% !TeX spellcheck = en_EN-EnglishUnitedKingdom
\documentclass[reqno,11pt]{amsart}

\usepackage[top=2.0cm,bottom=2.0cm,left=3cm,right=3cm]{geometry}
\usepackage{amsthm,amsmath,amssymb,dsfont}
\usepackage{mathrsfs,amsfonts,functan,extarrows,mathtools}
\usepackage[colorlinks]{hyperref}

\usepackage{marginnote}
\usepackage{xcolor}
\usepackage{stmaryrd}
\usepackage{esint}
\usepackage{graphicx}
\usepackage{bbm}
\usepackage{wasysym}
\usepackage{textcomp}
\usepackage{float}
\usepackage{wasysym}

\usepackage{tikz}
\usetikzlibrary{calc,decorations.pathreplacing}

%\usepackage{showkeys}

%%------- Theorem environments --------------------
\newtheorem{theorem}{Theorem}[section]
\newtheorem{proposition}{Proposition}[section]
\newtheorem{definition}{Definition}[section]
\newtheorem{corollary}{Corollary}[section]
\newtheorem{remark}{Remark}[section]
\newtheorem{lemma}{Lemma}[section]

\numberwithin{equation}{section}
\allowdisplaybreaks

\arraycolsep=1.5pt
%%-------------------------------------------------

%%%%%%%%%%%%%%%%%%%%%%%%%%%%%%%%%%%%%%%%%%%%%%%%%%%%%%%%%%%%%%%%%%%%%%
% \def\la{\langle}
% \def\ra{\rangle}
% \def\ls{\lesssim}
% \def\gs{\gtrsim}

% \def\hf{\hat{f}}
% \def\wt{\widetilde}

\def\d{\mathrm{d}}
\def\no{\nonumber}
\def\R{\mathbb{R}}

\def\exp{\mathrm{exp}}

%%------- For Revised notes -----------------------------------------------

\newcounter{wronumber}\setcounter{wronumber}{1}

%%-------------------------------------------------------------------------
%%------- Short Norms -----------------------------------------------------

% \newcommand{\agll}[1]{\agl{ \!\! \agl{#1} \!\! }}
% \newcommand{\agll}[1]{\left\langle\!\left\langle#1\right\rangle\!\right\rangle}

%%----------------------------------------------------------------------
%%----------------------------------------------------------------------

\begin{document}
\title[Renormalized Solutions to BSMP]
			{Global renormalized solutions to Boltzmann systems modeling mixture gases of monatomic and polyatomic species}

\author[Yi-Long Luo]{Yi-Long Luo}
\address[Yi-Long Luo]
{\newline School of Mathematics, Hunan University, Changsha, 410082, P. R. China}
\email{luoylmath@hnu.edu.cn}

\author[Jing-Xin Nie]{Jing-Xin Nie${}^*$}
\address[Jing-Xin Nie]{
\newline	School of Mathematics, Hunan University, Changsha, 410082, P. R. China} 
	\email{2359093513@qq.com}

\thanks{${}^*$ Corresponding author \quad \today}

\maketitle

\begin{abstract}
   Inspired by DiPerna-Lions' work \cite{Diperna-Lions}, we study the renormalized solutions to the large-data Cauchy problem of the Boltzmann systems modeling mixture gases of monatomic and polyatomic species, in which the distribution functions $f_\alpha$ characterized the polyatomic species contain the continuous internal energy variable $I \in \R_+$. We first construct the smooth approximated problem and establish the corresponding uniform and physically natural bounds. Then, by employing the averaged velocity (-internal energy) lemma, we can show that the weak $L^1$ limit of the approximated solution is exactly a renormalized solution what we required. Moreover, we also justify that the constructed renormalized solution subjects to the entropy inequality.  \\

   \noindent\textsc{Keywords.} Renormalized solution, Existence, Boltzmann equation, weak $L^1$ compactness \\

   \noindent\textsc{MSC2020.} 76P05, 82C40, 35Q20, 45K05, 35A01
\end{abstract}

%\vspace*{10pt}
%
%\phantomsection
%\addcontentsline{toc}{section}{\contentsname}
%
%\tableofcontents 

%%%%%%%%%%%%%%%%%%%%%%%%%%%%%%%%%%%%%%（正文）%%%%%%%%%%%%%%%%%%%%%%%%%%%%%
%%%%%%%%%%%%%%%%%%%%%%%%%%%%%%%%%%%%%%%%%%%%%%%%%%%%%%%%%%%%%%%%%%%%%%%%%%%

\section{Introduction and main results}

\subsection{Model for mixture gases of monatomic and polyatomic species}

In 1975, Borgnakke and Larsen \cite{Borgnakke-Larsen-JCP-1975} established a rarefied flow (a Boltzmann collision model) for a gas mixture of polyatomic molecules possessing internal energy, where the related ideas were followed from the works of Morse \cite{Morse-PF-1964}, Wang Chang-Uhlenbeck \cite{Wang-Uhlenbeck-1951} and Chapman-Cowling \cite{Chapman-Cowling-1960}. The polyatomicity is modeled by a continuous internal energy variable $I \in \R_+$. Compared to the classical mechanical models (i.e., monatomic gases models) such as rough-spheres, loaded spheres and various $C$-bodies, or special intermolecular potentials, which do not possess the continuous internal variables, the Boltzmann collision integral for diatomic or polyatomic gases is of more complexity, which will be illustrated by geometric intuitivity as follows. For instance, while considering the elastic collision for two monatomic molecules with the same unit mass (without internal energy variable), the conservation laws of momentum and energy read
\begin{equation*}
	\begin{aligned}
		\xi + \xi_* = \xi' + \xi_*' \,, \ |\xi|^2 + |\xi_*|^2 = |\xi'|^2 + |\xi_*'|^2 \,,
	\end{aligned}
\end{equation*}
where $(\xi, \xi_*) \in \R^3 \times \R^3$ and $(\xi', \xi_*') \in \R^3 \times \R^3$ are respectively pre-collision and post-collision velocities of two monatomic molecules, see Grad's work \cite{Grad-cut-off} and the references therein. As shown in Figure \ref{Fig1}, in the cross section of the corresponding collision, the four ``points" $(\xi, \xi_*, \xi', \xi_*')$ form a rectangle.
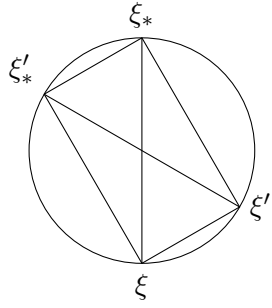
\begin{figure}[h]
	\begin{center}
		\begin{tikzpicture}
			\pgfmathsetmacro\rd{1.5};
			\draw (\rd,0) arc (0: 360: \rd);
			\draw (0, -\rd)--(-30:\rd)node[right]{$\xi'$}--(90:\rd)--(150:\rd)node[above left]{$\xi_*'$}--cycle;
			\draw (0, -\rd) node[below]{$\xi$}--(0,\rd)node[above]{$\xi_*$};
			\draw (-30:\rd)--(150:\rd);
		\end{tikzpicture}
	\end{center}
	\caption{Transformation of velocities for monatomic molecules with same mass.}\label{Fig1}
\end{figure}
However, while considering the collision for two polyatomic molecules with same unit mass, the conservation laws of momentum and total energy are expressed by
\begin{equation*}
	\begin{aligned}
		\xi + \xi_* = \xi' + \xi_*' \,, \ \tfrac{1}{2} |\xi|^2 + \tfrac{1}{2} |\xi_*|^2 + I + I_* = \tfrac{1}{2} |\xi'|^2 + \tfrac{1}{2} |\xi_*'|^2 + I' + I_*' \,,
	\end{aligned}
\end{equation*}
where $(I, I_*)$ and $(I', I_*')$ are respectively the pre-internal energy and post-internal energy variables. In the corresponding cross section, the four ``points" $(\xi, \xi_*, \xi', \xi_*')$ only form a parallelogram (not a rectangle), see Figure \ref{Fig2}.
\begin{figure}[h]
	\begin{center}
		\begin{tikzpicture}
			\draw[dashed] (2,0) arc (0: 360: 2);
			\draw (0,2)--(0,-2);
			\draw (0, -2)--(1.229,-0.75);
			\draw (-1.229,0.75)node[left=3pt,below]{$\xi_*'$}--(1.229,-0.75);
			\draw[dashed] (-1.732,1)--(-1.229,0.75);
			\draw[dashed] (1.229,-0.75)node[right=3pt, above]{$\xi'$}--(1.732,-1);
			\draw (0,2)--(-1.229,0.75)--(0,-2) node[below]{$\xi$};
			\draw (1.229,-0.75)--(0,2) node[above]{$\xi_*$};
		\end{tikzpicture}
	\end{center}
	\caption{Transformation of velocities for polyatomic molecules with same mass.}\label{Fig2}
\end{figure}
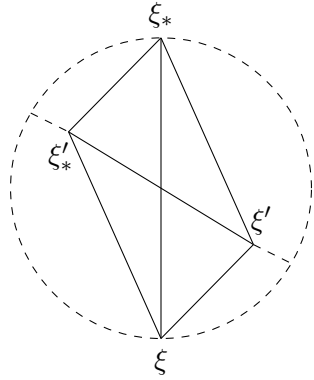
Moreover, if the two collided polyatomic molecules are of different masses, the four ``points" $(\xi, \xi_*, \xi', \xi_*')$ in the collisional cross section will only form a trapezoid, see Figure 1 of \cite{N.Bernhoff-2023} for example. From the geometric intuitivity above, the structure of the Boltzmann collision integral for the polyatomic gases is more complex than that for the monatomic gases.

Very recently, Bernhoff \cite{N.Bernhoff-2023} wrote down a more complicated rarefied gases model captured a multicomponent mixture of $s$ species $a_1, a_2, \cdots, a_s$ with $s_0$ monatomic and $s_1 : = s - s_0$ polyatomic species, and masses $m_1, m_2, \cdots, m_s$, respectively. This model involves the classical Boltzmann collision effect for monatomic molecules, the complex Boltzmann collision interaction possessing continuous internal energy variable $I \in \R_+$ for polyatomic molecules, and also the interaction collisional effect between monatomic and polyatomic molecules. More precisely, the model what we consider in current work is formulated by the following vector-valued Boltzmann equation  
\begin{equation}\label{BE-MP}
	\tfrac{\partial f}{\partial t } + \xi \cdot \nabla_x f = Q(f,f),
\end{equation}
where $ f = ( f_1, \cdots , f_s ) $ is the density distribution function with the components $ f_\alpha = f_\alpha ( t, x, \mathbf{Z} ) $ for $\alpha \in \{1,\cdots , s\}$ at $t > 0$, position $x = ( x_1, x_2, x_3 ) \in \R^3$ associated with mixed microscopic variable $ \mathbf{Z} $ given by 
\begin{equation}\label{eq2.01}
	\mathbf{Z} =\mathbf{Z}_\alpha :=
	\begin{cases}
		 \xi, \qquad \enspace \, \alpha \in \{1,\cdots , s_0 \} \,, \\
		( \xi , I )  , \quad \alpha \in \{s_0 +1,\cdots , s \} \,.
	\end{cases}
\end{equation} 
Here $\xi  = (\xi_1, \xi_2 ,\xi_3 ) \in \R^3$ is the average velocity variable of the microscopic particles and $I > 0$ is the continuous internal energy variable. Namely, $f_\alpha (t, x, \xi)$ ($\alpha \in \{ 1, \cdots, s_0 \}$) represent the density distribution of the monatomic molecules, and $f_\alpha (t, x, \xi, I)$ ($\alpha \in \{ s_0 + 1, \cdots, s \}$) capture that of the polyatomic molecules.

Moreover, the vector-valued collision operator $ Q = ( Q_1 , \cdots , Q_s )$ is a quadratic bilinear operator expressing the change of the velocities and internal energies of particles due to binary collisions. Here $Q_\alpha$ with $\alpha \in \{ 1, \cdots, s \}$ is the collision operator for species $a_\alpha$, the detailed expression of which will be given later. Then the components forms of \eqref{BE-MP} can be represented as
\begin{equation}\label{2.08}
	\tfrac{\partial f_\alpha}{\partial t } + \xi \cdot \nabla_x f_\alpha = Q_\alpha (f,f) \,, \alpha \in \{ 1,\cdots , s \} \,.
\end{equation}
A collision between two particles of species $a_\alpha$ and $a_\beta$ with $\alpha, \beta \in \{ 1, \cdots, s \}$ can be respectively expressed by pre-collisional elements $( \mathbf{Z}, \mathbf{Z}_* ) = ( \mathbf{Z}_\alpha, \mathbf{Z}_{* \beta} )$ and post-collisional elements $( \mathbf{Z}', \mathbf{Z}_*' ) = ( \mathbf{Z}_\alpha', \mathbf{Z}_{* \beta}' )$, where the all collisional elements consist of a microscopic velocity and possibly an internal energy. The notations for pre- and post-collisional pairs may be interchanged as well. Taking the momentum and total energy conservation into consideration, the following relations hold:
\begin{enumerate}
	\item Momentum conservation laws: for $\alpha, \beta \in \{ 1, \cdots, s \}$
	\begin{equation}\label{eq2.02}
		m_\alpha \mathbf{\xi } + m_\beta \mathbf{\xi }_* = m_\alpha \mathbf{\xi }' + m_\beta \mathbf{\xi }'_* \,;
	\end{equation}
    \item Total energy conservation laws:
    \begin{itemize}
    	\item If $ \alpha, \beta \in \{1, \cdots , s_0\}$,
    	\begin{equation}\label{eq2.03}
    		m_\alpha |\mathbf{\xi } |^2 + m_\beta |\xi_* |^2 = m_\alpha |\xi '|^2 +m_\beta |\xi '_*|^2 \,;
    	\end{equation}
        \item If $\alpha \in \{1, \cdots , s_0 \} $ and $\beta \in \{ s_0 + 1, \cdots , s \}$,
        \begin{equation}\label{eq2.04}
        	\tfrac{1}{2} m_\alpha |\xi |^2 + \tfrac{1}{2} m_\beta |\xi_* |^2 + I_* =\tfrac{1}{2} m_\alpha |\xi '|^2 + \tfrac{1}{2} m_\beta |\xi '_*|^2 +I'_* \,;
        \end{equation}
        \item If $\alpha \in \{ s_0 + 1, \cdots , s \} $ and $\beta \in \{ 1, \cdots , s_0 \} $,
        \begin{equation}\label{eq2.05}
        	\tfrac{1}{2} m_\alpha |\xi |^2 + \tfrac{1}{2} m_\beta |\xi_* |^2 + I =\tfrac{1}{2} m_\alpha |\xi '|^2 +\tfrac{1}{2} m_\beta |\xi '_*|^2 +I' \,;
        \end{equation}
        \item If $ \alpha , \beta \in \{ s_0 + 1, \cdots , s \} $,
        \begin{equation}
        	\tfrac{1}{2} m_\alpha |\xi |^2 + \tfrac{1}{2} m_\beta |\xi_* |^2 + I +I_* =\tfrac{1}{2} m_\alpha |\xi '|^2 +\tfrac{1}{2} m_\beta |\xi '_*|^2 +I' +I'_* \,.
        \end{equation}
    \end{itemize}
    Remark that the above energy conservation laws can be written as the following unified form
    \begin{equation}\label{eq2.06}
        \begin{aligned}
            & \tfrac{1}{2} m_\alpha |\xi |^2 + \tfrac{1}{2} m_\beta |\xi_* |^2 + \mathbf{1}_{ \alpha \in \{ s_0 + 1, \cdots, s \} } I + \mathbf{1}_{ \beta \in \{ s_0 + 1, \cdots, s \} }  I_*\\
            = & \tfrac{1}{2} m_\alpha |\xi '|^2 +\tfrac{1}{2} m_\beta |\xi '_*|^2 + \mathbf{1}_{ \alpha \in \{ s_0 + 1, \cdots, s \} } I' + \mathbf{1}_{ \beta \in \{ s_0 + 1, \cdots, s \} } I'_* \,.
        \end{aligned}
    \end{equation}
\end{enumerate}

\subsection{Collision operator and its equivalent transformation}

\subsubsection{Definition of collision operator $Q_\alpha$}

For $\alpha \in \{ 1, \cdots, s \}$, as in \cite{N.Bernhoff-2023}, the component collision operator $Q_\alpha (f,f)$ for species $a_\alpha$ can be introduced by
\begin{equation}\label{2.09}
	\begin{aligned}
		Q_\alpha (f,f)
		= \sum \limits_{\beta =1}^s Q_{\alpha \beta } (f, f) = \sum \limits_{\beta =1}^s  \int_{ \mathcal{Z}_\beta \times \mathcal{Z}_\alpha \times \mathcal{Z}_\beta } W_{\alpha \beta } \Lambda_{\alpha \beta } (f) d \mathbf{Z}_* d \mathbf{Z}' d \mathbf{Z}_*' \,, 
	\end{aligned}
\end{equation}
where
\begin{equation}\label{Z-gamma}
	\begin{aligned}
		\mathcal{Z}_\gamma =
		\left\{
		    \begin{aligned}
		    	\R^3 \,, & \textrm{ if } \gamma \in \{ 1, \cdots, s_0 \} \,, \\
		    	\R^3 \times \R_+ \,, & \textrm{ if } \gamma \in \{ s_0 + 1, \cdots, s \} \,,
		    \end{aligned}
		\right.
	\end{aligned}
\end{equation}
and
\begin{equation}
	\begin{aligned}
		\Lambda_{\alpha \beta } (f) := \tfrac{ f'_\alpha f'_{ \beta_* } }{ (I')^{ \delta ( \alpha )/2 - 1 } ( I'_* )^{ \delta ( \beta )/2 - 1 } } - \tfrac{ f_\alpha f_{ \beta_* } }{ I^{ \delta ( \alpha )/2 - 1 } I_*^{ \delta ( \beta )/2 - 1 } } \,.
	\end{aligned}
\end{equation}
Here $\delta^{(\alpha)} \geq 2 $ ($\alpha \in \{ 1, \cdots, s \}$) with $\delta^{(1)}=\cdots =\delta^{(s_0)} =2$ are the indices of the degeneracies, see Gamba and Pavic-Colic's work \cite{Gamba-Pavic-JMP-2023}, which  denote the number of internal degrees
of freedom of the monatomic or polyatomic species $a_\alpha$. Here and below the abbreviations $ f_{\alpha_*} = f_\alpha ( t, x, \mathbf{Z}_* ), f'_\alpha = f_\alpha ( t, x, \mathbf{Z}' ) $ and $ f'_{\alpha_*} = f_\alpha ( t, x, \mathbf{Z}'_* )$ are employed, where $\mathbf{Z}_*$, $\mathbf{Z}'$ and $\mathbf{Z}'_*$ are defined as the same way of \eqref{eq2.01}.

As in \cite{Bernhoff-AAM-2023,Bernhoff-KRM-2023,N.Bernhoff-2023}, the transition probabilities $W_{\alpha \beta }$ are introduced by
\begin{equation}\label{eq2.10}
	\begin{aligned}
		W_{\alpha \beta } 
		= & W_{\alpha \beta } ( \mathbf{Z} , \mathbf{Z}_* | \mathbf{Z}', \mathbf{Z}_*' ) \\
		= & (m_\alpha + m_\beta )^2 m_\alpha m_\beta (I')^{\delta(\alpha)/2-1} (I'_*)^{\delta(\beta)/2-1} \sigma'_{\alpha \beta } \tfrac{|g'|}{|g|} \hat{\delta}_1 \hat{\delta}_3\\
		= & (m_\alpha + m_\beta )^2 m_\alpha m_\beta I^{\delta(\alpha)/2-1} I_*^{\delta(\beta)/2-1} \sigma_{\alpha \beta } \tfrac{|g|}{|g'|} \hat{\delta}_1 \hat{\delta}_3 \,,
	\end{aligned}
\end{equation}
where
\begin{equation*}
	\begin{aligned}
		& \hat{\delta}_1 = \delta_1 ( \tfrac{1}{2} ( m_\alpha |\xi|^2 + m_\beta |\xi_*|^2 - m_\alpha |\xi'|^2 - m_\beta |\xi'_*|^2 ) -\Delta I ) \,,\\
		& \hat{\delta}_3 = \delta_3 ( m_\alpha \xi + m_\beta \xi_* - m_\alpha \xi' - m_\beta \xi'_* ) \,, \\
		& \cos \theta =\tfrac{g \cdot g'}{|g| |g'|} \,, g= \xi -\xi_* ,g'=\xi '- \xi'_* \,.
	\end{aligned}
\end{equation*}
Here $\delta_3$ and $\delta_1$ denote the Dirac's delta function in $\R^3$ and $\R$, respectively. Moreover, the scattering cross sections $ \sigma_{\alpha \beta } $, $\sigma_{\alpha \beta }' > 0$ and the energy gap $ \Delta I \in \R $ satisfy the following properties:
\begin{enumerate}
	\item If $\alpha, \beta \in \{ 1, \cdots, s_0 \}$, 
	\begin{equation*}
		\begin{aligned}
			\sigma_{\alpha \beta } = \sigma_{\alpha \beta } ( |g|, | \cos \theta | ) \,, \sigma'_{\alpha \beta } = \sigma_{\alpha \beta } ( |g'|, | \cos \theta | ) \,, \Delta I = 0 \,;
		\end{aligned}
	\end{equation*}
    \item If $\alpha \in \{ 1, \cdots, s_0 \}$ and $ \beta \in \{ s_0 + 1, \cdots, s \} $,
    \begin{equation*}
    	\begin{aligned}
    		\sigma_{\alpha \beta } = \sigma_{\alpha \beta } ( |g|, | \cos \theta |, I_*, I_*' ) \,, \sigma'_{\alpha \beta } = \sigma_{\alpha \beta } ( |g'|, | \cos \theta |, I_*', I_* ) \,, \Delta I = I_*' - I_* \,;
    	\end{aligned}
    \end{equation*}
    \item If $\alpha \in \{ s_0 + 1, \cdots, s \}$ and $ \beta \in \{ 1, \cdots, s_0 \} $,
    \begin{equation*}
    	\begin{aligned}
    		\sigma_{\alpha \beta } = \sigma_{\alpha \beta } ( |g|, | \cos \theta |, I, I' ) \,, \sigma'_{\alpha \beta } = \sigma_{\alpha \beta } ( |g'|, | \cos \theta |, I', I ) \,, \Delta I = I' - I \,;
    	\end{aligned}
    \end{equation*}
    \item If $\alpha, \beta \in \{ s_0 + 1, \cdots, s \}$, 
    \begin{equation*}
    	\begin{aligned}
    		& \sigma_{ \alpha \beta } = \sigma_{\alpha \beta } ( |g|, | \cos \theta |, I , I_* , I' ,I'_*) \,, \sigma'_{ \alpha \beta } = \sigma_{ \alpha \beta } ( |g'|, | \cos \theta |, I' , I'_*, I , I_* ) \,, \\
    		& \Delta I = I'+ I'_* - I - I_* \,.
    	\end{aligned}
    \end{equation*}
\end{enumerate}
Here $\theta \in [0, 2 \pi]$ represents the velocities deflection angular between pre- and post-collisions. By summarizing the above relations, the internal energy gap $\Delta I$ can be unified as
\begin{equation*}
    \begin{aligned}
        \Delta I = \mathbf{1}_{ \alpha \in \{ s_0 + 1, \cdots, s \} } I' + \mathbf{1}_{ \beta \in \{ s_0 + 1, \cdots, s \} }  I_*' - \mathbf{1}_{ \alpha \in \{ s_0 + 1, \cdots, s \} } I - \mathbf{1}_{ \beta \in \{ s_0 + 1, \cdots, s \} }  I_* \,.
    \end{aligned}
\end{equation*}

Moreover, the scattering cross sections $\sigma_{\alpha \beta } $ for $ \alpha, \beta \in \{ 1,\cdots , s\}$ are assumed to satisfy the microreversibility conditions
\begin{equation}\label{eq2.11}
	(I')^{\delta(\alpha)/2-1} (I'_*)^{\delta(\beta)/2-1} |g'|^2 \sigma'_{\alpha \beta } 
	= I^{\delta(\alpha)/2-1} I_*^{\delta(\beta)/2-1} |g|^2 \sigma_{\alpha \beta }
\end{equation}
and the symmetry relations
\begin{equation}\label{eq2.12}
    \begin{aligned}
        \sigma_{\alpha \beta } (&|g|,|\cos \theta |, I , I_* , I' ,I'_*) =\sigma_{\beta \alpha }(|g|,|\cos \theta |, I_*, I  ,I'_*, I' ) \,, \\
        \sigma_{\alpha \alpha} = & \sigma_{\alpha \alpha} (|g|,|\cos \theta |, I , I_* , I' ,I'_*) = \sigma_{\alpha \alpha} (|g|,|\cos \theta |, I_* , I , I' ,I'_*) \\
        = & \sigma_{\alpha \alpha} (|g|,|\cos \theta |, I_* , I , I'_* ,I') \,.
    \end{aligned}
\end{equation}
Moreover, we shall adopt in this paper an assumption of weak angular cut-off (see Grad \cite{Grad-cut-off}), namely, $\sigma_{\alpha \beta }$ are integrable associated with the angular variable $\theta$, i.e.,
\begin{equation}\label{Asum-angular-cutoff}
    \begin{aligned}
        \sigma_{\alpha \beta} (\mathbf{Z}, \mathbf{Z}_*, \mathbf{Z}', \mathbf{Z}_*', \theta) \in L^1_{loc} ( d \mathbf{Z}_*;  L^1 (d \theta) ) \,,
    \end{aligned}
\end{equation}
where the variables $\mathbf{Z}, \mathbf{Z}_*, \mathbf{Z}', \mathbf{Z}_*'$ subject to the conservation laws \eqref{eq2.02}-\eqref{eq2.06}, and the deflection angular $\theta \in [0, 2 \pi]$. As a result, together with the above relations and the known properties of Dirac’s delta function, the transition probabilities $W_{\alpha \beta }$ can be transformed to
\begin{equation*}
	\begin{aligned}
		W_{\alpha \beta } 
		&= (I')^{\delta(\alpha)/2-1} (I'_*)^{\delta(\beta )/2-1} \mu_{\alpha \beta } \ \sigma'_{\alpha \beta } \tfrac{|g'|}{|g|} 1_{\mu _{\alpha \beta }|g|^2>2\Delta I} \delta_{3} (G_{\alpha \beta }- G'_{\alpha \beta }) \delta_{1} (\sqrt{|g|^2-\tfrac{2\Delta I}{\mu_{\alpha \beta } }}-|g'|)\\
		&=I^{\delta(\alpha)/2-1} I_*^{\delta(\beta )/2-1} \mu_{\alpha \beta } \ \sigma_{\alpha \beta }\tfrac{|g|}{|g'|} 1_{\mu _{\alpha \beta }|g|^2>2\Delta I} \delta_{3} (G_{\alpha \beta }- G'_{\alpha \beta }) \delta_{1} (\sqrt{|g|^2-\tfrac{2\Delta I}{\mu_{\alpha \beta } }}-|g'|) \,,
	\end{aligned}
\end{equation*}
where 
\begin{equation*}
    \begin{aligned}
        G_{\alpha \beta } =\tfrac{m_\alpha \mathbf{\xi } + m_\beta \mathbf{\xi }_*}{m_\alpha + m_\beta}\,, G'_{\alpha \beta } =\tfrac{m_\alpha \mathbf{\xi' } + m_\beta \mathbf{\xi' }_*}{m_\alpha + m_\beta} \,, \mu _{\alpha \beta }= \tfrac{m_\alpha m_\beta }{(m_\alpha + m_\beta )} \,.
    \end{aligned}
\end{equation*}

Remark that the collision operator $Q_{\alpha \beta }(f,f)$ can be split into the gain term $Q^+_{\alpha \beta } (f,f)$ and loss term $Q^-_{\alpha \beta } (f,f)$, namely,
\begin{equation*}
	Q_{\alpha \beta } (f,f) =Q^+_{\alpha \beta } (f,f) -Q^-_{\alpha \beta } (f,f),
\end{equation*}
where
\begin{equation*}
	\begin{aligned}
		& Q^+_{\alpha \beta } (f,f) = \int_{\mathcal{Z}_\beta \times \mathcal{Z}_\alpha \times \mathcal{Z}_\beta} W_{\alpha \beta } \tfrac{f'_\alpha f'_{\beta_*}}{(I')^{\delta(\alpha)/2-1} (I'_* )^{\delta(\beta)/2-1}} d \mathbf{Z}_* d \mathbf{Z}' d \mathbf{Z}_*', \\
		& Q^-_{\alpha \beta } (f,f) =\int_{\mathcal{Z}_\beta \times \mathcal{Z}_\alpha \times \mathcal{Z}_\beta} W_{\alpha \beta }
		\tfrac{f_\alpha f_{\beta_*}}{I^{\delta(\alpha)/2-1} I_* ^{\delta(\beta)/2-1}} d \mathbf{Z}_* d \mathbf{Z}' d \mathbf{Z}_*' .
	\end{aligned}
\end{equation*}

\subsubsection{Equivalent transformation of the collision operator}

Notice that the total molecular energy conservation law from \eqref{eq2.06} can be simply written as
\begin{equation}\label{Eab}
    \begin{aligned}
        E_{\alpha \beta } = & \tfrac{\mu_{\alpha \beta } }{2} |g|^2+ \mathbf{1}_{ \alpha \in \{ s_0 + 1, \cdots, s \} } I + \mathbf{1}_{ \beta \in \{ s_0 + 1, \cdots, s \} }  I_* \\
        = & \tfrac{\mu_{\alpha \beta } }{2} |g'|^2+ \mathbf{1}_{ \alpha \in \{ s_0 + 1, \cdots, s \} } I' + \mathbf{1}_{ \beta \in \{ s_0 + 1, \cdots, s \} }  I_*'.
    \end{aligned}
\end{equation}
If $\beta \in \{s_0 + 1, \cdots , s\}$, one introduces a parameter $\mathfrak{R} \in [0,1]$ such that 
\begin{equation}\label{IR}
	\mathfrak{R} E_{\alpha \beta } =\tfrac{\mu _{\alpha \beta }}{2} |g'|^2, \quad (1- \mathfrak{R}) E_{\alpha \beta } = \mathbf{1}_{ \alpha \in \{ s_0 + 1, \cdots, s \} } I' + I'_*.
\end{equation}
Moreover, if $\alpha \in \{s_0 + 1, \cdots , s\} $, one introduces a parameter $\mathfrak{r} \in [0,1]$ such that 
\begin{equation}\label{Ir}
	I'=\mathfrak{r}(1-\mathfrak{R})E_{\alpha \beta }, \quad I'_* =(1-\mathfrak{r})(1-\mathfrak{R})E_{\alpha \beta }.
\end{equation}
Finally, for the sake of symmetry,we introduce the extra parameters
\begin{equation*}
	\mathfrak{R}' =\tfrac{\mu _{\alpha \beta }}{2 E_{\alpha \beta }} |g|^2,\quad  \mathfrak{r}'=\tfrac{I}{I+I_*}.
\end{equation*}
By a series of change of variables \cite{N.Bernhoff-2023}, there holds
\begin{equation*}
	\begin{aligned}
		d\xi' d\xi'_*dI'dI'_* 
		=&\ dG'_{\alpha \beta } dg' dI'dI'_* = \ |g'|^2 dG'_{\alpha \beta } d|g'|d\omega dI'dI'_* \\
		=& \ \sqrt{2} (\tfrac{E_{\alpha \beta }}{\mu_{\alpha \beta } })^{3/2} \mathfrak{R}^{1/2} d\mathfrak{R}d\omega dG'_{\alpha \beta } dI'dE'_{\alpha \beta } \\
		=& \tfrac{\sqrt{2} }{\mu_{\alpha \beta }^{3/2}} E_{\alpha \beta }^{5/2} (1-\mathfrak{R})\mathfrak{R}^{1/2} d\mathfrak{r}d\mathfrak{R}d\omega dG'_{\alpha \beta } dE'_{\alpha \beta }.
	\end{aligned}
\end{equation*}
Then the collision operators $Q_{\alpha \beta}$ can be transformed to the following forms.
\begin{enumerate}
    \item If $ \alpha ,\beta \in \{1,\cdots, s_0 \}$,
    \begin{equation} \label{eq2.14}
        Q_{\alpha \beta } (f,f) = \iint_{\R^3 \times \mathbb{S}^2 } B_{0\alpha \beta } (\xi-\xi_*,\omega )(f'_\alpha f'_{\beta_*}- f_\alpha f_{\beta_*})	d\xi_* d\omega , 
    \end{equation}
    where
    \begin{equation}\label{B0-ab}
        \begin{aligned}
            B_{0\alpha \beta } (\xi-\xi_*,\omega )=\tilde{\sigma}_{\alpha \beta }|g|,\quad \tilde{\sigma}_{\alpha \beta }(|g|,\omega)=\sigma_{\alpha \beta }(|g|,|\cos \theta|) \,;
        \end{aligned}
    \end{equation}
    \item If $\alpha \in \{1,\cdots, s_0 \}$ and $\beta \in \{s_0+1,\cdots,s \}$,
    \begin{equation}\label{eq2.15}
	\begin{aligned}
		 Q_{\alpha \beta } (f,f) =& \iiiint_{ \R^3 \times \R_+ \times [0,1] \times \mathbb{S}^2} B_{1\alpha \beta }(\xi-\xi_*,I_*, \mathfrak{R} ,\omega )(\tfrac{f'_\alpha f'_{\beta_*}}{(I'_* )^{\delta(\beta)/2-1}} - \tfrac{f_\alpha f_{\beta_*}}{ I_* ^{\delta(\beta)/2-1}}) \\
		& \times (1-\mathfrak{R})^{\delta(\beta )/2-1}  \mathfrak{R}^{1/2} I_*^{\delta(\beta)/2-1} d\xi_* d\mathfrak{R}d\omega dI_*, 
	\end{aligned}
    \end{equation}
    where
    \begin{equation}\label{B1-ab}{\small
        \begin{aligned}
            B_{1\alpha \beta }(\xi-\xi_*,I_*,\mathfrak{R},\omega )  = \tfrac{\tilde{\sigma}_{\alpha \beta }|g|E_{\alpha \beta }}{\mathfrak{R}^{1/2} (1-\mathfrak{R})^{\delta(\beta)/2-1}},\  \tilde{\sigma}_{\alpha \beta } (|g|,I_* ,\mathfrak{R},\omega )=\sigma_{\alpha \beta }(|g|,|\cos \theta|,I_*,I'_*) \,;
        \end{aligned}}
    \end{equation}
    \item If $\alpha \in \{s_0+1,\cdots,s \}$ and $\beta \in \{1,\cdots, s_0 \}$,
    \begin{equation}\label{eq2.16}
	\begin{aligned}
		  Q_{\alpha \beta} (f,f) = & \iiint_{\R^3  \times [0,1] \times \mathbb{S}^2} B_{1\beta \alpha }(\xi-\xi_*,I,\mathfrak{R},\omega ) (\tfrac{f'_\alpha f'_{\beta_*}}{(I')^{\delta(\alpha)/2-1} } - \tfrac{f_\alpha f_{\beta_*}}{I^{\delta(\alpha)/2-1}}) \\
		&\times (1-\mathfrak{R})^{\delta(\alpha  )/2-1} \mathfrak{R}^{1/2} I^{\delta(\alpha )/2-1} d\xi_* d\mathfrak{R}d\omega \,,
	\end{aligned}
    \end{equation}
    where 
    \begin{equation}\label{B1-ba}
        \begin{aligned}
            B_{1 \beta \alpha }(\xi-\xi_*,I,\mathfrak{R},\omega )  = \tfrac{\tilde{\sigma}_{\alpha \beta }|g|E_{\alpha \beta }}{\mathfrak{R}^{1/2} (1-\mathfrak{R})^{\delta(\alpha)/2-1}},\  \tilde{\sigma}_{\alpha \beta } (|g|,I ,\mathfrak{R},\omega )=\sigma_{\alpha \beta }(|g|,|\cos \theta|,I,I') \,;
        \end{aligned}
    \end{equation}
    \item If $ \alpha ,\beta \in \{s_0+ 1,\cdots, s \}$,
    \begin{equation}\label{eq2.17}
	\begin{aligned}
		Q_{\alpha \beta } (f,f) 
		=& \iiiint_{\R^3 \times \R_+ \times [0,1]^2 \times \mathbb{S}^2 } (\tfrac{f'_\alpha f'_{\beta_*}}{(I')^{\delta(\alpha)/2-1} (I'_* )^{\delta(\beta)/2-1}} - \tfrac{f_\alpha f_{\beta_*}}{I^{\delta(\alpha)/2-1} I_* ^{\delta(\beta)/2-1}})\\
		& \times B_{2 \alpha \beta } \ \mathfrak{r}^{\delta(\alpha)/2-1} (1-\mathfrak{r})^{\delta (\beta )/2-1 } (1-\mathfrak{R} ) ^{(\delta (\alpha) +\delta (\beta ))/2-1} \mathfrak{R}^{1/2} \\
		&\times I^{\delta(\alpha)/2-1 } I_*^{\delta (\beta )/2-1} d\xi_* d\mathfrak{r}d\mathfrak{R}d\omega dI_* \,,
        \end{aligned}
    \end{equation}
    where
    \begin{equation}\label{B2-ab}
        \begin{aligned}
            B_{2\alpha \beta }(\xi-\xi_*,I+I_*,\mathfrak{R},\mathfrak{r},\omega)&= \tfrac{\tilde{\sigma}_{\alpha \beta } |g|E^2_{\alpha \beta }}{\mathfrak{r}^{\delta(\alpha)/2-1} (1-\mathfrak{r})^{\delta (\beta )/2-1 }(1-\mathfrak{R} ) ^{(\delta (\alpha) +\delta (\beta ))/2-2} \mathfrak{R}^{1/2} } \,,\\
            \tilde{\sigma}_{\alpha \beta } (|g|,I+I_*,\mathfrak{R},\mathfrak{r},\omega) &= \sigma_{\alpha \beta } (|g|,|\cos \theta|,I+I_*,I'+I'_*) 
        \end{aligned}
    \end{equation}
\end{enumerate}
In the above collision integrals, the post-velocities $(\xi', \xi_*')$ can be represented by
\begin{equation}
    \begin{aligned}
        \left\{
             \begin{aligned}
                 \xi ' = & \tfrac{m_\alpha \xi +m_\beta \xi_* }{m_\alpha + m_\beta } + \omega \tfrac{m_\beta  }{m_\alpha + m_\beta } \sqrt{|g|^2 -\tfrac{2 \Delta I}{\mu_{\alpha \beta }}} \,, \\
                 \xi_* ' = & \tfrac{m_\alpha \xi +m_\beta \xi_* }{m_\alpha + m_\beta } - \omega \tfrac{m_\alpha }{m_\alpha + m_\beta } \sqrt{|g|^2 -\tfrac{2 \Delta I}{\mu_{\alpha \beta }}}
             \end{aligned}
        \right.
    \end{aligned}
\end{equation}
with parameter $\omega \in \mathbb{S}^2$. Together with the symmetric properties in \eqref{eq2.11}-\eqref{eq2.12}, the new forms of the collision integral kernels $B_{i \alpha \beta }$ ($i=0,1,2$) defined in \eqref{B0-ab}, \eqref{B1-ab} and \eqref{B2-ab} are invariant under the following change of variables:
\begin{equation}\label{eq2.18}
	\begin{aligned}
		(\xi , \xi_* , I , I_*, \mathfrak{R}, \mathfrak{r} , \omega ) &\leftrightarrow (\xi' , \xi'_* , I' , I'_*, \mathfrak{R}', \mathfrak{r}' , \omega' ),\\
		(\xi , \xi_* , I , I_*, \mathfrak{R}, \mathfrak{r} , \omega ) &\leftrightarrow (\xi_* , \xi , I_* , I, \mathfrak{R}, 1-\mathfrak{r} , -\omega ).
	\end{aligned}
\end{equation}
Namely, 
\begin{itemize}
    	\item If $ \alpha, \beta \in \{1, \cdots , s_0\}$,
    	\begin{equation}\label{eq2.03}
    	B_{0 \alpha \beta } (\xi - \xi_* ,  \omega ) = B_{0 \alpha \beta } (\xi' - \xi'_* ,\omega' ) =B_{0 \alpha \beta } (\xi_* - \xi , -\omega )  \,;
    	\end{equation}
        \item If $\alpha \in \{1, \cdots , s_0 \} $ and $\beta \in \{ s_0 + 1, \cdots , s \}$,
        \begin{equation}\label{eq2.04}
        	B_{1 \alpha \beta } (\xi -\xi_* ,  I_*, \mathfrak{R}, \omega ) = B_{1 \alpha \beta } (\xi' - \xi'_* , I'_*, \mathfrak{R}', \omega' ) =B_{1 \alpha \beta } (\xi_* - \xi , I , \mathfrak{R}, -\omega )  \,;
        \end{equation}
        \item If $\alpha \in \{ s_0 + 1, \cdots , s \} $ and $\beta \in \{ 1, \cdots , s_0 \} $,
        \begin{equation}\label{eq2.05}
        	B_{1  \beta  \alpha} (\xi - \xi_* , I ,  \mathfrak{R}, \omega ) = B_{1 \beta  \alpha } (\xi' - \xi'_* , I' , \mathfrak{R}',  \omega' ) =B_{1 \beta \alpha } (\xi_* - \xi ,  I, \mathfrak{R}, -\omega ) . \,;
        \end{equation}
        \item If $ \alpha , \beta \in \{ s_0 + 1, \cdots , s \} $,
        \begin{equation}\label{B-inv}
        \begin{aligned}
            B_{2 \alpha \beta } (\xi - \xi_* , I + I_*, \mathfrak{R}, \mathfrak{r} , \omega ) &= B_{2 \alpha \beta } (\xi' - \xi'_* , I' + I'_*, \mathfrak{R}', \mathfrak{r}' , \omega' )\\
            &=B_{2 \alpha \beta } (\xi_*- \xi , I_* + I, \mathfrak{R}, 1-\mathfrak{r} , -\omega ) \,.
        \end{aligned}
        \end{equation}
    \end{itemize}

Together with the angular cutoff assumptions \eqref{Asum-angular-cutoff}, one can further define the following functions
\begin{equation}\label{A}{\small
    \begin{aligned}
		A_{0 \alpha \beta } (\xi-\xi_*)=&\int_{\mathbb{S}^2 } B_{0 \alpha \beta }(\xi-\xi_*,\omega ) d\omega, \\
		A_{1\alpha \beta }(\xi-\xi_*,I_*)= &\iint_{[0,1] \times \mathbb{S}^2 } B_{1 \alpha \beta } (\xi-\xi_*,I_*,\mathfrak{R}, \omega )(1-\mathfrak{R})^{\delta(\beta )/2-1}\mathfrak{R}^{1/2} d\mathfrak{R}d\omega, \\
		A_{1 \beta \alpha }(\xi-\xi_*,I)=&\iint_{[0,1] \times \mathbb{S}^2 } B_{1 \beta \alpha }(\xi-\xi_*,I,\mathfrak{R},\omega) (1-\mathfrak{R})^{\delta(\alpha )/2-1}\mathfrak{R}^{1/2} d\mathfrak{R}d\omega, \\
		A_{2\alpha \beta } (\xi-\xi_*,I+I_*)=&\iiint_{[0,1]^2 \times \mathbb{S}^2 } B_{2\alpha \beta }(\xi-\xi_*,I+I_*,\mathfrak{R},\mathfrak{r},\omega ) \\
     & \times \mathfrak{r}^{\delta(\alpha )/2-1} (1-\mathfrak{r})^{\delta(\beta )/2-1} (1-\mathfrak{R})^{(\delta(\alpha)+\delta (\beta ))/2-1} \mathfrak{R}^{1/2} d\mathfrak{r}d\mathfrak{R}d\omega .
    \end{aligned}}
\end{equation}
Employing the above kernel functions, one introduces the following linear operators
{\small
    \begin{align}\label{L-def}
		\no & L_{0\alpha \beta }(f)= \int_{ \R^3} f_{\beta_*} A_{0\alpha \beta }(\xi-\xi_*)d\xi_*, \ L_{1\alpha \beta }(f)= \iint_{ \R^3\times \R_+} f_{\beta_*} A_{1\alpha \beta }(\xi-\xi_*,I_*)d\xi_* dI_*,\\
		& L_{1\beta \alpha }(f)= \int_{ \R^3} f_{\beta_*} A_{1\beta \alpha }(\xi-\xi_*,I)d\xi_*,\ L_{2\alpha \beta }(f)= \iint_{ \R^3\times \R_+} f_{\beta_*} A_{2\alpha \beta }(\xi-\xi_*,I+I_*)d\xi_* dI_* \,.
	\end{align}}
Actually, the above definitions can represent the loss term $Q_{\alpha \beta}^- (f,f)$ as follows:
  \begin{align}\label{Qn-alpha-beta}
    \no & Q_{\alpha \beta}^- (f,f) = f_\alpha L_{0 \alpha \beta} (f) \,, \quad \alpha, \beta \in \{ 1, \cdots, s_0 \} \,, \\
    \no & Q_{\alpha \beta}^- (f,f) = f_\alpha L_{1 \alpha \beta} (f) \,, \quad \alpha \in \{ 1, \cdots, s_0 \} \,, \beta \in \{ s_0 + 1, \cdots, s \} \,, \\
    \no & Q_{\alpha \beta}^- (f,f) = f_\alpha L_{1 \beta \alpha} (f) \,, \quad \alpha \in \{ s_0 + 1, \cdots, s \} \,, \beta \in \{ 1, \cdots, s_0 \} \,, \\
    & Q_{\alpha \beta}^- (f,f) = f_\alpha L_{2 \alpha \beta} (f) \,, \quad \alpha, \beta \in \{ s_0 + 1, \cdots, s \} \,.
  \end{align}
Furthermore, we also introduce
\begin{equation}\label{L-alpha-def}
  \begin{aligned}
    & L_\alpha (f) = \sum_{\beta = 1}^{s_0} L_{0 \alpha \beta} (f) + \sum_{\beta = s_0 + 1}^{s} L_{1 \alpha \beta} (f) \,, \quad \alpha \in \{ 1, \cdots, s_0 \} \,, \\
    & L_\alpha (f) = \sum_{\beta = 1}^{s_0} L_{1 \beta \alpha} (f) + \sum_{\beta = s_0 + 1}^{s} L_{2 \alpha \beta} (f) \,, \quad \alpha \in \{s_0 + 1, \cdots, s \} \,,
  \end{aligned}
\end{equation}
which implies
\begin{equation}\label{Qn-alpha}
  \begin{aligned}
    Q_\alpha^- (f, f) = \sum_{\beta = 1}^{s} Q_{\alpha \beta}^- (f, f) = f_\alpha L_\alpha (f) \,, \quad \alpha \in \{ 1, \cdots, s \} \,.
  \end{aligned}
\end{equation}

\subsection{Notations and main results}
 
\subsubsection{Notations}

In this paper, the symbol $\R^d$ means the d-dimensional real Euclidean space for integer $d \geq 1$. $\R_+$ stands for the positive half real line, i.e., $\R_+ = \{ x \in \R^1; x > 0 \}$. $\mathbb{S}^2 = \{ x \in \R^3; |x| =1 \}$ is the unit sphere in $\R^3$. $\mathbb{N}$ denotes by the nonnegative integers set, and $\mathbb{N}^k = \{ \boldsymbol{n} = (n_1, \cdots, n_k) | n_i \in \mathbb{N}, 1 \leq i \leq k \}$ for any integer $k \geq 1$. For any $\boldsymbol{n} \in \mathbb{N}^k$, we define its $1$-norm $| \boldsymbol{n} | = \sum_{i=1}^{k} n_i$. The brief notation ``a.e." means ``almost everywhere". $L^p (d \mu)$ represents the standard $L^p$ space associated with the measure $\mu$. For any functional space $F$, the symbol $F_+$ stands for $\{ f \in F; f \geq 0 \textrm{ a.e.} \}$. Moreover, $B_R$ is the ball in $\R^3$ centered at $0$ with radius $R \in (0, \infty)$.
 
We also consider the real Hilbert space
\begin{equation*}
	X:=(L^2(d\mathbf{\xi }))^{s_0}\times (L^2(d\mathbf{\xi }dI))^{s_1},
\end{equation*}
with inner product
\begin{equation}\label{X-innerP}
	(f,g)= \sum \limits_{\alpha =1}^{s_0} \int_{ \R^3}f_\alpha g_\alpha d\xi + \sum \limits_{\alpha =s_0+1}^{s} \iint_{\R^3\times \R_+}f_\alpha g_\alpha d\xi dI,\ f,g \in X.
\end{equation}

For any smooth function $f (t,x,\xi)$ over $ (t,x,\xi) \in \R_+ \times \R^3 \times \R^3 $ and $\boldsymbol{\zeta} = ( \zeta_0, \zeta_1, \cdots, \zeta_6 ) \in \mathbb{N}^7$, we define the partial derivative operator $D^{ \boldsymbol{ \zeta } }$ as
\begin{equation*}
  \begin{aligned}
    D^{ \boldsymbol{ \zeta } } f = \frac{ \partial^{ | \boldsymbol{\zeta} | } f }{  \partial t^{\zeta_0} \partial x_1^{\zeta_1} \partial x_2^{\zeta_2} \partial x_3^{\zeta_3} \partial \xi_1^{\zeta_4} \partial \xi_2^{\zeta_5} \partial \xi_3^{\zeta_6} } \,.
  \end{aligned}
\end{equation*}
Moreover, for any smooth function $g (t,x,\xi, I)$ over $ (t,x,\xi, I) \in \R_+ \times \R^3 \times \R^3 \times \R_+$ and $\boldsymbol{\gamma} = ( \gamma_0, \gamma_1, \cdots, \gamma_6, \gamma_7 ) \in \mathbb{N}^8$, we introduce the partial derivative operator $D^{ \boldsymbol{ \gamma } }$ as
\begin{equation*}
  \begin{aligned}
    D^{ \boldsymbol{ \gamma } } g = \frac{ \partial^{ | \boldsymbol{\gamma} | } g }{  \partial t^{\gamma_0} \partial x_1^{\gamma_1} \partial x_2^{\gamma_2} \partial x_3^{\gamma_3} \partial \xi_1^{\gamma_4} \partial \xi_2^{\gamma_5} \partial \xi_3^{\gamma_6} \partial I^{\gamma_7} } \,.
  \end{aligned}
\end{equation*}
In this paper, all limits $\lim\limits_{n \to + \infty} f^n$ are in the sense of subsequence if necessary.

\subsubsection{Initial data}

In this paper, we focus on the Cauchy problem of the equation \eqref{BE-MP}, whose initial data is 
\begin{equation}\label{f0}
		f ( t, x, \mathbf{Z} ) |_{t=0} = f_0 (x, \mathbf{Z}) = (f_{1,0},\cdots,f_{s,0}) (x, \mathbf{Z})
\end{equation}
with the following assumptions:
\begin{enumerate}
    \item For all $\alpha \in \{ 1, \cdots, s \}$,
    \begin{equation}\label{f0-p}
        \begin{aligned}
            f_{\alpha, 0} \geq 0 \quad a.e. \ (x, \mathbf{Z}) \in \R^3 \times \mathcal{Z_\alpha} \,,
        \end{aligned}
    \end{equation}
    where $\mathcal{Z_\alpha}$ is given in \eqref{Z-gamma};
    \item The components $f_{\alpha, 0}$ of the initial data $f_0$ satisfy:
    \begin{itemize}
        \item If $\alpha \in \{1,\cdots ,s_0\}$,
        \begin{equation}\label{fs0}
            \iint_{\R^3 \times \R^3} f_{\alpha,0} (1+|x|^2 +|\xi|^2 +|\log f_{\alpha,0}|)dxd\xi <\infty \,.
        \end{equation}
        \item If $\alpha \in \{s_0+1,\cdots ,s\} $,
        \begin{equation}\label{fs}
            \iiint_{\R^3 \times \R^3 \times \R_+} f_{\alpha,0} (1+|x|^2 +|\xi|^2+I +|\log(I^{1-\delta(\alpha)/2} f_{\alpha,0})|)dxd\xi dI<\infty.
    \end{equation}
    \end{itemize}
\end{enumerate}

\subsubsection{Hypotheses on collision kernels}

Based on the angular cutoff assumption \eqref{Asum-angular-cutoff}, we have introduced the average collision kernel functions $A_{i \alpha \beta}$ ($i=0,1,2$) and $A_{1 \beta \alpha }$ in \eqref{A}. The following hypotheses about $A_{i \alpha \beta}$ ($i=0,1,2$) and $A_{1 \beta \alpha }$ will be imposed: for any fixed $0 < R < + \infty$,
\begin{equation}\label{collision}
    \begin{aligned}
        & ( 1 + | \xi |^2 )^{-1} \int_{ | z - \xi | \leq R } A_{ 0 \alpha \beta } (z) d z \rightarrow 0, \quad | \xi | \rightarrow \infty, \\
        & ( 1 + | \xi |^2 +  \eta )^{-1} \int_{ | z -\xi | \leq R } A_{ 1 \alpha \beta } ( z, \eta ) d z \rightarrow 0,  \quad | \xi |,\eta  \rightarrow \infty, \\
        & ( 1 + | \xi |^2 )^{-1} \int_{ | z - \xi | \leq R } \int_{ 0 < \eta < R } A_{ 1 \beta \alpha } ( z, \eta ) d z d \eta \rightarrow 0, \quad | \xi | \rightarrow \infty , \\
        & ( 1 + | \xi |^2 +  I )^{-1} \int_{ | z - \xi | \leq R } \int_{ 0 < \eta - I < R } A_{ 2 \alpha \beta } ( z , \eta ) d z d \eta \rightarrow 0, \quad | \xi |, I \rightarrow \infty .
    \end{aligned}
\end{equation}

\subsubsection{Definition of renormalized solutions}

\begin{definition}\label{Def-RNS}
	We say that $f=(f_1,\cdots,f_s)$ is a renormalized solution of \eqref{BE-MP} if it satisfies 
	\begin{enumerate}
		\item {For $\alpha \in \{1,\cdots,s_0\}$, $g_\alpha = \beta (f_\alpha) = \log (1+f_\alpha)$ solves
			\begin{equation*}
				\tfrac{\partial g_\alpha}{\partial t } + \xi \cdot \nabla_{x} g_\alpha =\tfrac{1}{1+f_\alpha} Q_\alpha (f,f) \quad \text{in } \mathscr{D}'((0,\infty)\times \R^3 \times \R^3),
		\end{equation*} 
and $\frac{1}{1 + f_\alpha} Q_\alpha^\pm (f,f) \in L_{loc}^1 ( (0, \infty) \times \R^3 \times \R^3 )$;}
		\item {For $\alpha \in \{s_0+1,\cdots ,s\}$, $g_\alpha$ solves
			\begin{equation*}
				\tfrac{\partial g_\alpha}{\partial t } + \xi \cdot \nabla_{x} g_\alpha =\tfrac{1}{1+f_\alpha} Q_\alpha (f,f) \quad \text{in } \mathscr{D}'((0,\infty)\times \R^3 \times \R^3\times \R_+),
		\end{equation*}
and $\frac{1}{1 + f_\alpha} Q_\alpha^\pm (f,f) \in L_{loc}^1 ( (0, \infty) \times \R^3 \times \R^3 \times \R_+ )$.}
	\end{enumerate}
\end{definition}

\begin{remark}
  As shown in Lemma \ref{Lemma 3.1} later, the definition of the renormalized solution above is equivalent to that for all $\beta (\cdot) \in C^1 [0, + \infty )$ such that $|\beta' (t)| \leq \tfrac{C}{1 + t}$,
  \begin{equation*}
    \begin{aligned}
      \tfrac{\partial \beta (f_\alpha)}{\partial t} + \xi \cdot \nabla_x \beta (f_\alpha) = \beta' (f_\alpha) Q_\alpha (f,f) \ (\alpha \in \{ 1,\
       \cdots, s \})
    \end{aligned}
  \end{equation*}
  hold in the sense of distribution.
\end{remark}

Next we introduce Boltzmann-type H-functional by
\begin{equation}\label{Hf}
	\begin{aligned}
		H(f)(t)& = (f,\log (\varphi^{-1} f) ) \\
		&=\sum \limits_{\alpha=1}^{s_0} \iint_{\R^3 \times \R^3 } f_\alpha \log f_\alpha  dxd\xi + \sum \limits_{\alpha = s_0+1}^{s}\iiint_{\R^3 \times \R^3 \times \R_+ } f_\alpha \log (I^{1-\delta(\alpha)/2} f_\alpha ) dxd\xi dI.
	\end{aligned}
\end{equation}

Now we state the main result of current work.

\begin{theorem}\label{MainThm}
Assume that the collision kernels satisfy \eqref{Asum-angular-cutoff} and \eqref{collision}. Let the initial data $f_0 = (f_{1,0}, \cdots, f_{s,0})$ obeys \eqref{f0-p}, \eqref{fs0} and \eqref{fs}. Then the Cauchy problem \eqref{BE-MP}-\eqref{f0} admits a renormalized solution $f=(f_1,\cdots,f_s)$ enjoying the following properties: For any given $T<\infty$ and $0 < R < \infty$, there is a $C_T > 0$ such that
\begin{enumerate}
	\item {for $\alpha \in \{1,\cdots,s_0\},f_\alpha \in C([0,\infty); L^1 (\R^3 \times \R^3))_+,$
	\begin{equation}\label{Qa}
		\begin{aligned}
			&\tfrac{1}{1+f_\alpha} Q^-_{\alpha} (f,f) \in L^\infty ([0,\infty);L^1 (\R^3 \times B_R)), \ \tfrac{1}{1+f_\alpha} Q^+_{\alpha} (f,f) \in L^1 ([0,T]\times \R^3 \times B_R),
		\end{aligned}
	\end{equation}
	\begin{equation}\label{fa}
	\iint_{\R^3 \times \R^3 } f_\alpha (1+|x|^2 +|\xi|^2 +|\log f_\alpha|)dxd\xi \leq C_T;
	\end{equation}	}
	\item{for $\alpha \in \{s_0+1,\cdots,s\},f_\alpha \in C([0,\infty); L^1 (\R^3 \times \R^3\times \R_+))_+,$
		\begin{equation}\label{Qb}
			\begin{aligned}
				&\tfrac{1}{1+f_\alpha} Q^-_{\alpha} (f,f) \in L^\infty ([0,\infty);L^1 (\R^3 \times B_R \times (0,R))), \\
				&\tfrac{1}{1+f_\alpha} Q^+_{\alpha} (f,f) \in L^1 ([0,T] \times \R^3 \times B_R \times (0,R)),
			\end{aligned}
		\end{equation}
	\begin{equation}\label{fb}
\iiint_{\R^3 \times \R^3 \times \R_+} f_\alpha (1+|x|^2 +|\xi|^2 +I + |\log (I^{1-\delta(\alpha)/2}f_\alpha)|)dxd\xi dI<C_T;
		\end{equation}}
	\item{ The following entropy inequality holds:
	\begin{equation}\label{entropy-th}
	\begin{aligned}
		&H(f)(t)+ \tfrac{1}{4}\sum \limits_{\alpha =1}^{s_0} \int_{0}^{t} d \tau \iint_{ \R^3 \times \R^3 } e_\alpha(\tau,x,\xi ) dxd\xi\\
		& +\tfrac{1}{4}\sum \limits_{\alpha =s_0+1}^{s}\int_{0}^{t} d \tau \iiint_{ \R^3 \times \R^3 \times \R_+}  e_\alpha(\tau,x,\xi ,I) dxd\xi dI\leq H(f_0),
	\end{aligned}
	\end{equation}
	where 
	\begin{equation}\label{ee}
		\begin{aligned}
			e_\alpha&=\sum \limits_{\beta =1}^s \int_{(\R^3\times \R_+)^3} (\tfrac{I^{\delta(\alpha)/2-1} I_* ^{\delta(\beta)/2-1}f'_\alpha f'_{\beta_*}  } {f_\alpha f_{\beta_* } (I')^{\delta(\alpha)/2-1} (I'_* )^{\delta(\beta)/2-1}} - 1)\\
			&\times \log (\tfrac{I^{\delta(\alpha)/2-1} I_* ^{\delta(\beta)/2-1} f'_\alpha f'_{\beta_*}}{f_\alpha f_{\beta_* } (I')^{\delta(\alpha)/2-1} (I'_* )^{\delta(\beta)/2-1} } ) \tfrac{f_\alpha f_{\beta_* } }{I^{\delta(\alpha)/2-1} I_* ^{\delta(\beta)/2-1} }W_{\alpha\beta }d\xi_* d\xi'd\xi'_* dI_* dI' dI'_*.
		\end{aligned}
	\end{equation}	}
\end{enumerate}
\end{theorem}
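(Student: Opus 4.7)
The overall strategy is the DiPerna--Lions scheme adapted to the multi-species monatomic/polyatomic setting. First I would build a regularized family of problems by (i) truncating each kernel $B_{i\alpha\beta}$ so that the integrands are supported where $|\xi|,|\xi_*|,I,I_*\le n$ and $|\xi-\xi_*|\ge 1/n$, and (ii) damping the right-hand side by a factor $\bigl(1+n^{-1}\sum_\alpha \|f_\alpha^n\|_{L^1(\mathcal{Z}_\alpha)}\bigr)^{-1}$ to render the truncated collision operator globally Lipschitz from $L^1\cap L^\infty$ into itself. A Banach fixed-point argument, combined with a monotone iteration preserving nonnegativity, yields a unique nonnegative global-in-time approximating solution $f^n=(f_1^n,\dots,f_s^n)$. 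Mollifying the initial data in a standard way ensures that $f_0^n\to f_0$ strongly in $L^1$ with uniformly controlled mass, energy and entropy.

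Next I would derive the uniform (in $n$) a priori bounds that are physically natural. Testing the approximated $\alpha$-th equation against $1$, $m_\alpha\xi$ and $\tfrac12 m_\alpha|\xi|^2 + \mathbf{1}_{\alpha>s_0}I$ and summing over $\alpha$ gives conservation of mass, momentum and total energy, using \eqref{eq2.02}--\eqref{eq2.06} and the invariances in \eqref{B-inv}. Testing against $|x-\xi t|^2$ controls the $|x|^2$-moment. Most importantly, testing against $\log(I^{1-\delta(\alpha)/2}f_\alpha^n)$ and using the micro-reversibility \eqref{eq2.11} and symmetry \eqref{eq2.12} gives the entropy identity, whose dissipation term is precisely $\tfrac14\sum_\alpha \int_0^t \iint e_\alpha^n$ and whose boundary term controls $H(f^n)(t)$ in terms of $H(f_0^n)$. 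Combining with \eqref{fs0}--\eqref{fs} and de la Vall\'ee-Poussin yields weak $L^1_{\rm loc}$ compactness of $\{f^n\}$ on $(0,\infty)\times\R^3\times\mathcal{Z}_\alpha$; up to a subsequence, $f^n\rightharpoonup f$ and, via Fatou on a convex function, $e_\alpha^n\rightharpoonup e_\alpha$ with the correct lower semi-continuous inequality.

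The crux is passing to the limit in the renormalized formulation. I would verify, by Lemma~\ref{Lemma 3.1}, that it suffices to pass to the limit in
\begin{equation*}
\tfrac{\partial}{\partial t}\log(1+f_\alpha^n)+\xi\cdot\nabla_x\log(1+f_\alpha^n)=\tfrac{1}{1+f_\alpha^n}Q_\alpha(f^n,f^n).
\end{equation*}
For the loss term, I use the factorization \eqref{Qn-alpha}: $Q_\alpha^-(f^n,f^n)=f_\alpha^n L_\alpha(f^n)$, where the multiplier $L_\alpha(f^n)$ is a spatial average of $f_\beta^n$ against the kernels $A_{i\alpha\beta}$. The hypothesis \eqref{collision} together with the energy and entropy bound converts the $L^1$ control on $f^n$ into strong compactness of $L_\alpha(f^n)$ on compacts in $(\xi,I)$ via the averaged velocity--internal energy lemma announced in the abstract; this gives a.e.\ convergence of $L_\alpha(f^n)$, hence of $\tfrac{f_\alpha^n}{1+f_\alpha^n}L_\alpha(f^n)$, which is uniformly integrable on cylinders $\R^3_x\times B_R\times(0,R)$ because $\tfrac{f_\alpha^n}{1+f_\alpha^n}\le 1$. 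For the gain term one combines the elementary bound $\tfrac{Q_\alpha^+}{1+f_\alpha^n}\le \tfrac{Q_\alpha^-}{1+f_\alpha^n}+ \sqrt{e_\alpha^n\, \tfrac{Q_\alpha^-}{1+f_\alpha^n}}$ (obtained from $(a-b)\log(a/b)\ge(\sqrt{a}-\sqrt{b})^2$) with the $L^1$ control of $e_\alpha^n$ to deduce equi-integrability of the gain, then invokes the DiPerna--Lions product compactness identifying the weak limit of $\tfrac{f_\alpha^{n\prime}f_{\beta_*}^{n\prime}}{1+f_\alpha^n}$ with $\tfrac{f_\alpha' f_{\beta_*}'}{1+f_\alpha}$. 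This last step is the expected main obstacle: one must handle simultaneously the averaging over $(\xi,I,\mathfrak{R},\mathfrak{r},\omega)$ in \eqref{eq2.15}--\eqref{eq2.17} and the degenerate weights $(1-\mathfrak{R})^{\cdot}\mathfrak{R}^{1/2}$, $\mathfrak{r}^{\cdot}(1-\mathfrak{r})^{\cdot}$ appearing for polyatomic collisions, which is precisely where the assumption \eqref{collision} on the averaged kernels $A_{i\alpha\beta}$ is used.

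Finally, the entropy inequality \eqref{entropy-th} follows by passing to the limit in the entropy identity for $f^n$: the left-hand side $H(f^n)(t)$ is bounded below by $H(f)(t)$ via convexity of $u\mapsto u\log u$ and weak $L^1$ convergence, the dissipation is bounded below by $\tfrac14\sum_\alpha\int_0^t\iint e_\alpha$ by lower semi-continuity of the convex functional $(u,v)\mapsto (u-v)\log(u/v)$ against the nonnegative measure $W_{\alpha\beta}\,d\mathbf{Z}_* d\mathbf{Z}' d\mathbf{Z}_*'$, and the initial entropy $H(f_0^n)$ converges to $H(f_0)$ by the mollification. Combined with the moment and regularity bounds \eqref{fa}--\eqref{fb} inherited from the uniform estimates, this completes the verification that $f$ is a renormalized solution in the sense of Definition~\ref{Def-RNS} with the required integrability \eqref{Qa}, \eqref{Qb}.
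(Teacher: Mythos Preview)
Your overall architecture matches the paper's: regularize with truncated kernels and the damping factor $N_n(f^n)^{-1}$, derive the natural conservation and entropy bounds, use Dunford--Pettis for weak $L^1$ compactness, and pass to the limit via velocity averaging. The approximation step and the uniform bounds are essentially as in the paper.

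There is, however, a genuine gap in how you pass to the limit in the gain term. You write that after establishing equi-integrability you can ``invoke the DiPerna--Lions product compactness identifying the weak limit of $\tfrac{f_\alpha^{n\prime}f_{\beta_*}^{n\prime}}{1+f_\alpha^n}$ with $\tfrac{f_\alpha' f_{\beta_*}'}{1+f_\alpha}$.'' This identification does not follow from weak $L^1$ convergence alone, and the paper does \emph{not} prove it directly. What the paper shows (Lemma~\ref{theo-6.7}) is only that $\int \tilde Q^\pm_{\alpha,n}(f^n,f^n)\varphi\,\mathbf{1}_E \to \int Q^\pm_\alpha(f,f)\varphi\,\mathbf{1}_E$ on Borel sets $E$ where $\tilde L_{\alpha,n}(f^n)$ converges \emph{uniformly}; on the exceptional small-measure complement nothing is said, and for the unrestricted renormalized gain one only obtains that its weak limit $Q^+_{\delta\alpha}$ satisfies $Q^+_{\delta\alpha}\le Q^+_\alpha(f,f)$ a.e.\ (see \eqref{eq8.103}). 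Closing the argument requires the exponential-multiplier supersolution/subsolution machinery of Section~\ref{Sec:Ext}: one proves separately that the limit $f$ satisfies $\mathcal Q(f_\alpha^\sharp)\le \mathcal T(f_\alpha^\sharp)$ (via the truncation $h^n_M=f^n\wedge M$ and Lemma~\ref{theo-6.9}) and $\mathcal Q(f_\alpha^\sharp)\ge \mathcal T(f_\alpha^\sharp)$ (via the renormalized form $g^n_{\delta\alpha}=\delta^{-1}\log(1+\delta f^n_\alpha)$ and the one-sided bound on $Q^+_{\delta\alpha}$). Your sketch omits this two-sided comparison entirely.

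A secondary point: velocity averaging applies to $g^n_{\delta\alpha}$, whose right-hand side $(1+\delta f^n_\alpha)^{-1}\tilde Q_{\alpha,n}$ is weakly $L^1$-compact by Lemmas~\ref{theo-4.1}--\ref{theo-4.2}, not to $f^n_\alpha$ directly. The paper bridges back to $f^n_\alpha$ through the uniform estimate $\|f^n_\alpha-g^n_{\delta\alpha}\|_{L^1}\to 0$ as $\delta\to 0$ (Lemma~\ref{converge f,g}); you should make this intermediate step explicit. Also, the bound you invoke for equi-integrability of the gain is not the one used: the paper relies on the Arkeryd inequality $\tilde Q^+_{\alpha,n}\le (\log K)^{-1}\tilde e^n_\alpha + K\tilde Q^-_{\alpha,n}$ for $K>1$, not on a square-root estimate.
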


\subsection{Sketch of proofs}

In this subsection, we mainly sketch the ideas of proving the main theorem. Since the Boltzmann collision operator $Q (f,f)$ is not integrable, it is impossible to prove the weak solution of the problem \eqref{BE-MP}. In 1989, DiPerna-Lions \cite{DiPerna-Lions-Inv} introduced the so-called renormalized solution to the transported equations. Then they also defined the renormalized solution to the classical Boltzmann equation and proved its global existence with large initial data in \cite{Diperna-Lions}. Inspired by their works, we first give the definition of the renormalized solution to the Boltzmann system \eqref{BE-MP} in Definition \ref{Def-RNS}, in which the factor $\frac{1}{1 + f_\alpha}$ in the $f_\alpha$-equation is such that $ \frac{1}{1 + f_\alpha} Q_\alpha (f, f) $ is locally integrable. Thus the weak solutions of the renormalized form can be introduced. We remark that the equivalent expressions of renormalized solution are given in Lemma \ref{Lemma 3.1}. The main goal of current paper is to verify the global existence of the renormalized solution with large initial data.

{\bf Step 1. Construction of smooth approximation problem \eqref{approximation}.} While constructing the smoothly approximated problem \eqref{approximation} of the original problem \eqref{BE-MP}, there are four key ingredients: 1) smooth approximation of initial data; 2) smooth approximation of collision kernels; 3) existence, uniqueness and positivity of smooth approximated problem \eqref{approximation}; 4) uniform bounds of the approximated problem in the a priori spaces.

We initially construct the smoothly approximated initial data $f^n_0 = (f^n_{\alpha, 0})_{ \alpha \{ 1, \cdots, s \} }$ in the Schwartz space with positive components as in Lemma \ref{Lemma 4.8} and \ref{Lemma 4.9}. The constructed approximated initial data subject to the integrability \eqref{fs0}-\eqref{fs} uniformly in $n \geq 1$. Moreover, by using the weak lower semicontinuity of the entropy product $H(f)$ given in Lemma \ref{Lemma 4.6}, the approximated initial data constructed above also converge to the initial data $f_0$ in the sense of entropy product, see Lemma \ref{Lmm-Ini-Entpy}.

We next construct the smoothly approximated collision operator $\tilde{Q}_{n} (f^n, f^n)$ beginning with the collision kernels approximation. Note that the collision kernels $B_{2 \alpha \beta}$, $B_{1 \alpha \beta}$, $B_{1 \beta \alpha}$ and $B_{0 \alpha \beta}$ may possess the singularity at $|z|, \eta = 0, \infty$ and the deflection angular $\theta = 0$, i.e., $z \cdot \omega = 0$. As in Lemma \ref{Lemma 4.10}-\ref{Lemma 4.11}-\ref{Lemma 4.12}, we take zero-approximation near the underlying singular points and use the smooth modifiers, so that the approximated collision kernels $B_{2 \alpha \beta, n}$, $B_{1 \alpha \beta, n}$, $B_{1 \beta \alpha, n}$ and $B_{0 \alpha \beta, n}$ pointwisely converge to the original collision kernels as $n \to + \infty$. Moreover, together with the angular cutoff assumption \eqref{Asum-angular-cutoff}, the reduced collision kernels $A_{2 \alpha \beta, n}$, $A_{1 \alpha \beta, n}$, $A_{1 \beta \alpha, n}$ and $A_{0 \alpha \beta, n}$ obey the assumptions \eqref{collision} uniformly in $n \geq 1$. Consequently, one can construct the approximated collision operator $\tilde{Q}_n (f^n, f^n) = ( \tilde{Q}_{\alpha, n} (f^n, f^n) )_{\alpha \in \{ 1, \cdots, s \}}$ in \eqref{Q L}. Remark that the factor $N_n (f^n)^{-1}$ given in \eqref{Nnf} is such that the nonlinear approximated collision operator $\tilde{Q}_n (f^n, f^n)$ admits linear upper bounds in $L^1 \cap L^\infty$ as in Lemma \ref{Lemma 4.13} and \ref{Lemma 4.13 of infty}. Moreover, Lemma \ref{Lemma 4.13} shows that $\tilde{Q}_n (f^n, f^n)$ is Lipschitz continuous $L^1$ space, which will be used to prove the well-posedness of the approximated problem \eqref{approximation}. On the $L^1$ estimates, the $L^1$ norm of the loss term $\tilde{Q}^-_n (f^n, f^n) = f^n \tilde{L}_{n} (f^n)$ can easily be bounded by the $L^1$ norm of $f^n$ due to $\tilde{L}_{n} (f^n) \in L^\infty$. In order to estimate the $L^1$ norm of the gain term $\tilde{Q}^+_n (f^n, f^n)$, we shall utilize the relation $ \iint_{ \R^3\times \R_+}Q^+_{\alpha \beta ,n}(f,f) \varphi d \mathbf{Z}_\alpha = \iint_{ \R^3\times \R_+} Q^-_{ \alpha \beta,n }(f,f) \varphi' d \mathbf{Z}_\alpha $ to shift the gain term to the loss term, so that the gain term can be bounded by using the same arguments of the loss term. About the $L^\infty$ estimates, the $L^\infty$ norm of the loss term $\tilde{Q}^-_n (f^n, f^n)$ can be obtained by the analogous arguments of $L^1$ norm of loss term. For the gain term, the key is to obtain the bound \eqref{Q+-Linfty}, in which the quantity $I_{2 \alpha \beta, n}$ is dominated by \eqref{I-2abn-1}. For the sake of controlling $I_{2 \alpha \beta, n}$ by $L^1$ norm of $f^n_\beta$, we should discuss the internal energy gap $\Delta I = \mathbf{1}_{ \alpha \in \{ s_0 + 1, \cdots, s \} } I' + \mathbf{1}_{ \beta \in \{ s_0 + 1, \cdots, s \} }  I_*' - \mathbf{1}_{ \alpha \in \{ s_0 + 1, \cdots, s \} } I - \mathbf{1}_{ \beta \in \{ s_0 + 1, \cdots, s \} }  I_* $ by the cases $\Delta I \leq 0$ and $\Delta I > 0$ as in Lemma \ref{Lemma 4.13 of infty}. Together with $N_n (f^n)^{-1}$, the $L^\infty$ norm of the gain term can be bounded by that of $f^n$.

We then prove the well-posedness, positivity and regularity of the approximated problem \eqref{approximation}. Together with Lemma \ref{Lemma 4.13}, the operator semigroup theory shows the existence and uniqueness of \eqref{approximation} in $ C ( [0, + \infty); \prod_{\alpha = 1}^{s} L^1 (\R^3 \times \mathcal{Z}_\alpha; d x d \mathbf{Z}_\alpha) ) $. For the positivity, we first consider the auxiliary Cauchy problem \eqref{g}, i.e.,
\begin{equation*}
  \begin{aligned}
    \tfrac{\partial}{\partial t} g^n + \xi \cdot \nabla_x g^n = F (g^n) \,, \ g^n |_{t = 0} = f_0 \,.
  \end{aligned}
\end{equation*}
Following the same arguments of well-posedness of \eqref{approximation}, the above auxiliary problem for $g^n$ is also well-posed in $ C ( [0, + \infty); \prod_{\alpha = 1}^{s} L^1 (\R^3 \times \mathcal{Z}_\alpha; d x d \mathbf{Z}_\alpha) ) $. Then mild form of $g^n$ shows that the components of $g^n$ are nonnegative under the positive components of initial data $f_0$. As a result, the $g^n$ equation is exactly the approximated $f^n$ equation. Thus, the Gr\"onwall inequality for the approximated $f^n$ equation can prove the positive lower bounds of the components of $f^n$. Furthermore, as in Lemma \ref{Lemma 4.16}, the mild form \eqref{fn-alpha} of $f^n_\alpha$ and Lemma \ref{Lemma 4.13}-\ref{Lemma 4.13 of infty} can imply the regularity of the approximated solution $f^n$ by employing the Induction Principle for the orders of derivatives.

We finally derive the uniform bounds of the approximated solution $f^n$ as in Lemma \ref{Lemma 4.17}. Note that the collision kernels $B_{2 \alpha \beta ,n}$, $B_{1 \alpha \beta, n}$, $B_{1 \beta \alpha, n}$ and $B_{0 \alpha \beta, n}$ constructed in Lemmas \ref{Lemma 4.10}-\ref{Lemma 4.11}-\ref{Lemma 4.12} enjoy the same symmetry properties of $B_{2 \alpha \beta }$, $B_{1 \alpha \beta}$, $B_{1 \beta \alpha}$ and $B_{0 \alpha \beta}$ in \eqref{B0-ab}-\eqref{B1-ab}-\eqref{B1-ba}-\eqref{B2-ab}. Then the all structural properties of the collision operator $Q$ will also be valid for the approximated collision operator $\tilde{Q}_n = ( \tilde{Q}_{\alpha, n} )_{ \alpha \in \{ 1, \cdots, s \} }$ constructed in \eqref{Q L}. Then the collision invariant form of $Q$
	\begin{equation*}
		\varphi_i= ae_i + b \mathbf{m} \xi +c (\mathbf{m} |\xi|^2+2\mathbf{I}),\quad i=1,\cdots,s 
	\end{equation*}
is also that of $\tilde{Q}_n$, where $a,c\in \R $ and $ b\in \R^3$ are arbitrary, see Proposition \ref{prop2.3}. Consequently, one can easily obtain the required uniform bounds.

{\bf Step 2. Convergence of nonlinear approximated collision operator.} By the uniform bounds given in Lemma \ref{Lemma 4.17} and the sufficient conditions of relatively weak $L^1$ compactness in Lemma \ref{rwc} and Corollary \ref{rwc1}, we know that every component $f^n_\alpha$ of $f^n$ is weakly convergent to a nonnegative function $f_\alpha$ in $L^1$. The difficulty is to prove the convergence of the nonlinear collision operator $\tilde{Q}_n (f^n, f^n)$. 

First, we prove the weak $L^1$ compactness of the nonlinear collision operator $\tilde{Q}_n (f^n, f^n)$. Note that the loss term $\tilde{Q}_{\alpha, n}^- (f^n, f^n) = f^n_\alpha \tilde{L}_{\alpha, n} (f^n)$ has simpler structure than the gain term $\tilde{Q}_{\alpha, n}^+ (f^n, f^n)$, so that we initially study the weak compactness of the loss term as in Lemma \ref{theo-4.1}. Due to $0 \leq \frac{1}{1 + f^n_\alpha} \tilde{Q}^-_{\alpha, n} (f^n, f^n) \leq L_{\alpha, n} (f^n) = \sum_{\beta = 1}^s L_{\alpha \beta, n} (f^n)$, we only need to show the relatively weak compactness of $ L_{\alpha \beta, n} (f^n) $ in $L^1$. Instead, we consider the cutoff operator $ L^m_{\alpha \beta, n} (f^n) = \iint_{\R^3 \times \R_+} A_{\alpha \beta, n} (\xi - \xi_*, I + I_*) \mathbf{1}_{|\xi - \xi_*| \leq m} \mathbf{1}_{I+I_* < m} f^n_{\beta *} d \xi_* d I_* $ for any fixed $m > 0$. One can prove that $ L^m_{\alpha \beta, n} (f^n) $ satisfy \eqref{Psi-Lnab}, i.e.,
\begin{equation*}
  \iiint_{\R^3 \times B_R \times(0,R)} \Psi (L^m_{ \alpha \beta ,n}(f^n)) (t) dxd\xi dI \leq C(T,m) 
\end{equation*}
with $\Psi (t) = t (\log t)^+$. Then De La Vall\'ee-Poussin Criterion (Lemma \ref{theo-DE La}) tells us that $ L^m_{\alpha \beta, n} (f^n) $ is equi-integrable in $L^1$. The corresponding tightness is easy to be verified. The Dunford-Pettis Theorem (Lemma \ref{theorem-dunford}) implies the relatively weak $L^1$ compactness of $L^m_{ \alpha \beta ,n}(f^n)$. Moreover, by using the assumptions \eqref{collision}, one gains that $L^m_{ \alpha \beta ,n}(f^n)$ strongly converges to $L_{ \alpha \beta ,n}(f^n)$ as $m \to + \infty$, which finishes the proof of Lemma \ref{theo-4.1}. While proving the relatively weak $L^1$ compactness of $\frac{1}{1 + f^n_\alpha} \tilde{Q}^+_{\alpha, n} (f^n, f^n)$ as in Lemma \ref{theo-4.2}, one mainly needs to show the Arkeryd-type inequality
\begin{equation*}
  \tilde{Q}^+_{\alpha, n} (f^n, f^n) \leq \tfrac{1}{\log K} \tilde{e}^n_\alpha + K \tilde{Q}^-_{\alpha, n} (f^n, f^n)
\end{equation*}
for any fixed $K > 1$. Then, together with Lemma \ref{Lemma 4.17} and Lemma \ref{theo-4.1}, the conclusions in Lemma \ref{theo-4.2} can be obtained.

Second, we study the properties of limits of $f^n$. The key is to establish the averaged velocity (-internal energy) Lemma \ref{theo-6.2}-\ref{theo-6.2.1}-\ref{theo-6.4}-\ref{theo-6.4.1}, whose proofs come from the works \cite{Diperna-Lions,Golse-velocity,Golse-perthame-velocity}. By now, we do not prove that the approximated solution $f^n$ satisfies the conditions in the averaged velocity (-internal energy) lemmas. However, by Lemma \ref{theo-4.1}-\ref{theo-4.2}, the renormalized form $g^n_{\delta \alpha} = \frac{1}{\delta} \log (1 + \delta f^n_\alpha)$ of $f^n_\alpha$ exactly subjects to the required conditions, so that $ \int g^n_{\delta \alpha} \varphi $ strongly converges to $\int \tilde{g}_{\delta \alpha} \varphi$ in $L^1 ((0, T) \times \R^3)$. Fortunately, the strongly convergent relations between $ g^n_{\delta \alpha} $ and $f^n_\alpha$ are established in Lemma \ref{converge f,g}. Then the enhanced averaged velocity (-internal energy) lemma are derived in Lemma \ref{theo-6.5} and Corollary \ref{cor6.6}, hence, $\int f^n_\alpha \varphi$ strongly converges to $\int f_\alpha \varphi$ in $L^p ( (0, T); L^1 (\R^3) )$ for $1 \leq p < \infty$. Moreover, together Lemma \ref{converge f,g} and the renormalized approximated solution $g^n_{\delta \alpha}$, as in Lemma \ref{cor6.12}, we can prove that the limit function $f_\alpha \in C([0,\infty), L^1 (\R^3 \times \mathcal{Z}_\alpha))_+$ enjoys the bounds \eqref{fa} and \eqref{fb} in Theorem \ref{MainThm}. The core is to employ the mild form of $g^n_{\delta \alpha}$.

Third, we investigate the properties of limits of $\tilde{Q}_n (f^n, f^n )$. As stated before, the loss term $\tilde{Q}_{\alpha, n}^- (f^n, f^n) = f^n_\alpha \tilde{L}_{\alpha, n} (f^n)$ admits a simpler structure than the gain term. So we initially consider the loss term whose main ingredient is the operator $\tilde{L}_{\alpha, n} (f^n)$. As in Lemma \ref{converge L}, we can show that $ \tilde{L}_{\alpha, n} (f^n) $ strongly converges to $ L_\alpha (f) $ in $L^1$ by applying the averaged velocity (-internal energy) lemmas and assumptions \eqref{collision}. Together with the enhanced averaged velocity (-internal energy) lemmas (Lemma \ref{theo-6.5} and Corollary \ref{cor6.6}) and the Arkeryd-type inequality, Lemma \ref{theo-6.7} and Corollary \ref{cor6.8} indicate that $\int \tilde{Q}^\pm_{\alpha, n} (f^n, f^n) \varphi \mathbf{1}_{E}$ strongly converges to $\int Q^\pm_\alpha (f,f) \varphi \mathbf{1}_{E}$ in $L^1 ((0, T) \times B_R)$ for some finite measured Borel subset $E$, in which $ \tilde{L}_{\alpha, n} (f^n) $ is actually uniformly converges to $L_\alpha (f)$. Moreover, based on Lemma \ref{theo-6.7} and Corollary \ref{cor6.8}, we can prove the following two convergent results: 1) For $\beta_\delta (t) = t \wedge \frac{1}{\delta}$ or $\beta_\delta (t) = \frac{t}{1 + \delta t}$, $\tilde{Q}^\pm_{\alpha, n} (\beta_\delta (f^n), \beta_\delta (f^n))$ weakly converges to $Q^\pm (\tilde{\beta}_\delta, \tilde{\beta}_\delta)$ in $L^1$, where w-$\lim_{n \to + \infty} \beta_\delta (f^n) = \tilde{\beta}_\delta$ (see Lemma \ref{theo-6.9} and Corollary \ref{cor6.9}). 2) The sequence $\int \frac{ \tilde{Q}^\pm_{\alpha, n} (f^n, f^n) }{1 + \delta \tilde{L}_{\alpha, n} (f^n) } \varphi$ strongly converges to $ \int \frac{ Q^\pm_{\alpha} (f, f) }{1 + \delta L_{\alpha} (f) } \varphi $ in $ L^1 ( (0, T) \times \R^3 ) $, see Lemma \ref{QL L1} and Corollary \ref{corQLL1}. By employing the all above properties of limits, in Lemma \ref{Q in L1} and Corollary \ref{Q-L-loc} we can show that $\frac{Q_\alpha^- (f, f) }{1 + f_\alpha} \in L^\infty_t L^1_{loc}$ and $\frac{Q^+_\alpha (f, f)}{1 + f_\alpha} \in L^1_t L^1_{loc}$, hence, the integrability \eqref{Qa} and \eqref{Qb} in Theorem \ref{MainThm} is valid.

{\bf Step 3. Existence of renormalized solution to \eqref{BE-MP}.} As in Lemma \ref{distri-mild}, $f$ is a renormalized solution to \eqref{BE-MP} if and only if $f$ is a mild solution with $\frac{1}{1 + f_\alpha} Q^\pm_\alpha (f, f) \in L^1_{loc}$. Lemma \ref{Q in L1} and Corollary \ref{Q-L-loc} have been shown that $\frac{1}{1 + f_\alpha} Q^\pm_\alpha (f, f) \in L^1_{loc}$. We thus only need to prove that the limit $f$ is a mild solution. By Lemma \ref{converge L}, $L_\alpha (f) \in L^1 ( (0, T) \times \R^3 \times B_R )$ for $\alpha \in \{ 1, \cdots, s_0 \}$ and $L_\alpha (f) \in L^1 ( (0, T) \times \R^3 \times B_R \times (0, R) )$ for $\alpha \in \{ s_0 + 1, \cdots, s \}$. Then Lemma \ref{Lemma3.3} tells us that we only need to prove that $f$ satisfies the identities \eqref{3.5} and \eqref{3.6}, namely,
\begin{equation*}
  \begin{aligned}
    &\mathcal{T} (f_\alpha^\sharp) : = f^\sharp_\alpha (t,x,\mathbf{Z}_\alpha) -f^\sharp_\alpha (s,x,\mathbf{Z}_\alpha) \exp(-(F^\sharp _{\alpha} (t)-F^\sharp _{\alpha} (s)))\\
	=&\int_{s}^{t} Q^+_{\alpha } (f,f) ^\sharp (s,x,\mathbf{Z}_\alpha) \exp(-(F^\sharp _{\alpha } (t)- F^\sharp _{\alpha } (\sigma))) d\sigma : = \mathcal{Q} (f_\alpha^\sharp) \quad  a.e.\ x,\mathbf{Z}_\alpha.
  \end{aligned}
\end{equation*}

We first prove the limit $f$ is a supersolution to \eqref{BE-MP} given in Definition \ref{SupperSolution}, i.e., $f$ obeys the inequality $ \mathcal{Q} (f_\alpha^\sharp) \leq \mathcal{T} (f_\alpha^\sharp) $. We start with the approximated identities \eqref{eq up3}-\eqref{eq up4}, i.e.,
\begin{equation*}
  \begin{aligned}
    &f^{n\sharp}_\alpha (t,x,\mathbf{Z}_\alpha) -f^n_{\alpha ,0} (x,\mathbf{Z}_\alpha )\exp(-\tilde{F}^\sharp _{\alpha ,n } (t,x,\mathbf{Z}_\alpha))\\
		=&\int_{0}^{t}\tilde{Q}^+_{\alpha ,n} (f^n,f^n)^\sharp (s ,x,\mathbf{Z}_\alpha) \exp(-(\tilde{F}^\sharp _{\alpha ,n} (t,x,\mathbf{Z}_\alpha )-\tilde{F}^\sharp _{\alpha ,n} (s,x,\mathbf{Z}_\alpha))) ds \,.
  \end{aligned}
\end{equation*}
Lemma \ref{Lemma 7.1} easily implies that $f^{n\sharp}_\alpha (t,x,\mathbf{Z}_\alpha) -f^n_{\alpha ,0} (x,\mathbf{Z}_\alpha )\exp(-\tilde{F}^\sharp _{\alpha ,n } (t,x,\mathbf{Z}_\alpha))$ weakly converges to $\mathcal{T} (f_\alpha^\sharp)$ in $L^1$. However, there is no direct convergence on the gain term $\tilde{Q}^+_{\alpha ,n} (f^n,f^n)$ (all results are associated with various constraints). We then consider $h^n_M = ( h^n_{\alpha M} )_{ \alpha \in \{ 1, \cdots, s \} }$ with $h^n_{\alpha M} = f^n_\alpha \wedge M$, which means that $ \tilde{Q}^+_{\alpha ,n} (h^n_M,h^n_M) \leq \tilde{Q}^+_{\alpha ,n} (f^n,f^n) $ for $M > 0$. Lemma \ref{Lemma 7.2} infers that $ \int_{0}^{t}\tilde{Q}^+_{\alpha ,n} (h^n_M,h^n_M)^\sharp (s ,x,\mathbf{Z}_\alpha) \exp(-(\tilde{F}^\sharp _{\alpha ,n} (t,x,\mathbf{Z}_\alpha )-\tilde{F}^\sharp _{\alpha ,n} (s,x,\mathbf{Z}_\alpha))) ds $ weakly converges to $ \mathcal{Q} (h_{\alpha M}^\sharp) $ in $L^1$ by using Lemma \ref{theo-6.9} and Corollary \ref{cor6.9}. Here $h_{ \alpha M }$ is the weak $L^1$ limit of $h^n_{\alpha M}$. Then one has $\mathcal{Q} (h_{\alpha M}^\sharp) \leq \mathcal{T} (f_{\alpha}^\sharp)$. Moreover, \eqref{hM to f} shows that $h_{\alpha M}$ strongly converges to $f_\alpha$ in $L^1$ as $M \to + \infty$. It then implies that $ \mathcal{Q} (f_\alpha^\sharp) \leq \mathcal{T} (f_\alpha^\sharp) $, namely, $f$ is a suppersolution of \eqref{BE-MP}, see Lemma \ref{theo-7.3}.

We second show the limit $f$ is a subsolution to \eqref{BE-MP} given in Definition \ref{SubSolution}, hence, $ \mathcal{Q} (f_\alpha^\sharp) \geq \mathcal{T} (f_\alpha^\sharp) $. We mainly focus on the approximated renormalized equation \eqref{eq6.75}, which can be equivalently expressed by \eqref{eq8.93}, i.e.,
\begin{equation*}
  \begin{aligned}
    g^{n\sharp}_{\delta \alpha } (t) -g^n_{\delta \alpha,0}  \exp(-\tilde{F}^\sharp _{\alpha ,n } (t)) = \int_{0}^{t} \{\tfrac{\tilde{Q}^+_{\alpha ,n} (f^n,f^n)^\sharp }{1+\delta f^{n\sharp }_\alpha} +j^{n\sharp}_{\delta \alpha }\tilde{L}_{\alpha ,n}(f^n)^\sharp\} (\tau) \exp(-(\tilde{F}^\sharp _{\alpha ,n } (t)- \tilde{F}^\sharp _{\alpha ,n } (\tau))) d \tau ,
  \end{aligned}
\end{equation*}
where $j^{n}_{\delta \alpha } =g^{n}_{\delta \alpha } -\frac{f^n_\alpha}{1+\delta f^n_\alpha}$. By the similar arguments in Lemma \ref{Lemma 7.1}, one sees that $ g^{n\sharp}_{\delta \alpha } (t) -g^n_{\delta \alpha,0}  \exp(-\tilde{F}^\sharp _{\alpha, n } (t)) $ weakly converges to $\tilde{g}^{\sharp}_{\delta \alpha } (t) -\tilde{g}_{\delta \alpha,0}  \exp(- F^\sharp _{\alpha } (t))$ in $L^1$, where $\tilde{g}_{\delta \alpha }$ is the weak limit of $g^{n}_{\delta \alpha }$. Lemma \ref{jnw} then shows that $ \int_{0}^{t} j^{n\sharp}_{\delta \alpha }\tilde{L}_{\alpha ,n}(f^n)^\sharp (\tau) \exp(-(\tilde{F}^\sharp _{\alpha ,n } (t)- \tilde{F}^\sharp _{\alpha ,n } (\tau))) d \tau $ weakly converges to $ \int_{0}^{t} j^{\sharp}_{\delta \alpha } L_{\alpha}(f)^\sharp (\tau) \exp(-({F}^\sharp _{\alpha} (t)- {F}^\sharp _{\alpha} (\tau))) d \tau $ in $L^1$, where $ j_{\delta \alpha } $ is the weak limit of $j^{n}_{\delta \alpha }$. Lemma \ref{theo-4.2} implies that $ \tfrac{\tilde{Q}^+_{\alpha ,n} (f^n,f^n)^\sharp }{1+\delta f^{n\sharp }_\alpha} $ weakly converges to some function $ Q^+_{\delta \alpha} $ in $L^1$. Together with Lemma \ref{theo-6.7} and Corollary \ref{cor6.8}, we can prove that $ Q^+_{\delta \alpha} \leq Q^+_\alpha (f, f) $ almost everywhere. As a result, one has
\begin{equation*}
	\begin{aligned}
		&\tilde{g}^{\sharp}_{\delta \alpha } (t,x,\mathbf{Z}_\alpha) -\tilde{g}_{\delta \alpha,0} (x,\mathbf{Z}_\alpha) \exp(-F^\sharp _{\alpha} (t,x,\mathbf{Z}_\alpha)) \\
		\leq& \int_{0}^{t} \{j^{\sharp}_{\delta \alpha }(\tau) L_\alpha (f)^\sharp(\tau)+Q^+_{\alpha } (f,f) ^\sharp (\tau)\} \exp(-(F^\sharp _{\alpha } (t)- F^\sharp _{\alpha } (\tau))) d \tau \ a.e.
	\end{aligned}
\end{equation*}
Lemma \ref{converge f,g} tells us that $ \tilde{g}^{\sharp}_{\delta \alpha } (t,x,\mathbf{Z}_\alpha) -\tilde{g}_{\delta \alpha,0} (x,\mathbf{Z}_\alpha) \exp(-F^\sharp _{\alpha} (t,x,\mathbf{Z}_\alpha)) $ pointwisely converges to $f^{\sharp}_{\alpha } (t,x,\mathbf{Z}_\alpha) - f_{\alpha,0} (x,\mathbf{Z}_\alpha) \exp(-F^\sharp _{\alpha} (t,x,\mathbf{Z}_\alpha))$ as $\delta \to 0^+$. Moreover, the relation \eqref{j-0} reads that $ \int_{0}^{t} j^{\sharp}_{\delta \alpha }(\tau) L_\alpha (f)^\sharp(\tau) \exp(-(F^\sharp _{\alpha } (t)- F^\sharp _{\alpha } (\tau))) d \tau \to 0 $ pointwisely as $\delta \to 0^+$. Consequently, $ \mathcal{Q} (f_\alpha^\sharp) \geq \mathcal{T} (f_\alpha^\sharp) $, hence, $f$ is the subsolution to \eqref{BE-MP}. Namely, we prove that $f$ is the renormalized solution to \eqref{BE-MP}.

{\bf Step 4. Entropy inequality.} In order to prove the entropy inequality \eqref{entropy-th} from the approximated entropy identity \eqref{e na}, it suffices to verify \eqref{HH-limit}, i.e., $ \liminf_{n \to + \infty} \tilde{H}_{\alpha \beta }^n (f^n) \geq H_{\alpha \beta }(f) $. We initially justify that the sequences $ P^n_{\alpha \beta} = N_n (f^n)^{-1} \frac{f^n_\alpha f^n_{\beta_* }}{I^{\delta(\alpha)/2-1} I_* ^{\delta(\beta)/2-1}} B^R_{2 \alpha \beta, n}$ and $ \tilde{P}^n_{\alpha \beta} = N_n (f^n)^{-1} \frac{ {f_\alpha^n}' {f_{\beta *}^n}' }{(I')^{\delta(\alpha)/2-1} (I'_* )^{\delta(\beta)/2-1}} B^R_{2 \alpha \beta, n}$ are relatively weakly compact in $L^1(E_R,d \Theta_{\alpha \beta} )$ for almost all $(t,x) \in (0,R) \times \in B_R$ by employing the Dunford-Pettis Theorem (Lemma \ref{theorem-dunford}). Following the analogous arguments of Lemmas \ref{converge L}-\ref{theo-6.7}-\ref{theo-6.9}-\ref{QL L1} in Subsection \ref{Subsec:CFNCO}, we then can verify that $ \int_{E_R} N (f^n)^{-1} P^n_{\alpha \beta}\varphi d \Theta_{\alpha \beta} $ and $ \int_{E_R} N (f^n)^{-1} \tilde{P}^n_{\alpha \beta}\varphi d\Theta_{\alpha \beta} $ strongly converge to $ \int_{E_R} N (f)^{-1} P_{\alpha \beta}\varphi d \Theta_{\alpha \beta} $ and $ \int_{E_R} N (f)^{-1} \tilde{P}_{\alpha \beta}\varphi d \Theta_{\alpha \beta} $, respectively, in $ L^1((0,R)\times B_R) $, where $ N (f) $ is given in \eqref{Nf}. Together with the fact $N (f^n)$ strongly converges to $N (f)$ in $ L^1((0,R)\times B_R) $, we obtain that for almost all $(t,x)\in (0,R)\times B_R$, $P^n_{\alpha \beta} \rightarrow P_{\alpha \beta}$ and $\tilde{P}^n_{\alpha \beta} \rightarrow \tilde{P}_{\alpha \beta}$ weakly in $ L^1(E_R, d \Theta_{\alpha \beta})$. At the end, by using the convexity of the function
\begin{equation*}
j(a,b)=\left\{\begin{array}{rl}
	(a-b) \log \frac{a}{b}, & \quad \text{for } a, b>0 \,, \\
	+\infty, & \quad \text{for } a \text { or } b \leq 0 \,,
\end{array}\right.
\end{equation*}
there holds $\liminf\limits_{n\rightarrow \infty} \int_{E_R}j(P^n_{\alpha \beta},\tilde{P}^n_{\alpha \beta})d \Theta_{\alpha \beta } \geq \int_{E_R}j(P_{\alpha \beta} ,\tilde{P}_{\alpha \beta}) d \Theta_{\alpha \beta }$, which means that 
$$ \liminf\limits_{n \to + \infty} \tilde{H}_{\alpha \beta }^n (f^n) \geq H_{\alpha \beta }(f) \,. $$ 
So the entropy inequality \eqref{entropy-th} in Theorem \ref{MainThm} is obtained.

\subsection{Historical remarks}

Extensive research works have been carried out on the Boltzmann equation and related models, which can be classified into two main categories: 1) the classical solutions regime; 2) the renormalized solutions regime. In the framework of classical solutions, all works are imposed on the assumptions of the small initial data. For instance, the readers can be referred to the works \cite{Guo-CPAM-2006,Guo-ARMA-2010,GJJ-CPAM-2010,JL-APDE-2022,JLT-TAMS-2024,JLZ-ARMA-2023,JXZ-IUMJ-2018,SG-ARMA-2008} and the references therein. We here focus on the renormalized solutions regime.

We first review the existence of renormalized solutions for Boltzmann equation or related models. In 1989, DiPerna-Lions \cite{DiPerna-Lions-Inv} introduced the so-called renormalized solution to the transported equations. Then they also defined the renormalized solution to the classical Boltzmann equation on angular cutoff model and proved its global existence over whole space with large initial data in \cite{Diperna-Lions}. In 1991, DiPerna-Lions \cite{DL-ARMA-1991} proved the entropy inequality of renormalized solution to the Boltzmann equation over whole space. In 1994, Villani \cite{Villani-ADE-1966} justified the existence of renormalized solution with a nonnegative measure to the Landau equation over whole spatial space. In 2002, Alexandre-Villani \cite{AV-CPAM-2002} verified the existence of renormalized solution with a nonnegative measure to the Boltzmann equation for angular noncutoff model over whole space. In 2010, Mischler \cite{Mischler-ASENS-2010} proved the existence of renormalized solution to the cutoff Boltzmann equation, Vlasov-Poisson and Fokker-Planck type models over bounded domain with Maxwell reflection boundary conditions. In 2019, Ars\'enio and Saint-Raymond \cite{DSR-2019-BOOK} studied the renormalized solution to the Vlasov-Maxwell-Boltzmann system over period domain. Moreover, in 2019, Jiang-Zhang investigated the renormalized solution of the Boltzmann equation for angular cutoff model in \cite{JZ-SIMA-2019} and the renormalized solution with a nonnegative measure of the Boltzmann equation for angular noncutoff model in \cite{JZ-JDE-2019} over the bounded domain with incoming boundary condition.

We also remark that there were many works on the hydrodynamic limits in the renormalized solutions regime, which was initiated by Bardos-Golse-Levermore \cite{BGL-JSP-1991,BGL-CPAM-1993} in 1990s. Since then there has been many contributions to this program \cite{BGL-ARMA-2000,GL-CPAM-2002,GSR-IM-2004,GSR-JMPA-2009,JLM-CPDE-2010,LM-ARMA-2010,LM-ARMA-2001-I,LM-ARMA-2001-II,SR-ARMA-2003}. In particular, the work of Golse and Saint-Raymond \cite{GSR-IM-2004} is the ﬁrst completed rigorous justiﬁcation of the NSF limit from the Boltzmann equation in a class of bounded collision kernels without making any nonlinear weak compactness hypotheses. They have recently extended their result to the case of hard potentials \cite{GSR-JMPA-2009}. With some new nonlinear estimates, Levermore and Masmoudi \cite{LM-ARMA-2010} treated a broader class of collision kernels that includes all hard potential cases and, for the ﬁrst time in this program, soft potential cases. All works mentioned above were focused on either the periodic spatial domain or the whole space. For the bounded spatial domain issues, there also has been serval related works in the renormalized solutions regime. In \cite{MSR-CPAM-2003}, Masmoudi and Saint-Raymond proved the linear Stokes limit from the fluctuations of renormalized solutions to the Boltzmann equation in a bounded domain. In \cite{SR-2009-BOOK}, Saint-Raymond rigorously justified the NSF limit in the renormalized solutions regime over a bounded domain. In \cite{MSR-CPAM-2003,SR-2009-BOOK}, the fluid boundary conditions were either the Dirichlet or the Navier slip boundary condition depending on the relative sizes of the accommodation coefﬁcient and the Knudsen number. In \cite{JM-CPAM-2017}, Jiang-Masmoudi rigorously proved the NSF limit with Neumann boundary condition from the renormalized solutions to the Boltzmann equation in a bounded domain with Maxwell reflection boundary condition for small accommodation coefficient $\sqrt{Kn}$. Moreover, Jiang-Zhang \cite{JZ-SIMA-2019,JZ-JDE-2019} rigorously verified the NSF limit from the renormalized solutions to the Boltzmann equation (including cutoff and noncutoff models) in a bounded domain with incoming boundary condition.

\subsection{Organization of current paper}

In Section \ref{Sec:Prel}, we give some preliminaries which will be frequently used in this paper. We construct the approximated problem \eqref{approximation} and establish its uniform bounds in Section \ref{Sec:Apr}. The weak $L^1$ compactness for nonlinear collision operator are given in Section \ref{sec weak com}. Moreover, in Section \ref{Sec:Con}, we mainly study the convergence of the approximated solution $f^n$ and the nonlinear collision operator $\tilde{Q}_n (f^n, f^n)$. The integrable properties of the corresponding limits are also studied in this section. Section \ref{Sec:Ext} gives the proof of existence of renormalized solutions to \eqref{BE-MP}. Section \ref{Sec:Entropy} gives the justification of the entropy inequality \eqref{entropy-th} in Theorem \ref{MainThm}. Finally, we verify the weak lower semicontinuity of entropy, i.e. prove Lemma \ref{Lemma 4.6} in Section \ref{subsec-4.6}.

\section{Preliminaries}\label{Sec:Prel}

\subsection{Properties of weakly relatively compactness in $L^1$}
From the works \cite{ARKERYD-Boltz,R-de-la-Vallée-Poussin,Dunford-Schwartz-1958,Evans-1990-AMS}, we obtain the following properties.

\begin{definition}\label{def-equi}
	Let $\mathscr{F} $ be a subset of $L^1(X, \mathcal{M},\mu )$. We say that $\mathscr{F}$ is equi-integrable if for every $\epsilon >0 $ there exists $\delta >0 $ such that for any measurable set $E \subset X $ with $\mu (E ) < \delta $ and for all $f\in \mathscr{F}$,
	\begin{equation*}
		\int_{E} |f|dx < \epsilon .
	\end{equation*}
\end{definition}

It is a fact that $\mathscr{F} $ being equi-integrable is equivalent to 
	\begin{equation*}
		\lim \limits_{C \rightarrow \infty } \sup \limits_{f\in \mathscr{F }} \int_{|f|>C } |f|d \mu = 0 .
	\end{equation*}

	\begin{lemma}[Dunford-Pettis] \label{theorem-dunford}
	Suppose that $\mathscr{F}$ is a Bounded subset of $ L^1(X, \mathcal{M},\mu )$. $\mathscr{F}$ is weakly relatively compact if and only if 
	\begin{enumerate}
		\item {$\mathscr{F} $ is equi-integrable};
		\item {for any $\epsilon>0, $ there exists compact set $K \subset X$ such that 
			\begin{equation*}
				\int_{K^c} |f(x) | dx <\epsilon , \quad \forall f \in \mathscr{F}.
		\end{equation*}}
	\end{enumerate}
\end{lemma}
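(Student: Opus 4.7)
The plan is to prove the two implications separately; this is the classical Dunford--Pettis theorem, and I would structure a proof as follows.

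\textbf{Sufficiency ($(1)+(2) \Rightarrow$ weak relative compactness).} Given an arbitrary sequence $\{f_n\} \subset \mathscr{F}$, I would produce a weakly convergent subsequence via truncation and diagonalization. Condition $(2)$ yields an exhaustion $K_1 \subset K_2 \subset \cdots$ of $X$ by compact sets with $\sup_n \int_{K_m^c} |f_n|\, d\mu \to 0$ as $m \to \infty$. On each $K_m$ (of finite $\mu$-measure), condition $(1)$ allows a further truncation: setting $f_n^{(j)} := f_n \mathbf{1}_{\{|f_n| \leq j\}}$, equi-integrability gives $\sup_n \|f_n - f_n^{(j)}\|_{L^1} \to 0$ as $j \to \infty$. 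For fixed $j$ and $m$, the family $\{f_n^{(j)}\mathbf{1}_{K_m}\}$ is bounded in $L^\infty$ with support of finite measure, hence bounded in $L^2(K_m)$; by reflexivity of $L^2$, it admits a subsequence converging weakly in $L^2(K_m)$, and therefore also weakly in $L^1(K_m)$. Applying a standard diagonal procedure over the parameters $m$ and $j$, together with a Cauchy criterion in $L^1(X)$ guaranteed by the uniform truncation estimates, yields a single subsequence of $\{f_n\}$ that converges weakly in $L^1(X,\mathcal{M},\mu)$.

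\textbf{Necessity (weak relative compactness $\Rightarrow (1)+(2)$).} Suppose $\mathscr{F}$ is weakly relatively compact. I would prove each of $(1)$ and $(2)$ by contradiction, using that weak convergence in $L^1$ preserves uniform absolute continuity of the associated signed measures (this is the Vitali--Hahn--Saks theorem). If equi-integrability failed, there would exist $\epsilon > 0$, a sequence $f_k \in \mathscr{F}$, and measurable sets $E_k$ with $\mu(E_k) \downarrow 0$ and $\int_{E_k} |f_k|\, d\mu \geq \epsilon$. Extracting a weakly convergent subsequence $f_{k_j} \rightharpoonup f$ and applying Vitali--Hahn--Saks to the signed measures $\nu_{k_j}(E) := \int_E f_{k_j}\, d\mu$ shows that $\{\nu_{k_j}\}$ is uniformly absolutely continuous with respect to $\mu$, which contradicts $\mu(E_{k_j}) \downarrow 0$ together with $|\nu_{k_j}|(E_{k_j}) \geq \epsilon$. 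Tightness is verified analogously: if it failed, one would find $f_k$ concentrating mass outside every compact set, violating the uniform inner regularity forced by weak convergence.

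\textbf{Main obstacle.} The chief technical step is the Vitali--Hahn--Saks control in the necessity direction, which rests on a Baire category argument in the space of countably additive set functions; the sufficiency direction's diagonal argument is routine once tightness and equi-integrability are properly combined. One must also be careful in sufficiency that the limit constructed on each $K_m$ via $L^2$-reflexivity extends to a genuine $L^1(X)$ limit, which is where the uniform tail bound from $(2)$ is essential.
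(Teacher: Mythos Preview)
The paper does not prove this lemma at all; it is stated in Section~\ref{Sec:Prel} as a classical result imported from the literature, with the sentence ``From the works \cite{ARKERYD-Boltz,R-de-la-Vallée-Poussin,Dunford-Schwartz-1958,Evans-1990-AMS}, we obtain the following properties'' preceding the statement. So there is no paper proof to compare against.

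Your outline is a correct and standard route to Dunford--Pettis: the sufficiency argument via truncation, $L^2$-reflexivity on finite-measure pieces, and diagonalization is sound, and the necessity argument via Vitali--Hahn--Saks is the usual way to extract equi-integrability from weak compactness. One small point: in the tightness part of necessity you appeal to ``uniform inner regularity forced by weak convergence,'' which implicitly requires some regularity of the measure space (e.g.\ that $\mu$ is a Radon measure on a locally compact space, or at least $\sigma$-finite with an inner-regular structure so that compact sets exist and exhaust mass). The paper's applications are all on Euclidean spaces with Lebesgue measure, so this is harmless there, but as stated the lemma is for an abstract measure space $(X,\mathcal{M},\mu)$ where the notion of ``compact set'' may not even be meaningful. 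You may want to either add a Radon-type hypothesis or note that the statement as written is only intended for the Euclidean setting actually used in the paper.
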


\begin{lemma}[De La Vall\'ee-Poussin Criterion]\label{theo-DE La}
	A subset $\mathscr{F}$ of $L^1(X, \mathcal{M},\mu )$ is equi-integrable if and only if there exists a function $H : \R_+ \rightarrow \R_+ $ satisfying 
	\begin{equation*}
		\tfrac{H(r)}{r} \rightarrow +\infty \ as \ r \rightarrow +\infty
	\end{equation*}
	and such that 
	\begin{equation*}
		\sup \limits_{f \in \mathscr{F}} \int_{X} H(f) d \mu  <+\infty .
	\end{equation*}
\end{lemma}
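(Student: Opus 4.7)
The plan is to prove the two directions separately, the sufficiency being a one-shot splitting argument and the necessity requiring the explicit construction of $H$ from a fast-decaying tail estimate.

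For the sufficiency direction ($\Leftarrow$), suppose $H$ exists with $M := \sup_{f \in \mathscr{F}} \int_X H(|f|)\,d\mu < \infty$. Since $H(r)/r \to +\infty$, for every $\lambda>0$ there is $C_\lambda$ such that $H(r) \geq \lambda\, r$ whenever $r \geq C_\lambda$ (replacing $H$ by $r\mapsto \inf_{s\ge r} H(s)/s \cdot r$ if necessary to make this monotone). For $\epsilon>0$, pick $\lambda = 2M/\epsilon$ and then $\delta = \epsilon/(2C_\lambda)$. For any $f\in\mathscr{F}$ and any measurable $E$ with $\mu(E)<\delta$, split
\begin{equation*}
  \int_E |f|\,d\mu \;\leq\; C_\lambda\,\mu(E) \;+\; \int_{E\cap\{|f|>C_\lambda\}} |f|\,d\mu \;\leq\; C_\lambda\,\mu(E) \;+\; \tfrac{1}{\lambda}\int_X H(|f|)\,d\mu \;\leq\; \tfrac{\epsilon}{2}+\tfrac{\epsilon}{2} = \epsilon,
\end{equation*}
which gives equi-integrability.

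For the necessity direction ($\Rightarrow$), I would first establish the tail estimate $\varphi(n) := \sup_{f\in\mathscr{F}}\int_{\{|f|>n\}}|f|\,d\mu \longrightarrow 0$ as $n\to +\infty$. Given $\epsilon>0$ and the corresponding $\delta$ from equi-integrability, Chebyshev's inequality and the bound $M_1:=\sup_{f\in\mathscr{F}}\|f\|_{L^1}<\infty$ (which is part of the Dunford-Pettis hypothesis $\mathscr{F}$ bounded in $L^1$) yield $\mu(\{|f|>n\}) \leq M_1/n < \delta$ for $n$ large, so $\int_{\{|f|>n\}}|f|\,d\mu < \epsilon$ uniformly in $f$. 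Next, pick a strictly increasing sequence $n_k\to +\infty$ with $\varphi(n_k)\leq 2^{-k}$, and define
\begin{equation*}
  h(r) \;=\; \sum_{k=1}^\infty \mathbf{1}_{[n_k,+\infty)}(r), \qquad H(r) \;=\; \int_0^r h(s)\,ds \;=\; \sum_{k=1}^\infty (r-n_k)_+.
\end{equation*}
Since $h$ is non-decreasing with $h(r)\to +\infty$, one checks that $H(r)/r \geq h(r/2)/2 \to +\infty$. Finally, by Fubini (or directly from the sum representation),
\begin{equation*}
  \int_X H(|f|)\,d\mu \;=\; \sum_{k=1}^\infty \int_X (|f|-n_k)_+\,d\mu \;\leq\; \sum_{k=1}^\infty \int_{\{|f|>n_k\}} |f|\,d\mu \;\leq\; \sum_{k=1}^\infty 2^{-k} \;=\; 1,
\end{equation*}
uniformly in $f\in\mathscr{F}$, which completes the construction.

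The main obstacle is the step $\varphi(n)\to 0$: without $L^1$-boundedness of $\mathscr{F}$, the Chebyshev control of $\mu(\{|f|>n\})$ is not available, and equi-integrability alone does not force the tails to shrink uniformly. This is why the lemma is invoked in tandem with the Dunford--Pettis framework (Lemma \ref{theorem-dunford}), where the boundedness of $\mathscr{F}$ in $L^1$ is a standing hypothesis; once that is in place, the construction of $H$ is a clean diagonal argument over the dyadic tail thresholds $n_k$.
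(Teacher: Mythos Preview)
The paper does not prove this lemma; it is stated in Subsection~2.1 as a classical result drawn from the cited references. Your proof is correct and standard: the sufficiency is a direct splitting, and the necessity is the usual construction of $H$ from dyadic tail thresholds. You are right that the step $\varphi(n)\to 0$ in the necessity direction needs $L^1$-boundedness of $\mathscr{F}$ (otherwise the lemma as literally stated can fail, e.g.\ on a one-point space with $f_n\equiv n$). In the paper this is harmless: only the sufficiency direction is ever invoked, namely in the proof of Lemma~\ref{theo-4.1} with $\Psi(t)=t(\log t)^+$ playing the role of $H$.
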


\begin{lemma}\label{rwc}
	Let $J$ be a discrete or continuous set of indicators. Given $M_1, M_2 >0$, assume that $\{ f_j (x,\xi,I):j\in J \} \subset L^1(\R^3 \times \R^3 \times \R_+)_+$ satisfies
	\begin{equation*}
    \begin{aligned}
		\iiint_{\R^3\times \R^3 \times \R_+} (1+|x|^2+|\xi|^2+I)f_j dx d\xi dI\leq M_1 \,, \ \iiint_{\R^3\times \R^3 \times \R_+} f_j |\log f_j| dxd\xi dI \leq M_2
    \end{aligned}
	\end{equation*}
	for all $j \in J$. Then $\{f_j(x,\xi,I):j\in J\}$ is relatively weakly compact in $L^1(\R^3\times \R^3 \times \R_+)$.
\end{lemma}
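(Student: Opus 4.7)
The plan is to verify the three conditions of the Dunford--Pettis theorem (Lemma \ref{theorem-dunford}): boundedness in $L^1$, equi-integrability, and tightness. Boundedness is immediate from the first hypothesis, since $\iiint f_j \, dx d\xi dI \leq \iiint (1+|x|^2+|\xi|^2+I) f_j\, dx d\xi dI \leq M_1$.

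For equi-integrability, I would invoke the De La Vall\'ee--Poussin criterion (Lemma \ref{theo-DE La}) with the superlinear test function $H(r) = r \log^+\! r$, which manifestly satisfies $H(r)/r = \log^+\! r \to +\infty$ as $r \to +\infty$. The key observation is the pointwise decomposition $|\log f_j| = \log^+\! f_j + \log^-\! f_j$, which yields $f_j \log^+\! f_j \leq f_j |\log f_j|$ (since both summands are nonnegative). Integrating gives
\begin{equation*}
\sup_{j \in J} \iiint_{\R^3 \times \R^3 \times \R_+} H(f_j) \, dx d\xi dI \;=\; \sup_{j \in J} \iiint_{\R^3 \times \R^3 \times \R_+} f_j \log^+\! f_j \, dx d\xi dI \;\leq\; M_2,
\end{equation*}
so De La Vall\'ee--Poussin applies and the family is equi-integrable.

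For tightness, I would use the moment bound to absorb mass near infinity. Fix $R > 1$ and set $K_R := \overline{B_R} \times \overline{B_R} \times [0, R]$, a compact subset of $\R^3 \times \R^3 \times \R_+$. On the complement $K_R^c$ at least one of $|x| > R$, $|\xi| > R$, or $I > R$ holds, hence $1 + |x|^2 + |\xi|^2 + I \geq R$. Therefore
\begin{equation*}
\iiint_{K_R^c} f_j \, dx d\xi dI \;\leq\; \tfrac{1}{R} \iiint_{\R^3 \times \R^3 \times \R_+} (1+|x|^2+|\xi|^2+I)\, f_j \, dx d\xi dI \;\leq\; \tfrac{M_1}{R},
\end{equation*}
uniformly in $j$, which can be made smaller than any prescribed $\epsilon > 0$ by choosing $R$ large. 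This yields the tightness condition, and an application of Lemma \ref{theorem-dunford} concludes the proof.

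There is no real obstacle here; the only subtlety is avoiding the naive choice $H(r) = r |\log r|$ in De La Vall\'ee--Poussin (which fails to be nonnegative-monotone in a useful way near $r=0$ and is not needed anyway), and recognizing that the full entropy bound $\int f_j |\log f_j|$ is strictly stronger than what is required, since only the $\log^+$ part drives equi-integrability.
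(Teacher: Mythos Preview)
Your proof is correct and follows essentially the same Dunford--Pettis strategy as the paper. The only difference is in how equi-integrability is established: you invoke the De~La~Vall\'ee--Poussin criterion with $H(r)=r\log^+ r$, whereas the paper argues directly by splitting $\int_E f_j$ into $\{f_j\le R\}$ and $\{f_j>R\}$ and using the entropy bound on the large part. Your tightness argument, taking $K_R$ compact in the full product space $\R^3\times\R^3\times\R_+$, is in fact more careful than the paper's, which only controls the tail in the $x$-variable.
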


	\begin{proof}
	It's not hard to find $f_j$ is bounded in $L^1$. Moreover, for any $\epsilon >0$, there must exist $R > e^{ \frac{2 M_2}{\epsilon} } >0$ such that  $\log f_j >\frac{2M_2}{\epsilon }$, when $ f_j>R$. Let $\delta=\frac{\epsilon }{2R}$. Then for any $j\in J$ and measurable set $E$ with $\mu (E)<\delta$, one has
	\begin{equation*}
		\begin{aligned}
			\iiint_{E} f_j dxd\xi dI =& \iiint_{E\cap \{f_j\leq R\}} f_j dx d\xi dI +\iiint_{E\cap \{f_j> R\}} f_j dxd\xi dI \\
			\leq & R \mu (E) +\iiint_{E\cap \{f_j> R\}} \tfrac{1}{|\log f_j|}  f_j|\log f_j| dxd\xi dI 
			\leq \epsilon.
		\end{aligned}
	\end{equation*}
	Besides, for any $\epsilon>0$, there must exist $R>0$ such that $(1+|x|^2+|\xi|^2+I)^{-1} <\frac{\epsilon}{M_1}$ when $|x|>R$. Let $K=\{x:|x|\leq R\}$. Then for any $j\in J$, one has
	\begin{equation*}
		\iiint_{K^c} f_j dx d\xi dI 
		=\iiint_{K^c} (1+|x|^2+|\xi|^2+I)^{-1} (1+|x|^2+|\xi|^2+I) f_j  dx d\xi dI\leq \epsilon.
		%& \leq \frac{\epsilon}{M_1} \int_{R^3} (1+|x|^2)^{\frac{s}{2}}f_j(x)dx 
	\end{equation*}
	By Dunford-Pettis Theorem (Lemma \ref{theorem-dunford}), we know $\{f_j\}_{j \in J}$ is relatively weakly compact in $L^1(\R^3\times \R^3 \times \R_+)$. Then the proof of Lemma \ref{rwc} is finished.
\end{proof}

\begin{corollary}\label{rwc1}
	Let $J$ be a discrete or continuous set of indicators. Given $M_1, M_2 >0$, assume that $\{ f_j (x,\xi):j\in J \} \subset L^1(\R^3 \times \R^3)_+$ satisfies
	\begin{equation*}
		\iint_{\R^3\times \R^3 } (1+|x|^2+|\xi|^2)f_j dx d\xi \leq M_1, \ \iint_{\R^3\times \R^3 } f_j |\log f_j| dxd\xi  \leq M_2
	\end{equation*}
for all $j \in J$. Then $\{f_j(x,\xi):j\in J\}$ is relatively weakly compact in $L^1(\R^3\times \R^3 )$.
\end{corollary}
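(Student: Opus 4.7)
The plan is to follow verbatim the strategy used for Lemma \ref{rwc}, simply dropping the internal energy variable $I$ throughout. I would verify the three hypotheses in Dunford--Pettis (Lemma \ref{theorem-dunford}) for $\{f_j\}_{j \in J}$ viewed as a subset of $L^1(\R^3 \times \R^3)$: uniform $L^1$-boundedness, equi-integrability, and tightness.

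First, $L^1$-boundedness is immediate: $\iint f_j \, dx\, d\xi \leq \iint (1+|x|^2+|\xi|^2) f_j \, dx \, d\xi \leq M_1$. Next, for equi-integrability, I would argue as in Lemma \ref{rwc}. Given $\epsilon>0$, pick $R > \exp(2M_2/\epsilon)$, so that $\log f_j > 2M_2/\epsilon$ on $\{f_j > R\}$, and set $\delta = \epsilon/(2R)$. For any measurable $E \subset \R^3\times\R^3$ with Lebesgue measure $\mu(E) < \delta$,
\begin{equation*}
\iint_E f_j\, dx\, d\xi = \iint_{E \cap \{f_j \leq R\}} f_j\, dx\, d\xi + \iint_{E \cap \{f_j > R\}} \tfrac{1}{|\log f_j|} f_j |\log f_j|\, dx\, d\xi \leq R \mu(E) + \tfrac{\epsilon}{2 M_2}\, M_2 \leq \epsilon,
\end{equation*}
uniformly in $j \in J$.

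For tightness, I would invoke the moment bound. Given $\epsilon>0$, choose $R>0$ large enough so that $(1+|x|^2+|\xi|^2)^{-1} < \epsilon/M_1$ whenever $|x|^2+|\xi|^2 > R^2$, and take $K = \{(x,\xi) \in \R^3\times\R^3 : |x|^2 + |\xi|^2 \leq R^2\}$, which is compact. Then
\begin{equation*}
\iint_{K^c} f_j\, dx\, d\xi = \iint_{K^c} (1+|x|^2+|\xi|^2)^{-1}\,(1+|x|^2+|\xi|^2) f_j\, dx\, d\xi \leq \tfrac{\epsilon}{M_1}\cdot M_1 = \epsilon
\end{equation*}
for every $j \in J$. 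With all three Dunford--Pettis conditions uniformly verified, the conclusion follows.

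There is no genuine obstacle here: the statement is the two-variable analogue of Lemma \ref{rwc}, and the argument is a direct transcription. The only minor notational point is simply choosing the compact set $K$ as a ball in $\R^3\times\R^3$ rather than in $\R^3$ alone, which does not affect the moment-controlled tail estimate.
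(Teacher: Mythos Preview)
The proposal is correct and follows exactly the approach implicit in the paper: Corollary~\ref{rwc1} is stated without proof immediately after Lemma~\ref{rwc}, precisely because it is the two-variable transcription of that argument. Your choice of the compact set $K$ as a ball in $\R^3\times\R^3$ is in fact slightly cleaner than the paper's own proof of Lemma~\ref{rwc}, where $K$ is written as $\{x:|x|\leq R\}$ (a set in $\R^3$ alone) even though tightness is needed in the full product space.
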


We now introduce the following weighted $L^1$ spaces.

	\begin{definition}[DiPerna-Lions' work \cite{Diperna-Lions}] \label{def 4.1}
If $f(x,\xi)$ is a measurable function on $\R^3 \times \R^3$, denote by
	\begin{equation*}
		\Vert f \Vert_{1,2} = \iint_{\R^3 \times \R^3 } (1+|x|^2 + |\xi|^2 ) |f (x,\xi)|  dxd\xi \,.
	\end{equation*} 
Define the space $L^1_2(\R^3 \times \R^3 )$ by 
\begin{equation*}
		L^1_2(\R^3 \times \R^3 ) = \{ f (x, \xi) \textrm{ measurable on } \R^3 \times \R^3; \Vert f \Vert_{1,2} < \infty \}
	\end{equation*} 
endowed with the $L^1_2$-norm $ \Vert f \Vert_{1,2} $.
\end{definition}

	\begin{definition} \label{def 4.2}
If $f(x,\xi,I)$ is a measurable function on $\R^3 \times \R^3 \times \R_+$, denote by
	\begin{equation*}
		\Vert f \Vert_{1,3} = \iiint_{\R^3 \times \R^3 \times \R_+ } (1+|x|^2 + |\xi|^2+ I ) |f (x,\xi,I)|  dxd\xi dI \,.
	\end{equation*} 
Define the space $L^1_3(\R^3 \times \R^3 \times \R_+)$ by 
\begin{equation*}
		L^1_3(\R^3 \times \R^3 \times \R_+) = \{ f (x, \xi, I) \textrm{ measurable on } \R^3 \times \R^3 \times \R_+; \Vert f \Vert_{1,3} < \infty \}
	\end{equation*} 
endowed with the $L^1_3$-norm $ \Vert f \Vert_{1,3} $.
\end{definition}

\begin{lemma} \label{Lemma 4.4}
Assume $f_\alpha(x,\xi,I) \in L^1_3 (\R^3 \times \R^3 \times \R_+)_+ $ and $\alpha \in \{s_0+1,\cdots,s\}$. Then
\begin{equation*}
	\iiint_{\R^3 \times \R^3\times \R_+ } f_\alpha \log (I^{1-\delta(\alpha)/2}f_\alpha) dxd\xi dI<+\infty
\end{equation*} 
if and only if 
\begin{equation*}
	f_\alpha \log ^+ (I^{1-\delta(\alpha)/2}f_\alpha) \in L^1(\R^3 \times \R^3 \times \R_+) .
\end{equation*}
\end{lemma}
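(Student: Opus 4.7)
Write $p := \delta(\alpha)/2-1 \geq 0$ and $h := I^{-p} f_\alpha = I^{1-\delta(\alpha)/2}f_\alpha$, so that $f_\alpha \log(I^{1-\delta(\alpha)/2}f_\alpha) = f_\alpha \log h$. The plan is to split $f_\alpha \log h = f_\alpha \log^+ h - f_\alpha \log^- h$ and prove the \emph{unconditional} integrability
\begin{equation*}
\iiint_{\R^3\times\R^3\times\R_+} f_\alpha \log^-\!(I^{1-\delta(\alpha)/2}f_\alpha)\, dx\,d\xi\, dI < +\infty,
\end{equation*}
using only the $L^1_3$ assumption on $f_\alpha$. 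Once this is established, the equivalence is immediate: $\iiint f_\alpha \log h < +\infty$ (well-defined, not $-\infty$) is equivalent to $\iiint f_\alpha \log^+ h < +\infty$, which is the claim since $f_\alpha \log^+ h = f_\alpha (\log^+\!(I^{1-\delta(\alpha)/2}f_\alpha))$.

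The key auxiliary inequality I would use is the elementary bound: for every $y \geq 0$ and every $N \geq 1$,
\begin{equation*}
y\log^- y \leq N\, y + N\, e^{-N}.
\end{equation*}
This is verified by splitting on $\{y \geq e^{-N}\}$ (where $\log^- y \leq N$) and on $\{y < e^{-N}\}$ (where, since $N \geq 1$ forces $e^{-N}\leq e^{-1}$, the function $y \mapsto -y\log y$ is increasing on $(0,e^{-N})$ and so is bounded there by $N e^{-N}$).

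Apply this with $y = h$ and with the weight $N := 1 + |x|^2 + |\xi|^2 + I \geq 1$, multiply by $I^p$, and use $I^p h = f_\alpha$ to obtain, pointwise a.e.,
\begin{equation*}
f_\alpha \log^- h \;\leq\; N\, f_\alpha \;+\; N\, I^{p}\, e^{-N}.
\end{equation*}
The first term integrates to at most $\|f_\alpha\|_{1,3}$ by the assumption $f_\alpha \in L^1_3$. For the second, use $N e^{-N} \leq C e^{-N/2}$ so that the integrand is dominated by $C\, I^{p} e^{-I/2}\, e^{-|x|^2/2}\, e^{-|\xi|^2/2}$, whose integral over $\R^3\times\R^3\times\R_+$ is finite since $\int_0^\infty I^{p} e^{-I/2}\, dI < \infty$ for any $p \geq 0$. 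This closes the estimate and hence the lemma.

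The only delicate step is choosing the right elementary inequality together with the right weight $N(x,\xi,I)$: one must produce a bound of the form $f_\alpha \log^- h \leq (\text{weight})\cdot f_\alpha + (\text{absolutely integrable remainder})$ in which the weight is exactly controlled by the $1+|x|^2+|\xi|^2+I$ moment that the $L^1_3$-hypothesis provides, while the remainder absorbs the singular factor $I^p$ coming from the $I^{1-\delta(\alpha)/2}$ discrepancy between $h$ and $f_\alpha$. Everything else is bookkeeping.
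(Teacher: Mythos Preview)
Your proof is correct and follows essentially the same approach as the paper: both establish the unconditional bound $f_\alpha \log^-(I^{1-\delta(\alpha)/2}f_\alpha)\in L^1$ by splitting at the threshold $h = e^{-N}$ with $N \sim |x|^2+|\xi|^2+I$, controlling the region $\{h\ge e^{-N}\}$ by the $L^1_3$ moment and the region $\{h<e^{-N}\}$ by an absolutely integrable $I^{\delta(\alpha)/2-1}e^{-N/2}$-type remainder. The only difference is packaging: you encapsulate the split in the single inequality $y\log^- y \le Ny + Ne^{-N}$, whereas the paper writes the two regions explicitly using $u^{1/2}\log(1/u)<1$; the underlying mechanism is identical.
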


\begin{proof}
	Firstly, the necessity is obvious. On the contrary, if $f_\alpha \log ^+ (I^{1-\delta(\alpha)/2}f_\alpha) \in L^1(\R^3 \times \R^3 \times \R_+) $, we need to prove $\iiint_{\R^3 \times \R^3\times \R_+ } f_\alpha \log (I^{1-\delta(\alpha)/2}f_\alpha) dxd\xi dI <+\infty $. It suffices to prove $f_\alpha \log ^- (I^{1-\delta(\alpha)/2}f_\alpha) \in L^1(\R^3 \times \R^3\times \R_+ )$. In fact, using the inequality $u^{\frac{1}{2}} \log u^{-1} <1 (0<u<1)$, one has
	\begin{equation*}{\small
		\begin{aligned}
		&\iiint_{\R^3 \times \R^3\times \R_+ } f_\alpha \log^- (I^{1-\delta(\alpha)/2}f_\alpha) dxd\xi dI\\
			=& \iiint_{\R^3 \times \R^3\times \R_+} I^{\delta(\alpha)/2-1} I^{1-\delta(\alpha)/2} f_\alpha \log \tfrac{1}{I^{1-\delta(\alpha)/2}f_\alpha} \cdot \mathbf{1}_{0<I^{1-\delta(\alpha)/2}f_\alpha< \exp-(|x|^2 + |\xi|^2 +I)} dxd\xi dI\\
		+& \iiint_{\R^3 \times \R^3\times \R_+  }   f_\alpha \log \tfrac{1}{I^{1-\delta(\alpha)/2}f_\alpha} \cdot \mathbf{1}_{\exp-(|x|^2 + |\xi|^2 +I ) <I^{1-\delta(\alpha)/2}f_\alpha <1} dxd\xi dI\\
		\leq & \iiint_{\R^3 \times \R^3 \times \R_+} I^{\delta(\alpha)/2-1}  \exp(-\tfrac{1}{2}(|x|^2 + |\xi|^2+I )) dxd\xi dI+ \iiint_{\R^3 \times \R^3\times \R_+} (|x|^2 + |\xi|^2+I ) f_\alpha dx d\xi dI < +\infty. 
		\end{aligned}}
	\end{equation*}
	Therefore, the proof of Lemma \ref{Lemma 4.4} is finished.
\end{proof}

\begin{corollary}\label{cor4.5}
		Assume $f_\alpha(x,\xi) \in L^1_2 (\R^3 \times \R^3 )_+ $ and $\alpha \in \{1,\cdots,s_0\}$. Then
	\begin{equation*}
		\iint_{\R^3 \times \R^3 } f_\alpha \log f_\alpha dxd\xi <+\infty \textrm{ if and only if } f_\alpha \log ^+ f_\alpha \in L^1(\R^3 \times \R^3 ) \,.
	\end{equation*} 
\end{corollary}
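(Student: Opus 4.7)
The plan is to mirror the proof of Lemma \ref{Lemma 4.4}, specialized to the monatomic setting where the internal energy variable $I$ is absent and the degeneracy index $\delta(\alpha)=2$ renders the weight $I^{1-\delta(\alpha)/2}$ trivial. In this reduction the phase space becomes $\R^3\times\R^3$, the hypothesis $f_\alpha\in L^1_2(\R^3\times\R^3)_+$ provides control by the weight $1+|x|^2+|\xi|^2$, and the two-part decomposition used in Lemma \ref{Lemma 4.4} carries over verbatim with the Gaussian threshold $\exp(-(|x|^2+|\xi|^2))$ replacing $\exp(-(|x|^2+|\xi|^2+I))$.

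The necessity direction is immediate: if $f_\alpha\log f_\alpha\in L^1(\R^3\times\R^3)$ then $0\le f_\alpha\log^+ f_\alpha\le|f_\alpha\log f_\alpha|$ forces $f_\alpha\log^+ f_\alpha\in L^1$. For the sufficiency direction, since $f_\alpha\log f_\alpha=f_\alpha\log^+f_\alpha-f_\alpha\log^-f_\alpha$, it suffices to show that $f_\alpha\log^-f_\alpha\in L^1(\R^3\times\R^3)$. I would split the region $\{0<f_\alpha<1\}$ into the two subregions $E_1=\{0<f_\alpha<\exp(-(|x|^2+|\xi|^2))\}$ and $E_2=\{\exp(-(|x|^2+|\xi|^2))\le f_\alpha<1\}$.

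On $E_1$, the elementary inequality $u^{1/2}\log u^{-1}<1$ for $u\in(0,1)$ (used in the proof of Lemma \ref{Lemma 4.4}) yields $f_\alpha\log f_\alpha^{-1}=f_\alpha^{1/2}\cdot f_\alpha^{1/2}\log f_\alpha^{-1}\le f_\alpha^{1/2}$, and the indicator constraint gives the integrable majorant $\exp(-\tfrac12(|x|^2+|\xi|^2))$. On $E_2$, the pointwise lower bound on $f_\alpha$ translates into $\log f_\alpha^{-1}\le|x|^2+|\xi|^2$, whence $f_\alpha\log f_\alpha^{-1}\le(|x|^2+|\xi|^2)f_\alpha$, which is integrable by the assumption $f_\alpha\in L^1_2(\R^3\times\R^3)$. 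Summing the two bounds gives $\iint_{\R^3\times\R^3}f_\alpha\log^-f_\alpha\,dx\,d\xi<+\infty$, which combined with $f_\alpha\log^+f_\alpha\in L^1$ yields the desired integrability of $f_\alpha\log f_\alpha$.

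There is no genuine obstacle: the argument is a strict simplification of Lemma \ref{Lemma 4.4}, since neither the $I$-integration nor the weight $I^{\delta(\alpha)/2-1}$ (whose integrability against $e^{-I/2}$ provided the Gamma-type factor in the original proof) is present. The only point to remark is the bookkeeping $f_\alpha\log f_\alpha=f_\alpha\log^+f_\alpha-f_\alpha\log^-f_\alpha$, which reduces the statement to the one-sided integrability of $f_\alpha\log^-f_\alpha$ handled by the split above.
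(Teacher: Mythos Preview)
Your proposal is correct and is precisely the specialization of the paper's proof of Lemma \ref{Lemma 4.4} that the authors intend; the paper states Corollary \ref{cor4.5} without proof as an immediate consequence, and your two-region split with threshold $\exp(-(|x|^2+|\xi|^2))$, the inequality $u^{1/2}\log u^{-1}<1$, and the $L^1_2$ weight control reproduce that argument verbatim in the monatomic setting.
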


\begin{lemma}[Product Limit Theorem, \cite{BGL-1993-CPAM,Diperna-Lions}]\label{product limit} 
Let $\mu$ be a finite, positive Borel measure on a Borel subset $X$ of $\R^3$. Assume that $f^n,f\in L^1(d \mu)$ and $g^n,g\in L^\infty(d \mu)$ satisfy
\begin{enumerate}
\item {$w-\lim \limits_{n\rightarrow \infty} f^n = f$ in $L^1(d \mu)$;}
\item {$\{g^n\}_{n\geq 1}$ is bounded in $L^\infty (d \mu)$ and $\lim \limits_{n\rightarrow \infty} g^n=g, a.e.$}
\end{enumerate}
Then $w-\lim \limits_{n \rightarrow \infty} f^n g^n= fg$ in $L^1(d \mu)$, where “$w-\lim$” means weak limit.
\end{lemma}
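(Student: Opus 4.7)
The plan is to test $f^n g^n$ against an arbitrary $\varphi \in L^\infty(d\mu)$ and show
\[
\int_X f^n g^n \varphi \, d\mu \longrightarrow \int_X f g \varphi \, d\mu.
\]
The natural splitting is
\[
\int_X (f^n g^n - f g)\varphi \, d\mu = \int_X f^n (g^n - g) \varphi \, d\mu + \int_X (f^n - f) g \varphi \, d\mu.
\]
The second term tends to zero immediately: since $g\varphi \in L^\infty(d\mu)$, this is nothing but testing the weak $L^1$ convergence $f^n \rightharpoonup f$ against an $L^\infty$ function. So the entire content is in controlling the first term, where neither factor converges strongly.

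First I would record two preliminary facts about $\{f^n\}$. By the uniform boundedness principle applied to the weakly convergent sequence, $\sup_n \|f^n\|_{L^1(d\mu)} < \infty$. Next, since $\{f^n\}$ is weakly relatively compact in $L^1(d\mu)$, the Dunford–Pettis Theorem (Lemma \ref{theorem-dunford}) gives equi-integrability: for every $\eta > 0$ there exists $\delta > 0$ such that $\mu(E) < \delta$ implies $\sup_n \int_E |f^n| \, d\mu < \eta$. Both ingredients will be used in the next step.

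To handle $I_n := \int_X f^n (g^n - g) \varphi \, d\mu$, I exploit that $\mu$ is finite together with the a.e. convergence $g^n \to g$. By Egorov's theorem, for any prescribed $\delta > 0$ there is a measurable set $E_\delta \subset X$ with $\mu(X \setminus E_\delta) < \delta$ on which $g^n \to g$ uniformly. Split
\[
I_n = \int_{E_\delta} f^n (g^n - g) \varphi \, d\mu + \int_{X \setminus E_\delta} f^n (g^n - g) \varphi \, d\mu.
\]
The first piece is bounded in absolute value by $\|\varphi\|_\infty \, \|g^n - g\|_{L^\infty(E_\delta)} \, \sup_m \|f^m\|_{L^1}$, which tends to $0$ as $n \to \infty$ by uniform convergence on $E_\delta$. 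The second piece is bounded by $\bigl(\sup_m \|g^m\|_\infty + \|g\|_\infty\bigr)\|\varphi\|_\infty \int_{X \setminus E_\delta} |f^n| \, d\mu$. Given $\varepsilon > 0$, choose the $\delta$ supplied by equi-integrability so that this bound is $\le \varepsilon$ uniformly in $n$; with that $\delta$ fixed, pick $E_\delta$ via Egorov, and then let $n \to \infty$ in the first piece.

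Combining, $\limsup_{n \to \infty} |I_n| \le \varepsilon$ for every $\varepsilon > 0$, hence $I_n \to 0$. Together with the weak-$L^1$ convergence handling of the second term, this proves that $\int f^n g^n \varphi \, d\mu \to \int f g \varphi \, d\mu$ for every $\varphi \in L^\infty(d\mu)$, which is precisely weak $L^1$ convergence of $f^n g^n$ to $fg$. The main obstacle, the interplay between weak-but-not-strong convergence of $f^n$ and pointwise-but-not-uniform convergence of $g^n$, is overcome by pairing Egorov's theorem (which needs $\mu$ finite) with the equi-integrability of $\{f^n\}$ furnished by Dunford–Pettis.
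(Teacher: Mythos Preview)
The paper does not supply its own proof of this lemma; it is stated with a citation to \cite{BGL-1993-CPAM,Diperna-Lions} and used as a black box. Your argument is the standard one and is correct: the splitting $f^n g^n - fg = f^n(g^n-g) + (f^n-f)g$, weak convergence for the second piece, and Egorov plus Dunford--Pettis equi-integrability for the first, is exactly how this result is typically proved in the cited references.
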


Now we give the next two lemmas, which will be used to construct the approximate initial data.

\begin{lemma}\label{Lemma 4.6}
	Assume that $f^n_\alpha(x,\xi ) \in L^1(\R^3\times \R^3)_+$ with $\alpha \in \{1,\cdots,s_0\}$ satisfy
	\begin{enumerate}
		\item {there exists a constant $M >0$ such that 
			$$\Vert f^n_\alpha\Vert_{1,2} \leq M ,\ f^n_\alpha \log f^n_\alpha \in L^1(\R^3\times \R^3); $$ }
		\item {$\{ f^n_\alpha (x,\xi ) \} $ converges to $f_\alpha (x,\xi ) $ weakly in $ L^1(\R^3 \times \R^3 ) $. }
	\end{enumerate}
Moreover, assume that $f^n_\alpha(x,\xi ,I) \in L^1(\R^3\times \R^3 \times \R_+)_+$ with $\alpha \in \{s_0+1,\cdots,s\}$ satisfy
	\begin{enumerate}
		\item[(3)] {there exists a constant $M >0$ such that 
			$$\Vert f^n_\alpha \Vert_{1,3} \leq M ,\ f^n_\alpha \log (I^{1-\delta(\alpha)/2}f^n_\alpha) \in L^1(\R^3 \times \R^3\times \R_+ ); $$ }
		\item[(4)] {$\{ f^n_\alpha(x,\xi ,I) \} $  converges  to $f_\alpha(x,\xi,I ) $ weakly in $  L^1(\R^3 \times \R^3\times \R_+) $. }
	\end{enumerate}
	Then 
	\begin{equation}\label{eq4.28}{\small
    \begin{aligned}
		& \sum_{\alpha = 1}^{s_0} \iint_{\R^3 \times \R^3} f_\alpha \log f_\alpha dxd\xi + \sum_{\alpha = s_0 + 1}^{s} \iiint_{\R^3 \times \R^3\times \R_+ } f_\alpha \log (I^{1-\delta(\alpha )/2 } f_\alpha) dxd\xi dI \\
\leq & \liminf \limits_{n\rightarrow \infty } \Big\{ \sum_{\alpha = 1}^{s_0} \iint_{\R^3 \times \R^3 } f^n_\alpha \log f^n_\alpha dxd\xi + \sum_{\alpha = s_0 + 1}^{s} \iiint_{\R^3 \times \R^3\times \R_+ } f^n_\alpha \log (I^{1-\delta(\alpha)/2}f^n_\alpha) dxd\xi dI \Big\} .
\end{aligned}}
	\end{equation} 
\end{lemma}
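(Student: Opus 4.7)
The plan is to prove weak lower semicontinuity of the Boltzmann entropy functional via the classical DiPerna--Lions approach, based on the variational characterization of relative entropy.

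The pointwise inequality
\begin{equation*}
u \log u \geq u \log v + u - v \quad (u, v > 0),
\end{equation*}
equivalent to the non-negativity of relative entropy, is the starting point. For the monatomic component $\alpha \in \{1,\dots,s_0\}$, I would take $v(x,\xi) = e^{-\varphi(x,\xi)}$ with $\varphi = 1 + |x|^2 + |\xi|^2$, so that
\begin{equation*}
\Phi_\alpha(u, x, \xi) := u \log u + u\varphi + e^{-\varphi} - u \geq 0
\end{equation*}
is convex in $u$, Borel-measurable in $(x,\xi)$, and non-negative. For the polyatomic component $\alpha \in \{s_0+1,\dots,s\}$, I would absorb the degenerate weight $I^{\delta(\alpha)/2 - 1}$ into a reference measure $d\nu_\alpha = I^{\delta(\alpha)/2 - 1}\,dx\,d\xi\,dI$ and work with $g_\alpha^n := I^{1 - \delta(\alpha)/2} f_\alpha^n$; the polyatomic entropy becomes $\int g_\alpha \log g_\alpha \, d\nu_\alpha$, and the same construction with $v(x,\xi,I) = e^{-(1+|x|^2+|\xi|^2+I)}$ produces a non-negative convex integrand $\Phi_\alpha$ suitable for $d\nu_\alpha$.

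Next, I would invoke the weak $L^1$ lower semicontinuity of integrals of non-negative convex integrands. This is the classical result obtained from Mazur's theorem (every weakly convergent sequence has convex combinations converging strongly, hence a.e.\ along a subsequence) together with Fatou's lemma applied to the non-negative integrand $\Phi_\alpha$. This gives
\begin{equation*}
\liminf_{n\to\infty} \int \Phi_\alpha(f^n_\alpha) \geq \int \Phi_\alpha(f_\alpha)
\end{equation*}
in each component. Since weak $L^1$ convergence implies $\int f_\alpha^n \to \int f_\alpha$ (the constant $1$ being in $L^\infty$), and the constant $\int e^{-\varphi}$ is harmless, the LSC of $\int \Phi_\alpha$ transfers to the desired LSC of the entropy once the weighted moment $\int f_\alpha^n \varphi$ is handled.

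The handling of this moment is the main technical point. The uniform bound $\|f^n_\alpha\|_{1,2} \leq M$ (resp.\ $\|f^n_\alpha\|_{1,3} \leq M$) together with Fatou's lemma yields $\int f_\alpha \varphi \leq \liminf_n \int f^n_\alpha \varphi$. Following the DiPerna--Lions strategy, one combines the LSC of the bundled functional $\int [f \log f + f\varphi]$ with the Fatou inequality via a truncation $\varphi_R = \varphi \wedge R^2$ (a bounded, and hence weak $L^1$-admissible, test function). The tightness provided by $\sup_n \int_{\varphi > R^2} f^n_\alpha \leq M/R^2 \to 0$ as $R \to \infty$ ensures that the contribution of the tail is uniformly negligible, so that the entropy lower semicontinuity is recovered in the limit $R \to \infty$. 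Summing over $\alpha \in \{1,\dots,s\}$ gives \eqref{eq4.28}. The hard part is precisely this interplay between the unbounded weight $\varphi$ and weak $L^1$ convergence: the entropy is dominated below by a quantity of the form $-f\varphi$, which requires the weighted bound plus a careful cancellation, and in the polyatomic case one must additionally verify that weak $L^1(dx\,d\xi\,dI)$ convergence of $f^n_\alpha$ induces weak $L^1(d\nu_\alpha)$ convergence of the reweighted $g^n_\alpha$ against the relevant test-function class.
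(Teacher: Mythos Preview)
Your variational approach via the relative-entropy inequality and Mazur/Fatou is sound and is genuinely different from the paper's proof. The paper works by hand: it localizes to $B_r\times B_r$ (resp.\ $B_r\times B_r\times(r^{-1},r)$), partitions the range $[0,b]$ of the limit $f_\alpha$ into small intervals $[y_i,y_{i+1}]$, and applies Jensen's inequality on each level set $\{y_i\le f_\alpha<y_{i+1}\}$ to the averages of $f^n_\alpha$, reassembling a Riemann-sum approximation of $\int f_\alpha\log f_\alpha$; it then lets $r\to\infty$. Your route is more conceptual and portable, and the polyatomic rewriting $g^n_\alpha=I^{1-\delta(\alpha)/2}f^n_\alpha$ against the reference measure $d\nu_\alpha$ is the clean way to handle the weight.

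One step in your sketch needs sharpening: the cancellation of the moment term. From lower semicontinuity of the bundled functional you get $\liminf_n\bigl[\int f^n\log f^n+\int f^n\varphi\bigr]\ge \int f\log f+\int f\varphi$, but extracting $\liminf_n\int f^n\log f^n$ would require $\limsup_n\int f^n\varphi\le\int f\varphi$, the wrong direction of Fatou. The $L^1$ tail bound $\int_{\varphi>R^2}f^n_\alpha\le M/R^2$ you invoke does not by itself control the entropy tail, because $(f^n\log f^n)^-$ on $\{\varphi>R^2\}$ is dominated only by $f^n\varphi$, whose tail need not vanish uniformly in $n$. Two clean fixes: (i) replace $\varphi$ by $\lambda\varphi$ in the reference Maxwellian $e^{-\lambda\varphi}$; the bundled LSC then gives $\liminf_n\int f^n\log f^n\ge\int f\log f-\lambda M$, and you let $\lambda\to0$; or (ii) split $(f^n\log f^n)^-$ at the threshold $e^{-\sqrt\varphi}$ and use Cauchy--Schwarz, so that the dangerous piece is bounded by $\int_{\varphi>R^2}f^n\sqrt\varphi\le\bigl(\int_{\varphi>R^2}f^n\bigr)^{1/2}\bigl(\int f^n\varphi\bigr)^{1/2}\le M/R$, which does vanish uniformly.
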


The proof of Lemma \ref{Lemma 4.6} will be given in Section \ref{subsec-4.6} later.

\subsection{Collision invariants}

The results in this subsection follow directly from the Section 2.2 of \cite{N.Bernhoff-2023}.

  \begin{lemma}\label{measure}
	For all $\{ \alpha, \beta \} \subset \{1, \cdots , s\}$, the following measures are invariant under the changes \eqref{eq2.18}:
	\begin{equation*}
\begin{aligned}
		d\Gamma_{\alpha \beta }= & W_{\alpha \beta } (\xi, \xi_*, I, I_* ) d\xi d\xi_*d\xi'd\xi'_* dI dI_*dI'dI'_* \\
= & 
\left\{
  \begin{aligned}
    & B_{0 \alpha \beta} (\xi - \xi_*, \omega) d \omega d \xi_* d \xi \,, \qquad \qquad \qquad \qquad \qquad \qquad \qquad \qquad \quad \alpha, \beta \in \mathcal{I}_m \,, \\
    & B_{1 \alpha \beta} ( \xi - \xi_*, I_*, \mathfrak{R}, \omega ) ( 1 - \mathfrak{R} )^{ \delta (\beta) / 2 - 1 } \mathfrak{R}^\frac{1}{2} I_*^{ \delta (\beta) / 2 - 1 } d \omega d \mathfrak{R} d \xi_* d I_* d \xi \,, \ \alpha \in \mathcal{I}_m, \beta \in \mathcal{I}_p \,, \\
    & B_{1 \beta \alpha} ( \xi - \xi_*, I, \mathfrak{R}, \omega ) ( 1 - \mathfrak{R} )^{ \delta (\alpha) / 2 - 1 } \mathfrak{R}^\frac{1}{2} I^{ \delta (\alpha) / 2 - 1 } d \omega d \mathfrak{R} d \xi_* d \xi d I \,, \quad \alpha \in \mathcal{I}_p, \beta \in \mathcal{I}_m \,, \\
    & B (\xi - \xi_*, I + I_*, \mathfrak{R}, \mathfrak{r}, \omega) \mathfrak{r}^{  \delta (\alpha) / 2 - 1 } ( 1 - \mathfrak{r} )^{  \delta (\beta) / 2 - 1 } \mathfrak{R}^\frac{1}{2} \\
    & \ \times ( 1 - \mathfrak{R} )^{ ( \delta (\alpha) + \delta (\beta) ) / 2 - 1 } I^{  \delta (\alpha) / 2 - 1 } I_*^{  \delta (\beta) / 2 - 1 } d \omega d \mathfrak{r} d \mathfrak{R} d \xi_* d I_* d \xi d I \,, \quad \alpha, \beta \in \mathcal{I}_p \,,
  \end{aligned}
\right.
\end{aligned}
	\end{equation*}
where $\mathcal{I}_m = \{ 1, \cdots, s_0 \}$ and $\mathcal{I}_p = \{ s_0 + 1, \cdots, s \}$.
\end{lemma}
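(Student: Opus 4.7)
The plan is to prove the lemma in two stages: first derive each of the four parameterized expressions for $d\Gamma_{\alpha\beta}$ from the original twelve-dimensional measure $W_{\alpha\beta}\,d\xi\,d\xi_*\,d\xi'\,d\xi'_*\,dI\,dI_*\,dI'\,dI'_*$, and then verify that each resulting expression is invariant under the two changes of variables listed in \eqref{eq2.18}.

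For the reduction, I would treat the four cases separately but with the same recipe. Starting from the Dirac-delta form of $W_{\alpha\beta}$ displayed just after \eqref{Asum-angular-cutoff}, I would apply the decomposition $d\xi'\,d\xi'_* = |g'|^2\,dG'_{\alpha\beta}\,d|g'|\,d\omega$ together with the substitutions \eqref{IR}--\eqref{Ir} for each polyatomic index, arriving in the fully polyatomic case at
\[
d\xi'\,d\xi'_*\,dI'\,dI'_* = \tfrac{\sqrt 2}{\mu_{\alpha\beta}^{3/2}} E_{\alpha\beta}^{5/2} (1-\mathfrak{R}) \mathfrak{R}^{1/2} \, d\mathfrak{r}\, d\mathfrak{R}\, d\omega\, dG'_{\alpha\beta}\, dE'_{\alpha\beta},
\]
as already noted in the computation above the lemma. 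The two deltas then collapse $G'_{\alpha\beta}$ and $E'_{\alpha\beta}$ onto the pre-collisional values $G_{\alpha\beta}$ and $E_{\alpha\beta}$, and the residual factors combine with the cross section $\tilde{\sigma}_{\alpha\beta}|g|$ from \eqref{eq2.10} to reproduce exactly the definitions \eqref{B0-ab}, \eqref{B1-ab}, \eqref{B1-ba}, \eqref{B2-ab} of $B_{i\alpha\beta}$ in each of the four cases; in the mixed and monatomic cases one simply omits the corresponding internal-energy substitutions and associated $(1-\mathfrak{R})^{\delta/2-1}I^{\delta/2-1}$ weights.

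For the invariance step, the kernels $B_{i\alpha\beta}$ themselves are preserved under both transformations by the relations stated just before the lemma (culminating in \eqref{B-inv}), so it remains to check the invariance of the weight $\mathfrak{r}^{\delta(\alpha)/2-1}(1-\mathfrak{r})^{\delta(\beta)/2-1}(1-\mathfrak{R})^{(\delta(\alpha)+\delta(\beta))/2-1}\mathfrak{R}^{1/2}\,I^{\delta(\alpha)/2-1}I_*^{\delta(\beta)/2-1}$ and its lower-dimensional analogues in the other three cases. Under the first substitution, this follows from the parametric identities $\mathfrak{R} E_{\alpha\beta}=\tfrac{\mu_{\alpha\beta}}{2}|g'|^2$, $\mathfrak{R}'E_{\alpha\beta}=\tfrac{\mu_{\alpha\beta}}{2}|g|^2$, $I'=\mathfrak{r}(1-\mathfrak{R})E_{\alpha\beta}$, $I=\mathfrak{r}'(1-\mathfrak{R}')E_{\alpha\beta}$, which translate each primed factor into its unprimed counterpart after cancellation of the common powers of $E_{\alpha\beta}$. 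Under the second substitution, invariance is immediate: the simultaneous swaps $(\xi,\xi_*)\to(\xi_*,\xi)$, $\omega\to-\omega$, $(I,I_*)\to(I_*,I)$, $\mathfrak{r}\to 1-\mathfrak{r}$ permute in a compatible way, so that the pairs $(\mathfrak{r}^{\delta(\alpha)/2-1},(1-\mathfrak{r})^{\delta(\beta)/2-1})$ and $(I^{\delta(\alpha)/2-1},I_*^{\delta(\beta)/2-1})$ are merely interchanged.

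The main obstacle is bookkeeping rather than any genuine analytical difficulty: because only polyatomic indices contribute internal-energy factors, each of the four cases carries a different power of $(1-\mathfrak{R})$, $\mathfrak{R}^{1/2}$, and $E_{\alpha\beta}$, and one has to match these against the appropriate definition of $B_{i\alpha\beta}$ and the correct ambient differential $d\xi_*\,d\omega\,d\mathfrak{R}\,d\mathfrak{r}\,dI_*\,\ldots$. The microreversibility identity \eqref{eq2.11} is the structural glue that makes the primed-to-unprimed translation of the weight work uniformly across all four cases.
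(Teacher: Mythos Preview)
Your two-stage plan is sound and would constitute a complete proof. Note, however, that the paper does not actually prove this lemma: the sentence immediately preceding it states that ``The results in this subsection follow directly from the Section 2.2 of \cite{N.Bernhoff-2023},'' and no argument is given beyond that citation. Your outline therefore supplies considerably more detail than the paper itself, and the reduction step you describe is essentially the computation the authors already display just before \eqref{eq2.14}--\eqref{eq2.17}, while the invariance step rests on the kernel symmetries \eqref{eq2.03}--\eqref{B-inv} together with the microreversibility relation \eqref{eq2.11}, exactly as you identify.

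One remark on the invariance under the first substitution: checking only the algebraic weight is not quite enough, since the differentials $d\omega\,d\mathfrak{r}\,d\mathfrak{R}\,d\xi_*\,dI_*\,d\xi\,dI$ must also be compared with their primed counterparts, and the Jacobian of $(\xi,\xi_*,I,I_*,\mathfrak{R},\mathfrak{r},\omega)\mapsto(\xi',\xi'_*,I',I'_*,\mathfrak{R}',\mathfrak{r}',\omega')$ is not the identity. The cleanest way to handle this is to observe that the original measure $W_{\alpha\beta}\,d\xi\,d\xi_*\,d\xi'\,d\xi'_*\,dI\,dI_*\,dI'\,dI'_*$ is manifestly symmetric under the exchange of primed and unprimed blocks (by the two equal forms of $W_{\alpha\beta}$ in \eqref{eq2.10}, which is precisely \eqref{eq2.11}), and then to apply the \emph{same} parametrization procedure on either side; the resulting parameterized measures must then agree. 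This bypasses any explicit Jacobian computation and is likely what Bernhoff does as well.
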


The weak form of the collision operator $Q(f,f)$ reads 
\begin{equation*}
(Q(f,f),g) = \sum \limits_{\alpha , \beta =1}^s \int_{(\R^3\times \R_+)^4} (\tfrac{f'_\alpha f'_{\beta_*}}{(I')^{\delta(\alpha)/2-1} (I'_* )^{\delta(\beta)/2-1}} - \tfrac{f_\alpha f_{\beta_*}}{I^{\delta(\alpha)/2-1} I_* ^{\delta(\beta)/2-1}}) g_\alpha d\Gamma_{\alpha \beta }
\end{equation*}
for any function $g= (g_1,\cdots , g_s)$, with $g_\alpha =g_\alpha (\mathbf{Z} )$. Then by the Lemma \ref{measure}, we have the following proposition.

\begin{proposition}\label{prop 2.2}
	Let $g= (g_1,\cdots , g_s)$, with $g_\alpha =g_\alpha (\mathbf{Z} )$, it follows that 
	\begin{equation*}
		(Q(f,f),g) 
		= \tfrac{1}{4}\sum \limits_{\alpha , \beta =1}^s \int_{(\R^3\times \R_+)^4} (\tfrac{f'_\alpha f'_{\beta_*}}{(I')^{\delta(\alpha)/2-1} (I'_* )^{\delta(\beta)/2-1}} - \tfrac{f_\alpha f_{\beta_*}}{I^{\delta(\alpha)/2-1} I_* ^{\delta(\beta)/2-1}}) \Delta_{\alpha \beta } (g) d \Gamma_{\alpha \beta },
	\end{equation*}
	where $\Delta_{\alpha \beta } (g) := g_\alpha +g_{\beta_*}-g_\alpha ' - g'_{\beta _*} $.
\end{proposition}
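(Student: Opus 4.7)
The plan is to perform the standard four-fold symmetrization familiar from the classical Boltzmann theory, lifted to the multispecies monatomic/polyatomic setting. The starting point is to rewrite
\begin{equation*}
  (Q(f,f),g) = \sum_{\alpha,\beta=1}^s I_{\alpha\beta}, \qquad
  I_{\alpha\beta} := \int_{(\R^3\times\R_+)^4}\Lambda_{\alpha\beta}(f)\,g_\alpha\,d\Gamma_{\alpha\beta},
\end{equation*}
where $d\Gamma_{\alpha\beta}$ is the invariant measure identified in Lemma \ref{measure}. Once this identification is made, all four equivalent expressions for $I_{\alpha\beta}$ are obtained purely by changes of variables in the integrand using \eqref{eq2.18}, together with the invariance of $d\Gamma_{\alpha\beta}$ (Lemma \ref{measure}) and the kernel invariances \eqref{eq2.03}--\eqref{B-inv}.

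First, I would apply the pre/post-collision swap
$(\xi,\xi_*,I,I_*,\mathfrak{R},\mathfrak{r},\omega)\leftrightarrow(\xi',\xi'_*,I',I'_*,\mathfrak{R}',\mathfrak{r}',\omega')$.
Under this swap the measure $d\Gamma_{\alpha\beta}$ is preserved, the kernel $B_{i\alpha\beta}$ is preserved by \eqref{B-inv}, while primed and unprimed densities are interchanged, so $\Lambda_{\alpha\beta}(f)\mapsto-\Lambda_{\alpha\beta}(f)$ and $g_\alpha\mapsto g'_\alpha$. This yields $I_{\alpha\beta}=-\int\Lambda_{\alpha\beta}(f)\,g'_\alpha\,d\Gamma_{\alpha\beta}$.
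Second, I would combine the particle-swap
$(\xi,\xi_*,I,I_*,\mathfrak{R},\mathfrak{r},\omega)\leftrightarrow(\xi_*,\xi,I_*,I,\mathfrak{R},1-\mathfrak{r},-\omega)$
with the exchange of the dummy species labels $\alpha\leftrightarrow\beta$; the symmetry relation \eqref{eq2.12} and its consequences \eqref{B-inv} then give invariance of both kernel and measure, while $\Lambda_{\alpha\beta}(f)$ is mapped to itself (the two factors in the product form are simply interchanged) and $g_\alpha\mapsto g_{\beta_*}$. This produces $\sum_{\alpha,\beta}I_{\alpha\beta}=\sum_{\alpha,\beta}\int\Lambda_{\alpha\beta}(f)\,g_{\beta_*}\,d\Gamma_{\alpha\beta}$. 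Composing the two transformations gives the fourth expression $\sum_{\alpha,\beta}I_{\alpha\beta}=-\sum_{\alpha,\beta}\int\Lambda_{\alpha\beta}(f)\,g'_{\beta_*}\,d\Gamma_{\alpha\beta}$.

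Averaging the four equivalent expressions then yields
\begin{equation*}
  \sum_{\alpha,\beta=1}^s I_{\alpha\beta}
  = \tfrac14 \sum_{\alpha,\beta=1}^s \int \Lambda_{\alpha\beta}(f)\,
      \bigl(g_\alpha + g_{\beta_*} - g'_\alpha - g'_{\beta_*}\bigr)\,d\Gamma_{\alpha\beta},
\end{equation*}
which is exactly the announced formula with $\Delta_{\alpha\beta}(g)=g_\alpha+g_{\beta_*}-g'_\alpha-g'_{\beta_*}$.

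The main obstacle is bookkeeping rather than analysis: one must check case by case (monatomic/monatomic, monatomic/polyatomic, polyatomic/monatomic, polyatomic/polyatomic) that (i) the form of the measure $d\Gamma_{\alpha\beta}$ given by Lemma \ref{measure} really is preserved by each of the two involutions in \eqref{eq2.18}, including the Jacobian-like factors $\mathfrak{R}^{1/2}$, $(1-\mathfrak{R})^{(\delta(\alpha)+\delta(\beta))/2-1}$, $I^{\delta(\alpha)/2-1}I_*^{\delta(\beta)/2-1}$, and the parameter ranges of $\mathfrak{R},\mathfrak{r}$; (ii) the expression $\Lambda_{\alpha\beta}(f)$ changes sign under the pre/post swap but is symmetric under the particle swap combined with $\alpha\leftrightarrow\beta$, because the denominators $I^{\delta(\alpha)/2-1}I_*^{\delta(\beta)/2-1}$ correctly shuffle along with the species labels; and (iii) the kernel invariances \eqref{B-inv} are applicable in the form in which $\Gamma_{\alpha\beta}$ is written. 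Once these bookkeeping items are verified, the proof is the one-line four-fold averaging described above, with no further estimate needed (the formula is formal, valid for any $g=(g_1,\dots,g_s)$ for which both sides make sense).
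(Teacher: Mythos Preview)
Your proposal is correct and follows exactly the approach the paper intends: the paper states the proposition as an immediate consequence of Lemma \ref{measure} (the invariance of $d\Gamma_{\alpha\beta}$ under the changes \eqref{eq2.18}) and gives no further details, so your four-fold symmetrization using the pre/post swap and the particle swap combined with $\alpha\leftrightarrow\beta$ is precisely the argument being invoked.
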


\begin{definition}
	A vector function $g= (g_1,\cdots , g_s)$, with $g_\alpha = (\mathbf{Z} )$ is a collision invariant if 
	\begin{equation*}
		\int_{(\R^3 \times \R_+)^4} (\tfrac{f'_\alpha f'_{\beta_*}}{(I')^{\delta(\alpha)/2-1} (I'_* )^{\delta(\beta)/2-1}} - \tfrac{f_\alpha f_{\beta_*}}{I^{\delta(\alpha)/2-1} I_* ^{\delta(\beta)/2-1}}) \Delta_{\alpha \beta } (g) d\Gamma_{\alpha \beta } =0,
	\end{equation*}
	for all $\{ \alpha, \beta \} \subset \{1, \cdots , s\}$.
\end{definition}

\begin{proposition}\label{prop2.3}
	Let $\mathbf{m}=(m_1,\cdots, m_s), \mathbf{I} = (\underbrace{0,\cdots ,0}_{s_0} ,\underbrace{I,\cdots , I}_{s_1} )$, and $\{e_1,\cdots , e_s\}$ be the standard basis of $\R^s$. Then the vector space of collision invariants is generated by 
	\begin{equation*}
		\{e_1, \cdots ,e_s, \mathbf{m} \xi_x, \mathbf{m} \xi_y, \mathbf{m} \xi_z ,\mathbf{m} |\xi|^2+2\mathbf{I}\}.
	\end{equation*}
\end{proposition}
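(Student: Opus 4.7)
The plan is to establish both inclusions. For the easy direction, I verify that each proposed generator $g$ satisfies the pointwise identity $\Delta_{\alpha\beta}(g)=0$ on the collision manifold cut out by \eqref{eq2.02}--\eqref{eq2.06}, which forces the defining integral to vanish identically in $f$. For $g=e_i$, each component $g_\alpha$ is independent of $\mathbf{Z}$, so $g_\alpha=g_\alpha'$ and $g_{\beta_*}=g_{\beta_*}'$, whence $\Delta_{\alpha\beta}(e_i)=0$. For $g=\mathbf{m}\xi_k$ ($k=1,2,3$), the vanishing of $\Delta_{\alpha\beta}(g)$ is precisely the $k$-th component of momentum conservation \eqref{eq2.02}. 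For $g=\mathbf{m}|\xi|^2+2\mathbf{I}$, whose $\alpha$-component equals $m_\alpha|\xi|^2+2\cdot\mathbf{1}_{\alpha\in\mathcal{I}_p}I$, the vanishing is twice the unified energy conservation \eqref{eq2.06}.

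For the reverse inclusion, I would first upgrade the integral vanishing in the definition to the pointwise relation $\Delta_{\alpha\beta}(g)=0$ almost everywhere with respect to $d\Gamma_{\alpha\beta}$. The standard trick is to test against a sufficiently rich family of $f$'s: since the pre-factor $\tfrac{f'_\alpha f'_{\beta_*}}{(I')^{\delta(\alpha)/2-1}(I'_*)^{\delta(\beta)/2-1}}-\tfrac{f_\alpha f_{\beta_*}}{I^{\delta(\alpha)/2-1}I_*^{\delta(\beta)/2-1}}$ can be made to have arbitrary sign and support by the choice of $f$, the factor $\Delta_{\alpha\beta}(g)$ must vanish $d\Gamma_{\alpha\beta}$-almost everywhere.

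Next I would analyze the resulting functional equation on each species pair. For $\alpha=\beta\in\mathcal{I}_m$, the classical monatomic Boltzmann theory gives $g_\alpha(\xi)=a_\alpha+\mathbf{b}_\alpha\cdot\xi+c_\alpha|\xi|^2$. For $\alpha,\beta\in\mathcal{I}_p$, I would exploit the two free parameters $\mathfrak{R}\in[0,1]$ and $\mathfrak{r}\in[0,1]$ from \eqref{IR}--\eqref{Ir}, which independently vary the kinetic/internal energy split and the internal-energy redistribution among $I'$ and $I_*'$, to deduce that $g_\alpha(\xi,I)$ is jointly affine in $|\xi|^2$ and $I$ with matched coefficients, namely $g_\alpha(\xi,I)=a_\alpha+\mathbf{b}_\alpha\cdot\xi+c_\alpha(m_\alpha|\xi|^2+2I)$. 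The mixed cases $\alpha\in\mathcal{I}_m$, $\beta\in\mathcal{I}_p$ (and vice versa) are treated analogously using only the single parameter $\mathfrak{R}$.

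Finally, I would use the cross-pair equations $\Delta_{\alpha\beta}(g)=0$ for $\alpha\neq\beta$ to link the coefficients. Substituting the conservation laws \eqref{eq2.02}--\eqref{eq2.06} into the functional equation and equating coefficients of the free variables yields the constraints $\mathbf{b}_\alpha/m_\alpha=\mathbf{b}_\beta/m_\beta$ and $c_\alpha=c_\beta$, forcing $\mathbf{b}_\alpha=m_\alpha\mathbf{b}$ and $c_\alpha=c$ for common $\mathbf{b}\in\R^3$ and $c\in\R$. This reproduces exactly a linear combination of the claimed generators. The main obstacle is the rigorous execution of the pointwise analysis in the mixed monatomic-polyatomic pair, where the collision manifold couples a pure $\xi$-variable to a $(\xi,I)$-pair, together with verifying that the linking across all species pairs is consistent; this is carried out in detail in Bernhoff \cite{N.Bernhoff-2023}, on which the authors explicitly rely.
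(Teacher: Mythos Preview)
Your proposal is correct and follows the classical route to classifying collision invariants. Note, however, that the paper does not actually prove Proposition~\ref{prop2.3}: it states at the beginning of the subsection that ``the results in this subsection follow directly from the Section 2.2 of \cite{N.Bernhoff-2023}'' and gives no further argument. Your sketch therefore supplies considerably more detail than the paper itself, and your concluding remark that the full execution is carried out in Bernhoff's work is exactly in line with how the authors handle it.
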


Define 
\begin{equation}\label{W[f]}
	W[f]:=(Q(f,f),\log (\varphi ^{-1} f )) ,
\end{equation}
where $\varphi = diag (I^{\delta(1)/2-1} , \cdots, I^{\delta(s)/2-1})$. It follows by Proposition \ref{prop 2.2} that
\begin{equation}
	\begin{aligned}
		W[f]=&-\tfrac{1}{4}\sum \limits_{\alpha , \beta =1}^s \int_{(\R^3\times \R_+)^4} 
		 (\tfrac{I^{\delta(\alpha)/2-1} I_* ^{\delta(\beta)/2-1}f'_\alpha f'_{\beta_*}  }
		{f_\alpha f_{\beta_* } (I')^{\delta(\alpha)/2-1} (I'_* )^{\delta(\beta)/2-1}} - 1) \\
		&\times \log (\tfrac{I^{\delta(\alpha)/2-1} I_* ^{\delta(\beta)/2-1} f'_\alpha f'_{\beta_*}}{f_\alpha f_{\beta_* } (I')^{\delta(\alpha)/2-1} (I'_* )^{\delta(\beta)/2-1} } ) \tfrac{f_\alpha f_{\beta_* } }{I^{\delta(\alpha)/2-1} I_* ^{\delta(\beta)/2-1} }d\Gamma_{\alpha \beta  } .
	\end{aligned}
\end{equation}
It is known that $(x-1)\log x \geq 0$ for $x\geq 0$, which means that $W[f]$ is non-positive, that is $W[f]\leq 0.$ Then, by means of the weak formulation, the entropy identity is obtained by multiplying \eqref{BE-MP} by $1+\log (\varphi^{-1} f)$, integrating over $ \mathbf{Z}_\alpha \in \mathcal{Z}_\alpha $ and using \eqref{eq2.18} that
\begin{equation}\label{EntropyId}
	\begin{aligned}
		&\tfrac{d}{dt} H(f)(t) +
		\tfrac{1}{4} \sum \limits_{\alpha , \beta =1}^s  \int_{(\R^3\times \R_+)^4} 
		(\tfrac{I^{\delta(\alpha)/2-1} I_* ^{\delta(\beta)/2-1}f'_\alpha f'_{\beta_*}  } {f_\alpha f_{\beta_* } (I')^{\delta(\alpha)/2-1} (I'_* )^{\delta(\beta)/2-1}} - 1) \\
		& \times \log (\tfrac{I^{\delta(\alpha)/2-1} I_* ^{\delta(\beta)/2-1} f'_\alpha f'_{\beta_*}}{f_\alpha f_{\beta_* } (I')^{\delta(\alpha)/2-1} (I'_* )^{\delta(\beta)/2-1} } ) \tfrac{f_\alpha f_{\beta_* } }{I^{\delta(\alpha)/2-1} I_* ^{\delta(\beta)/2-1} }d\Gamma_{\alpha \beta  } = 0 \,,
	\end{aligned}
\end{equation}
where $H(f)$ is defined in \eqref{Hf}.

\subsection{Equivalent description of renormalized solutions}

In this subsection, we will give the definitions of distributional, mild and renormalized solutions and the connection among them. The main results and methods of proof are taken from \cite{Diperna-Lions}. Besides, throughout this section,
we will denote by $f_\alpha$ a nonnegative element of $L^1_{loc} ((0,+\infty)\times \R^3 \times \R^3)$ for all $\alpha \in \{1,\cdots,s_0\}$ and a nonnegative element of $L^1_{loc} ((0,+\infty)\times \R^3 \times \R^3\times \R_+)$ for all $\alpha \in \{s_0+1,\cdots,s\}$.
 
\begin{definition}[Distributional solution]\label{def-distributed}
		We say that $f=(f_1,\cdots,f_s)$ is a distributional solution of \eqref{BE-MP} if it satisfies 
	\begin{enumerate}
		\item {for $\alpha \in \{1,\cdots,s_0\}$, $f_\alpha$ solves
			\begin{equation*}
				\tfrac{\partial f_\alpha}{\partial t } + \xi \cdot \nabla_{x} f_\alpha = Q_\alpha (f,f), \quad \text{in } \mathscr{D}'((0,\infty)\times \R^3 \times \R^3);
		\end{equation*} }
		\item {for $\alpha \in \{s_0+1,\cdots ,s\}$, $f_\alpha$ solves
			\begin{equation}\label{distribution solution}
				\tfrac{\partial f_\alpha}{\partial t } + \xi \cdot \nabla_{x} f_\alpha = Q_\alpha (f,f), \quad \text{in } \mathscr{D}'((0,\infty)\times \R^3 \times \R^3\times \R_+).
		\end{equation}}
	\end{enumerate}
\end{definition}

\begin{definition}[Mild solution]\label{def-mild}
	We say that $f=(f_1,\cdots,f_s)$ is a mild solution of \eqref{BE-MP} if it satisfies 
\begin{enumerate}
	\item {for $\alpha \in \{1,\cdots,s_0\}$ and for almost all $(x,\xi)\in \R^3 \times \R^3,$ one has $Q^\pm_\alpha (f,f)^\sharp (t,x,\xi ) \in L^1 (0,T)$ for all $T<\infty$, and
		\begin{equation*}
			\begin{aligned}
					f^\sharp_\alpha (t,x,\xi )-f^\sharp_\alpha(s,x,\xi ) &=\int_{s}^{t} Q_\alpha(f,f)^\sharp (\sigma ,x,\xi )d\sigma \quad (\forall 0<s<t<\infty),
			\end{aligned}
		\end{equation*}	
	where $f^\sharp_\alpha (t,x,\xi )= f_\alpha(t,x+t\xi ,\xi )$;}
	\item {for $\alpha \in \{s_0+1,\cdots ,s\}$ and for almost all $(x,\xi,I) \in \R^3 \times \R^3\times \R_+,$ there hold $Q^\pm_\alpha (f,f)^\sharp (t,x,\xi ,I) \in L^1(0,T)$ for all $T<\infty$, and
			\begin{equation}\label{mild}
			\begin{aligned}
				f^\sharp_\alpha (t,x,\xi ,I)-f^\sharp_\alpha(s,x,\xi ,I) &=\int_{s}^{t} Q_\alpha(f,f)^\sharp (\sigma ,x,\xi ,I)d\sigma \quad (\forall 0<s<t<\infty),
			\end{aligned}
		\end{equation}	
		where $f^\sharp_\alpha(t,x,\xi ,I)= f_\alpha(t,x+t\xi ,\xi ,I)$.}
\end{enumerate}
\end{definition}

Next we study the connection among the distributional, mild and renormalized solutions. 

\begin{lemma}\label{Lemma 3.1}
The following statements hold:
	\begin{enumerate}
		\item {If $Q^\pm_\alpha(f,f) \in L^1_{loc} $, then $f$ is a distributional solution of \eqref{BE-MP} if and only if $f$ is a renormalized solution of \eqref{BE-MP}; }

		\item {$f$ is a renormalized solution of \eqref{BE-MP} if and only if for all $\beta \in C^1[0,+\infty)$ such that $|\beta '(t)|\leq \frac{C}{1+t}$ for some $C > 0$, $\beta(f_\alpha)$ enjoys
		\begin{equation*}
		\tfrac{\partial }{\partial t }\beta (f_\alpha) + \xi \cdot \nabla_x \beta (f_\alpha)= \beta'(f_\alpha) Q_\alpha(f,f) \quad \text{in } \mathscr{D}'((0,\infty)\times \R^3 \times \R^3) 
		\end{equation*}	
for all $\alpha \in \{1,\cdots,s_0\}$ and $\beta'(f_\alpha) Q_\alpha(f,f) \in L^1_{loc} ((0, \infty) \times \R^3 \times \R^3) $, and 
		\begin{equation*}
			\tfrac{\partial }{\partial t }\beta (f_\alpha) + \xi \cdot \nabla_x \beta (f_\alpha)= \beta'(f_\alpha) Q_\alpha(f,f),\quad \text{in } \mathscr{D}'((0,\infty)\times \R^3 \times \R^3\times \R_+).
		\end{equation*}		
for all $\alpha \in \{s_0+1,\cdots,s\}$ and $\beta'(f_\alpha) Q_\alpha(f,f) \in L^1_{loc} ((0, \infty) \times \R^3 \times \R^3 \times \R_+)$.}
	\end{enumerate}
\end{lemma}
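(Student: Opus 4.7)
The plan is to derive both equivalences from a single workhorse: the DiPerna-Lions chain rule for transport equations with a coefficient $\xi$ that is constant in $(t,x)$. Concretely, given nonnegative $h \in L^1_{\mathrm{loc}}$ satisfying $\partial_t h + \xi \cdot \nabla_x h = k$ in $\mathscr{D}'$ with $k \in L^1_{\mathrm{loc}}$, and given $\beta \in C^1[0,+\infty)$ with $\beta'$ bounded, one has $\partial_t \beta(h) + \xi \cdot \nabla_x \beta(h) = \beta'(h) k$ in $\mathscr{D}'$. I would prove this by mollifying in $(t,x)$: because $\xi$ does not depend on $(t,x)$, the commutator $[\xi \cdot \nabla_x, \rho_\epsilon *]$ vanishes identically and $h_\epsilon := h *_{t,x} \rho_\epsilon$ satisfies the mollified equation pointwise. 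The classical chain rule then applies to the smooth function $h_\epsilon$, and one passes $\epsilon \to 0$ using $h_\epsilon \to h$, $k_\epsilon \to k$ in $L^1_{\mathrm{loc}}$, together with the Product Limit Theorem (Lemma \ref{product limit}) applied to $\beta'(h_\epsilon) k_\epsilon$. In the polyatomic case the variable $I$ never enters the transport operator and plays the role of a passive parameter, so the argument applies verbatim on $\R^3 \times \R^3 \times \R_+$.

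For part (1), the direction $\Rightarrow$ is immediate: when $Q^\pm_\alpha(f,f) \in L^1_{\mathrm{loc}}$, the distributional identity $\partial_t f_\alpha + \xi \cdot \nabla_x f_\alpha = Q_\alpha(f,f)$ has right-hand side in $L^1_{\mathrm{loc}}$, and applying the chain rule with $\beta(t) = \log(1+t)$ (bounded derivative $1/(1+t) \leq 1$) yields exactly the renormalized equation of Definition \ref{Def-RNS}. For $\Leftarrow$, I would apply the chain rule to the renormalized equation for $g_\alpha := \log(1+f_\alpha)$ using a truncated exponential: pick $\gamma_k \in C^1[0,+\infty)$ with $\gamma_k' \in L^\infty$, $\gamma_k(s) = e^s - 1$ for $s \leq \log k$, and $\gamma_k$ stationary beyond $s = \log(k+1)$. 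On the set $\{f_\alpha \leq k-1\}$ one has $\gamma_k(g_\alpha) = f_\alpha$ and $\gamma_k'(g_\alpha) = 1 + f_\alpha$, so the chain rule produces
\[
\partial_t \gamma_k(g_\alpha) + \xi \cdot \nabla_x \gamma_k(g_\alpha) = \gamma_k'(g_\alpha)\,\tfrac{Q_\alpha(f,f)}{1+f_\alpha} \quad \text{in } \mathscr{D}' \,,
\]
and passing $k \to \infty$ by dominated convergence (with majorants $f_\alpha$ and $|Q_\alpha|$) recovers the distributional equation.

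For part (2), the direction $\Leftarrow$ is trivial since $\beta(t) = \log(1+t)$ satisfies $|\beta'(t)| = 1/(1+t)$. For $\Rightarrow$, given any admissible $\beta \in C^1[0,+\infty)$ with $|\beta'(t)| \leq C/(1+t)$, the decisive observation is that the composition $\widetilde\beta(s) := \beta(e^s - 1)$ has derivative $\widetilde\beta'(s) = \beta'(e^s - 1)\,e^s$, and hence $|\widetilde\beta'(s)| \leq C$ uniformly in $s \geq 0$: the weight $1/(1+t)$ in the hypothesis is exactly compensated by $e^s = 1 + (e^s - 1)$. Thus $\widetilde\beta \in C^1[0,+\infty)$ has bounded derivative, and the chain rule applied to the renormalized equation for $g_\alpha$ gives
\[
\partial_t \widetilde\beta(g_\alpha) + \xi \cdot \nabla_x \widetilde\beta(g_\alpha) = \widetilde\beta'(g_\alpha)\,\tfrac{Q_\alpha(f,f)}{1+f_\alpha} = \beta'(f_\alpha)(1+f_\alpha)\cdot \tfrac{Q_\alpha(f,f)}{1+f_\alpha} = \beta'(f_\alpha) Q_\alpha(f,f) \,,
\]
together with the identity $\widetilde\beta(g_\alpha) = \beta(f_\alpha)$. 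The required integrability $\beta'(f_\alpha) Q^\pm_\alpha \in L^1_{\mathrm{loc}}$ follows from the pointwise bound $|\beta'(f_\alpha) Q^\pm_\alpha| \leq C\,Q^\pm_\alpha/(1+f_\alpha)$ and Definition \ref{Def-RNS}.

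The step I expect to be most delicate is the truncation argument in part (1)$\Leftarrow$: the smoothing of $\gamma_k$ on the transition interval $[\log k, \log(k+1)]$ contributes on $\{k-1 \leq f_\alpha \leq k\}$, and one must verify $L^1_{\mathrm{loc}}$ convergence of both $\gamma_k(g_\alpha) \to f_\alpha$ and $\gamma_k'(g_\alpha) Q_\alpha/(1+f_\alpha) \to Q_\alpha$ as $k \to \infty$. This follows from the uniform pointwise bounds $0 \leq \gamma_k(g_\alpha) \leq f_\alpha$ and $0 \leq \gamma_k'(g_\alpha)/(1+f_\alpha) \leq 1$, combined with $f_\alpha, Q^\pm_\alpha \in L^1_{\mathrm{loc}}$ and $\mathrm{meas}\{f_\alpha \geq k\} \to 0$ on compact sets, provided the interpolation is chosen monotonically so that the dominated convergence theorem applies cleanly.
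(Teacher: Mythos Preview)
Your proposal is correct and follows essentially the same approach as the paper, which simply defers to Lemma~II.1 of DiPerna--Lions \cite{Diperna-Lions}; the mollification-in-$(t,x)$ chain rule, the truncated exponential for (1)$\Leftarrow$, and the composition $\widetilde\beta(s)=\beta(e^s-1)$ for (2)$\Rightarrow$ are exactly the ingredients of that classical argument. Your write-up is in fact more detailed than what the paper provides.
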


\begin{proof}
The proof is almost the same as Lemma II.1 of \cite{Diperna-Lions}. We omit the details here for simplicity.
\end{proof}

    \begin{lemma}\label{distri-mild}
    For $\alpha \in \{ 1, \cdots, s \}$, the following statements hold:
	\begin{enumerate}
		\item {If $Q^\pm(f,f) \in L^1_{loc}$, then $f$ is a distributional solution of \eqref{BE-MP} if and only if $f$ is a mild solution of \eqref{BE-MP}; }
		\item {$f$ is a renormalized solution of \eqref{BE-MP} if and only if $f$ is a mild solution of \eqref{BE-MP} and $(1+f_\alpha )^{-1} Q^\pm_\alpha(f,f) \in L^1_{loc}$. }
	\end{enumerate}
\end{lemma}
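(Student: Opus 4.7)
The plan is to reduce the whole lemma to a one-dimensional question along characteristics via the shear substitution $f^\sharp_\alpha(t,x,\mathbf{Z}_\alpha) = f_\alpha(t, x + t\xi, \mathbf{Z}_\alpha)$. Since the map $(t,x,\mathbf{Z}_\alpha) \mapsto (t, x+t\xi, \mathbf{Z}_\alpha)$ is a volume-preserving diffeomorphism of $(0,\infty) \times \R^3 \times \mathcal{Z}_\alpha$, the transport operator $\partial_t + \xi \cdot \nabla_x$ acting on $f_\alpha$ corresponds to $\partial_t$ acting on $f^\sharp_\alpha$, and membership in $L^1_{loc}$ is preserved. Hence $\partial_t f_\alpha + \xi \cdot \nabla_x f_\alpha = Q_\alpha(f,f)$ in $\mathscr{D}'$ is equivalent to $\partial_t f^\sharp_\alpha = Q_\alpha(f,f)^\sharp$ in $\mathscr{D}'$, and likewise for any renormalized version $\beta(f_\alpha)$.

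For part (1), under the hypothesis $Q^\pm_\alpha(f,f) \in L^1_{loc}$, if $f$ is a distributional solution then $f^\sharp_\alpha \in L^1_{loc}$ has a distributional $t$-derivative $Q_\alpha(f,f)^\sharp \in L^1_{loc}$. The standard fact that an $L^1_{loc}$ function whose weak $t$-derivative lies in $L^1_{loc}$ coincides a.e.\ with a function absolutely continuous in $t$ (for a.e.\ value of the remaining variables) yields the mild identity \eqref{mild} directly. The converse is obtained by differentiating the mild identity in $t$ in the distributional sense.

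For the forward implication of part (2), the definition of renormalized solution already contains the integrability $(1+f_\alpha)^{-1}Q^\pm_\alpha(f,f) \in L^1_{loc}$ and the assertion that $\log(1+f_\alpha)$ satisfies a transport equation with locally integrable right-hand side $(1+f_\alpha)^{-1}Q_\alpha(f,f)$. Applying part (1) to that renormalized equation shows that $\log(1+f^\sharp_\alpha)$ is absolutely continuous in $t$ for a.e.\ $(x,\mathbf{Z}_\alpha)$, and hence so is $f^\sharp_\alpha = \exp(\log(1+f^\sharp_\alpha)) - 1$. For each such $(x,\mathbf{Z}_\alpha)$ the function $f^\sharp_\alpha(\cdot,x,\mathbf{Z}_\alpha)$ is locally bounded on $[0,T]$ by some constant $C(T,x,\mathbf{Z}_\alpha)$, so the pointwise pointwise estimate $Q^{\pm}_\alpha(f,f)^\sharp \leq (1 + C(T,x,\mathbf{Z}_\alpha))(1+f^\sharp_\alpha)^{-1}Q^{\pm}_\alpha(f,f)^\sharp$ places $Q^{\pm}_\alpha(f,f)^\sharp$ in $L^1(0,T)$. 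The chain rule for absolutely continuous functions then yields $\partial_t f^\sharp_\alpha = (1+f^\sharp_\alpha)\,\partial_t \log(1+f^\sharp_\alpha) = Q_\alpha(f,f)^\sharp$ a.e.\ in $t$, and integrating produces the mild identity \eqref{mild}. For the converse, if $f$ is mild and $(1+f_\alpha)^{-1}Q^\pm_\alpha \in L^1_{loc}$, then $f^\sharp_\alpha$ is absolutely continuous in $t$ with derivative $Q_\alpha(f,f)^\sharp$, so by the chain rule $\log(1+f^\sharp_\alpha)$ is absolutely continuous in $t$ with derivative $(1+f^\sharp_\alpha)^{-1}Q_\alpha(f,f)^\sharp \in L^1_{loc}$. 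Translating this back to the original variables through the equivalence in Lemma \ref{Lemma 3.1} applied with $\beta(t) = \log(1+t)$ yields the renormalized equation of Definition \ref{Def-RNS}.

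The main obstacle sits in the forward direction of part (2): one has the mild form only for $\log(1+f_\alpha)$, and must upgrade it to the mild form for $f_\alpha$ itself under the weaker a priori integrability $(1+f_\alpha)^{-1}Q^\pm_\alpha \in L^1_{loc}$. The key is to leverage the local (in $t$, for a.e.\ $(x,\mathbf{Z}_\alpha)$) boundedness of $f^\sharp_\alpha$ inherited from the absolute continuity of $\log(1+f^\sharp_\alpha)$ in order to promote $(1+f^\sharp_\alpha)^{-1}Q^{\pm,\sharp}_\alpha \in L^1(0,T)$ to $Q^{\pm,\sharp}_\alpha \in L^1(0,T)$. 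This is precisely the measurable-in-parameter argument at the heart of the DiPerna--Lions renormalization framework, and once it is set up the remaining manipulations reduce to the classical chain rule for absolutely continuous functions.
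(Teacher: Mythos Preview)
Your proposal is correct and follows exactly the approach of DiPerna--Lions, Lemma II.2, which is precisely what the paper invokes: its proof reads in full ``The proof is almost the same as Lemma II.2 of \cite{Diperna-Lions}. We omit the details here for simplicity.'' Your write-up supplies those omitted details---the shear substitution, the Fubini/absolute-continuity argument along characteristics, and the key step of using the local-in-$t$ boundedness of $f^\sharp_\alpha$ (inherited from the absolute continuity of $\log(1+f^\sharp_\alpha)$) to promote $(1+f^\sharp_\alpha)^{-1}Q^{\pm,\sharp}_\alpha \in L^1(0,T)$ to $Q^{\pm,\sharp}_\alpha \in L^1(0,T)$---and these are precisely the ingredients of the DiPerna--Lions argument.
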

\begin{proof}

The proof is almost the same as Lemma II.2 of \cite{Diperna-Lions}. We omit the details here for simplicity.
\end{proof}

Denote by
\begin{equation}\label{F-sharp}{\small
\begin{aligned}
F^\sharp_\alpha (t,x,\xi)&= \int_{0}^{t} L_\alpha (f)^\sharp(s ,x,\xi )ds,\quad \alpha \in \{1,\cdots,s_0\},\\
F^\sharp_\alpha (t,x,\xi ,I)&= \int_{0}^{t} L_\alpha (f)^\sharp(s ,x,\xi,I)ds,\quad \alpha \in \{s_0+1,\cdots,s\} \,,
\end{aligned}}
\end{equation}
where the operator $L_\alpha (f)$ is defined in \eqref{L-alpha-def}.

\begin{lemma}\label{Lemma3.3}
	Assume that 
	\begin{equation*}
	\begin{aligned}
	L_\alpha (f)&\in L^1((0,T)\times B_R \times B_R ),\quad \alpha \in \{1,\cdots,s_0\},\\
	L_\alpha (f)&\in L^1((0,T)\times B_R \times B_R \times (0,R)), \quad \alpha \in \{s_0+1,\cdots,s\},
	\end{aligned}
	\end{equation*}
	for all $R,T<\infty$. $f$ is a mild solution of \eqref{BE-MP} if and only if the following conclusions hold:
	\begin{enumerate}
	\item {For all $\alpha \in \{1,\cdots,s_0\},\ 0<s<t<\infty$, one has
	\begin{equation}\label{3.5}
	\begin{aligned}
	&f^\sharp_\alpha (t,x,\xi) -f^\sharp_\alpha (s,x,\xi ) \exp(-(F^\sharp _{\alpha} (t)-F^\sharp _{\alpha} (s)))\\
	=&\int_{s}^{t} Q^+_{\alpha } (f,f) ^\sharp (s,x,\xi ) \exp(-(F^\sharp _{\alpha } (t)- F^\sharp _{\alpha } (\sigma))) d\sigma \quad  a.e.\ x,\xi;
	\end{aligned}
	\end{equation}	}		
	\item {For all $\alpha \in \{s_0+1,\cdots,s\},\ 0<s<t<\infty$, one has
	\begin{equation}\label{3.6}
	\begin{aligned}
	&f^\sharp_\alpha (t,x,\xi,I) -f^\sharp_\alpha (s,x,\xi ,I) \exp(-(F^\sharp _{\alpha} (t)-F^\sharp _{\alpha} (s)))\\
	=&\int_{s}^{t} Q^+_{\alpha } (f,f) ^\sharp (s,x,\xi ,I) \exp(-(F^\sharp _{\alpha } (t)- F^\sharp _{\alpha } (\sigma))) d\sigma \quad  a.e.\ x,\xi,I.
	\end{aligned}
	\end{equation}	}		
	\end{enumerate}
\end{lemma}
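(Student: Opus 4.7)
The plan is a standard integrating-factor argument along the free-transport characteristics $(t, x+t\xi, \mathbf{Z}_\alpha)$. Along these characteristics the Boltzmann equation formally reads
\[
\tfrac{\partial}{\partial t} f^\sharp_\alpha = Q^+_\alpha(f,f)^\sharp - f^\sharp_\alpha \cdot L_\alpha(f)^\sharp,
\]
since $Q^-_\alpha(f,f)=f_\alpha L_\alpha(f)$ by \eqref{Qn-alpha}. The function $F^\sharp_\alpha$ defined in \eqref{F-sharp} is the $t$-primitive of the nonnegative integrand $L_\alpha(f)^\sharp$, so by the hypothesis $L_\alpha(f)\in L^1((0,T)\times B_R\times B_R(\times(0,R)))$ and Fubini, for a.e.\ $(x,\mathbf{Z}_\alpha)$ the map $t\mapsto F^\sharp_\alpha(t,x,\mathbf{Z}_\alpha)$ is absolutely continuous and nondecreasing; in particular the exponential weight $\exp(-(F^\sharp_\alpha(t)-F^\sharp_\alpha(\sigma)))$ lies in $[0,1]$ for $\sigma\le t$. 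These properties make every manipulation below legitimate in the pointwise a.e.\ sense.

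\textbf{Forward direction.} Assuming $f$ is a mild solution in the sense of Definition \ref{def-mild}, for a.e.\ $(x,\mathbf{Z}_\alpha)$ the map $t\mapsto f^\sharp_\alpha(t,x,\mathbf{Z}_\alpha)$ is absolutely continuous with
\[
\tfrac{d}{dt}f^\sharp_\alpha(t) = Q^+_\alpha(f,f)^\sharp(t) - f^\sharp_\alpha(t)\,L_\alpha(f)^\sharp(t) \quad \textrm{a.e.\ }t.
\]
Multiplying by the integrating factor $\exp(F^\sharp_\alpha(t))$ and using the product rule (which is valid since both $f^\sharp_\alpha$ and $F^\sharp_\alpha$ are absolutely continuous in $t$) yields
\[
\tfrac{d}{dt}\bigl[f^\sharp_\alpha(t)\exp(F^\sharp_\alpha(t))\bigr] = Q^+_\alpha(f,f)^\sharp(t)\exp(F^\sharp_\alpha(t)).
\]
Integrating from $s$ to $t$ and then multiplying by $\exp(-F^\sharp_\alpha(t))$ produces exactly \eqref{3.5} for $\alpha\in\{1,\ldots,s_0\}$ and \eqref{3.6} for $\alpha\in\{s_0+1,\ldots,s\}$.

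\textbf{Converse direction.} Assume \eqref{3.5}--\eqref{3.6} hold. Multiplying each identity by $\exp(F^\sharp_\alpha(t))$ rewrites it as
\[
f^\sharp_\alpha(t)\exp(F^\sharp_\alpha(t)) - f^\sharp_\alpha(s)\exp(F^\sharp_\alpha(s)) = \int_{s}^{t} Q^+_\alpha(f,f)^\sharp(\sigma)\exp(F^\sharp_\alpha(\sigma))\,d\sigma,
\]
which (together with the fact that $\exp(F^\sharp_\alpha)\ge 1$) forces $Q^+_\alpha(f,f)^\sharp(\cdot,x,\mathbf{Z}_\alpha)\in L^1(0,T)$ for a.e.\ $(x,\mathbf{Z}_\alpha)$, and also shows that $t\mapsto f^\sharp_\alpha(t)\exp(F^\sharp_\alpha(t))$ is absolutely continuous with derivative $Q^+_\alpha(f,f)^\sharp(t)\exp(F^\sharp_\alpha(t))$. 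Since \eqref{3.5}--\eqref{3.6} dominate $f^\sharp_\alpha(t)$ by $f^\sharp_\alpha(s)+\int_s^t Q^+_\alpha(f,f)^\sharp\,d\sigma$, the function $f^\sharp_\alpha$ is locally bounded in $t$; combined with $L_\alpha(f)^\sharp\in L^1_{\rm loc}$ this gives $Q^-_\alpha(f,f)^\sharp=f^\sharp_\alpha L_\alpha(f)^\sharp\in L^1_{\rm loc}$. Undoing the product rule via $\partial_t F^\sharp_\alpha=L_\alpha(f)^\sharp$ then yields
\[
\tfrac{d}{dt}f^\sharp_\alpha(t) = Q^+_\alpha(f,f)^\sharp(t) - f^\sharp_\alpha(t)L_\alpha(f)^\sharp(t) = Q_\alpha(f,f)^\sharp(t),
\]
and integration from $s$ to $t$ gives the mild formula \eqref{mild}, so $f$ is a mild solution.

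The substance is essentially bookkeeping, and the \emph{main obstacle} is justifying the a.e.\ $(x,\mathbf{Z}_\alpha)$ regularity required to treat $f^\sharp_\alpha(t,x,\mathbf{Z}_\alpha)$ and $\exp(F^\sharp_\alpha(t,x,\mathbf{Z}_\alpha))$ as simultaneously absolutely continuous in $t$ so that the product rule and fundamental theorem of calculus apply on the same null set. This is handled by combining Fubini on the hypothesis $L_\alpha(f)\in L^1((0,T)\times B_R\times B_R(\times(0,R)))$ (which yields the integrability of $L_\alpha(f)^\sharp$ along a.e.\ characteristic) with the integrability of $Q^\pm_\alpha(f,f)^\sharp$ supplied either by Definition \ref{def-mild} in the forward direction or, in the converse direction, extracted from \eqref{3.5}--\eqref{3.6} themselves together with nonnegativity of $F^\sharp_\alpha$ and $f^\sharp_\alpha$.
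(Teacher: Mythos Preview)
Your proof is correct and follows essentially the same integrating-factor argument along characteristics that the paper uses: multiply the mild ODE by $\exp(F^\sharp_\alpha)$ and integrate for the forward direction, and reverse the manipulation for the converse. You spell out the integrability of $Q^\pm_\alpha(f,f)^\sharp$ and the absolute-continuity justifications a bit more carefully than the paper does, but the strategy is identical.
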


\begin{proof}

Without loss of generality, we only consider the case $\alpha \in \{ s_0 + 1, \cdots, s \}$. Assume that $f$ is a mild solution of \eqref{BE-MP}. Then $Q^\pm_\alpha (f,f)^\sharp (t,x,\xi ,I) \in L^1_{loc}(0,+\infty )$ for almost all $(x,\xi ,I) \in \R^3 \times \R^3\times \R_+ $ and
	\begin{equation*}
		f^\sharp_\alpha (t,x,\xi ,I) - f^\sharp_\alpha (s,x,\xi ,I) =\int_{s}^{t} Q_\alpha (f,f)^\sharp (\sigma ,x,\xi ,I)d\sigma \quad (\forall 0<s<t<\infty).
	\end{equation*}
	Therefore, for such $(x,\xi ,I)$, one has
	\begin{equation}\label{3.27}
		\tfrac{\partial}{\partial t} f^\sharp_\alpha (t,x,\xi ,I) + f^\sharp_\alpha L_\alpha (f)^\sharp (t,x,\xi ,I) = Q^+_\alpha (f,f)^\sharp (t,x,\xi ,I) \quad a.e.\ t>0.
	\end{equation}
Note that for almost all $(x,\xi ,I)\in \R^3 \times \R^3\times \R_+$, $ F^\sharp_\alpha(t,x,\xi ,I)$ is absolutely continuous with respect to $t$. From multiplying \eqref{3.27} by $\exp (F^\sharp_\alpha(t,x,\xi ,I))$, we then find 
	\begin{equation*}
		\begin{aligned}
			&f^\sharp_\alpha (t,x,\xi ,I) -f^\sharp_\alpha (s,x,\xi ,I) \exp(-(F^\sharp _{\alpha} (t)-F^\sharp _{\alpha} (s)))\\
			=&\int_{s}^{t} Q^+_{\alpha } (f,f) ^\sharp (\sigma,x,\xi ,I) \exp(-(F^\sharp _{\alpha } (t)- F^\sharp _{\alpha } (\sigma))) d\sigma \quad a.e.\ x,\xi ,I.
		\end{aligned}
	\end{equation*} 

Conversely, by \eqref{3.6}, $f^\sharp_\alpha (t,x,\xi ,I)$ is absolutely continuous with respect to $t$ for almost all $(x,\xi ,I)$ and
	\begin{equation*}
		\begin{aligned}
			&f^\sharp_\alpha (t,x,\xi ,I)\exp (F^\sharp _{\alpha}(t,x,\xi ,I)) -f^\sharp_\alpha (s,x,\xi ,I) \exp(F^\sharp _{\alpha} (s,x,\xi ,I))\\
			=&\int_{s}^{t} Q^+_{\alpha } (f,f) ^\sharp \exp( F^\sharp _{\alpha } (\sigma,x,\xi ,I)) d\sigma \quad a.e.\ x,\xi ,I.
		\end{aligned}
	\end{equation*} 
	Fixing $s$ and taking derivatives for $t$, one has
	\begin{equation*}
		\tfrac{\partial}{\partial t}f^\sharp (t,x,\xi ,I)+f^\sharp_\alpha L_\alpha (f)^\sharp (t,x,\xi ,I) = Q^+_\alpha (f,f)^\sharp (t,x,\xi ,I) \quad a.e.\ t,x,\xi ,I.
	\end{equation*}
	According to the above proof procedure, we have that $f$ is a mild solution of \eqref{BE-MP}. The proof of Lemma \ref{Lemma3.3} is finished.
\end{proof}

%%%%%%%%%%%%%%%%%%%%%%%%%%%NewSection%%%%%%%%%%%%%%%%%%%%%%%%%%%%%%%%%%%%%

\section{Approximate problem and global uniform bounds}\label{Sec:Apr}

In this section, we mainly construct the following approximate problem:
\begin{equation} \label{approximation}
		\tfrac{\partial f^n}{\partial t} +\xi \cdot \nabla_x f^n =\tilde{Q}_n(f^n, f^n) \,, \ f^n|_{t=0}=f^n_{0} \,,
\end{equation} 
where $f^n_{0} = ( f_{1,0}^n, \cdots, f^n_{s,0} )$ and $\tilde{Q}_n=(\tilde{Q}_{1,n},\cdots,\tilde{Q}_{s,n})$ are respectively the approximated initial and nonlinear collision operator that will be explicitly constructed later. Then we will prove the existence, positivity and uniqueness of the approximate problem. Moreover, the regularity of the solution to \eqref{approximation} will also be verified. At the end, we shall derive the conservation laws, entropy inequality and the uniform-in-$n$ bounds of the approximate problem \eqref{approximation}.

\subsection{Approximation of initial data $f_0$ }

In this subsection, we focus on constructing the approximation of the initial data $f_0$ given in \eqref{f0}-\eqref{fs}.

\begin{lemma}\label{Lemma 4.8}
	Assume that $f_{\alpha,0 } \in L^1_2 ( \R^3 \times \R^3)_+ $ for $\alpha \in \{1,\cdots ,s_0\}$ satisfy \eqref{fs0}, hence,
	\begin{equation*}
		\iint_{\R^3 \times \R^3 } f_{\alpha,0 } (x,\xi ) \cdot (1+|x|^2 +|\xi|^2 + |\log  f_{\alpha,0 } |)  dxd\xi <+ \infty .
	\end{equation*}
	Then there exists a sequence $\{f^n_{\alpha,0 }\}_{n\geq 1} \subset \mathscr{S} (\R^3 \times \R^3 )$ (the Schwartz space over $\R^3 \times \R^3$) and a constant $C > 0$ such that 
\begin{equation}\label{Ap-fs0}
  \begin{aligned}
    & \lim \limits_{n\rightarrow \infty}  \iint_{\R^3 \times \R^3 } (1+|x|^2 +|\xi|^2) |f^n_{\alpha,0 } - f_{\alpha,0 }| dxd\xi=0 \,, \ f^n_{\alpha,0 } \geq \tfrac{1}{n} \exp (- \tfrac{|x|^2+ |\xi|^2 }{2}) \,, \\
    & \iint_{\R^3 \times \R^3} f^n_{\alpha,0 }  (1+|x|^2 +|\xi|^2 + |\log f^n_{\alpha,0 }|)dxd\xi  \leq C \,, \\
    & \limsup \limits_{n\rightarrow \infty} \iint_{\R^3 \times \R^3 } f^n_{\alpha,0 } \log f^n_{\alpha,0 }  dxd\xi  \leq \iint_{\R^3 \times \R^3 } f_{\alpha,0 } \log  f_{\alpha,0 } dxd\xi \,.
  \end{aligned}
\end{equation}
\end{lemma}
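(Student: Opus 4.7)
The plan is a three-step regularization: truncate $f_{\alpha,0}$ to bound both its support and its $L^\infty$ norm, mollify to gain smoothness, then add a small Gaussian to enforce strict positivity. Concretely, set
\begin{equation*}
g_n(x,\xi) := f_{\alpha,0}(x,\xi)\,\mathbf{1}_{\{f_{\alpha,0}\leq n\}}\,\mathbf{1}_{\{|x|^2+|\xi|^2\leq n^2\}},
\end{equation*}
fix a nonnegative mollifier $\rho\in C_c^\infty(\R^3\times\R^3)$ with $\int\rho=1$, put $\rho_\eps(x,\xi)=\eps^{-6}\rho(x/\eps,\xi/\eps)$, and, for $\eps_n\downarrow 0$ to be chosen, define
\begin{equation*}
\tilde g_n := g_n*\rho_{\eps_n},\qquad f^n_{\alpha,0} := \tilde g_n + \tfrac{1}{n}\exp\!\bigl(-\tfrac{|x|^2+|\xi|^2}{2}\bigr).
\end{equation*}
Since $0\leq g_n\leq n$ with compact support in $B_n\times B_n$, $\tilde g_n\in C_c^\infty(\R^3\times\R^3)$, hence $f^n_{\alpha,0}\in\mathscr{S}(\R^3\times\R^3)_+$, and the pointwise lower bound in \eqref{Ap-fs0} holds by construction.

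For the weighted $L^1$ convergence, $g_n\uparrow f_{\alpha,0}$ a.e., so monotone convergence yields $(1+|x|^2+|\xi|^2)\,g_n\to (1+|x|^2+|\xi|^2)\,f_{\alpha,0}$ in $L^1$; since each $g_n$ is compactly supported, $g_n*\rho_\eps\to g_n$ in $L^1_2$ as $\eps\downarrow 0$, so a diagonal choice $\eps_n\downarrow 0$ gives $\tilde g_n\to f_{\alpha,0}$ in $L^1_2$, while the Gaussian tail contributes only $O(1/n)$. Turning to the entropy of $g_n$: the positive part satisfies $g_n\log^+ g_n\leq f_{\alpha,0}\log^+ f_{\alpha,0}\in L^1$ (by Corollary \ref{cor4.5}, since $t\mapsto t\log^+ t$ is nondecreasing), and the negative part is uniformly dominated via the standard splitting
\begin{equation*}
t\log^- t \leq c(|x|^2+|\xi|^2)e^{-c(|x|^2+|\xi|^2)} + c(|x|^2+|\xi|^2)\,t,\quad 0\leq t\leq 1,\ c>0,
\end{equation*}
so that dominated convergence yields $\int g_n\log g_n\,dxd\xi\to\int f_{\alpha,0}\log f_{\alpha,0}\,dxd\xi$. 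Applying Jensen's inequality pointwise to the convex function $\phi(t)=t\log t$ on $[0,\infty)$ shows that mollification does not increase entropy: $\int\tilde g_n\log\tilde g_n\,dxd\xi\leq\int g_n\log g_n\,dxd\xi$.

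The main obstacle, and the only delicate step, is controlling the entropy increment produced by the Gaussian perturbation $\mu_n:=\tfrac{1}{n}e^{-(|x|^2+|\xi|^2)/2}$. Writing $a=\tilde g_n$, decompose $\R^3\times\R^3=\{a\geq\mu_n\}\cup\{a<\mu_n\}$. On $\{a\geq\mu_n\}$, the bound $\log(1+s)\leq s$ combined with $a\leq n$ gives
\begin{equation*}
(a+\mu_n)\log(a+\mu_n)-a\log a \;\leq\; \mu_n\log a + 2\mu_n \;\leq\; \mu_n(\log n + 2),
\end{equation*}
whose integral is $O((\log n)/n)$. On $\{a<\mu_n\}$, one has $\mu_n<1/e$ once $n\geq 3$, so the monotonicity of $t\mapsto t|\log t|$ on $(0,1/e)$ yields both $|a\log a|\leq\mu_n|\log\mu_n|$ and $(a+\mu_n)|\log(a+\mu_n)|\leq 2\mu_n|\log(2\mu_n)|$, whose integrals are also $O((\log n)/n)$. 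Combining these two estimates with the preceding paragraph,
\begin{equation*}
\limsup_{n\to\infty}\int f^n_{\alpha,0}\log f^n_{\alpha,0}\,dxd\xi \;\leq\; \lim_{n\to\infty}\int g_n\log g_n\,dxd\xi \;=\; \int f_{\alpha,0}\log f_{\alpha,0}\,dxd\xi,
\end{equation*}
which is the limsup inequality in \eqref{Ap-fs0}. The uniform bound on $\int f^n_{\alpha,0}(1+|x|^2+|\xi|^2+|\log f^n_{\alpha,0}|)\,dxd\xi$ then follows from the $L^1_2$ convergence, the limsup estimate just proved, and the elementary lower bound $t\log t\geq -1/e$ (which reduces control of $|\log f^n_{\alpha,0}|$ to control of $\log^+ f^n_{\alpha,0}$ plus a bounded term absorbed by the weighted $L^1$ norm).
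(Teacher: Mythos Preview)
Your proof is correct and follows essentially the same three-step scheme as the paper's: spatial truncation, mollification (with Jensen's inequality for $t\mapsto t\log t$ to show entropy does not increase), and addition of a Gaussian tail. The technical differences are minor---you add an extra height truncation $\mathbf{1}_{\{f_{\alpha,0}\le n\}}$ (which lets you use $\log a\le\log n$ in the Gaussian step), and you bound the Gaussian entropy increment via a two-region split $\{a\ge\mu_n\}\cup\{a<\mu_n\}$ rather than the paper's mean-value estimate $f^n_{\alpha,0}\log f^n_{\alpha,0}\le \tilde f^n_{\alpha,0}\log\tilde f^n_{\alpha,0}+\varphi_n+\varphi_n^2+\tilde f^n_{\alpha,0}\varphi_n$. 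One small caution on your final sentence: the pointwise bound $t\log t\ge -1/e$ is not by itself integrable over $\R^6$, so what you actually need (and correctly gesture at with ``absorbed by the weighted $L^1$ norm'') is the standard splitting of $\{0<f<1\}$ into $\{f<e^{-|x|^2-|\xi|^2}\}$ and its complement, which bounds $\int f\log^- f$ by $C+\|f\|_{1,2}$---this is precisely Corollary~\ref{cor4.5}.
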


\begin{proof}
 Let $h^n_\alpha (x,\xi ) = f_{\alpha,0 } (x,\xi )  \chi_n (x,\xi )$ for $ n \geq 1$. Here $\chi_n = \mathbf{1}_{ \{ (x,\xi) : |x|^2+ |\xi|^2 \leq n^2 \} }$. Obviously, by the Dominated Convergence Theorem,
	\begin{equation}\label{eq4.44}
		\lim \limits_{n\rightarrow \infty}  \iint_{\R^3 \times \R^3 } (1+|x|^2 +|\xi|^2) |h^n_\alpha - f_{\alpha,0 }| d x d \xi = 0 \,.
	\end{equation}
	Assume that $j_{\delta_n} $ is a standard mollification. Let 
	\begin{equation*}
		\tilde{f}^n_{\alpha,0 } (x,\xi ) = \iint_{\R^3 \times \R^3 } j_{\delta_n } (x-y, \xi - \zeta ) h^n_\alpha (y,\zeta)dyd\zeta .
	\end{equation*}
	As a result, $\tilde{f}^n_{\alpha,0 } \in \mathscr{S} (x,\xi )$.
	For any $ n \geq 1$, there exists $\delta_n^* \in (0,1)$ such that for all $\delta_n \in (0, \delta_n^*)$,
	\begin{equation*}
		\iint_{\R^3 \times \R^3 } |\tilde{f}^n_{\alpha,0 } -h^n_\alpha | dxd\xi \leq \tfrac{1}{n(n+2)^2 }.
	\end{equation*}
Due to $\mathrm{supp} \tilde{f}^n_{\alpha ,0} , \mathrm{supp} h^n_\alpha \subseteq
 \{ (x,\xi ) : |x|^2 + |\xi|^2 \leq (n+1)^2 \}$, one then has
	\begin{equation}\label{eq4.45}
		\iint_{\R^3 \times \R^3 } |\tilde{f}^n_{\alpha,0 } -h^n_\alpha |(1+|x|^2 + |\xi|^2  ) dxd\xi \leq \tfrac{1}{n} \,.
	\end{equation}
Combining \eqref{eq4.44} and \eqref{eq4.45}, we get
	\begin{equation} \label{eq4.46}
		\lim \limits_{n\rightarrow \infty } \iint_{\R^3 \times \R^3 } |\tilde{f}^n_{\alpha,0 } -f_{\alpha,0 } | (1+|x|^2 + |\xi|^2  ) dxd\xi =0.
	\end{equation}
As a consequence, there exists a constant $C_1 > 0$ such that  
	\begin{equation} \label{eq4.47}
		\sup \limits_{n\geq 1 } \iint_{\R^3 \times \R^3 } \tilde{f}^n_{\alpha,0 } (1+|x|^2 + |\xi|^2) dxd\xi \leq C_1 .
	\end{equation}
Note that $\iint_{\R^3 \times \R^3 } j_{\delta_n} dxd\xi =1$ and $ t\log t $ is a continuous convex function over $\R_+$. The Jensen's inequality then implies
	\begin{equation*}
		\begin{aligned}
			&\tilde{f}^n_{\alpha,0 } \log \tilde{f}^n_{\alpha,0 } \leq \iint_{\R^3 \times \R^3 } j_{\delta_n } (x-y, \xi - \zeta ) ( h^n_\alpha \log h^n_\alpha (y,\zeta ))dyd\zeta, \\
			&\tilde{f}^n_{\alpha,0 } \log ^+ \tilde{f}^n_{\alpha,0 } \leq \iint_{\R^3 \times \R^3 } j_{\delta_n } (x-y, \xi - \zeta ) | h^n_\alpha \log h^n_\alpha (y,\zeta )|dyd\zeta.
		\end{aligned}
	\end{equation*}
Integrating above two inequalities with respect to $(x,\xi ) $ and combining the definition of $h^n_\alpha$, one obtains
	\begin{equation}\label{eq4.48}
		\begin{aligned}
            \iint_{\R^3 \times \R^3 } \tilde{f}^n_{\alpha,0 } \log \tilde{f}^n_{\alpha,0 } dxd\xi \leq & \iint_{\R^3 \times \R^3 } f_{\alpha,0 } \log f_{\alpha,0 } dxd\xi, \\
            \iint_{\R^3 \times \R^3 } \tilde{f}^n_{\alpha,0 } \log ^+ \tilde{f}^n_{\alpha,0 } dxd\xi \leq &  \iint_{\R^3 \times \R^3 } f_{\alpha,0 } | \log f_{\alpha,0 } | dxd \xi.
	\end{aligned}
	\end{equation}
	By \eqref{eq4.46} and \eqref{eq4.48}, we know $\tilde{f}^n_{\alpha,0 }$ converges strongly to $f_{\alpha,0 }$ in $L^1 (\R^3 \times \R^3 )$ and
	\begin{equation*}
		\begin{aligned}
   \iint_{\R^3 \times \R^3 } f_{\alpha,0 } \log f_{\alpha,0 } dxd\xi
  & \geq \limsup \limits_{n\rightarrow \infty }  \iint_{\R^3 \times \R^3 } \tilde{f}^n_{\alpha,0 } \log \tilde{f}^n_{\alpha,0 } dxd\xi .
		\end{aligned}
	\end{equation*}

Besides, by Corollary \ref{cor4.5}, the assumption \eqref{fs0} and the first inequality in \eqref{eq4.48}, one concludes that there exists a constant $C_0 > 0$ such that for all $n \geq 1$
	\begin{equation*}
		\iint_{\R^3 \times \R^3  } \tilde{f}^n_{\alpha,0 } \log^- \tilde{f}^n_{\alpha,0 } dxd\xi \leq C_0.
	\end{equation*}
Combining the second inequality in \eqref{eq4.48}, one sees
	\begin{equation*}
		\sup \limits_{n} \iint_{\R^3 \times \R^3  } \tilde{f}^n_{\alpha,0 }| \log \tilde{f}^n_{\alpha,0 }| dxd\xi \leq \iint_{\R^3 \times \R^3 } f_{\alpha,0 } |\log f_{\alpha,0 }| dxd\xi+ C_0.
	\end{equation*}
In summary, the above inequalities show that such $\tilde{f}^n_{\alpha,0 }$ satisfies the first, the third and the last relations in \eqref{Ap-fs0} of Lemma \ref{Lemma 4.8}. 

In order to obtain a sequence of functions that satisfies all the conditions of the Lemma \ref{Lemma 4.8}, let
\begin{equation*}
	f^n_{\alpha,0 } = \tilde{f}^n_{\alpha,0 } + \tfrac{1}{n} \exp (-\tfrac{|x|^2 +|\xi|^2 } {2}) \triangleq \tilde{f}^n_{\alpha,0 } + \varphi_n \in \mathscr{S} (\R^3 \times \R^3 ) .
\end{equation*}
Obviously, $f^n_{\alpha,0 }$ satisfies the first two conditions in \eqref{Ap-fs0}. Using the elementary inequality $(u+v) \log (u+v ) \leq u \log u + v \log v + u+ v$ and letting $u=\tilde{f}^n_0, v =\varphi_n,$ one has
	\begin{align}\label{eq4.49}
		\no \iint_{\R^3 \times \R^3 } f^n_{\alpha,0 } \log ^+ f^n_{\alpha,0 } d x d \xi \leq & \iint_{\R^3 \times \R^3 } \tilde{f}^n_{\alpha,0 } | \log \tilde{f}^n_{\alpha,0 } | dxd\xi + \iint_{\R^3 \times \R^3 } \tilde{f}^n_{\alpha,0 } dxd\xi  \\
		+& \iint_{\R^3 \times \R^3 } (\tfrac{1+\log n}{n} +\tfrac{|x|^2 +|\xi|^2 } {2n} ) \exp (-\tfrac{|x|^2 +|\xi|^2 } {2}) dxd\xi \leq C .
	\end{align}
Since $\varphi_n \to 0 $ and $\tilde{f}^n_{\alpha,0 } \to f_{\alpha ,0}$ strongly in $L^1(\R^3\times \R^3)$, one knows that $ f^n_{\alpha ,0}$ converges strongly to $f_{\alpha ,0}$ in $L^1(\R^3\times \R^3)$. Similarly in arguments about $\tilde{f}^n_{\alpha,0 }$ above, we can conclude that 
\begin{equation*}
	\begin{aligned}
		\iint_{\R^3 \times \R^3 } f^n_{\alpha,0 } \log ^- f^n_{\alpha,0 } dxd\xi \leq C_2
	\end{aligned}
\end{equation*}
for some $C_2 > 0$. Then the third inequality in \eqref{Ap-fs0} holds. Besides, it is derived from the Differential Mean Value Theorem and $\log x < x$ for $x > 0$ that 
\begin{equation*}
	f^n_{\alpha,0 } \log f^n_{\alpha,0 } \leq \tilde{f}^n_{\alpha,0 } \log \tilde{f}^n_{\alpha,0 } +\varphi_n + \varphi_n ^2 +\tilde{f}^n_{\alpha,0 } \varphi_n \,,
\end{equation*}
which concludes that $ \limsup \limits_{n\rightarrow \infty }  \iint_{\R^3 \times \R^3 } f^n_{\alpha,0 } \log f^n_{\alpha,0 } dxd\xi \leq \iint_{\R^3 \times \R^3 } f_{\alpha,0 } \log f_{\alpha,0 } dxd\xi $. Namely, the forth relation in \eqref{Ap-fs0} holds. The proof of Lemma \ref{Lemma 4.8} is therefore finished.
\end{proof}

\begin{lemma}\label{Lemma 4.9}
	Assume that $f_{\alpha,0 } \in L^1_3( \R^3 \times \R^3 \times \R_+)_+ $ for $\alpha \in \{s_0+1,\cdots ,s\}$ satisfy
	\begin{equation}\label{eq 4.51}
		\iiint_{\R^3 \times \R^3 \times \R_+ } f_{\alpha,0 } (1+|x|^2 +|\xi|^2+I + |\log (I ^{1-\frac{\delta(\alpha )}{2} } f_{\alpha,0 } )|)  dxd\xi dI<+ \infty .
	\end{equation}
	Then there exists a sequence $\{f^n_{\alpha,0 }\}_{n\geq 1} \subset \mathscr{S} (\R^3 \times \R^3 \times \R_+ ) $ (the Schwartz space over $\R^3 \times \R^3 \times \R_+$) and a constant $C > 0$ such that 
	\begin{enumerate}
		\item {$\lim \limits_{n\rightarrow \infty}  \iiint_{\R^3 \times \R^3 \times \R_+ } (1+|x|^2 +|\xi|^2+I) |f^n_{\alpha,0 } - f_{\alpha,0 }| dxd\xi dI=0 $;}\label{1 of Lemma 4.9}
		\item {$f^n_{\alpha,0 } \geq \frac{1}{n} \exp (- \frac{|x|^2+ |\xi|^2 + I }{2})$; }\label{2 of Lemma 4.9}
		\item {$\iiint_{\R^3 \times \R^3 \times \R_+ } f^n_{\alpha,0 } (1+|x|^2 +|\xi|^2 + I + |\log (I ^{1-\frac{\delta(\alpha )}{2} }f^n_{\alpha,0 })|)dxd\xi dI \leq C $;}\label{3 of Lemma 4.9}
		\item {\small $\limsup \limits_{n\rightarrow \infty} \iiint_{\R^3 \times \R^3 \times \R_+ } f^n_{\alpha,0 } \log (I ^{1-\frac{\delta(\alpha )}{2} } f^n_{\alpha,0 } ) dxd\xi dI
			\leq \iiint_{\R^3 \times \R^3 \times \R_+ } f_{\alpha,0 } \log (I ^{1-\frac{\delta(\alpha )}{2} } f_{\alpha,0 } ) dxd\xi dI$.}\label{4 of Lemma 4.9}
	\end{enumerate}
\end{lemma}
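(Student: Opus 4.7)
The plan is to follow the truncation--mollification--regularization recipe of Lemma \ref{Lemma 4.8}, now on $\R^3_x\times\R^3_\xi\times\R_+^I$, with two extra complications: the internal energy variable lives on the half-line $\R_+$, and the relevant entropy functional is weighted by $I^{1-\delta(\alpha)/2}$.

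First I would truncate: set $h^n_\alpha = f_{\alpha,0}\,\mathbf{1}_{\Omega_n}$ with $\Omega_n = \{|x|^2+|\xi|^2\leq n^2\}\cap\{1/n\leq I\leq n\}$. Cutting $I$ away from $0$ is essential both to avoid the singularity of the weight $I^{1-\delta(\alpha)/2}$ (recall $\delta(\alpha)\geq 2$) and to ensure that a subsequent mollification in $I$ preserves support inside $\R_+$. By the Dominated Convergence Theorem and \eqref{eq 4.51}, $h^n_\alpha\to f_{\alpha,0}$ in $\|\cdot\|_{1,3}$. Next, mollify with a tensor-product $j_{\delta_n}$, choosing $\delta_n\ll 1/n^2$, and put $\tilde f^n_{\alpha,0}=j_{\delta_n}\ast h^n_\alpha$. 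The choice of $\delta_n$ guarantees $\mathrm{supp}\,\tilde f^n_{\alpha,0}\subset\{I\geq 1/(2n)\}$, so $\tilde f^n_{\alpha,0}\in\mathscr{S}(\R^3\times\R^3\times\R_+)$, and the displacement in $I$ is controllably small compared with the lower bound $1/n$ on the truncation.

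The main obstacle is establishing the sharp entropy bound (4), because the logarithmic weight $I^{1-\delta(\alpha)/2}$ is evaluated at $I$ while mollification averages $h^n_\alpha$ over nearby $\eta$. For each fixed $I>0$, the map $t\mapsto t\log(I^{1-\delta(\alpha)/2}\,t)$ is convex, so Jensen yields
\[
\tilde f^n_{\alpha,0}\log\bigl(I^{1-\delta(\alpha)/2}\,\tilde f^n_{\alpha,0}\bigr)\leq \iiint j_{\delta_n}(x-y,\xi-\zeta,I-\eta)\,h^n_\alpha(y,\zeta,\eta)\log\bigl(I^{1-\delta(\alpha)/2}\,h^n_\alpha(y,\zeta,\eta)\bigr)\,dy\,d\zeta\,d\eta.
\]
Decomposing $\log(I^{1-\delta(\alpha)/2}\,h^n_\alpha)=\log(\eta^{1-\delta(\alpha)/2}\,h^n_\alpha)+(1-\tfrac{\delta(\alpha)}{2})\log(I/\eta)$ and using $|I-\eta|<\delta_n$ with $\eta\geq 1/n$, the correction satisfies $|\log(I/\eta)|\leq C\delta_n n= O(1/n)$ uniformly on the support of the integrand. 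Integrating in $(x,\xi,I)$ and applying Fubini, the correction is absorbed into an $o(1)$ term (using the uniform $L^1$ bound on $h^n_\alpha$), leaving
\[
\iiint\tilde f^n_{\alpha,0}\log\bigl(I^{1-\delta(\alpha)/2}\,\tilde f^n_{\alpha,0}\bigr)\,dx\,d\xi\,dI\leq\iiint h^n_\alpha\log\bigl(\eta^{1-\delta(\alpha)/2}\,h^n_\alpha\bigr)\,dy\,d\zeta\,d\eta+o(1).
\]
The right-hand side converges to $\iiint f_{\alpha,0}\log(I^{1-\delta(\alpha)/2}\,f_{\alpha,0})\,dx\,d\xi\,dI$ by Dominated Convergence, using the dominator $|t\log t|\leq 1/e + t|\log t|$ together with Lemma \ref{Lemma 4.4} applied to $f_{\alpha,0}$. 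This gives (4); the boundedness (3) follows by combining (4) with the bound from Lemma \ref{Lemma 4.4} applied uniformly in $n$ to the negative part of the entropy, since $\tilde f^n_{\alpha,0}$ satisfies a uniform $L^1_3$ bound as in Lemma \ref{Lemma 4.8}.

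Finally, to enforce the pointwise lower bound (2), I set
\[
f^n_{\alpha,0}=\tilde f^n_{\alpha,0}+\tfrac{1}{n}\exp\bigl(-\tfrac{|x|^2+|\xi|^2+I}{2}\bigr),
\]
which remains in $\mathscr{S}(\R^3\times\R^3\times\R_+)$ since the Gaussian is smooth up to $I=0$ and decays super-polynomially in $I$. Applying the elementary inequality $(u+v)\log(u+v)\leq u\log u+v\log v+(u+v)$ with $u=\tilde f^n_{\alpha,0}$ and $v=\varphi_n$, together with explicit control of $\varphi_n\log(I^{1-\delta(\alpha)/2}\varphi_n)$ via the Gaussian moments, shows that (1), (3), and (4) survive the perturbation. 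This completes the construction.
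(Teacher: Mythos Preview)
Your proposal is correct and follows essentially the same approach as the paper: truncate in $(x,\xi)$ and in $I$ away from $0$ and $\infty$, mollify with a kernel narrow enough to keep the support inside $\R_+$, apply Jensen to the convex map $t\mapsto t\log(I^{1-\delta(\alpha)/2}t)$, and finally add the Gaussian $\tfrac{1}{n}e^{-(|x|^2+|\xi|^2+I)/2}$ and control the perturbation via $(u+v)\log(u+v)\leq u\log u+v\log v+u+v$. You are in fact slightly more careful than the paper about the $I$ versus $\eta$ discrepancy appearing in the logarithm after Jensen; the paper writes $\eta^{1-\delta(\alpha)/2}$ on the right-hand side directly and absorbs the correction implicitly, whereas you isolate it as an $O(\delta_n n)\cdot\|h^n_\alpha\|_{L^1}=o(1)$ term.
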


\begin{proof}
 Let $h^n_\alpha (x,\xi ,I ) = f_{\alpha,0 } (x,\xi ,I )  \chi_n (x,\xi ) \lambda_n (I)$ for all $n \geq 1$. Here $\lambda_n (I) = \mathbf{1}_{ \{ I \in \R_+ : \frac{1}{n} \leq I \leq n \} } $ and $\chi_n (x, \xi) = \mathbf{1}_{ \{ (x,\xi) : |x|^2+ |\xi|^2 \leq n^2 \} } $. It is obvious that
	\begin{equation}\label{hn-f}
		\lim \limits_{n\rightarrow \infty}  \iiint_{\R^3 \times \R^3 \times \R_+ } (1+|x|^2 +|\xi|^2+I) \cdot |h^n_\alpha - f_{\alpha,0 }| dxd\xi dI=0.
	\end{equation}
	Let $j_{\delta_n} (x, \xi) $ and $ \varphi_{\epsilon_n} (I)$ be the standard mollification. Define
	\begin{equation*}
		\tilde{f}^n_{\alpha,0 } (x,\xi ,I ) = \iiint_{\R_+ \times \R^3 \times \R^3 } j_{\delta_n } (x-y, \xi - \zeta ) h^n_\alpha (y,\zeta , \eta )  \varphi_{\epsilon_n } (I -\eta ) d y d \zeta d \eta ,\ \epsilon_n \in (0,\tfrac{1}{2n}).
	\end{equation*}
	As a result, $\tilde{f}^n_{\alpha,0 } \in \mathscr{S} (x,\xi ,I )_+$. Furthermore, as the similar arguments in \eqref{eq4.46}, one can easily gain
	\begin{equation}\label{fnt-f}
		\lim \limits_{n\rightarrow \infty } \iiint_{\R^3 \times \R^3 \times \R_+ } |\tilde{f}^n_{\alpha,0 } -f_{\alpha,0 } | (1+|x|^2 + |\xi|^2 + I ) dxd\xi dI =0.
	\end{equation}
	Then there exists a constant $C_1 > 0$ such that  
	\begin{equation}\label{fn-bd}
		\sup \limits_{n\geq 1 } \iiint_{\R^3 \times \R^3 \times \R_+ } \tilde{f}^n_{\alpha,0 } (1+|x|^2 + |\xi|^2 + I) dxd\xi dI \leq C_1 .
	\end{equation}
Observe that $\iint_{\R^3 \times \R^3 } j_{\delta_n} dxd\xi =1$, $\int_{\R_+} \varphi_{\epsilon_n} dI =1$ and $ \Phi (t) = t\log ( I^{1 - \delta (\alpha) /2} t ) $ is continuous convex function over $\R_+$. It is then derived from the Jensen's inequality that
	\begin{equation*}{\small
		\begin{aligned}
			&\tilde{f}^n_{\alpha,0 } \log ( I^{1 - \delta (\alpha) /2} \tilde{f}^n_{\alpha,0 } ) \leq \iiint_{\R_+ \times \R^3 \times \R^3 } j_{\delta_n } (x-y, \xi - \zeta ) ( h^n_\alpha \log ( \eta^{1 - \delta (\alpha) /2} h^n_\alpha ) (y,\zeta , \eta )) \varphi_{\epsilon_n } (I -\eta ) d y d \zeta d \eta, \\
			&\tilde{f}^n_{\alpha,0 } \log ^+ ( I^{1 - \delta (\alpha) /2} \tilde{f}^n_{\alpha,0 } ) \leq \iiint_{\R_+ \times \R^3 \times \R^3 } j_{\delta_n } (x-y, \xi - \zeta ) | h^n_\alpha \log ( \eta^{1 - \delta (\alpha) /2} h^n_\alpha ) (y,\zeta , \eta )| \varphi_{\epsilon_n } (I -\eta ) d y d \zeta d \eta.
		\end{aligned}}
	\end{equation*}
	Taking the similar methods in Lemma \ref{Lemma 4.8}, we obtain
	\begin{equation}\label{fn0}{\small
		\begin{aligned}
			\limsup \limits_{n\rightarrow \infty } \iiint_{\R^3 \times \R^3 \times \R_+ } \tilde{f}^n_{\alpha,0 } \log ( I^{1 - \delta (\alpha) /2} \tilde{f}^n_{\alpha,0 } ) dxd\xi dI &\leq \iiint_{\R^3 \times \R^3 \times \R_+ } f_{\alpha,0 } \log ( I^{1 - \delta (\alpha) /2} f_{\alpha,0 } ) dxd\xi dI, \\
		\limsup \limits_{n\rightarrow \infty } \iiint_{\R^3 \times \R^3 \times \R_+ } \tilde{f}^n_{\alpha,0 } \log ^+ ( I^{1 - \delta (\alpha) /2} \tilde{f}^n_{\alpha,0 } ) dxd\xi dI &\leq \iiint_{\R^3 \times \R^3 \times \R_+ } f_{\alpha,0 } |\log ( I^{1 - \delta (\alpha) /2} f_{\alpha,0 } ) | dxd\xi dI.
		\end{aligned}}
	\end{equation}
	Then by the proof of Lemma \ref{Lemma 4.4}, there exists a constant $C_2>0$ such that
\begin{equation*}
\iiint_{\R^3 \times \R^3 \times \R_+ } \tilde{f}^n_{\alpha,0 } \log ^-(I^{1-\delta (\alpha)/2}\tilde{f}^n_{\alpha,0 })dxd\xi dI\leq C_2.
\end{equation*}
Thus, one has
\begin{equation}\label{fn-log-bd}
  \sup \limits_{n \geq 1}   \iiint_{\R^3 \times \R^3 \times \R_+ } \tilde{f}^n_{\alpha,0 } |\log (I^{1-\delta (\alpha)/2}\tilde{f}^n_{\alpha,0 })|dxd\xi dI\leq C.
\end{equation}

The above inequalities show that such $\tilde{f}^n_{\alpha,0}$ satisfies the conditions \eqref{1 of Lemma 4.9}, \eqref{3 of Lemma 4.9}, \eqref{4 of Lemma 4.9} of Lemma \ref{Lemma 4.9}. To obtain a sequence of functions $f^n_{\alpha, 0}$ that satisfies all the conditions of the Lemma \ref{Lemma 4.9}, let
 \begin{equation*}
	f^n_{\alpha,0 } = \tilde{f}^n_{\alpha,0 } + \tfrac{1}{n} \exp (-\tfrac{|x|^2 +|\xi|^2 + I} {2}) \triangleq \tilde{f}^n_{\alpha,0 } + \varphi_n.
\end{equation*}
Then $f^n_{\alpha,0 } \in \mathscr{S} (\R^3 \times \R^3 \times \R_+)_+$. Obviously, $f^n_{\alpha,0 }$ satisfies the relations \eqref{1 of Lemma 4.9}, \eqref{2 of Lemma 4.9} and
\begin{equation*}
  \begin{aligned}
    \iiint_{\R^3 \times \R^3 \times \R_+ } f^n_{\alpha,0 } (1+|x|^2 +|\xi|^2 + I )dxd\xi dI \leq C \,.
  \end{aligned}
\end{equation*} 
By using the elementary inequality $ (u+v) \log (u+v ) \leq u \log u + v \log v + u+ v $ and letting $u=I^{1-\delta(\alpha)/2} \tilde{f}^n_{\alpha,0}, v = I^{1- \delta(\alpha)/2} \varphi_n,$ one has
\begin{equation*}
	\begin{aligned}
		&\iiint_{\R^3 \times \R^3 \times \R_+ } f^n_{\alpha,0 } \log ^+ (I^{1-\delta(\alpha)/2}f^n_{\alpha,0 } )dxd\xi dI \\
		\leq & \iiint_{\R^3 \times \R^3 \times \R_+ } \tilde{f}^n_{\alpha,0 } |\log (I^{ 1-\delta(\alpha)/2} \tilde{f}^n_{\alpha,0 } )| dxd\xi dI 
		+ \iiint_{\R^3 \times \R^3 \times \R_+ } \tilde{f}^n_{\alpha,0 } dxd\xi dI \\
		+& \iiint_{\R^3 \times \R^3 \times \R_+ } (\tfrac{1+\log n}{n} +\tfrac{|x|^2 +|\xi|^2 + I + \delta(\alpha) I} {2n} )
		\exp (-\tfrac{|x|^2 +|\xi|^2 + I} {2}) dxd\xi dI \leq C (\alpha ).
	\end{aligned}
\end{equation*}
Combining the proof of Lemma \ref{Lemma 4.4}, we conclude that 
\begin{equation*}
	\iiint_{\R^3 \times \R^3 \times \R_+ } f^n_{\alpha,0 } |\log  (I^{1-\delta(\alpha)/2} f^n_{\alpha,0 } )| dxd\xi dI \leq C.
\end{equation*}	
Then the relation \eqref{3 of Lemma 4.9} holds. Besides, it is derived from the Differential Mean Value Theorem that
\begin{equation*}
	f^n_{\alpha,0} \log (I^{1-\delta(\alpha)/2} f^n_{\alpha,0}) \leq \tilde{f}^n_{\alpha,0} \log (I^{1-\delta(\alpha)/2} \tilde{f}^n_{\alpha,0}) +\varphi_n ( 1 + \log ( I^{1-\delta(\alpha)/2} {f}^n_{\alpha,0} ) ) .
\end{equation*}
Note that by $\log x < x$ over $x \in \R_+$,
\begin{equation*}
  \begin{aligned}
    \log ( I^{1-\delta(\alpha)/2} {f}^n_{\alpha,0} ) \leq (1 - \tfrac{\delta (\alpha)}{2}) \mathbf{I}_{0 < I < 1} \log I + {f}^n_{\alpha,0} \,.
  \end{aligned}
\end{equation*}
Then there holds
\begin{equation*}
  \begin{aligned}
    \iiint_{\R^3 \times \R^3\times \R_+} \varphi_n ( 1 + \log ( I^{1-\delta(\alpha)/2} {f}^n_{\alpha,0} ) ) d x d \xi d I \leq \tfrac{C}{n} \,.
  \end{aligned}
\end{equation*}
Together with \eqref{fn0}, one knows that the relation \eqref{4 of Lemma 4.9} holds. The proof of Lemma \ref{Lemma 4.9} is therefore finished.
\end{proof}

Together with Lemma \ref{Lemma 4.6}, Lemma \ref{Lemma 4.8} and Lemma \ref{Lemma 4.9} conclude the following lemma.

\begin{lemma}\label{Lmm-Ini-Entpy}
  Let the approximated initial data $f^n_0 = (f^n_{\alpha, 0})_{\alpha \in \{ 1, \cdots , s\}}$ of $f_0$ be constructed in Lemma \ref{Lemma 4.8} and Lemma \ref{Lemma 4.9}. Then
  \begin{equation}{\small
    \begin{aligned}
		& \sum_{\alpha = 1}^{s_0} \iint_{\R^3 \times \R^3} f_{\alpha, 0} \log f_{\alpha, 0} dxd\xi + \sum_{\alpha = s_0 + 1}^{s} \iiint_{\R^3 \times \R^3\times \R_+ } f_{\alpha, 0} \log (I^{1-\delta(\alpha )/2 } f_{\alpha, 0}) dxd\xi dI \\
= & \lim \limits_{n\rightarrow \infty } \Big\{ \sum_{\alpha = 1}^{s_0} \iint_{\R^3 \times \R^3 } f^n_{\alpha, 0} \log f^n_{\alpha, 0} dxd\xi + \sum_{\alpha = s_0 + 1}^{s} \iiint_{\R^3 \times \R^3\times \R_+ } f^n_{\alpha, 0} \log (I^{1-\delta(\alpha)/2}f^n_{\alpha, 0}) dxd\xi dI \Big\} .
\end{aligned}}
	\end{equation} 
Hence $\lim \limits_{n \to \infty} H (f^n_0) = H (f_0)$, where the entropy $H (f_0)$ is defined in \eqref{Hf}.
\end{lemma}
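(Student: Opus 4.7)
The plan is to obtain the claimed convergence by combining the weak lower semicontinuity of entropy (Lemma \ref{Lemma 4.6}) with the upper-bound inequalities already built into the constructions of Lemmas \ref{Lemma 4.8} and \ref{Lemma 4.9}. The equality $\lim_{n\to\infty} H(f_0^n) = H(f_0)$ will follow from matching $\liminf \geq H(f_0)$ and $\limsup \leq H(f_0)$, with one half coming from each tool.

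First I would verify the hypotheses of Lemma \ref{Lemma 4.6} applied to the sequence $f^n_0 = (f^n_{\alpha,0})_{\alpha \in \{1,\ldots,s\}}$. The third estimate in \eqref{Ap-fs0} and item (3) of Lemma \ref{Lemma 4.9} respectively yield a uniform-in-$n$ constant $C$ with $\Vert f^n_{\alpha,0}\Vert_{1,2} \leq C$ for $\alpha \in \{1,\ldots,s_0\}$ and $\Vert f^n_{\alpha,0}\Vert_{1,3} \leq C$ for $\alpha \in \{s_0+1,\ldots,s\}$, together with $f^n_{\alpha,0} \log f^n_{\alpha,0} \in L^1$ and $f^n_{\alpha,0} \log(I^{1-\delta(\alpha)/2} f^n_{\alpha,0}) \in L^1$. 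The first estimate in \eqref{Ap-fs0} and item (1) of Lemma \ref{Lemma 4.9} give strong convergence $f^n_{\alpha,0} \to f_{\alpha,0}$ in $L^1$, which a fortiori yields weak convergence in $L^1$. Thus all hypotheses of Lemma \ref{Lemma 4.6} are in place, and we obtain
\begin{equation*}
H(f_0) \leq \liminf_{n \to \infty} H(f^n_0).
\end{equation*}

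Second, I would extract the reverse inequality directly from the construction. The last relation in \eqref{Ap-fs0} gives, for each $\alpha \in \{1,\ldots,s_0\}$,
\begin{equation*}
\limsup_{n \to \infty} \iint_{\R^3 \times \R^3} f^n_{\alpha,0} \log f^n_{\alpha,0}\, dx\, d\xi \leq \iint_{\R^3 \times \R^3} f_{\alpha,0} \log f_{\alpha,0}\, dx\, d\xi,
\end{equation*}
and item (4) of Lemma \ref{Lemma 4.9} gives, for each $\alpha \in \{s_0+1,\ldots,s\}$,
\begin{equation*}
\limsup_{n \to \infty} \iiint_{\R^3 \times \R^3 \times \R_+} f^n_{\alpha,0} \log(I^{1-\delta(\alpha)/2} f^n_{\alpha,0})\, dx\, d\xi\, dI \leq \iiint_{\R^3 \times \R^3 \times \R_+} f_{\alpha,0} \log(I^{1-\delta(\alpha)/2} f_{\alpha,0})\, dx\, d\xi\, dI.
\end{equation*}
Summing these $s$ finitely many inequalities and using the elementary bound $\limsup \sum \leq \sum \limsup$ (valid here because each $\limsup$ is a finite real number by the uniform bounds above) yields $\limsup_{n \to \infty} H(f^n_0) \leq H(f_0)$.

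Combining the two estimates gives both the displayed identity and $\lim_{n\to\infty} H(f^n_0) = H(f_0)$. No substantive obstacle arises; the only delicate point is confirming that all entries in the sum are genuinely finite so that the $\limsup$--splitting step is legitimate, which is precisely what the uniform $L^1$-entropy bounds in item (3) of Lemma \ref{Lemma 4.9} and \eqref{Ap-fs0} provide (via Lemma \ref{Lemma 4.4} and Corollary \ref{cor4.5} to control the negative parts).
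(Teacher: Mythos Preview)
Your proposal is correct and follows exactly the approach the paper intends: the paper simply states that the lemma is concluded ``together with Lemma \ref{Lemma 4.6}, Lemma \ref{Lemma 4.8} and Lemma \ref{Lemma 4.9}'', and you have spelled out precisely how---using weak lower semicontinuity of entropy for the $\liminf$ direction and the $\limsup$ estimates built into the constructions for the reverse direction. Your care about finiteness of each summand to justify the $\limsup$ splitting is a nice clarification beyond what the paper makes explicit.
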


\subsection{Smooth approximation of collision kernels}

In this subsection, the goal is to smoothly approximate the collision kernels $A_{0\alpha \beta },\ A_{1\alpha \beta },\ A_{1\beta \alpha }$ and $ A_{2\alpha \beta }$ defined in \eqref{A}, which will be used to construct the approximation of nonlinear collision operator.

	\begin{lemma}\label{Lemma 4.10}
	Assume collision kernel $A_{2\alpha \beta }$ satisfies the assumption in \eqref{collision}, i.e.,
	\begin{equation*}
		(1+|\xi|^2 + I )^{-1} \int_{|z-\xi |\leq R } \int_{0<\eta -I < R } A_{2\alpha \beta } (z,\eta ) dzd\eta \rightarrow 0, \quad |\xi|, I \rightarrow \infty , \ \forall R <+\infty .
	\end{equation*}	
	Then there exists $B_{2\alpha \beta ,n }\geq 0 $ satisfying
	\begin{enumerate}
		\item {For any fixed $(\mathfrak{R},\mathfrak{r},\omega)\in [0,1]^2 \times \mathbb{S}^2, B_{2\alpha \beta , n } (z,\eta , \mathfrak{R},\mathfrak{r},\omega ) \in C^\infty (\R^3 \times \R_+), supp B_{2\alpha \beta , n } \subset \{(z,\eta , \mathfrak{R},\mathfrak{r},\omega ): \frac{1}{n} \leq |z|,\eta \leq n \}$ and if $ z \cdot \omega \leq \frac{1}{n}, B_{2\alpha \beta , n } =0$;	}\label{1 of Lemma 4.10}
		\item  {For almost all $(z,\eta , \mathfrak{R},\mathfrak{r},\omega )\in \R^3 \times \R_+ \times [0,1]^2 \times \mathbb{S}^2$, 
  $ \lim \limits_{n\rightarrow \infty } B_{2\alpha \beta ,n } =B_{2\alpha \beta } $, where $B_{2\alpha \beta }$ is defined in \eqref{B2-ab};
  } \label{2 of Lemma 4.10}
		\item {For any $R < + \infty$,
		\begin{equation*}
		\sup \limits_{n \geq 1} (1+|\xi|^2 + I )^{-1} \int_{|z-\xi |\leq R } \int_{0<\eta -I < R } A_{2\alpha \beta ,n } (z,\eta ) dzd\eta \rightarrow 0, \ |\xi|, I \rightarrow \infty ,
		\end{equation*}	
  where 
  \begin{equation*}
    A_{2\alpha \beta ,n }=\int_{[0,1]^2 \times \mathbb{S}^2} B_{2\alpha \beta ,n } \mathfrak{r}^{\delta(\alpha)/2-1 } (1-\mathfrak{r})^{\delta(\beta )/2-1} (1-\mathfrak{R})^{(\delta(\alpha)+(\delta(\beta ))/2-1} \mathfrak{R}^{1/2}  d\mathfrak{R}d\mathfrak{r}d\omega.  
  \end{equation*} 
}\label{3 of Lemma 4.10}
	\end{enumerate}
\end{lemma}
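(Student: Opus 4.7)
The plan is to build $B_{2\alpha\beta,n}$ from $B_{2\alpha\beta}$ by a two-step cutoff-then-mollify procedure in the $(z,\eta)$ variables only (leaving $\mathfrak{R}$, $\mathfrak{r}$, $\omega$ as parameters). The cutoff excises the singular loci $|z|=0$, $\eta=0$, $z\cdot\omega\le 0$ together with the tails $|z|\ge n$, $\eta\ge n$, yielding a bounded, compactly supported truncation dominated pointwise by $B_{2\alpha\beta}$; the subsequent mollification gives the required smoothness while perturbing the averaged kernel only slightly.

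Concretely, pick $\phi_n,\psi_n\in C^\infty(\R_+)$ with $\phi_n\equiv 0$ on $[0,2/n]$, $\phi_n\equiv 1$ on $[3/n,\infty)$, $\psi_n\equiv 1$ on $[0,n-1/n]$, $\psi_n\equiv 0$ on $[n,\infty)$, and set
\[\widehat{B}_{2\alpha\beta,n}:=B_{2\alpha\beta}\,\phi_n(|z|)\psi_n(|z|)\phi_n(\eta)\psi_n(\eta)\phi_n(z\cdot\omega).\]
Then convolve in $(z,\eta)$ with a standard mollifier $j_{\delta_n}\in C_c^\infty(\R^3\times\R)$ of support radius $\delta_n\ll 1/n$ to obtain $B_{2\alpha\beta,n}$. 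Item (1) is then immediate: the support restrictions from $\phi_n,\psi_n$ are enlarged only by $\delta_n$ under convolution, so for $\delta_n$ small enough the support lies inside $\{1/n\le|z|,\eta\le n,\ z\cdot\omega\ge 1/n\}$, while smoothness in $(z,\eta)$ is built in. Item (2) holds at every $(z,\eta)$ at which $B_{2\alpha\beta}(\cdot,\cdot,\mathfrak{R},\mathfrak{r},\omega)$ is continuous (hence a.e., by the explicit formula \eqref{B2-ab}): the truncation factors tend to $1$ pointwise and the mollification converges back to the truncated value.

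The main obstacle is the uniform-in-$n$ bound (3). The key observation is that $\widehat{B}_{2\alpha\beta,n}\le B_{2\alpha\beta}$ pointwise, so the associated averaged kernel $\widehat{A}_{2\alpha\beta,n}$ is dominated by $A_{2\alpha\beta}$ and already inherits the decay hypothesis \eqref{collision}. To propagate the bound through the mollification, one interchanges the mollifier with the $(\mathfrak{R},\mathfrak{r},\omega)$-integration via Fubini, changes variables $(z',\eta')=(z-u,\eta-v)$, and uses $|u|,|v|\le\delta_n\le 1$ to estimate
\[\int_{|z-\xi|\le R}\int_{0<\eta-I<R}A_{2\alpha\beta,n}(z,\eta)\,dz\,d\eta\le\int j_{\delta_n}(u,v)\int_{|z'-\xi|\le R+1}\int_{-1<\eta'-I<R+1}A_{2\alpha\beta}(z',\eta')\,dz'\,d\eta'\,du\,dv.\]
For $I>1$ the inner region in $\eta'$ is contained in $\{0<\eta'-(I-1)<R+2\}$, so \eqref{collision} applied with $R$ replaced by $R+2$ and $I$ by $I-1$, together with $(1+|\xi|^2+I-1)\sim(1+|\xi|^2+I)$ as $|\xi|,I\to\infty$, delivers the desired $n$-uniform vanishing. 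The only technical point is to pick $\delta_n$ decreasing fast enough that every enlargement in this chain is controlled independently of $n$; once that is fixed, all three conclusions follow.
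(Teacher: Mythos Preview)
Your proposal is correct and follows essentially the same truncate-then-mollify scheme as the paper: cut off $B_{2\alpha\beta}$ away from the singular set, convolve in $(z,\eta)$, and for item~(3) use the pointwise domination $\widehat{B}_{2\alpha\beta,n}\le B_{2\alpha\beta}$ to get $A_{2\alpha\beta,n}\le j_{\delta_n}\ast A_{2\alpha\beta}$, after which enlarging the integration box by~$1$ reduces everything to the hypothesis on $A_{2\alpha\beta}$.

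Two small points of comparison are worth recording. First, the paper does not truncate $B_{2\alpha\beta}$ directly but rather the \emph{weighted} kernel $\hat{B}_{2\alpha\beta}=B_{2\alpha\beta}\,\mathfrak{r}^{\delta(\alpha)/2-1}(1-\mathfrak{r})^{\delta(\beta)/2-1}(1-\mathfrak{R})^{(\delta(\alpha)+\delta(\beta))/2-1}\mathfrak{R}^{1/2}=\tilde\sigma_{\alpha\beta}\,|g|(1-\mathfrak{R})E_{\alpha\beta}^2$, which lies in $L^1_{\mathrm{loc}}(\R^3\times\R_+;L^1([0,1]^2\times\mathbb{S}^2))$; this lets the paper prove item~(2) via $L^1_{\mathrm{loc}}$ convergence rather than through a pointwise Lebesgue-point argument (your appeal to ``continuity by the explicit formula'' is not quite right since $\tilde\sigma_{\alpha\beta}$ is only assumed locally integrable, but the Lebesgue-point version of your argument is fine). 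Second, the paper's cutoff also restricts $\mathfrak{R},\mathfrak{r}$ to $[1/(2n),1-1/(2n)]$. That restriction is not stated in Lemma~4.10, so your construction proves the lemma as written; however, the extra $(\mathfrak{R},\mathfrak{r})$-support is used later in the $L^\infty$ estimate (Lemma~\ref{Lemma 4.13 of infty}), so if you follow the rest of the paper you will want to include it.
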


\begin{proof}
	Assume that $\chi_n (z,\eta )$ and $\lambda_n(z,\eta ,\mathfrak{R} ,\mathfrak{r}, \omega )$ are respectively the characteristic functions of sets
	\begin{equation*}
		\{(z,\eta )\in \R^3 \times \R_+ :\tfrac{1}{2n}\leq |z|,\eta \leq n-\tfrac{1}{2n} \}
	\end{equation*}
	and 
	\begin{equation*}
		\{(z,\eta ,\mathfrak{R} ,\mathfrak{r}, \omega )\in \R^3 \times \R_+ \times [0,1]^2 \times \mathbb{S}^2: z\cdot \omega \geq \tfrac{1}{2n},\ \tfrac{1}{2n}\leq \mathfrak{R},\mathfrak{r} \leq 1-\tfrac{1}{2n}\}.
	\end{equation*}
	Let
	\begin{equation*}
		\begin{aligned}
		\hat{B}_{2\alpha \beta }=B_{2\alpha \beta } \cdot \mathfrak{r}^{\delta(\alpha)/2-1 }(1-\mathfrak{r})^{\delta(\beta )/2-1}&(1-\mathfrak{R})^{(\delta(\alpha)+\delta(\beta ))/2-1} \mathfrak{R}^{1/2},\\
  \tilde{B}_{2\alpha \beta,n } =\hat{B}_{2\alpha \beta } \cdot \chi_n \cdot \lambda_n,\quad & 
  \hat{B}_{2\alpha \beta ,n }=j_{\delta_n }* \tilde{B}_{2\alpha \beta ,n},\\
  B_{2\alpha \beta ,n }= \hat{B}_{2\alpha \beta,n } \cdot \mathfrak{r}^{1-\delta(\alpha)/2 }(1-\mathfrak{r})^{1-\delta(\beta )/2}&(1-\mathfrak{R})^{1-(\delta(\alpha)+\delta(\beta ))/2} \mathfrak{R}^{-\frac{1}{2}}
		\end{aligned}
	\end{equation*}
	with $0< \delta_n \leq \frac{1}{2n}$, where the convolution is done for the variables $z, \eta$. Then $B_{2\alpha \beta ,n }$ satisfies the condition \eqref{1 of Lemma 4.10}.

For all subset $K \Subset \R^3 \times \R_+,$ there exists $n \geq 1 $ such that $K \subset \{(z,\eta ) \in \R^3 \times \R_+ : 0<|z|,\eta <n-\frac{1}{2n}\}$. Then
	\begin{equation*}{\small
		\begin{aligned}
			&\int_{K} dzd\eta \int_{[0,1]^2 \times \mathbb{S}^2 } |\hat{B}_{2\alpha \beta }-\tilde{B}_{2\alpha \beta ,n}|d\mathfrak{R}d\mathfrak{r}d\omega \\
			\leq & \int_{K \cap \{|z|\leq \frac{1}{2n}\}} dzd\eta \int_{[0,1]^2 \times \mathbb{S}^2 } \hat{B}_{2\alpha \beta } d\mathfrak{R}d\mathfrak{r}d\omega + \int_{K \cap \{\eta \leq \frac{1}{2n}\}} dzd\eta \int_{[0,1]^2 \times \mathbb{S}^2 } \hat{B}_{2\alpha \beta } d\mathfrak{R}d\mathfrak{r}d\omega \\
		+& \int_{K} d z d \eta \int_{ [0,1]^2 } d \mathfrak{r} d \mathfrak{R} \int_{ z \cdot \omega < \frac{1}{2n}} \hat{B}_{ 2 \alpha \beta } d \omega +\int_{K} dzd\eta \int_{[0,1]^2 \times \mathbb{S}^2 } \hat{B}_{2\alpha \beta } (\mathbf{1}_{0<\mathfrak{R}<\frac{1}{2n}}+\mathbf{1}_{0<\mathfrak{r}<\frac{1}{2n}}) d\mathfrak{R}d\mathfrak{r}d\omega\\
  +& \int_{K} dzd\eta \int_{[0,1]^2 \times \mathbb{S}^2 } \hat{B}_{2\alpha \beta } (\mathbf{1}_{1-\frac{1}{2n}<\mathfrak{R}<1}+ \mathbf{1}_{1-\frac{1}{2n}<\mathfrak{r}<1}) d\mathfrak{R}d\mathfrak{r}d\omega.
		\end{aligned}}
	\end{equation*}
	Since $\hat{B}_{2\alpha \beta }=\tilde{\sigma}_{\alpha \beta }|g|(1-\mathfrak{R})E^2_{\alpha \beta } \in L^1_{loc} (\R^3 \times \R_+;L^1 ([0,1]^2 \times \mathbb{S}^2)),$ the Dominated Convergence Theorem concludes that 
	\begin{equation*}
		\lim \limits_{n\rightarrow \infty } \int_{K} dzd\eta \int_{[0,1]^2 \times \mathbb{S}^2 } |\hat{B}_{2\alpha \beta }-\tilde{B}_{2\alpha \beta ,n}|d\mathfrak{R}d\mathfrak{r}d\omega =0.
	\end{equation*}
Moreover, by the property of convolution, we know that for all $n \geq 1$ there exists a $\delta_n^* \in (0,\frac{1}{2n})$ such that for $\delta_n \in (0, \delta_n^*)$
	\begin{equation*}
		\int_{K} dzd\eta \int_{[0,1]^2 \times \mathbb{S}^2 } |\hat{B}_{2\alpha \beta ,n }-\tilde{B}_{2\alpha \beta ,n}|d\mathfrak{R}d\mathfrak{r}d\omega <\tfrac{1}{2n}.
	\end{equation*}
	Therefore $\hat{B}_{2\alpha \beta ,n } \rightarrow \hat{B}_{2\alpha \beta} \enspace $in $L^1_{loc} (\R^3\times \R_+ ; L^1([0,1]^2 \times \mathbb{S}^2 ))$. It therefore follows that \eqref{2 of Lemma 4.10} is true. 

Due to $\tilde{B}_{2\alpha \beta ,n}\leq \hat{B}_{2\alpha \beta},$ one has $A_{2\alpha \beta ,n } \leq j_{\delta_n}* A_{2\alpha \beta }.$ Then
\begin{equation*}{\small
\begin{aligned}
	&\int_{|z-\xi|\leq R} \int_{0< \eta -I <R } A_{2\alpha \beta ,n } (z,\eta )dzd\eta \\
	\leq& \int_{|z-\xi|\leq R}dz \int_{0< \eta -I <R } d\eta \iint_{\R^3\times \R_+} j_{\delta_n}(\zeta ,J) A_{2\alpha \beta } (z-\zeta ,\eta - J ) d\zeta dJ\\
	\leq & \iint_{\R^3\times \R_+} j_{\delta_n}(\zeta ,J) d\zeta dJ \int_{|\psi - \xi |\leq R+1} \int_{0< w -(I-\frac{1}{2}) <R +1} A_{2\alpha \beta }(\psi ,w) d\psi dw\\
	= & \int_{|\psi - \xi |\leq R+1} \int_{0< w -(I-\frac{1}{2}) <R+1 } A_{2\alpha \beta }(\psi ,w) d\psi dw.
\end{aligned}}
\end{equation*}
Together with the assumption on $A_{2 \alpha \beta}$, one then gains that for all $n\geq 1$,
\begin{equation*}{\small
\begin{aligned}
	& (1+|\xi|^2 + I )^{-1} \int_{|z-\xi |\leq R } \int_{0<\eta -I < R } A_{2\alpha \beta ,n } (z,\eta ) dzd\eta \\
	\leq & (1+|\xi|^2 + (I-\tfrac{1}{2}) )^{-1} \int_{|\psi - \xi |\leq R+1} \int_{0< w -(I-\frac{1}{2}) <R +1} A_{2\alpha \beta }(\psi , w) d\psi dw \rightarrow 0
\end{aligned}}
\end{equation*}
as $|\xi|, I \rightarrow \infty$, which means that \eqref{3 of Lemma 4.10} holds. Therefore, the proof of Lemma \ref{Lemma 4.10} is finished.
\end{proof}

\begin{lemma}\label{Lemma 4.11}
	Assume that collision kernels $A_{1\alpha \beta }$ and $A_{1\beta \alpha }$ satisfy the assumptions \eqref{collision}, i.e.,
	\begin{equation*}{\small
		\begin{aligned}
			&(1+|\xi|^2 + \eta )^{-1} \int_{|z-\xi |\leq R } A_{1\alpha \beta } (z,\eta ) dz,  \quad |\xi|,\eta  \rightarrow \infty, \ \forall R <+\infty, \\
			&(1+|\xi|^2 )^{-1}\int_{|z-\xi |\leq R }  \int_{0<\eta < R } A_{1\beta \alpha } (z,\eta ) dzd\eta \rightarrow 0, \quad |\xi| \rightarrow \infty , \ \forall R <+\infty.
		\end{aligned}}
	\end{equation*}	
	Then there exist $B_{1\alpha \beta ,n }(z,\eta , \mathfrak{R},\omega ) ,B_{1 \beta \alpha ,n } (z,\eta , \mathfrak{R},\omega ) \geq 0 $ satisfying 
	\begin{enumerate}
		\item {For any $(\mathfrak{R},\omega)\in [0,1] \times \mathbb{S}^2,\ B_{1\alpha \beta , n } , B_{1 \beta \alpha ,n }  \in C^\infty (\R^3 \times \R_+),\ supp B_{1\alpha \beta , n },supp B_{1 \beta \alpha ,n } \subset \{(z,\eta , \mathfrak{R},\omega ): \frac{1}{n} \leq |z|,\eta \leq n \}$ and if $ (\xi -\xi_* ) \cdot \omega \leq \frac{1}{n} , B_{1\alpha \beta , n } =B_{1 \beta \alpha ,n }=0$;}\label{1 of Lemma 4.11}
		\item  {For almost all $(z,\eta , \mathfrak{R},\omega )\in \R^3 \times \R_+ \times [0,1] \times \mathbb{S}^2$,
		\begin{equation*}
  \lim \limits_{n\rightarrow \infty } B_{1\alpha \beta ,n } =B_{1\alpha \beta },\quad
 \lim \limits_{n\rightarrow \infty } B_{1\beta \alpha ,n } =B_{1\beta \alpha },
\end{equation*} 
where $B_{1 \alpha \beta}$ and $B_{1 \beta \alpha}$ are respectively defined in \eqref{B1-ab} and \eqref{B1-ba};} \label{2 of Lemma 4.11}
		\item {For all $R < + \infty$,
			\begin{equation*}{\small
				\begin{aligned}
					&\sup \limits_{n \geq 1} (1+|\xi|^2 + \eta )^{-1} \int_{|z-\xi |\leq R } A_{1\alpha \beta ,n } (z,\eta ) dz \rightarrow 0, \quad |\xi|, \eta \rightarrow \infty , \\
					&\sup \limits_{n \geq 1} (1+|\xi|^2 )^{-1}\int_{|z-\xi |\leq R }  \int_{0<\eta < R } A_{1\beta \alpha ,n} (z,\eta ) dzd\eta \rightarrow 0, \quad |\xi| \rightarrow \infty \,,
				\end{aligned}}
		\end{equation*}
   where 
{\small
    \begin{align*}
   A_{1\alpha \beta ,n } =\iint_{[0,1]\times \mathbb{S}^2} B_{1\alpha \beta ,n } (1-\mathfrak{R})^{\delta(\beta )/2-1}\mathfrak{R}^{1/2}  d\mathfrak{R}d\omega, A_{1\beta \alpha ,n } =\iint_{[0,1]\times \mathbb{S}^2} B_{1\beta \alpha ,n }   (1-\mathfrak{R})^{\delta(\alpha )/2-1}\mathfrak{R}^{1/2}d\mathfrak{R}d\omega.
   \end{align*}}
  }\label{3 of Lemma 4.11}
	\end{enumerate}
\end{lemma}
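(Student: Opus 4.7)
The plan is to mimic the construction in Lemma \ref{Lemma 4.10} twice, once for $B_{1\alpha\beta}$ and once for $B_{1\beta\alpha}$, adjusting only the exponent appearing in the $(1-\mathfrak{R})$ factor (from $\delta(\beta)/2-1$ in the first case to $\delta(\alpha)/2-1$ in the second).

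First I would introduce the same two truncations as before. Let $\chi_n(z,\eta)$ denote the characteristic function of $\{(z,\eta)\in\R^3\times\R_+:\tfrac{1}{2n}\le |z|,\eta\le n-\tfrac{1}{2n}\}$, and let $\lambda_n(z,\mathfrak{R},\omega)$ be the characteristic function of $\{z\cdot\omega\ge \tfrac{1}{2n},\ \tfrac{1}{2n}\le \mathfrak{R}\le 1-\tfrac{1}{2n}\}$ (only one parameter $\mathfrak{R}$ enters, since $\mathfrak{r}$ is absent). Set
\begin{equation*}
\hat B_{1\alpha\beta}=B_{1\alpha\beta}\cdot(1-\mathfrak{R})^{\delta(\beta)/2-1}\mathfrak{R}^{1/2},\quad \tilde B_{1\alpha\beta,n}=\hat B_{1\alpha\beta}\cdot\chi_n\cdot\lambda_n,\quad \hat B_{1\alpha\beta,n}=j_{\delta_n}*\tilde B_{1\alpha\beta,n},
\end{equation*}
with $0<\delta_n\le \tfrac{1}{2n}$ and the convolution taken only in the $(z,\eta)$ variables, and then recover $B_{1\alpha\beta,n}=\hat B_{1\alpha\beta,n}\cdot(1-\mathfrak{R})^{1-\delta(\beta)/2}\mathfrak{R}^{-1/2}$. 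For the second kernel I would repeat this verbatim, replacing $\delta(\beta)$ by $\delta(\alpha)$ throughout: $\hat B_{1\beta\alpha}=B_{1\beta\alpha}\cdot(1-\mathfrak{R})^{\delta(\alpha)/2-1}\mathfrak{R}^{1/2}$, then truncate, mollify, and divide back.

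The smoothness and support properties in item \eqref{1 of Lemma 4.11} are immediate from the construction: smoothness comes from the $j_{\delta_n}$-mollification, the supports in $(z,\eta)$ are contained in $\{\tfrac{1}{n}\le|z|,\eta\le n\}$ because of $\chi_n$ combined with $\delta_n\le\tfrac{1}{2n}$, and the constraint $(\xi-\xi_*)\cdot\omega>\tfrac{1}{n}$ follows from the $\lambda_n$ cutoff on $z\cdot\omega\ge \tfrac{1}{2n}$ plus the same mollification radius. For item \eqref{2 of Lemma 4.11}, I would argue exactly as in Lemma \ref{Lemma 4.10}: on any compact $K\Subset\R^3\times\R_+$, split the difference $\hat B_{1\alpha\beta}-\tilde B_{1\alpha\beta,n}$ by the regions cut off by $\chi_n$ and $\lambda_n$, apply the Dominated Convergence Theorem using the local integrability implied by the cutoff assumption \eqref{Asum-angular-cutoff}, then absorb the mollification error by a suitable choice of $\delta_n^*$; this yields strong $L^1_{loc}$ convergence of $\hat B_{1\alpha\beta,n}$ to $\hat B_{1\alpha\beta}$ and hence pointwise a.e.\ convergence of $B_{1\alpha\beta,n}$ to $B_{1\alpha\beta}$ along a subsequence. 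The argument for $B_{1\beta\alpha,n}$ is identical.

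For item \eqref{3 of Lemma 4.11}, the key observation, exactly as in Lemma \ref{Lemma 4.10}, is that $\tilde B_{1\alpha\beta,n}\le \hat B_{1\alpha\beta}$ pointwise, so after integrating against $d\mathfrak{R}\,d\omega$ one obtains $A_{1\alpha\beta,n}\le j_{\delta_n}*A_{1\alpha\beta}$, and similarly $A_{1\beta\alpha,n}\le j_{\delta_n}*A_{1\beta\alpha}$. Then for any fixed $R<\infty$,
\begin{equation*}
\int_{|z-\xi|\le R}A_{1\alpha\beta,n}(z,\eta)\,dz\le \int_{|\psi-\xi|\le R+1}A_{1\alpha\beta}(\psi,w)\,d\psi
\end{equation*}
uniformly over $w$ in a unit neighbourhood of $\eta$, after which the hypothesis on $A_{1\alpha\beta}$ in \eqref{collision} gives the desired vanishing $(1+|\xi|^2+\eta)^{-1}(\cdots)\to 0$ as $|\xi|,\eta\to\infty$, uniformly in $n$. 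The second decay estimate for $A_{1\beta\alpha,n}$ follows in exactly the same way from its assumption in \eqref{collision}, since the mollification only enlarges the integration domain by a shift of at most $\tfrac{1}{2n}\le \tfrac12$. The step I expect to require the most care is verifying that the small shift introduced by the mollifier in the $\eta$ variable does not destroy the uniform decay, but this is handled as in Lemma \ref{Lemma 4.10} by shifting the reference point $I\mapsto I-\tfrac12$ and enlarging the radius $R\mapsto R+1$, both of which are absorbed by the factor $(1+|\xi|^2+\eta)^{-1}$ (respectively $(1+|\xi|^2)^{-1}$) in the limit.
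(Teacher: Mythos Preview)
Your proposal is correct and follows the same truncation-plus-mollification approach as the paper. The one minor deviation is that the paper mollifies $\hat B_{1\alpha\beta}$ in the $z$ variable only (keeping $\eta$ fixed) while mollifying $\hat B_{1\beta\alpha}$ in both $(z,\eta)$ as you do; this yields the slightly cleaner bound $\int_{|z-\xi|\le R}A_{1\alpha\beta,n}(z,\eta)\,dz\le\int_{|\psi-\xi|\le R+1}A_{1\alpha\beta}(\psi,\eta)\,d\psi$ with the same $\eta$ on both sides for item \eqref{3 of Lemma 4.11}, but your uniform shift argument in $\eta$ is equally valid.
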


\begin{proof}
	Denote by
	\begin{equation*}
    \begin{aligned}
		& \chi_n (z,\eta ) = \mathbf{1}_{ \{(z,\eta )\in \R^3 \times \R_+ :\frac{1}{2n}\leq |z|,\eta \leq n-\frac{1}{2n} \} } \,, \\
        & \lambda_n(z,\eta ,\mathfrak{R} ,\omega ) = \mathbf{1}_{ \{(z,\eta ,\mathfrak{R} ,\omega )\in \R^3 \times \R_+ \times [0,1] \times \mathbb{S}^2: z\cdot \omega \geq \frac{1}{2n},\ \frac{1}{2n}\leq \mathfrak{R}<1-\frac{1}{2n} \} }.
    \end{aligned}
	\end{equation*}
	Let
\begin{equation*}{\small
\begin{aligned}
  & \hat{B}_{1\alpha \beta}(z,\eta ,\mathfrak{R},\omega )=B_{1\alpha \beta} (1-\mathfrak{R})^{\delta(\beta )/2-1}\mathfrak{R}^{1/2}, \ \hat{B}_{1\beta \alpha}(z,\eta ,\mathfrak{R},\omega )=B_{1\beta \alpha}(1-\mathfrak{R})^{\delta(\alpha )/2-1}\mathfrak{R}^{1/2}, \\
  & \tilde{B}_{1\alpha \beta ,n} (z,\eta ,\mathfrak{R},\omega )=\hat{B}_{1\alpha \beta }\cdot \chi_n \cdot \lambda_n, \ \tilde{B}_{1\beta \alpha ,n} (z,\eta ,\mathfrak{R},\omega )=\hat{B}_{1\beta \alpha }\cdot \chi_n \cdot \lambda_n, \\ 
  & \hat{B}_{1\alpha \beta ,n }(z,\eta,\mathfrak{R},\omega ) =\int_{\R^3} j_{\delta_n }(z-y) \tilde{B}_{1\alpha \beta ,n}(y,\eta,\mathfrak{R},\omega )dy,\\ 
  & \hat{B}_{1\beta \alpha ,n }(z,\eta,\mathfrak{R},\omega ) =\iint_{\R^3 \times \R_+} \varphi_{\epsilon_n }(z-y,\eta-I) \tilde{B}_{1\beta \alpha ,n}(y,I,\mathfrak{R},\omega)dydI,\\
  & B_{1\alpha \beta,n}(z,\eta ,\mathfrak{R},\omega )=\hat{B}_{1\alpha \beta,n} (1-\mathfrak{R})^{1-\delta(\beta )/2}\mathfrak{R}^{-\frac{1}{2}},\ B_{1\beta \alpha,n}(z,\eta ,\mathfrak{R},\omega )=\hat{B}_{1\beta \alpha,n}(1-\mathfrak{R})^{1-\delta(\alpha )/2}\mathfrak{R}^{-\frac{1}{2}} \,, 
		\end{aligned}}
	\end{equation*}
	where $0< \delta_n,\epsilon_n \leq \frac{1}{2n} $ and $j_{\delta_n}, \varphi_{\epsilon_n}$ are the standard mollifiers. Then $B_{1\alpha \beta ,n }$ and $B_{1\beta \alpha ,n }$ satisfy the condition \eqref{1 of Lemma 4.11}. 

Taking the similar arguments in Lemma \ref{Lemma 4.10}, we obtain
	\begin{equation*}
		\hat{B}_{1\alpha \beta ,n } \rightarrow \hat{B}_{1\alpha \beta}, \quad
		\hat{B}_{1\beta \alpha ,n} \rightarrow \hat{B}_{1\beta \alpha } \quad \text{in }  L^1_{loc} (\R^3\times \R_+ ; L^1([0,1]\times \mathbb{S}^2 )).
	\end{equation*}
It then follows that \eqref{2 of Lemma 4.11} is true. 

Due to $\tilde{B}_{1\alpha \beta ,n}\leq \hat{B}_{1\alpha \beta}, \tilde{B}_{1\beta \alpha ,n} \leq \hat{B}_{1\beta \alpha }$ we can conclude that $A_{1\alpha \beta ,n } \leq j_{\delta_n}* A_{1\alpha \beta },A_{1\beta \alpha ,n } \leq \varphi_{\epsilon_n}* A_{1\beta \alpha }.$ Then for $\alpha\in \{1,\cdots,s_0\} $ and $ \beta \in \{s_0+1,\cdots,s\}$,
	\begin{equation*}
		\begin{aligned}
			& \int_{|z-\xi|\leq R}  A_{1\alpha \beta ,n } (z,\eta )dz
			\leq \int_{|z-\xi|\leq R} \int_{\R^3} j_{\delta_n}(\zeta ) A_{1\alpha \beta } (z-\zeta ,\eta ) d\zeta dz\\
			= & \int_{\R^3} j_{\delta_n}(\zeta ) \int_{|\psi +\zeta - \xi |\leq R} A_{1\alpha \beta }(\psi ,\eta) d\psi d\zeta \leq \int_{|\psi - \xi |\leq R+1}A_{1\alpha \beta }(\psi,\eta ) d\psi \,.
		\end{aligned}
	\end{equation*}
Moreover, for $\alpha\in \{s_0+1,\cdots,s\} $ and $ \beta \in \{1,\cdots,s_0\}$,
	\begin{equation*}{\small
		\begin{aligned}
			\int_{|z-\xi|\leq R} \int_{0< \eta <R } A_{1\beta\alpha ,n } (z,\eta )dzd\eta \leq& \int_{|z-\xi|\leq R}\int_{0< \eta <R } \iint_{\R^3\times \R_+} \varphi_{\epsilon_n} (\zeta ,J) A_{1\beta\alpha } (z-\zeta ,\eta - J ) d\zeta dJ d\eta dz \\
			= & \iint_{\R^3\times \R_+} \varphi_{\epsilon_n}(\zeta ,J) \int_{|\psi +\zeta - \xi |\leq R} \int_{0< K + J <R } A_{1\beta\alpha }  (\psi , K) d\psi dK dJ d\zeta \\
			\leq & \int_{|\psi - \xi |\leq R+1} \int_{0< K +\frac{1}{2}<R+1} A_{1\beta\alpha }(\psi , K) dK d\psi.
		\end{aligned}}
	\end{equation*}
	Therefore, together with the assumptions on $A_{1\alpha \beta }$ and $A_{1\beta\alpha }$, there hold that for all $ n \geq 1 $ and $0 < R< +\infty$,
		\begin{align*}
			& (1+|\xi|^2 + \eta )^{-1} \int_{|z-\xi |\leq R }  A_{1\alpha \beta ,n } (z,\eta ) dz\rightarrow 0, \quad |\xi|, \eta \rightarrow \infty , \\
			&(1+|\xi|^2 )^{-1} \int_{|z-\xi|\leq R} \int_{0< \eta <R } A_{1\beta\alpha ,n } (z,\eta )dzd\eta\rightarrow 0, \quad |\xi| \rightarrow \infty .
		\end{align*}
	The condition \eqref{3 of Lemma 4.11} is thus true. The proof of Lemma \ref{Lemma 4.11} is then finished.
\end{proof}

	\begin{lemma}\label{Lemma 4.12}
	Assume that collision kernel $A_{0\alpha \beta }$ satisfies the assumption \eqref{A}, hence,
	\begin{equation*}
		(1+|\xi|^2)^{-1} \int_{|z-\xi |\leq R } A_{0\alpha \beta } (z)dz, \quad |\xi|\rightarrow \infty, \ \forall R <+\infty .
	\end{equation*}	
	Then there exists $B_{0\alpha \beta ,n }\geq 0 $ satisfying
	\begin{enumerate}
		\item {For any $\omega\in \mathbb{S}^2, B_{0\alpha \beta ,n } (z,\omega ) \in C^\infty (\R^3 ),supp B_{0\alpha \beta , n} \subset \{(z,\omega ): \frac{1}{n} \leq |z| \leq n \}$ and if $ (\xi -\xi_* ) \cdot \omega \leq \frac{1}{n} , B_{0\alpha \beta ,n } =0$;}\label{1 of Lemma 4.12}
		\item  {For almost all $(z,\omega )\in \R^3 \times \mathbb{S}^2 , \lim \limits_{n\rightarrow \infty } B_{0\alpha \beta ,n } =B_{0\alpha \beta }$, where $B_{0\alpha \beta }$ is defined in \eqref{B0-ab};} \label{2 of Lemma 4.12}
		\item {For any $0 < R < + \infty$,
			\begin{equation*}
				\sup \limits_{n \geq 1} (1+|\xi|^2 )^{-1} \int_{|z-\xi |\leq R } A_{0\alpha \beta ,n } (z) dz \rightarrow 0 ,
		\end{equation*}	
where $A_{0 \alpha \beta , n} (z) = \int_{ \mathbb{S}^2 } B_{0 \alpha \beta, n} (z, \omega) \d \omega$. }\label{3 of Lemma 4.12}
	\end{enumerate}
\end{lemma}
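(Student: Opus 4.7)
The plan is to mimic the constructions in Lemmas \ref{Lemma 4.10} and \ref{Lemma 4.11}, specialized to the substantially simpler setting where $B_{0\alpha\beta}$ depends only on $(z,\omega)\in\R^3\times\mathbb{S}^2$ (no internal energy, no $\mathfrak{R},\mathfrak{r}$ parameters, and no weights $(1-\mathfrak{R})^{\delta(\beta)/2-1}\mathfrak{R}^{1/2}$ to peel off). The singularities to excise are at $|z|=0$, $|z|=\infty$, and the grazing directions $z\cdot\omega=0$.

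First I introduce the cutoff indicators
\begin{equation*}
\chi_n(z)=\mathbf{1}_{\{\frac{1}{2n}\leq|z|\leq n-\frac{1}{2n}\}},\qquad \lambda_n(z,\omega)=\mathbf{1}_{\{z\cdot\omega\geq\frac{1}{2n}\}},
\end{equation*}
and set $\tilde B_{0\alpha\beta,n}(z,\omega)=B_{0\alpha\beta}(z,\omega)\chi_n(z)\lambda_n(z,\omega)$. Then I mollify in $z$ only,
\begin{equation*}
B_{0\alpha\beta,n}(z,\omega)=\int_{\R^3}j_{\delta_n}(z-y)\tilde B_{0\alpha\beta,n}(y,\omega)\,dy,
\end{equation*}
with $0<\delta_n\leq\frac{1}{2n}$ a standard mollifier radius to be fixed. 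Property \eqref{1 of Lemma 4.12} is then built in: the mollifier produces $C^\infty$ regularity in $z$, and the $\frac{1}{2n}$ buffers in the two indicators together with the radius $\delta_n\leq\frac{1}{2n}$ ensure both $\mathrm{supp}\,B_{0\alpha\beta,n}\subset\{\frac{1}{n}\leq|z|\leq n\}$ and $B_{0\alpha\beta,n}(z,\omega)=0$ whenever $z\cdot\omega\leq\frac{1}{n}$.

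For \eqref{2 of Lemma 4.12}, the angular cutoff hypothesis \eqref{Asum-angular-cutoff} and the form $B_{0\alpha\beta}=\tilde\sigma_{\alpha\beta}|g|$ give $B_{0\alpha\beta}\in L^1_{loc}(\R^3;L^1(\mathbb{S}^2))$. The Dominated Convergence Theorem yields $\tilde B_{0\alpha\beta,n}\to B_{0\alpha\beta}$ in $L^1_{loc}(\R^3;L^1(\mathbb{S}^2))$, and by choosing $\delta_n$ small enough via the standard approximation property of mollifiers on any compact set I can also ensure $\|B_{0\alpha\beta,n}-\tilde B_{0\alpha\beta,n}\|_{L^1(K\times\mathbb{S}^2)}\leq \frac{1}{n}$ for a prescribed exhaustion $K$. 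Then $B_{0\alpha\beta,n}\to B_{0\alpha\beta}$ in $L^1_{loc}(\R^3;L^1(\mathbb{S}^2))$, and passing to a subsequence gives the desired a.e.\ convergence.

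For \eqref{3 of Lemma 4.12}, since $\tilde B_{0\alpha\beta,n}\leq B_{0\alpha\beta}$, the corresponding averaged kernels satisfy $A_{0\alpha\beta,n}\leq j_{\delta_n}*A_{0\alpha\beta}$, and a Fubini argument identical to the one used at the end of the proof of Lemma \ref{Lemma 4.10} gives
\begin{equation*}
\int_{|z-\xi|\leq R}A_{0\alpha\beta,n}(z)\,dz\leq\int_{|\psi-\xi|\leq R+1}A_{0\alpha\beta}(\psi)\,d\psi,
\end{equation*}
so the uniform-in-$n$ decay requested in \eqref{3 of Lemma 4.12} follows directly from the hypothesis \eqref{collision} on $A_{0\alpha\beta}$. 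There is no real obstacle to overcome: this lemma is strictly easier than Lemmas \ref{Lemma 4.10} and \ref{Lemma 4.11} because the absence of the $\eta$ variable removes the need to truncate in internal energy and to mollify in that direction, and the absence of $\mathfrak{R},\mathfrak{r}$ removes the measure-theoretic bookkeeping of the weighting factors.
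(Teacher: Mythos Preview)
Your proof is correct and follows essentially the same approach as the paper: the same cutoffs $\chi_n$ and $\lambda_n$, the same mollification in $z$ alone, the same $L^1_{loc}(\R^3;L^1(\mathbb{S}^2))$ convergence argument for part \eqref{2 of Lemma 4.12}, and the same Fubini bound $\int_{|z-\xi|\leq R}A_{0\alpha\beta,n}\leq\int_{|\psi-\xi|\leq R+1}A_{0\alpha\beta}$ for part \eqref{3 of Lemma 4.12}. There is no meaningful difference between the two arguments.
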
	
		\begin{proof}
	Denote by $	\chi_n (z) = \mathbf{1}_{ \{z\in \R^3 :\frac{1}{2n}\leq |z|\leq n-\frac{1}{2n} \} }$ and $\lambda_n(z, \omega ) = \mathbf{1}_{ \{(z, \omega )\in \R^3 \times \mathbb{S}^2: z\cdot \omega \geq \frac{1}{2n}\} }$. Let
	\begin{equation*}
		\tilde{B}_{0\alpha \beta ,n} (z,\omega )=B_{0\alpha \beta }\cdot \chi_n \cdot \lambda_n,\  B_{0\alpha \beta ,n } =j_{\delta_n }* \tilde{B}_{0\alpha \beta ,n},
	\end{equation*}
where $0< \delta_n \leq \frac{1}{2n}.$ Here the convolution is done for the variables $z \in \R^3$. Then the kernel $B_{0\alpha \beta ,n }$ satisfies the condition \eqref{1 of Lemma 4.12}. Taking the similar methods in Lemma \ref{Lemma 4.9}, we obtain
	\begin{equation*}
		B_{0\alpha \beta ,n } \rightarrow B_{0\alpha \beta} \text{ in } L^1_{loc} (\R^3 ; L^1( \mathbb{S}^2 )) \,,
	\end{equation*}
which infers \eqref{2 of Lemma 4.12}. Moreover, the fact $\tilde{B}_{0\alpha \beta ,n}\leq B_{0\alpha \beta}$ implies $A_{0\alpha \beta ,n } \leq j_{\delta_n}* A_{0\alpha \beta }.$ Then one has
	\begin{equation*}
		\begin{aligned}
			\int_{|z-\xi|\leq R}  A_{0\alpha \beta ,n } (z )dz 
			\leq & \int_{|z-\xi|\leq R}\int_{\R^3} j_{\delta_n}(\zeta) A_{0\alpha \beta } (z-\zeta ) d\zeta dz \\
			= & \int_{\R^3} j_{\delta_n}(\zeta )  \int_{|\psi +\zeta - \xi |\leq R} A_{0\alpha \beta }(\psi ) d\psi d\zeta \leq \int_{|\psi - \xi |\leq R+1}  A_{0\alpha \beta }(\psi) d\psi \,,
		\end{aligned}
	\end{equation*}
which, together with the assumption \eqref{A} of $A_{0 \alpha \beta}$, concludes \eqref{3 of Lemma 4.12}. The proof of Lemma \ref{Lemma 4.12} is therefore finished.
\end{proof}

\subsection{Approximation of collision operator}

Now we construct the approximated collision operator  $\tilde{Q}_n=(\tilde{Q}_{1,n},\cdots,\tilde{Q}_{s,n} )$ as follows. Recalling the collision kernels $B_{2 \alpha \beta,n}$, $B_{1 \alpha \beta,n}$, $B_{1 \beta \alpha,n}$ and $B_{0 \alpha \beta,n}$ constructed in Lemma \ref{Lemma 4.10}-\ref{Lemma 4.11}-\ref{Lemma 4.12}, we first define
\begin{equation*}{\small
     \begin{aligned}
 & \hat{B}_{1\alpha \beta,n}(z,\eta ,\mathfrak{R},\omega ) =B_{1\alpha \beta ,n } (1-\mathfrak{R})^{\delta(\beta )/2-1}\mathfrak{R}^{1/2} \,, \hat{B}_{1\beta \alpha,n }(z,\eta ,\mathfrak{R},\omega ) =B_{1\beta \alpha ,n }(1-\mathfrak{R})^{\delta(\alpha )/2-1}\mathfrak{R}^{1/2} \,, \\
  & \hat{B}_{2\alpha \beta ,n}(z,\eta ,\mathfrak{R}, \mathfrak{r} , \omega ) =B_{2\alpha \beta ,n } \mathfrak{r}^{\delta(\alpha)/2-1 }(1-\mathfrak{r})^{\delta(\beta )/2-1}(1-\mathfrak{R})^{(\delta(\alpha)+\delta(\beta ))/2-1} \mathfrak{R}^{1/2} .
     \end{aligned}}
 \end{equation*}
 
We now introduce the operators $Q_{\alpha, n} (f, f)$ for $\alpha \in \{ 1, \cdots, s \}$. 

If $\alpha \in \{1,\cdots , s_0\}$, we define
\begin{equation*}
  \begin{aligned}
    Q_{\alpha, n} (f,f) (\xi) = \sum_{\beta = 1}^{s} Q_{\alpha \beta, n} (f, f) (\xi) \,,
  \end{aligned}
\end{equation*}
where
\begin{equation*}
  \begin{aligned}
    Q_{\alpha \beta, n} (f, f) (\xi) & = \iint_{\R^3 \times \mathbb{S}^2 } B_{0\alpha \beta ,n} (\xi - \xi_*, \omega) (f'_\alpha f'_{\beta_*}- f_\alpha f_{\beta_*})d\omega d\xi_* \\
    & = Q_{\alpha \beta, n}^+ (f, f) (\xi) - Q_{\alpha \beta, n}^- (f, f) (\xi)
  \end{aligned}
\end{equation*}
for $\beta \in \{ 1, \cdots, s_0 \}$, and
\begin{equation*}{\small
  \begin{aligned}
    Q_{\alpha \beta, n} (f,f) (\xi) & = \iiiint \limits_{\R^3\times \R_+ \times [0,1] \times \mathbb{S}^2} \hat{B}_{1\alpha \beta ,n} (\xi - \xi_*, I_*, \mathfrak{R}, \omega) (\tfrac{f'_\alpha f'_{\beta_*}}{(I'_* )^{\delta(\beta)/2-1}} - \tfrac{f_\alpha f_{\beta_*}}{ I_* ^{\delta(\beta)/2-1}}) I_*^{\delta(\beta)/2-1} d\omega d \mathfrak{R} dI_* d\xi_* \\
    & = Q_{\alpha \beta, n}^+ (f, f) (\xi) - Q_{\alpha \beta, n}^- (f, f) (\xi)
  \end{aligned}}
\end{equation*}
for $\beta \in \{ s_0 + 1, \cdots, s \}$. For the loss terms $Q_{\alpha \beta, n}^- (f,f) (\xi)$ with $\alpha \in \{ 1, \cdots, s_0 \}$ and $\beta \in \{ 1 , \cdots , s \}$, one can represent
\begin{equation*}
  \begin{aligned}
    Q_{\alpha \beta, n}^- (f,f) (\xi) = f_\alpha L_{\alpha \beta, n} (f) (\xi) \,,
  \end{aligned}
\end{equation*}
where
\begin{equation*}
  \begin{aligned}
    & L_{\alpha \beta, n} (f) (\xi) = \iint_{\R^3 \times \mathbb{S}^2 } B_{0\alpha \beta ,n} (\xi - \xi_*, \omega) f_{\beta *} d \omega d \xi_* \,, \ \beta \in \{ 1, \cdots, s_0 \} \,, \\
    & L_{\alpha \beta, n} (f) (\xi) = \iiiint \limits_{\R^3\times \R_+ \times [0,1] \times \mathbb{S}^2} \hat{B}_{1\alpha \beta ,n} (\xi - \xi_*, I_*, \mathfrak{R}, \omega) f_{\beta_*} d \omega d \mathfrak{R} dI_* d\xi_* \,, \ \beta \in \{ s_0 + 1, \cdots, s \} \,.
  \end{aligned}
\end{equation*}
Denote by $L_{\alpha, n} (f) (\xi) = \sum_{\beta = 1}^{s} L_{\alpha \beta, n} (f) (\xi)$ for $\alpha \in \{ 1, \cdots, s_0 \}$.

If $\alpha \in \{ s_0 + 1, \cdots, s \}$, we define
\begin{equation*}
  \begin{aligned}
    Q_{\alpha, n} (f,f) (\xi, I) = \sum_{\beta = 1}^{s} Q_{\alpha \beta, n} (f, f) (\xi, I) \,,
  \end{aligned}
\end{equation*}
where
\begin{equation*}{\small
  \begin{aligned}
    Q_{\alpha \beta, n} (f, f) (\xi, I) = & \iiint_{\R^3 \times [0,1] \times \mathbb{S}^2} \hat{B}_{1\beta \alpha ,n} (\xi - \xi_*, I, \mathfrak{R}, \omega) (\tfrac{f'_\alpha f'_{\beta_*}}{(I')^{\delta(\alpha)/2-1} } - \tfrac{f_\alpha f_{\beta_*}}{I^{\delta(\alpha)/2-1}})  I^{\delta(\alpha )/2-1} d\omega d\mathfrak{R} d\xi_* \\
    = & Q_{\alpha \beta, n}^+ (f, f) (\xi, I) - Q_{\alpha \beta, n}^- (f, f) (\xi, I)
  \end{aligned}}
\end{equation*}
for $ \beta \in \{ 1, \cdots, s_0 \} $, and
\begin{equation*}
  \begin{aligned}
    Q_{\alpha \beta, n} (f, f) (\xi, I) = & \iiiint_{\R^3 \times \R_+ \times [0,1]^2 \times \mathbb{S}^2 } (\tfrac{f'_\alpha f'_{\beta_*}}{(I')^{\delta(\alpha)/2-1} (I'_* )^{\delta(\beta)/2-1}} - \tfrac{f_\alpha f_{\beta_*}}{I^{\delta(\alpha)/2-1} I_* ^{\delta(\beta)/2-1}})\\
		& \times \hat{B}_{2 \alpha \beta ,n} (\xi - \xi_*, I + I_*, \mathfrak{R}, \mathfrak{r}, \omega) I^{\delta(\alpha)/2-1 } I_*^{\delta (\beta )/2-1} d \omega d \mathfrak{R} d \mathfrak{r} d I_* d \xi_* \\
= & Q_{\alpha \beta, n}^+ (f, f) (\xi, I) - Q_{\alpha \beta, n}^- (f, f) (\xi, I)
  \end{aligned}
\end{equation*}
for $ \beta \in \{ s_0 + 1, \cdots, s \} $. Moreover, the loss terms $Q_{\alpha \beta, n}^- (f,f) (\xi, I)$ with $\alpha \in \{s_0 + 1, \cdots, s \}$ and $\beta \in \{ 1 , \cdots , s \}$ can be represented by
\begin{equation*}
  \begin{aligned}
    Q_{\alpha \beta, n}^- (f,f) (\xi, I) = f_\alpha L_{\alpha \beta, n} (f) (\xi, I) \,,
  \end{aligned}
\end{equation*}
where
\begin{equation*}{\small
  \begin{aligned}
    & L_{\alpha \beta, n} (f) (\xi, I) = \iiint_{\R^3 \times [0,1] \times \mathbb{S}^2} \hat{B}_{1\beta \alpha ,n} (\xi - \xi_*, I, \mathfrak{R}, \omega) f_{\beta *} d \omega d\mathfrak{R} d\xi_* \,, \ \beta \in \{ 1, \cdots, s_0 \} \,, \\
    & L_{\alpha \beta, n} (f) (\xi, I) = \iiiint\limits_{\R^3 \times \R_+ \times [0,1]^2 \times \mathbb{S}^2 } \hat{B}_{2 \alpha \beta ,n} (\xi - \xi_*, I + I_*, \mathfrak{R}, \mathfrak{r}, \omega) f_{\beta *} d \omega d \mathfrak{R} d \mathfrak{r} d I_* d \xi_* \,, \ \beta \in \{ s_0 + 1, \cdots, s \} \,.
  \end{aligned}}
\end{equation*}
Denote by $L_{\alpha, n} (f) (\xi, I) = \sum_{\beta = 1}^{s} L_{\alpha \beta, n} (f) (\xi, I) $ for $\alpha \in \{ s_0 + 1, \cdots, s \} $. Furthermore, we employ the symbols $Q_{\alpha, n}^\pm (f,f) (\mathbf{Z}) = \sum_{\beta = 1}^{s} Q_{\alpha \beta, n}^\pm (f, f) (\mathbf{Z})$ for $\alpha, \beta \in \{ 1, \cdots, s \}$ and $ \mathbf{Z} = \xi $ or $(\xi, I)$. Then $Q_{\alpha ,n } (f,f) = Q^+_{\alpha ,n } (f,f) -Q^-_{\alpha ,n } (f,f)$.

Applying the above symbols, we construct the smoothly approximated collision operator $\tilde{Q}_{\alpha, n}$ as follows: 
\begin{equation}\label{Q L}
	\begin{aligned}
		\tilde{Q}^\pm _{\alpha ,n } (f,f) &=
		\begin{cases}
			( N_n (f) )^{-1} Q^\pm _{\alpha ,n } (f,f)(\xi ), \quad &\alpha \in \{1,\cdots, s_0 \},\\
			( N_n (f) )^{-1} Q^\pm _{\alpha ,n } (f,f)(\xi ,I ), \quad &\alpha \in \{s_0+1,\cdots, s \},
		\end{cases} \\
			\tilde{Q}_{\alpha ,n } (f,f) &=
		\begin{cases}
			( N_n (f) )^{-1} Q_{\alpha ,n } (f,f)(\xi ), \quad &\alpha \in \{1,\cdots, s_0 \},\\
			( N_n (f) )^{-1} Q_{\alpha ,n } (f,f)(\xi ,I ), \quad &\alpha \in \{s_0+1,\cdots, s \},
		\end{cases}\\
		\tilde{L} _{\alpha ,n } (f) &=
		\begin{cases}
			( N_n (f) )^{-1} L _{\alpha ,n } (f)(\xi ), \quad & \ \ \ \alpha \in \{1,\cdots, s_0 \},\\
			( N_n (f) )^{-1} L _{\alpha ,n } (f)(\xi ,I ), \quad & \ \ \ \alpha \in \{s_0+1,\cdots, s \} \,, 
		\end{cases}
	\end{aligned}
\end{equation}
where
\begin{equation}\label{Nnf}
  \begin{aligned}
    N_n (f) = 1 + \tfrac{1}{n} \sum_{\alpha = 1}^{s_0} \int_{\R^3} | f_\alpha | d \xi + \tfrac{1}{n} \sum_{\alpha = s_0 + 1}^{s} \iint_{\R^3\times \R_+} | f_\alpha | d\xi dI \,. 
  \end{aligned}
\end{equation}

Now we study the $L^1$ and $L^\infty$ properties of the approximated collision operator $\tilde{Q}_{\alpha, n}$, which will be used to investigate the existence of the approximation problem \eqref{approximation}.

\begin{lemma}[$L^1$ estimates on variable $\mathbf{Z}$]\label{Lemma 4.13}
	Denote by the variable $\mathbf{Z}$ by
\begin{equation*}
		\mathbf{Z} =\mathbf{Z}_\alpha :=
		\begin{cases}
			\mathbf{\xi } , \qquad \enspace \, \alpha \in \{1,\cdots , s_0 \} \,, \\
			(\mathbf\xi , I )  , \quad \alpha \in \{s_0 +1,\cdots , s \} \,.
		\end{cases}
	\end{equation*}
There exists a positive constant $C_{s, n}$ depending on $s ,n$ such that for all distribution functions $f (\mathbf{Z}), g(\mathbf{Z} ) $ with the components $f_\alpha , g_\alpha \in L^1(d \mathbf{Z})$, 
	\begin{equation*}
		\begin{aligned}
			& \Vert \tilde{Q}_{\alpha , n} (f,f) (\mathbf{Z}) \Vert_{L^1}\leq C_{s , n} \Vert f_{\alpha} (\mathbf{Z}) \Vert_{L^1}, \\
			& \Vert \tilde{Q}_{\alpha , n} (f,f) (\mathbf{Z}) -\tilde{Q}_{\alpha , n} (g,g ) (\mathbf{Z}) \Vert_{L^1} \leq C_{s , n} \sum_{\beta = 1}^{s} \Vert f_\beta (\mathbf{Z}) -g_\beta (\mathbf{Z}) \Vert_{L^1}.
		\end{aligned}
	\end{equation*}
\end{lemma}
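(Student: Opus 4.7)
The plan is to exploit two features of the approximated collision operator: first, by Lemmas~\ref{Lemma 4.10}--\ref{Lemma 4.12}, each kernel $B_{i\alpha\beta,n}$ is smooth, bounded, and supported in $\{1/n\le|z|,\eta\le n\}$, so every averaged kernel $A_{i\alpha\beta,n}$ is bounded with compact support and the linear operator $L_{\alpha\beta,n}$ satisfies the universal bound $\Vert L_{\alpha\beta,n}(f)\Vert_{L^\infty}\le C_n\Vert f_\beta\Vert_{L^1}$ with a constant $C_n$ depending only on $n$. Second, the normalization $N_n(f)=1+\tfrac1n\sum_\gamma\Vert f_\gamma\Vert_{L^1}$ built into $\tilde Q_n$ enforces the pointwise inequality $\Vert f_\gamma\Vert_{L^1}/N_n(f)\le n$ for every $\gamma\in\{1,\ldots,s\}$. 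Together these features let us absorb exactly one of the two $L^1$ factors arising from the quadratic nature of $Q_{\alpha\beta,n}$, which is precisely what the lemma requires.

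For the first inequality I would bound the loss term directly:
\begin{equation*}
\Vert\tilde Q_{\alpha\beta,n}^-(f,f)\Vert_{L^1}=\frac{\Vert f_\alpha\,L_{\alpha\beta,n}(f)\Vert_{L^1}}{N_n(f)}\le\frac{C_n\Vert f_\alpha\Vert_{L^1}\Vert f_\beta\Vert_{L^1}}{N_n(f)}\le C_n n\,\Vert f_\alpha\Vert_{L^1}.
\end{equation*}
For the gain term I would invoke the measure-preserving change of variables $(\xi,\xi_*,\ldots)\leftrightarrow(\xi',\xi'_*,\ldots)$ provided by Lemma~\ref{measure}, under which each kernel $B_{i\alpha\beta,n}$ is invariant by the explicit construction in Lemmas~\ref{Lemma 4.10}--\ref{Lemma 4.12}; this yields
$$\Vert Q_{\alpha\beta,n}^+(f,f)\Vert_{L^1}\le\int B\,|f'_\alpha||f'_{\beta*}|\,d\Gamma_{\alpha\beta}=\int B\,|f_\alpha||f_{\beta*}|\,d\Gamma_{\alpha\beta}\le C_n\Vert f_\alpha\Vert_{L^1}\Vert f_\beta\Vert_{L^1},$$
and the factor $N_n(f)^{-1}$ absorbs one $L^1$ norm as before. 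Summation over $\beta\in\{1,\ldots,s\}$ gives the first assertion with $C_{s,n}=2sC_nn$.

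The Lipschitz estimate is the more delicate point: since $Q_{\alpha\beta,n}$ is quadratic while $N_n(f)^{-1}$ can absorb only one $L^1$ factor, a naive splitting $\tilde Q(f,f)-\tilde Q(g,g)=N_n(f)^{-1}[Q(f,f)-Q(g,g)]+[N_n(f)^{-1}-N_n(g)^{-1}]Q(g,g)$ leaves unpaired ratios such as $\Vert g_\beta\Vert_{L^1}/N_n(f)$ which are not uniformly controlled in $f,g$. I would circumvent this by inserting the natural bilinear extension $Q_{\alpha\beta,n}(f,g)$ and using the algebraic identity
\begin{equation*}
\tilde Q(f,f)-\tilde Q(g,g)=\frac{Q(f,f-g)}{N_n(f)}+\frac{Q(f-g,g)}{N_n(g)}+\frac{N_n(g)-N_n(f)}{N_n(f)\,N_n(g)}\,Q(f,g),
\end{equation*}
which one verifies by clearing denominators and using $Q(f,f-g)=Q(f,f)-Q(f,g)$ and $Q(f-g,g)=Q(f,g)-Q(g,g)$. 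Every term is now paired correctly: the first is bounded by $C_nn\Vert f_\beta-g_\beta\Vert_{L^1}$ via $\Vert f_\alpha\Vert_{L^1}/N_n(f)\le n$; the second by $C_nn\Vert f_\alpha-g_\alpha\Vert_{L^1}$ via $\Vert g_\beta\Vert_{L^1}/N_n(g)\le n$; and the third, after using $|N_n(f)-N_n(g)|\le n^{-1}\sum_\gamma\Vert f_\gamma-g_\gamma\Vert_{L^1}$ together with $\Vert f_\alpha\Vert_{L^1}/N_n(f)\le n$ and $\Vert g_\beta\Vert_{L^1}/N_n(g)\le n$, by $C_nn\sum_\gamma\Vert f_\gamma-g_\gamma\Vert_{L^1}$. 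Summing in $\beta$ then yields the stated inequality with $C_{s,n}$ depending only on $s,n$; the main obstacle—the quadratic-versus-linear mismatch highlighted above—is resolved precisely by the insertion of the mixed term $Q(f,g)$ in the identity.
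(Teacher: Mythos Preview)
Your proposal is correct and follows essentially the same approach as the paper. For the boundedness you both handle the gain term via the pre/post-collisional change of variables (the paper's identity \eqref{Q}) and then absorb one $L^1$ factor using $\sum_\beta\Vert f_\beta\Vert_{L^1}/N_n(f)\le n$; for the Lipschitz estimate you both use a three-term telescoping splitting that pairs each surviving factor of $f$ with $N_n(f)$ and each factor of $g$ with $N_n(g)$. The only cosmetic difference is that the paper writes the splitting pointwise on the integrand $N_n(f)^{-1}f_\alpha f_{\beta*}-N_n(g)^{-1}g_\alpha g_{\beta*}$ with intermediate term $f_{\beta*}g_\alpha$, whereas you write it at the operator level with intermediate term $Q(f,g)$ (i.e., $f_\alpha g_{\beta*}$); the two identities are interchangeable and the estimates that follow are identical.
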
 

\begin{proof}
	For all $\alpha \in \{ 1,\cdots , s \},$
	\begin{equation*}
		\begin{aligned}
			\Vert \tilde{Q}_{\alpha , n} (f,f) \Vert_{L^1} 
			&\leq \Vert \tilde{Q}^+_{\alpha , n} (f,f) \Vert_{L^1} + \Vert \tilde{Q}^-_{\alpha , n} (f,f) \Vert_{L^1} ,\\
			\Vert \tilde{Q}_{\alpha , n} (f,f) -\tilde{Q}_{\alpha , n} (g,g ) \Vert_{L^1} 
			&\leq \Vert \tilde{Q}^+_{\alpha , n} (f,f) -\tilde{Q}^+_{\alpha , n} (g,g ) \Vert_{L^1} +\Vert \tilde{Q}^-_{\alpha , n} (f,f) -\tilde{Q}^-_{\alpha , n} (g,g ) \Vert_{L^1}.
		\end{aligned}
	\end{equation*}
   Using invariant properties of $B_{2\alpha \beta }$, that is, $B_{2\alpha \beta }(\xi - \xi_*,I+I_*,\mathfrak{R},\mathfrak{r},\omega) =B_{2\alpha \beta }(\xi' - ,\xi'_*,I'+I'_*,\mathfrak{R}',\mathfrak{r}',\omega')$ for $\alpha, \beta \in \{ s_0 + 1, \cdots, s \}$ (see  \eqref{B-inv}), we easily deduce that $\hat{B}_{2\alpha \beta,n }$ is also invariant for such an exchange of state. Then from using the change variables $(\xi ,I)\rightarrow (\xi',I')$, it follows that for $\alpha, \beta \in \{ s_0 + 1, \cdots, s \}$ and for any $\varphi(\xi,I) \in L^\infty ( \R^3\times \R_+)$ 
	\begin{equation}\label{Q}{\small
	\begin{aligned}
	&\iint_{ \R^3\times \R_+}Q^+_{\alpha \beta ,n}(f,f) \varphi d\xi dI\\
	=&\int_{(\R^3)^2 \times(\R_+)^2 \times [0,1]^2 \times \mathbb{S}^2 } \hat{B}_{2 \alpha \beta ,n} \tfrac{f'_\alpha f'_{\beta_*}}{(I')^{\delta(\alpha)/2-1} (I'_* )^{\delta(\beta)/2-1}}
	 I^{\delta(\alpha)/2-1 } I_*^{\delta (\beta )/2-1}\varphi d\xi d\xi_* d \mathfrak{r} d \mathfrak{R} d \omega dI dI_*\\
	=& \int_{(\R^3)^2 \times(\R_+)^2 \times [0,1]^2 \times \mathbb{S}^2 }\hat{B}_{2 \alpha \beta ,n} f_\alpha f_{\beta_*} 
	\varphi' d\xi d\xi_* d \mathfrak{r} d \mathfrak{R} d \omega dI dI_* \\
	= & \iint_{ \R^3\times \R_+} Q^-_{ \alpha \beta,n }(f,f) \varphi' d \xi dI,
	\end{aligned}}
	\end{equation}
	where $\varphi '= \varphi (\xi',I')$. Moreover, by the similar arguments in \eqref{Q}, we concludes that for the other indices $\alpha,\beta$,
\begin{equation*}
  \begin{aligned}
    \int_{\mathcal{Z}_\alpha }Q^+_{\alpha \beta ,n}(f,f) \varphi d \mathbf{Z}_\alpha = \int_{\mathcal{Z}_\alpha } Q^-_{ \alpha \beta,n }(f,f) \varphi' d \mathbf{Z}_\alpha \,.
  \end{aligned}
\end{equation*}
It then follows that for $\alpha \in \{1,\cdots , s_0\}$,
	\begin{equation*}
		\begin{aligned}
			\Vert	Q^+_{\alpha ,n} (f,f)\Vert_{L^1} 
			\leq &\sum \limits_{\beta =1}^{s_0} \iint_{\R^3 \times \R^3 } A_{0\alpha \beta ,n} | f_\alpha f_{\beta *} | d\xi d\xi_*  + \sum \limits_{\beta =s_0+ 1}^{s} \iiint_{\R^3 \times \R^3 \times \R_+ } A_{1\alpha \beta ,n} | f_\alpha f_{\beta *} | d\xi d\xi_*  dI_*\\
			\leq & C_{\alpha ,n} \Vert f_\alpha \Vert_{L^1} \sum \limits_{\beta =1}^{s}  \Vert f_{\beta } \Vert_{L^1},
		\end{aligned}
	\end{equation*}
	and for $\alpha \in \{s_0+1,\cdots , s\}$,
	\begin{equation*}{\small
		\begin{aligned}
			\Vert	Q^+_{\alpha ,n} (f,f)\Vert_{L^1}
			\leq & \sum \limits_{\beta =1}^{s_0} \iiint_{\R^3 \times \R^3 \times \R_+ } A_{1\beta \alpha ,n} | f_\alpha f_{\beta } | d\xi d\xi_* dI + \sum \limits_{\beta =s_0+ 1}^{s} \iiiint_{\R^3 \times \R^3 \times \R_+ \times \R_+} A_{2 \alpha \beta ,n} | f_\alpha f_{\beta_*} | d\xi d\xi_*dI dI_* \\
			\leq & C_{\alpha ,n} \Vert f_\alpha \Vert_{L^1} \sum \limits_{\beta =1}^{s}  \Vert f_{\beta } \Vert_{L^1}.
		\end{aligned}}
	\end{equation*}
Here we have utilized the facts that the approximated kernels $A_{0 \alpha \beta, n}, A_{1 \alpha \beta, n}, A_{1 \beta \alpha, n}, A_{2 \alpha \beta, n}$ constructed in Lemmas \ref{Lemma 4.10}-\ref{Lemma 4.11}-\ref{Lemma 4.12} belong to $L^\infty$ with upper bounds depending on $n$. Observe that $\sum_{\beta = 1}^{s}\| f_\beta \|_{L^1} ( N_n (f) )^{-1} \leq n$, where $N_n (f)$ is defined in \eqref{Nnf}. As a consequence, it is easy to see that $\Vert \tilde{Q}^+_{\alpha , n} (f,f) \Vert_{L^1}\leq C_{\alpha , n} \Vert f_{\alpha} \Vert_{L^1}$ for all $\alpha \in \{1,\cdots , s\}$. Similarly, one has $\Vert \tilde{Q}^-_{\alpha , n} (f,f) \Vert_{L^1}\leq C_{\alpha , n} \Vert f_{\alpha} \Vert_{L^1}$ for all $\alpha \in \{1,\cdots , s\}$. Then $\Vert \tilde{Q}_{\alpha , n} (f,f) \Vert_{L^1}\leq C_{\alpha , n} \Vert f_{\alpha} \Vert_{L^1}$. In summary, we infer that
	\begin{equation}
	\Vert \tilde{Q}_{\alpha , n} (f,f) \Vert_{L^1}\leq C_{s , n} \Vert f_{\alpha} \Vert_{L^1}.
	\end{equation}

Observe that for $\alpha \in \{1,\cdots ,s_0\}$, 
	\begin{equation*}{\small
		\begin{aligned}
			\Vert \tilde{Q}^+_{\alpha , n} (f,f) - & \tilde{Q}^+_{\alpha , n} (g,g ) \Vert_{L^1} =\sum \limits_{\beta =1}^{s_0}\int_{\R^3} d\xi |\int_{\R^3} \{(N_n (f))^{-1}  f_\alpha f_{\beta *} -(N_n (g))^{-1} g_\alpha  g_{\beta *}\} A_{0\alpha \beta ,n}d\xi_* | \\
			+& \sum \limits_{\beta =s_0+ 1}^{s} \int_{\R^3} d\xi |\iint_{\R^3\times \R_+} \{(N_n (f))^{-1}  f_\alpha f_{\beta *} -(N_n (g))^{-1} g_\alpha  g_{\beta *}\} A_{1\alpha \beta ,n }d\xi_* dI_*| \,.
		\end{aligned}}
	\end{equation*}
Note that
	\begin{equation*}
		\begin{aligned}
			&|(N_n (f))^{-1}  f_\alpha f_{\beta *} -(N_n (g))^{-1}  g_\alpha  g_{\beta_*} | \\
			\leq & (N_n (f))^{-1} | f_{\beta *} | |f_\alpha -g_\alpha | +(N_n (g))^{-1} | g_\alpha | |f_{\beta *}-g_{\beta *}| \\
			+ & | f_{\beta *} g_\alpha | (N_n (f))^{-1} (N_n (g))^{-1} \sum_{\beta = 1}^{s} \| f_\beta - g_\beta \|_{L^1}.
		\end{aligned}
	\end{equation*}
By using the facts $0 \leq A_{0 \alpha \beta, n} , A_{1 \alpha \beta, n} \leq C_{s,n} $ and 
\begin{equation*}
  \begin{aligned}
    \sum_{\beta = 1}^{s} \| f_\beta \|_{L^1} (N_n (f))^{-1} \leq n \,, \ \sum_{\beta = 1}^{s} \| g_\beta \|_{L^1} (N_n (g))^{-1} \leq n \,,
  \end{aligned}
\end{equation*}
one then has
\begin{equation*}
  \begin{aligned}
    \| \tilde{Q}_{\alpha, n}^+ (f, f) - \tilde{Q}_{\alpha, n}^+ (g, g) \|_{L^1} \leq C_{s,n} \sum_{\beta = 1}^{s} \| f_\beta - g_\beta \|_{L^1} 
  \end{aligned}
\end{equation*}
for $\alpha \in \{1,\cdots ,s_0\}$. Similarly, for $\alpha \in \{s_0 + 1,\cdots ,s\}$, there holds
\begin{equation*}
  \begin{aligned}
    \| \tilde{Q}_{\alpha, n}^+ (f, f) - \tilde{Q}_{\alpha, n}^+ (g, g) \|_{L^1} \leq C_{s,n} \sum_{\beta = 1}^{s} \| f_\beta - g_\beta \|_{L^1} \,.
  \end{aligned}
\end{equation*}
Similarly, one infers that
\begin{equation*}
  \begin{aligned}
    \| \tilde{Q}_{\alpha, n}^- (f, f) - \tilde{Q}_{\alpha, n}^- (g, g) \|_{L^1} \leq C_{s,n} \sum_{\beta = 1}^{s} \| f_\beta - g_\beta \|_{L^1} \,.
  \end{aligned}
\end{equation*}
Therefore, the proof of Lemma \ref{Lemma 4.13} is finished.
	\end{proof}

\begin{lemma}[$L^\infty$ esitmates]\label{Lemma 4.13 of infty}
		There exists a constant $C_{s, n} > 0$ depending on $s ,n$ such that for all distribution functions $f=(f_1,\cdots, f_s) $ with the component $f_\alpha \in L^\infty \cap L^1 $, 
		\begin{equation*}
			\Vert \tilde{Q}_{\alpha , n} (f,f) \Vert_{L^\infty } \leq C_{\alpha, n} \Vert f_\alpha \Vert_{L^\infty }
		\end{equation*}
for all $\alpha \in \{ 1, \cdots, s \}$.
	\end{lemma}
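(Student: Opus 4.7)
\medskip

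\noindent\textbf{Proof proposal.} The plan is to split $\tilde{Q}_{\alpha,n}(f,f)=\tilde{Q}^+_{\alpha,n}(f,f)-\tilde{Q}^-_{\alpha,n}(f,f)$ and estimate each piece pointwise in $\mathbf{Z}_\alpha$, exploiting the factor $N_n(f)^{-1}$ defined in \eqref{Nnf} together with the key inequality $N_n(f)^{-1}\|f_\beta\|_{L^1}\le n$ for every $\beta\in\{1,\dots,s\}$. The overall target is to bound each term by $\|f_\alpha\|_{L^\infty}$ times a linear combination of $N_n(f)^{-1}\|f_\beta\|_{L^1}$'s, which then collapses to a constant depending only on $s,n$.

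For the loss term I would use the factorization $Q^-_{\alpha\beta,n}(f,f)=f_\alpha L_{\alpha\beta,n}(f)$. Because the smoothed collision kernels $B_{0\alpha\beta,n},\hat B_{1\alpha\beta,n},\hat B_{1\beta\alpha,n},\hat B_{2\alpha\beta,n}$ constructed in Lemmas \ref{Lemma 4.10}--\ref{Lemma 4.12} are smooth and supported in $\{\tfrac1n\le |z|,\eta\le n\}\cap\{z\cdot\omega\ge\tfrac1n\}$, each reduced kernel $A_{i\alpha\beta,n}$ is bounded by some constant $C_n$ with compact support in $\mathbf{Z}_*$. Integrating against $f_{\beta_*}$ then yields $L_{\alpha\beta,n}(f)(\mathbf{Z})\le C_n\|f_\beta\|_{L^1}$ uniformly in $\mathbf{Z}$, so
\[
\|\tilde{Q}^-_{\alpha,n}(f,f)\|_{L^\infty}\le \|f_\alpha\|_{L^\infty}\sum_{\beta=1}^{s} C_n\,N_n(f)^{-1}\|f_\beta\|_{L^1}\le C_{s,n}\|f_\alpha\|_{L^\infty}.
\]

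The gain term is the hard part. I would start from the inequality $f'_\alpha\le\|f_\alpha\|_{L^\infty}$ to pull $\|f_\alpha\|_{L^\infty}$ out of the integral, reducing matters to showing that the remaining quantity, which has the schematic form
\[
I_{i\alpha\beta,n}(\mathbf{Z})=\int f'_{\beta_*}\cdot\bigl[\text{smoothed kernel weighted by the appropriate }I,I'\text{ powers}\bigr]\,d\mathbf{Z}_* d(\text{parameters}),
\]
is pointwise dominated by $C_n\|f_\beta\|_{L^1}$. The strategy is to use the parametrization of post-collisional states together with the Jacobian identity
\[
d\xi'd\xi'_*dI'dI'_* = \tfrac{\sqrt{2}}{\mu_{\alpha\beta}^{3/2}}E_{\alpha\beta}^{5/2}(1-\mathfrak{R})\mathfrak{R}^{1/2}\,d\mathfrak{r}d\mathfrak{R}d\omega dG'_{\alpha\beta}dE'_{\alpha\beta}
\]
recalled earlier, so as to convert the integration variables $(\xi_*,I_*,\mathfrak{R},\mathfrak{r},\omega)$ into $\mathbf{Z}'_*$-type variables (plus trivial ones) and read $f'_{\beta_*}$ back in its natural arguments. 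The main obstacle is that the $\omega$-support $\{z\cdot\omega\ge\tfrac1n\}$ and the factor $\sqrt{|g|^2-2\Delta I/\mu_{\alpha\beta}}$ in the velocity relations behave differently depending on the sign of $\Delta I=\mathbf{1}_{\alpha\in\mathcal I_p}I'+\mathbf{1}_{\beta\in\mathcal I_p}I'_*-\mathbf{1}_{\alpha\in\mathcal I_p}I-\mathbf{1}_{\beta\in\mathcal I_p}I_*$. I would therefore split the parameter domain into $\{\Delta I\le 0\}$ and $\{\Delta I>0\}$: in the first regime $|g'|\ge|g|$ and the Jacobian stays bounded on the kernel support; in the second regime one uses the constraint $\mu_{\alpha\beta}|g|^2>2\Delta I$ already encoded in $W_{\alpha\beta}$ and the fact that $E_{\alpha\beta}$, $\mathfrak R$, $\mathfrak r$ all live in a bounded region cut out by the support of $\hat B_{i\alpha\beta,n}$.

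After this case analysis, the change of variables yields $I_{i\alpha\beta,n}(\mathbf{Z})\le C_n\|f_\beta\|_{L^1}$ for each $\alpha,\beta$ and $i\in\{0,1,2\}$, uniformly in $\mathbf{Z}$. Combining with the $L^\infty$ estimate on $f'_\alpha$ and the factor $N_n(f)^{-1}$ exactly as for the loss term gives $\|\tilde Q^+_{\alpha,n}(f,f)\|_{L^\infty}\le C_{s,n}\|f_\alpha\|_{L^\infty}$. Summing the two contributions produces the claimed bound. The technical crux throughout is the case split on $\Delta I$ and verifying that the $n$-dependent support conditions on the approximated kernels keep every Jacobian, every $(1-\mathfrak R)^{(\cdot)/2-1}\mathfrak R^{1/2}\mathfrak r^{(\cdot)/2-1}(1-\mathfrak r)^{(\cdot)/2-1}$ weight, and every $I^{\delta(\alpha)/2-1}I_*^{\delta(\beta)/2-1}/((I')^{\delta(\alpha)/2-1}(I'_*)^{\delta(\beta)/2-1})$ ratio uniformly bounded in the relevant region.
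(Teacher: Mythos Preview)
Your proposal captures the paper's strategy: split into loss and gain, bound the loss via $L_{\alpha\beta,n}(f)\le C_n\|f_\beta\|_{L^1}$ from the compact support and boundedness of the smoothed kernels, pull $\|f_\alpha\|_{L^\infty}$ out of the gain integrand via $|f'_\alpha|\le\|f_\alpha\|_{L^\infty}$, control the remaining integral by $C_n\|f_\beta\|_{L^1}$ through a change of variables with a case split on $\operatorname{sgn}\Delta I$, and absorb the $\|f_\beta\|_{L^1}$ factors using $N_n(f)^{-1}\|f_\beta\|_{L^1}\le n$.

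The one imprecision is the specific change of variables you cite. The Jacobian identity for $d\xi'd\xi'_*dI'dI'_*$ in terms of $d\mathfrak{r}\,d\mathfrak{R}\,d\omega\,dG'_{\alpha\beta}\,dE'_{\alpha\beta}$ transforms the \emph{full} post-collisional block at once; it does not by itself convert the gain-term measure $d\xi_*\,dI_*\,d\mathfrak{R}\,d\mathfrak{r}\,d\omega$ (with $\xi,I$ held fixed) into the $\mathbf{Z}'_*$-variables you need to read off $\|f_\beta\|_{L^1}$. The paper uses a Carleman-type representation instead: first $dI_*\to dI'_*$ via $dI_*=(1-\mathfrak{r})^{-1}(1-\mathfrak{R})^{-1}\,dI'_*$; then set $t=|g'|$, write $\xi'_*=\xi'-t\omega$, decompose $\xi-\xi_*$ into its component $s=(\xi-\xi_*)\cdot\omega$ and a perpendicular part $v\in\Pi(\xi',\omega)$ so that $d\xi_*=ds\,dv$; finally change $s\to t$ using $s=\sqrt{t^2+2\Delta I/\mu_{\alpha\beta}}\,\cos\theta$, which produces the Jacobian factor $t\cos\theta/\sqrt{t^2+2\Delta I/\mu_{\alpha\beta}}$. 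It is this factor that drives the $\Delta I\lessgtr 0$ split: for $\Delta I\le 0$ one bounds it by $n^2 t^2$ using the support condition $s\ge 1/n$; for $\Delta I>0$ one shows the kernel vanishes for $t<\kappa_n$ for some $\kappa_n>0$ and bounds by $\kappa_n^{-2}t^2$. After that, $\int t^2\,dt\,d\omega\int |f_\beta(\xi'-t\omega,I'_*)|\,dI'_*=\|f_\beta\|_{L^1}$ and the bound closes. Apart from this point your outline is the paper's.
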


\begin{proof}
We first consider the quantity $\tilde{Q}_{\alpha \beta, n} (f, f)$ defined in \eqref{Q L} for the case $\alpha ,\beta \in \{s_0+1,\cdots,s\}$. Recall that $\tilde{Q}_{\alpha \beta, n} (f, f) = ( N_n (f) )^{-1} Q_{\alpha \beta, n} (f,f)$ with $Q_{\alpha \beta, n} (f,f) = Q_{\alpha \beta, n}^+ (f,f) - Q_{\alpha \beta, n}^- (f,f)$. For the quantity $Q_{\alpha \beta, n}^+ (f,f)$,
\begin{equation*}
  \begin{aligned}
    Q_{\alpha \beta, n}^+ (f,f) \leq \| f_\alpha \|_{L^\infty} \int_{\R^3\times \R_+ \times [0,1]^2 \times \mathbb{S}^2 } B_{2\alpha \beta ,n} (\xi-\xi_*,I+I_*,\mathfrak{R},\mathfrak{r},\omega )\tfrac{ | f_{\beta}(\xi'_*,I'_*) | }{(I')^{\delta(\alpha)/2-1}(I'_*)^{\delta(\beta )/2-1}} \\
 \times \mathfrak{r}^{\delta(\alpha)/2-1} (1-\mathfrak{r})^{\delta (\beta )/2-1 } (1-\mathfrak{R} ) ^{(\delta (\alpha) +\delta (\beta ))/2-1} \mathfrak{R}^{1/2}
I^{\delta(\alpha)/2-1}I_*^{\delta(\beta )/2-1} d\xi_* dI_* d\mathfrak{R} d\mathfrak{r}d\omega \,.
  \end{aligned}
\end{equation*}
By using the expressions of $E_{\alpha \beta },\mathfrak{R}$ and $\mathfrak{r}$ in \eqref{Eab}, \eqref{IR} and \eqref{Ir}, respectively, one has
\begin{equation*}
  \begin{aligned}
    (I')^{\delta(\alpha)/2-1}(I'_*)^{\delta(\beta )/2-1} = \mathfrak{r}^{ \delta (\alpha) /2 - 1 } ( 1 - \mathfrak{r} )^{ \delta (\beta) /2 - 1 } ( 1 - \mathfrak{R} )^{ ( \delta (\alpha) + \delta (\beta) ) /2 - 2 } E_{\alpha \beta}^{ ( \delta (\alpha) + \delta (\beta) ) /2 - 2 } \,. 
  \end{aligned}
\end{equation*}
Due to $E_{\alpha \beta} = \tfrac{\mu_{ \alpha \beta }}{2} |g|^2 + I + I_*$, one knows that
\begin{equation*}
  \begin{aligned}
    \tfrac{1}{ E_{\alpha \beta}^{ ( \delta (\alpha) + \delta (\beta) ) /2 - 2 } } I^{ \delta ( \alpha ) / 2 - 1 } I_*^{ \delta ( \beta ) / 2 - 1 } \leq 1 \,.
  \end{aligned}
\end{equation*}
Moreover, Lemma \ref{Lemma 4.10} shows that 
$$supp B_{2\alpha \beta ,n} (z,\eta ,\mathfrak{R},\mathfrak{r},\omega) \subset \{(z,\eta ,\mathfrak{R},\mathfrak{r},\omega): \tfrac{1}{n}\leq |z|,\eta \leq n,\ \tfrac{1}{n} \leq \mathfrak{R},\mathfrak{r} \leq 1-\tfrac{1}{n}\} \,.$$ 
Then
\begin{equation*}
 \begin{aligned}
Q_{\alpha \beta, n}^+ (f,f) \leq \tilde{C}_n \| f_\alpha \|_{L^\infty} \int_{\R^3\times \R_+ \times [\frac{1}{n},1-\frac{1}{n}]^2 \times \mathbb{S}^2 } B_{2 \alpha \beta ,n}(\xi-\xi_*,I+I_*,\mathfrak{R},\mathfrak{r},\omega ) | f_{\beta}(\xi'_*,I'_*) | \\
\times (1-\mathfrak{R}) \mathfrak{R}^{1/2} d\xi_* dI_* d\mathfrak{R} d\mathfrak{r}d\omega.
 \end{aligned}   
\end{equation*}
Furthermore, due to $E_{\alpha \beta} = \frac{\mu_{\alpha \beta}}{2} |g|^2 + I + I_*$ and $I_*' = ( 1 - \mathfrak{r} ) ( 1 - \mathfrak{R} ) E_{\alpha \beta}$, one then has $d I_*' = d [ ( 1 - \mathfrak{r} ) ( 1 - \mathfrak{R} ) ( \frac{\mu_{\alpha \beta}}{2} |g|^2 + I + I_* ) ] = ( 1 - \mathfrak{r} ) ( 1 - \mathfrak{R} ) d I_* $, namely, $ d I_* = ( 1 - \mathfrak{r} )^{-1} ( 1 - \mathfrak{R} )^{-1} d I_*' $. Then
\begin{equation}\label{Q+-Linfty}
 \begin{aligned}
Q_{\alpha \beta, n}^+ (f,f) \leq \tilde{C}_n \| f_\alpha \|_{L^\infty} I_{2 \alpha \beta, n}\,,
 \end{aligned}   
\end{equation}
where
\begin{equation}\label{I-2abn}{\footnotesize
  \begin{aligned}
    I_{2 \alpha \beta, n} = \int_{\R^3\times \R_+ \times [\frac{1}{n},1-\frac{1}{n}]^2 \times \mathbb{S}^2 } B_{2 \alpha \beta ,n}(\xi-\xi_*,I+I_*,\mathfrak{R},\mathfrak{r},\omega ) | f_{\beta}(\xi'_*,I'_*) | (1-\mathfrak{r})^{-1} \mathfrak{R}^{1/2} d\xi_* dI_*' d\mathfrak{R} d\mathfrak{r}d\omega
  \end{aligned}}
\end{equation}

Now we control the integral $I_{2 \alpha \beta, n}$ given in \eqref{I-2abn}. Let $t=|\xi'-\xi'_* | = |g'|$. Then $\xi'_* = \xi' - |\xi'-\xi'_* |\omega = \xi' - t \omega$. We also extend the kernel $B_{2 \alpha \beta, n}$ on the variable $\theta$ associated with the variable $\omega \in \mathbb{S}^2$ from $(0, \frac{\pi}{2})$ to $(0, \pi)$ such that $B_{\alpha \beta ,n}=0 $ if $\theta\in [\frac{\pi}{2} ,\pi )$. The extension is such that $\omega$ goes through the whole $\mathbb{S}^2$ Consider the plane $\Pi (\xi' ,\omega )$ that is perpendicular to vector $\omega $ through point $\xi'$. For any given $\omega$, $  \xi- \xi_*$ can be decomposed as $(s,v)$ variables, where $s=(\xi -\xi_*,\omega ), v\in \Pi (\xi', \omega )$ and $d\xi_* =-dsdv$. Denote by $s=(\xi -\xi_* ,\omega ) = (\xi -\xi_* , \frac{\xi' -\xi'_* }{|\xi'-\xi'_*|})=|g|\cos \theta$. Recalling that $|g'|^2 + \tfrac{2}{\mu_{ \alpha \beta }} \Delta I = |g|^2$, one has $s = \sqrt{t^2+\frac{2}{\mu_{\alpha \beta }}\Delta I } \cos \theta$. Then
	\begin{equation*}
		ds=d(|g| \cos \theta )=\tfrac{t \cos \theta }{\sqrt{t^2+\frac{2}{\mu_{\alpha \beta }}\Delta I }} dt.
	\end{equation*}
As a result, for the case $ \alpha ,\beta \in \{s_0+1,\cdots,s\}$, 
\begin{equation}\label{I-2abn-1}
\begin{aligned}
I_{2 \alpha \beta, n} \leq & \int_{0}^{+\infty} dt \int_{\Pi (\xi' ,\omega )}  dv \int_{\R_+ \times [\frac{1}{n},1-\frac{1}{n} ]^2 \times \mathbb{S}^2 } B_{2\alpha \beta ,n} (\xi-\xi_*,I+I_*,\mathfrak{R},\mathfrak{r},\omega ) \\
&\times | f_{\beta }(\xi'-t\omega ,I'_*) | (1-\mathfrak{r})^{-1}\mathfrak{R}^{1/2} \tfrac{t \cos \theta }{\sqrt{t^2+\frac{2}{\mu_{\alpha \beta }}\Delta I }}dI'_* d\mathfrak{R}d\mathfrak{r}d\omega.    
\end{aligned}
\end{equation}
By Lemma \ref{Lemma 4.10}, one knows $B_{2\alpha \beta ,n} (\xi-\xi_*,I+I_*,\mathfrak{R},\mathfrak{r},\omega ) =0$ if $s=|g|\cos \theta <\frac{1}{n}$.

{\bf Case 1. $\Delta I \leq 0$.}
	
 If $t < \tfrac{1}{n \cos\theta },\ s = |g| \cos\theta = \sqrt{t^2+\tfrac{2}{\mu_{\alpha \beta }}\Delta I } \cos \theta < t \cos \theta <\frac{1}{n}$. That means $B_{2\alpha \beta ,n} (\xi-\xi_*,I+I_*,\mathfrak{R},\mathfrak{r},\omega ) =0$ for $t < \tfrac{1}{n \cos \theta }$. Notices that $|g|^2 = t^2+\frac{2}{\mu_{\alpha \beta }}\Delta I $ and $\tfrac{1}{|g|^2} = \tfrac{\cos^2 \theta}{s^2} \leq n^2 \cos^2 \theta \leq n^2$ for $s \geq \tfrac{1}{n}$. Then $ |g|^2 \tfrac{1}{|g|^2} \frac{t \cos \theta }{\sqrt{t^2+\frac{2}{\mu_{\alpha \beta }}\Delta I }} \leq n^2 t \cos \theta \sqrt{t^2+\frac{2}{\mu_{\alpha \beta }}\Delta I } \leq n^2 t^2 $. Consequently, the quantity $I_{2 \alpha \beta, n}$ can be further bounded by
\begin{equation*}{\small
\begin{aligned}
I_{2 \alpha \beta, n} \leq & \int_{\frac{1}{n \cos \theta} }^{+\infty} dt \int_{\Pi (\xi' ,\omega )} |g|^2 \tfrac{1}{|g|^2} dv \int_{\R_+ \times [\frac{1}{n},1-\frac{1}{n} ]^2 \times \mathbb{S}^2 } B_{2\alpha \beta ,n} (\xi-\xi_*,I+I_*,\mathfrak{R},\mathfrak{r},\omega )\\
& \times | f_{\beta }(\xi'-t\omega ,I'_*) | (1-\mathfrak{r})^{-1} \mathfrak{R}^{1/2} \tfrac{t \cos \theta }{\sqrt{t^2+\frac{2}{\mu_{\alpha \beta }}\Delta I }} d I'_* d \mathfrak{R} d \mathfrak{r} d \omega \\
\leq & n^2 \int_{\frac{1}{n \cos\theta} }^{+\infty}t^2 dt \int_{\Pi (\xi' ,\omega )}  dv \int_{\R_+ \times [\frac{1}{n},1-\frac{1}{n}]^2 \times \mathbb{S}^2 } B_{2\alpha \beta ,n} | f_{\beta }(\xi'-t\omega ,I'_*) | (1-\mathfrak{r})^{-1} \mathfrak{R}^{1/2} dI'_* d \mathfrak{R} d \mathfrak{r} d\omega.
\end{aligned}}
\end{equation*}

{\bf Case 2. $\Delta I > 0$.}

Recalling that $\cos \theta = \tfrac{g \cdot g'}{ |g| |g'| }$ in \eqref{eq2.10}, one knows that as $t \to 0^+$,
\begin{equation*}
  \begin{aligned}
    g \cdot g' = t \sqrt{t^2 + \tfrac{2}{ \mu_{ \alpha \beta } } \Delta I } \cos \theta \to 0 \,,
  \end{aligned}
\end{equation*}
which further means that $\cos \theta \to 0$ as $t \to 0^+$. By the construction of $B_{2 \alpha \beta, n}$ in Lemma \ref{Lemma 4.10}, one infers that there is a small $\kappa_n > 0$ such that $ B_{2 \alpha \beta, n} (\xi-\xi_*,I+I_*,\mathfrak{R},\mathfrak{r},\omega ) = 0 $ as $t < \kappa_n$. Consequently, the integral $I_{2 \alpha \beta, n}$ can be dominated by
\begin{equation*}{\small
\begin{aligned}
I_{2 \alpha \beta, n} \leq & \int_{\kappa_n}^{+\infty} dt \int_{\Pi (\xi' ,\omega )} t^2 \tfrac{1}{t^2} dv \int_{\R_+ \times [\delta_n,1-\delta_n]^2 \times \mathbb{S}^2 } B_{2\alpha \beta ,n} (\xi-\xi_*,I+I_*,\mathfrak{R},\mathfrak{r},\omega ) \\
& \times | f_{\beta }(\xi'-t\omega ,I'_*) | (1-\mathfrak{r})^{-1}\mathfrak{R}^{1/2} \tfrac{t \cos \theta }{\sqrt{t^2+\frac{2}{\mu_{\alpha \beta }}\Delta I }}dI'_* d\mathfrak{R}d\mathfrak{r}d\omega\\
\leq & \tfrac{1}{\kappa_n^2} \int_{\kappa_n }^{+\infty} t^2 dt \int_{\Pi (\xi' ,\omega )}  dv \int_{\R_+ \times [\frac{1}{n},1-\frac{1}{n}]^2 \times \mathbb{S}^2 } B_{2\alpha \beta ,n} | f_{\beta }(\xi'-t\omega ,I'_*) | (1-\mathfrak{r})^{-1}\mathfrak{R}^{1/2} dI'_* d\mathfrak{R}d\mathfrak{r}d\omega.
\end{aligned}}
\end{equation*}	

Therefore, there exists $C'_n > 0$ such that for all $\Delta I \in \R$,
\begin{equation*}
\begin{aligned}
I_{2 \alpha \beta, n} \leq C'_n \int_0^{+\infty} t^2 dt \int_{\Pi (\xi' ,\omega )}  dv \int_{\R_+ \times [\frac{1}{n},1-\frac{1}{n}]^2 \times \mathbb{S}^2 } B_{2\alpha \beta ,n} (\xi-\xi_*,I+I_*,\mathfrak{R},\mathfrak{r},\omega ) \\
\times | f_{\beta }(\xi'-t\omega ,I'_*) | (1-\mathfrak{r})^{-1}\mathfrak{R}^{1/2} dI'_* d\mathfrak{R}d\mathfrak{r}d\omega \,.
\end{aligned}
\end{equation*}	
Moreover, we denote by $C_1$ the maximum area of the intersection between plane $\Pi (\xi',\omega )$ and set $supp B_{2 \alpha \beta ,n}$ and denote by $ C_2$ the maximum value of $B_{2\alpha \beta ,n}$. Then the integral $I_{2 \alpha \beta, n}$ can further controlled by
\begin{equation}\label{B2ab 3}
\begin{aligned}
I_{2 \alpha \beta, n} \leq & C_1 C_2 \int_{[\frac{1}{n},1-\frac{1}{n}]^2} (1-\mathfrak{r})^{-1}\mathfrak{R}^{1/2} d \mathfrak{R} d \mathfrak{r} \int_{0}^{+\infty} t^2 dt\iint_{\R_+  \times \mathbb{S}^2 } | f_{\beta }(\xi'-t\omega ,I'_*) | d I'_* d \omega \\
\leq & C''_n \iint_{\R^3 \times \R_+} | f_{\beta }(\xi'- y ,I'_*) | d y dI'_* = C''_n \| f_\beta \|_{L^1} \,.
\end{aligned}
\end{equation}	
From substituting \eqref{B2ab 3} into \eqref{Q+-Linfty}, it derives that for $\alpha, \beta \in \{ s_0 + 1, \cdots, s \}$,
\begin{equation}\label{Q+1}
\begin{aligned}
Q_{\alpha \beta, n}^+ (f,f) \leq C_n \| f_\alpha \|_{L^\infty} \| f_\beta \|_{L^1} \,.
\end{aligned}
\end{equation}	

Following the similar arguments in \eqref{Q+1}, one can also derive that
\begin{equation}\label{Q+2}
\begin{aligned}
Q_{\alpha \beta, n}^+ (f,f) \leq C_n \| f_\alpha \|_{L^\infty} \| f_\beta \|_{L^1}
\end{aligned}
\end{equation}
for $\alpha, \beta \in \{ 1, \cdots, s_0 \}$, or $\alpha \in \{ 1, \cdots, s_0 \}$, $\beta \in \{ s_0 + 1, \cdots, s \}$, or $\alpha \in \{ s_0 + 1, \cdots, s \}$, $\beta \in \{ 1, \cdots, s_0 \}$. As a consequence, the bounds \eqref{Q+1} and \eqref{Q+2} imply that for all $\alpha \in \{ 1, \cdots, s \}$,
\begin{equation}\label{Q+alpha-n-bnd}
  \begin{aligned}
    \| Q_{\alpha, n}^+ (f,f) \|_{L^\infty} = \| \sum_{\beta = 1}^{s} Q_{\alpha \beta, n}^+ (f,f) \|_{L^\infty} \leq C_{n,s} \| f_\alpha \|_{L^\infty} \sum_{\beta = 1}^{s} \| f_\beta \|_{L^1} \,.
  \end{aligned}
\end{equation}

By the structures of $B_{2 \alpha \beta, n}$, $B_{1 \alpha \beta, n}$, $B_{1 \beta \alpha, n}$ and $B_{0 \alpha \beta, n}$ constructed in Lemmas \ref{Lemma 4.10}-\ref{Lemma 4.11}-\ref{Lemma 4.12}, one easily has $L_{\alpha, n} (f) = \sum_{\beta = 1}^{s} L_{\alpha \beta, n} (f) \leq C_{n,s} \sum_{\beta = 1}^{s} \| f_\beta \|_{L^1}$ for all $\alpha \in \{ 1, \cdots, s \}$, which infer that
\begin{equation}\label{Q-alpha-n-bnd}
  \begin{aligned}
    Q_{\alpha,n}^- (f, f) = f_\alpha L_{\alpha, n} (f) \leq C_{n,s} \| f_\alpha \|_{L^\infty} \sum_{\beta = 1}^{s} \| f_\beta \|_{L^1} \,.
  \end{aligned}
\end{equation}
The bounds \eqref{Q+alpha-n-bnd} and \eqref{Q-alpha-n-bnd} reduce to
\begin{equation*}
  \begin{aligned}
    \| \tilde{Q}_{\alpha, n} (f, f) \|_{L^\infty} = ( N_n (f) )^{-1} \| Q _{\alpha, n} (f, f) \|_{L^\infty} \leq C_{n,s} \| f_\alpha \|_{L^\infty} \sum_{\beta = 1}^{s} \| f_\beta \|_{L^1} \leq C_{n,s} \| f_\alpha \|_{L^\infty}
  \end{aligned}
\end{equation*}
for all $\alpha \in \{ 1, \cdots, s \}$, where $( N_n (f) )^{-1} \sum_{\beta = 1}^{s} \| f_\beta \|_{L^1} \leq n $ is used. The proof of Lemma \ref{Lemma 4.13 of infty} is therefore finished.
\end{proof}

\subsection{Global existence of approximation equation}

In this subsection, based on the Lemma \ref{Lemma 4.13} and \ref{Lemma 4.13 of infty}, we prove the existence and uniqueness of the nonnegative smooth (regularity) solution $f^n$ to the approximate problem \eqref{approximation}. Moreover, we derive the conservation laws (associated with mass, moment, angular moment and energy), the entropy identity and the uniform bounds of the approximated problem \eqref{approximation}.

\begin{lemma}[Existence, Uniqueness and Positivity]\label{Lemma 4.14}
Consider the approximated initial data $f_{\alpha, 0}^n$ constructed in Lemmas \ref{Lemma 4.8}-\ref{Lemma 4.9} and the approximated collision operator $\tilde{Q}_{n} = ( \tilde{Q}_{\alpha, n} )_{\alpha \in \{ 1, \cdots, s \}}$ constructed in \eqref{Q L}. Then the approximated problem \eqref{approximation} admits a unique distributional solution $f^n = ( f^n_\alpha )_{\alpha \in \{ 1, \cdots, s \}}$ satisfying that for any fixed $T>0$,
\begin{enumerate}
\item{for $\alpha \in \{1,\cdots,s_0\}, f^n_\alpha \in C([0,+\infty ); L^1(\R^3 \times \R^3 ))$ and
\begin{equation*}
	f^n_\alpha (t,x,\xi ) \geq \tfrac{1}{n} \exp [ -c_0 (|x|^2+|\xi |^2 ) ] \quad \textrm{for all } \ (t,x,\xi ) \in [0, T] \times \R^3 \times \R^3 ;
	\end{equation*}}
\item{for $\alpha \in \{s_0+1,\cdots,s\}, f^n_\alpha \in C([0,+\infty ); L^1(\R^3 \times \R^3 \times \R_+))$,
	\begin{equation*}
		f^n_\alpha (t,x,\xi , I ) \geq \tfrac{1}{n} \exp [ -c_0 (|x|^2+|\xi |^2 + I ) ] \quad \textrm{for all } \ (t,x,\xi , I ) \in [0, T] \times \R^3 \times \R^3 \times \R_+ \,.
\end{equation*}}	
\end{enumerate}
Here the constant $c_0 = c_0 (n,T)>0 $.
\end{lemma}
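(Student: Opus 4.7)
The plan is to solve \eqref{approximation} by a Banach fixed-point argument in the mild formulation, obtain positivity through an auxiliary truncated problem, and extract the pointwise lower bound via a Gronwall estimate along characteristics. Setting $X^1:=\prod_{\alpha=1}^{s}L^1(\R^3\times\mathcal{Z}_\alpha)$ and using the free-transport translation $f^{n,\sharp}_\alpha(t,x,\mathbf{Z}_\alpha):=f^n_\alpha(t,x+t\xi,\mathbf{Z}_\alpha)$, the Cauchy problem \eqref{approximation} is equivalent to the integral equation $f^{n,\sharp}_\alpha(t)=f^n_{\alpha,0}+\int_0^t \tilde{Q}_{\alpha,n}(f^n,f^n)^\sharp(s)\,ds$. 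Since $\sharp$ is an $L^1$-isometry and the Lipschitz constant $C_{s,n}$ from Lemma \ref{Lemma 4.13} is independent of the size of its arguments, the right-hand side is a contraction on $C([0,T];X^1)$ for $T<(2sC_{s,n})^{-1}$; iterating on consecutive intervals produces a unique global $f^n\in C([0,\infty);X^1)$.

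For nonnegativity, as suggested in the sketch, introduce the modified operator
\begin{equation*}
F_\alpha(g) := \tilde{Q}^+_{\alpha,n}(g_+,g_+) - g_\alpha\,\tilde{L}_{\alpha,n}(g_+), \qquad g_+:=(\max(g_\beta,0))_{\beta=1}^s,
\end{equation*}
which coincides with $\tilde{Q}_{\alpha,n}(g,g)$ whenever $g\geq 0$ componentwise. Since $t\mapsto t_+$ is $1$-Lipschitz, the arguments in the proof of Lemma \ref{Lemma 4.13} transfer verbatim to show that $F$ inherits the same global $L^1$-Lipschitz bound, so the auxiliary Cauchy problem $\partial_t g^n+\xi\cdot\nabla_x g^n=F(g^n)$ with $g^n|_{t=0}=f^n_0$ admits a unique $g^n\in C([0,\infty);X^1)$. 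Its mild form along characteristics is
\begin{equation*}
g^{n,\sharp}_\alpha(t) = f^n_{\alpha,0}\,\exp\Bigl(-\!\int_0^t\tilde{L}_{\alpha,n}(g^n_+)^\sharp(\tau)\,d\tau\Bigr) + \int_0^t \tilde{Q}^+_{\alpha,n}(g^n_+,g^n_+)^\sharp(s)\,\exp\Bigl(-\!\int_s^t\tilde{L}_{\alpha,n}(g^n_+)^\sharp(\tau)\,d\tau\Bigr)ds,
\end{equation*}
in which both terms are manifestly nonnegative ($f^n_{\alpha,0}>0$ by Lemmas \ref{Lemma 4.8}--\ref{Lemma 4.9}, and the approximated gain operator is a nonnegative bilinear form in nonnegative inputs). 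Thus $g^n\geq 0$ a.e., so $g^n_+=g^n$ and $g^n$ solves \eqref{approximation}; the uniqueness from the previous step forces $f^n=g^n\geq 0$.

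For the quantitative lower bound, retain only the first term of the mild representation above, with $f^n$ in place of $g^n$, to obtain
\begin{equation*}
f^{n,\sharp}_\alpha(t,x,\mathbf{Z}_\alpha) \geq f^n_{\alpha,0}(x,\mathbf{Z}_\alpha)\,\exp\Bigl(-\!\int_0^t\tilde{L}_{\alpha,n}(f^n)^\sharp(\tau,x,\mathbf{Z}_\alpha)\,d\tau\Bigr).
\end{equation*}
The kernels $A_{i\alpha\beta,n}$ constructed in Lemmas \ref{Lemma 4.10}--\ref{Lemma 4.12} are $L^\infty$-bounded by a constant depending only on $n$, and the renormalization satisfies $N_n(f^n)^{-1}\sum_\beta\|f^n_\beta\|_{L^1}\leq n$, so $\tilde{L}_{\alpha,n}(f^n)\leq M(n)$ pointwise. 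Substituting the explicit Gaussian lower bound on $f^n_{\alpha,0}$ from Lemmas \ref{Lemma 4.8}--\ref{Lemma 4.9}, reverting $\sharp$ via $|x-t\xi|^2\leq 2|x|^2+2t^2|\xi|^2$, and absorbing $e^{-M(n)T}$ into the exponent yields $f^n_\alpha(t,x,\xi,I)\geq \tfrac{1}{n}\exp[-c_0(|x|^2+|\xi|^2+I)]$ on $[0,T]\times\R^3\times\mathcal{Z}_\alpha$ for a suitable $c_0=c_0(n,T)>0$ (with the $I$-term absent when $\alpha\leq s_0$).

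The main obstacle is the positivity step: one must check that the renormalizing factor $N_n(\cdot)^{-1}$ inside $\tilde{Q}^+_{\alpha,n}$ and $\tilde{L}_{\alpha,n}$ remains globally $L^1$-Lipschitz when its vector argument is replaced by its componentwise positive part, which is really a bookkeeping exercise tracking the $1$-Lipschitz character of $t\mapsto t_+$ through the proof of Lemma \ref{Lemma 4.13}. Once this is in place, the positivity from the Duhamel representation is immediate, and the remaining quantitative step is a direct Gronwall argument.
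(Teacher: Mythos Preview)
Your proposal is correct and follows essentially the same approach as the paper's proof: a fixed-point/semigroup argument in $L^1$ via Lemma \ref{Lemma 4.13} for existence and uniqueness, an auxiliary problem with a sign-corrected right-hand side for nonnegativity, and a Gronwall argument along characteristics for the quantitative Gaussian lower bound. The only cosmetic difference is that the paper uses $g_\star=(|g_\alpha|)_\alpha$ rather than your $g_+=(\max(g_\alpha,0))_\alpha$ in the auxiliary operator $F$, but both are $1$-Lipschitz modifications that coincide with $\tilde{Q}_{\alpha,n}$ on the nonnegative cone, so the arguments are interchangeable.
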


\begin{proof}

\underline{\em Step 1. Existence and uniqueness.} For simplicity, we denote by $\mathbf{Z}_\alpha = \xi \in \mathcal{Z}_\alpha = \R^3$ for $\xi \in \{ 1, \cdots, s_0 \}$ and $\mathbf{Z}_\alpha = (\xi, I) \in \mathcal{Z}_\alpha = \R^3 \times \R_+$ for $\alpha \in \{ s_0 + 1, \cdots, s \}$.

For all $f_\alpha (x,\mathbf{Z}_\alpha) \in L^1(\R^3 \times \mathcal{Z}_\alpha ; d x d \mathbf{Z}_\alpha)$ with the norm $\| f_\alpha \|_{L^1} = \int_{\R^3 \times \mathcal{Z}_\alpha} |f_\alpha (x, \mathbf{Z}_\alpha)| d x d \mathbf{Z}_\alpha$, we define
\begin{equation*}
  \begin{aligned}
    U (t) f_\alpha (x, \mathbf{Z}_\alpha) = f_\alpha ( x - t \xi, \mathbf{Z}_\alpha ) \,.
  \end{aligned}
\end{equation*}
It is easy to see that $ U (t + \tau) f_\alpha (x, \mathbf{Z}_\alpha) = U (t) U (\tau) f_\alpha (x, \mathbf{Z}_\alpha) $ and $U (0) f_\alpha (x, \mathbf{Z}_\alpha) = f_\alpha (x, \mathbf{Z}_\alpha)$. Namely, $\{U(t)\}_{t\geq 0}$ is a operator semigroup on $L^1(\R^3 \times \mathcal{Z}_\alpha ;  d x d \mathbf{Z}_\alpha)$. Moreover,
	\begin{equation*}
		A(f_\alpha)=\lim \limits_{t\rightarrow 0^+ } \tfrac{U(t) f_\alpha - f_\alpha }{t} =-\xi \cdot \nabla_x f_\alpha \,,
	\end{equation*}
which means that $A(f_\alpha)$ is the generator of the operator semigroup $U(t)f(x,\mathbf{Z}_\alpha)$. Denote by
\begin{equation*}
  \begin{aligned}
    X = \{ f = (f_\alpha)_{\alpha \in \{ 1, \cdots, s \}} | f_\alpha \in L^1(\R^3 \times \mathcal{Z}_\alpha ; d x d \mathbf{Z}_\alpha) \} = \prod_{\alpha = 1}^{s} L^1(\R^3 \times \mathcal{Z}_\alpha ; d x d \mathbf{Z}_\alpha) 
  \end{aligned}
\end{equation*}
endowed with the norm $ \| f \|_X = \sum_{\alpha = 1}^{s} \| f_\alpha \|_{L^1} < \infty $. By Lemma \ref{Lemma 4.13}, one has
	\begin{equation*}
		\begin{aligned}
			\Vert \tilde{Q}_{\alpha ,n}(f,f) - \tilde{Q}_{\alpha ,n} (g,g) \Vert_{L^1} \leq C_{s,n} \sum_{\beta = 1}^s \Vert f_\beta -g_\beta \Vert_{L^1},\ \Vert \tilde{Q}_{\alpha ,n}(f,f) \Vert_{L^1} \leq C_{s,n} \Vert f_\alpha \Vert_{L^1} \,,
		\end{aligned}
	\end{equation*}
which means that the map $G: X \to X$ such that 
$$f = (f_\alpha)_{\alpha \in \{ 1, \cdots, s \}} \mapsto \tilde{Q}_n (f,f) = ( \tilde{Q}_{\alpha, n} (f,f) )_{\alpha \in \{ 1, \cdots, s \}}$$
is Lipschitz continuous. Then there is a unique $f \in C ( [0, + \infty ) ; X )$ solving the following Cauchy problem \eqref{approximation} with the form
	\begin{equation}\label{G}
			\tfrac{d}{dt} f =A ( f ) + G ( f ) \,, \ f |_{t=0} = f_0 : = ( f^n_{\alpha ,0})_{\alpha \in \{ 1, \cdots, s \}}  
	\end{equation}
with the mild form. Namely, for all $\alpha \in \{ 1, \cdots, s \}$, the components $f_\alpha$ of $f$ satisfy
	\begin{equation*}
		\begin{aligned}
			f^n_\alpha (t, \mathbf{Z}_\alpha) = f^n_{\alpha ,0} (x,\mathbf{Z}_\alpha ) + \int_{0}^{t} G_\alpha ( f^n_\alpha (\tau,x-(t-\tau)\xi ,\mathbf{Z}_\alpha)) d \tau \quad a.e.\ (x,\mathbf{Z}_\alpha) \in \R^3\times \mathcal{Z}_\alpha \,.
		\end{aligned}
	\end{equation*}
Here $A (f) = ( A (f_\alpha) )_{\alpha \in \{ 1, \cdots, s \}}$ and $G (f) = ( G_\alpha (f_\alpha) )_{\alpha \in \{ 1, \cdots, s \}}$ with $G_\alpha (f_\alpha) = \tilde{Q}_{\alpha, n} (f,f)$. By the Lemma \ref{distri-mild}, one infers that $f^n$ is also the unique distributional solution of problem \eqref{approximation}. 

\underline{\em Step 2. Positivity.} We first consider the following auxiliary Cauchy problem 
	\begin{equation} \label{g}
			\tfrac{\partial }{\partial t} g^n  +\xi \cdot \nabla_x g^n = F(g^n) \,, \ g^n|_{t=0} = f^n_{0} \,,
	\end{equation}
	where $g^n = ( g^n_{\alpha} (t, x, \mathbf{Z}_\alpha) )_{\alpha \in \{ 1, \cdots, s \}}$, $F(g^n)=(F_{1 }(g^n),\cdots,F_{s }(g^n))$, $ F_{\alpha }(g^n)=\tilde{Q}_{\alpha ,n}^+ ( g^n_\star , g^n_\star ) - \tilde{L} _{\alpha, n} ( g^n_\star ) g^n_\alpha $ with $g_\star = ( |g^n_\alpha| )_{\alpha \in \{ 1, \cdots, s \}}$. By Lemma \ref{Lemma 4.13}, one infers that $F_\alpha (g^n)$ is a bounded and Lipschitz continuous operator on $X$. Then the Cauchy problem \eqref{g} admits a unique mild solution $g^n \in C([0,+\infty ); X )$. Lemma \ref{Lemma3.3} then implies that for all $\alpha \in \{ 1, \cdots, s \}$,
\begin{equation*}
\begin{aligned}
& g^{n \sharp}_\alpha (t, x, \mathbf{Z}_\alpha) - g^{n\sharp}_\alpha (0, x, \mathbf{Z}_\alpha ) \exp [ - \int_{0}^{t} \tilde{L}_{\alpha ,n } ( g^n_\star)^\sharp (\tau, \mathbf{Z}_\alpha ) d \tau ] \\
= & \int_{0}^{t} \tilde{Q}^+_{\alpha ,n} ( g^n_\star, g^n_\star )^\sharp (\tau, x, \mathbf{Z}_\alpha ) \exp [- \int_{\tau}^{t} \tilde{L}_{\alpha ,n } ( g^n_\star)^\sharp (\tau', x, \mathbf{Z}_\alpha ) d \tau' ] d \tau \,,
\end{aligned}
\end{equation*}
where $g^\sharp (t, x, \mathbf{Z}_\alpha) = g (t, x + t \xi, \mathbf{Z}_\alpha)$. By changing the variable $x+t\xi \rightarrow x$ and using the initial condition $g^n |_{t= 0} = f_0^n$, one gains that for all $\alpha \in \{ 1, \cdots, s \}$,
	\begin{equation*}{\small
		\begin{aligned}
			g^n_\alpha (t,x,\mathbf{Z}_\alpha) = & f_{\alpha, 0}^n (x, \mathbf{Z}_\alpha) \exp [ - \int_{0}^{t} \tilde{L}_{\alpha ,n } ( g^n_\star ) (\tau, x - (t - \tau ) \xi, \mathbf{Z}_\alpha ) d \tau ] \\
			+ & \int_{0}^{t} \tilde{Q}^+_{\alpha ,n} ( g^n_\star, g^n_\star ) (\tau, x - (t - \tau) \xi, \mathbf{Z}_\alpha ) \exp [ - \int_{\tau}^{t} \tilde{L}_{\alpha ,n } ( g^n_\star) (\tau' , x - (t - \tau') \xi, \mathbf{Z}_\alpha ) d \tau' ] d \tau .
		\end{aligned}}
	\end{equation*}
Note that $f_{\alpha, 0}^n (x, \mathbf{Z}_\alpha) > 0$ and $\tilde{Q}^+_{\alpha ,n} ( g^n_\star, g^n_\star ) (\tau, x - (t - \tau) \xi, \mathbf{Z}_\alpha ) \geq 0$. One concludes that the solution $g^n_\alpha (t,x,\mathbf{Z}_\alpha) \geq 0$ for all $(t,x,\mathbf{Z}_\alpha) \in \R_+ \times \R^3 \times \mathcal{Z}_\alpha$ and for all $\alpha \in \{ 1, \cdots, s \}$. Due to $g_\alpha \geq 0$, one has $F (g^n) = G (g^n)$. This means that $g^n$ is also a solution to the problem \eqref{G} with nonnegative components $g_\alpha \geq 0$ ($ \alpha \in \{ 1, \cdots, s \} $). By the uniqueness of the approximate problem \eqref{approximation} (or equivalently \eqref{G}), we know that $f^n_\alpha(t,x,\mathbf{Z}_\alpha) = g^n_\alpha(t,x,\mathbf{Z}_\alpha) \geq 0$ for all $\alpha \in \{ 1, \cdots, s \}$.

At the end, we prove the positive lower bound of the components $f_\alpha (t,x, \mathbf{Z}_\alpha) \geq 0$ for all $\alpha \in \{ 1, \cdots, s \}$. Observe that
	\begin{equation*}
		\tfrac{\partial }{\partial t} f^n_\alpha +\xi \cdot \nabla_x f^n_\alpha = \tilde{Q}^+_{\alpha ,n} (f^n ,f^n)-\tilde{Q}^-_{\alpha ,n} (f^n ,f^n) \geq - \tilde{L}_{\alpha ,n} (f^n)f^n_\alpha \,.
	\end{equation*}
Moreover, by Lemmas \ref{Lemma 4.10}-\ref{Lemma 4.11}-\ref{Lemma 4.12}, one knows that $B_{2 \alpha \beta, n}$, $B_{1 \alpha \beta, n}$, $B_{1 \beta \alpha, n}$ and $B_{0 \alpha \beta, n}$ admit some upper bounds $C_{n,s} > 0$. Then one easily has $L_{\alpha, n} (f^n) = \sum_{\beta = 1}^{s} L_{\alpha \beta, n} (f^n) \leq C_{n,s} \sum_{\beta = 1}^{s} \| f_\beta^n \|_{L^1}$ for all $\alpha \in \{ 1, \cdots, s \}$, which deduces from $ ( N_n (f^n) )^{-1} \sum_{\beta = 1}^{s} \| f_\beta^n \|_{L^1} \leq n $ that
	\begin{equation*}
		\begin{aligned}
			f^n_\alpha \tilde{L}_{\alpha,n} (f^n) = ( N_n (f^n) )^{-1} {L}_{\alpha,n} (f^n) f^n_\alpha \leq C_1 ( n) f^n_\alpha 
		\end{aligned}
	\end{equation*}
for some constant $C_1 (n) > 0$. Consequently,
	\begin{equation*}
		\tfrac{\partial }{\partial t} f^{n\sharp}_\alpha = \tilde{Q}_{\alpha,n} (f^n,f^n)^\sharp \geq - C_1(n) f^{n\sharp}_\alpha.
	\end{equation*}
Then for all $t \in [0, T]$,
\begin{equation*}
  \begin{aligned}
    f^{n\sharp}_\alpha (t, x, \mathbf{Z}_\alpha) \geq e^{-C_1 (n) t} f^{n\sharp}_\alpha (0, x, \mathbf{Z}_\alpha) \geq e^{-C_1 (n) T }f_{\alpha, 0}^n (x, \mathbf{Z}_\alpha) \,,
  \end{aligned}
\end{equation*}
which, together with Lemma \ref{Lemma 4.8} and Lemma \ref{Lemma 4.9}, means that
\begin{equation*}
  \begin{aligned}
    & f^n_\alpha (t,x, \mathbf{Z}_\alpha) \geq e^{-C_1 (n) T } f_{\alpha, 0}^n (x - t \xi, \mathbf{Z}_\alpha) \geq \tfrac{1}{n} \exp ( - \tfrac{|x - t \xi|^2 + |\xi|^2 + 2C_1 (n) T }{2} ) \,, \qquad \alpha \in \{ 1, \cdots, s_0 \} \,, \\ 
    & f^n_\alpha (t,x, \mathbf{Z}_\alpha) \geq e^{-C_1 (n) T }  f_{\alpha, 0}^n (x - t \xi, \mathbf{Z}_\alpha) \geq \tfrac{1}{n} \exp ( - \tfrac{|x - t \xi|^2 + |\xi|^2 + I + 2 C_1 (n) T}{2} ) \,, \quad \alpha \in \{ s_0 + 1, \cdots, s \} \,.
  \end{aligned}
\end{equation*}
Note that $ |x - t \xi|^2 \leq 2 |x|^2 +  2 T^2  |\xi|^2 $ for all $t \in [0, T]$. Take $c_0 = \max \{ 1, T^2 +  \frac{1}{2}, C_1 (n) T \} > 0 $. Then
\begin{equation*}
  \begin{aligned}
    & f^n_\alpha (t,x, \xi) \geq \tfrac{1}{n} \exp ( - c_0 (|x|^2 + |\xi|^2) ) \,, \qquad \quad \ \ \alpha \in \{ 1, \cdots, s_0 \} \,, \\ 
    & f^n_\alpha (t,x, \xi, I) \geq \tfrac{1}{n} \exp ( - c_0 (|x|^2 + |\xi|^2 + I) ) \,, \quad \alpha \in \{ s_0 + 1, \cdots, s \} \,.
  \end{aligned}
\end{equation*}
Therefore, the proof of Lemma \ref{Lemma 4.14} is finished.
\end{proof}

Now we give the regularity results on the approximated problem \eqref{approximation}.

\begin{lemma}[Regularity]\label{Lemma 4.16}
Let $T > 0$ and the integers $m,k\geq 1$ be arbitrarily fixed. Assume that $f^n$ is the distributional solution to the approximated problem \eqref{approximation} constructed in Lemma \ref{Lemma 4.14}. Then there is a constant $ C = C (n, m, k, T) > 0 $ such that the solution $f^n$ further obeys the following statements:

$(1)$ {For all $ \alpha \in \{1,\cdots,s_0\}$ and any multi-index $ \boldsymbol{\zeta} = ( \zeta_0, \zeta_1, \cdots, \zeta_6 ) \in \mathbb{N}^7$ with $| \boldsymbol{\zeta} | = \sum_{i=0}^{6} \zeta_i \leq m$, we have 
$$D^{ \boldsymbol{\zeta} } f^n_\alpha \in L^\infty ((0,T) \times \R^3 \times \R^3 ) \cap L^\infty ( (0, T); L^1 ( \R^3 \times \R^3; (1 + |x|^k + |\xi|^k) d x d \xi ) ) \,, $$
and 
\begin{equation*}
  \sup_{ (t,x,\xi) \in (0,T) \times \R^3 \times \R^3 } | D^{ \boldsymbol{\zeta} } f^n_\alpha | + \sup_{t \in (0, T)} \iint_{\R^3 \times \R^3 } (1+|x|^k+|\xi|^k ) | D^{ \boldsymbol{\zeta} } f^n_\alpha |dxd\xi \leq C \,;
\end{equation*} }

$(2)$ {For all $\alpha \in \{s_0+1,\cdots,s\}$ and any multi-index $\boldsymbol{ \gamma } = (\gamma_0, \gamma_1, \cdots, \gamma_6, \gamma_7) \in \mathbb{N}^8$ with $| \boldsymbol{ \gamma } | = \sum_{i=0}^{7} \gamma_i \leq m$, we have
{\small
$$ D^{ \boldsymbol{ \gamma } } f^n_\alpha \in L^\infty ((0,T)\times \R^3 \times \R^3 \times \R_+) \cap L^\infty ( (0, T); L^1 ( \R^3 \times \R^3 \times \R_+; (1 + |x|^k + |\xi|^k + I^k) d x d \xi d I ) ) \,, $$ }
and
\begin{equation*}
	\sup_{ (t,x,\xi, I) \in (0,T) \times \R^3 \times \R^3 \times \R_+ } | D^{ \boldsymbol{\gamma} } f^n_\alpha | + \sup_{t \in (0, T)} \iiint_{\R^3 \times \R^3 \times \R_+ } (1+|x|^k+|\xi|^k + I^k) | D^{ \boldsymbol{ \gamma } } f^n_\alpha |dxd\xi dI \leq C \,.
	\end{equation*}	}	
\end{lemma}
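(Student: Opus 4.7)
The plan is to prove Lemma \ref{Lemma 4.16} by a double induction: first reduce time derivatives to pure $(x, \mathbf{Z}_\alpha)$-derivatives via the equation itself, and then induct on the order of spatial/velocity/internal-energy derivatives using the mild form of \eqref{approximation} combined with the $L^1$ and $L^\infty$ estimates on $\tilde{Q}_n$ already established in Lemmas \ref{Lemma 4.13} and \ref{Lemma 4.13 of infty}. Specifically, for any multi-index with $\zeta_0 \geq 1$ (or $\gamma_0 \geq 1$), I would iteratively substitute $\partial_t f^n_\alpha = -\xi \cdot \nabla_x f^n_\alpha + \tilde{Q}_{\alpha,n}(f^n, f^n)$, turning each time derivative into a combination of $x$-derivatives and $D^{\boldsymbol{\mu}} \tilde{Q}_{\alpha,n}(f^n, f^n)$ with $|\boldsymbol{\mu}|$ no larger than the original total order. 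It therefore suffices to prove the statement for multi-indices with $\zeta_0 = 0$ (respectively $\gamma_0 = 0$).

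For the base case of the induction on $(x, \xi, I)$-derivatives, i.e.\ $|\boldsymbol{\zeta}| = 0$, the $L^\infty$ bound is obtained by writing the mild form $f^{n\sharp}_\alpha(t) = f^n_{\alpha, 0} + \int_0^t \tilde{Q}_{\alpha,n}(f^n, f^n)^\sharp d\tau$, invoking Lemma \ref{Lemma 4.13 of infty} to get $\|\tilde{Q}_{\alpha,n}(f^n,f^n)(\tau)\|_{L^\infty} \leq C_{s,n}\|f^n_\alpha(\tau)\|_{L^\infty}$, and applying Grönwall together with the Schwartz regularity of $f^n_{\alpha, 0}$ from Lemmas \ref{Lemma 4.8}-\ref{Lemma 4.9}. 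For the weighted $L^1$ bound, I would multiply the equation by $1 + |x|^k + |\xi|^k + I^k$ and integrate, handling the streaming contribution $\xi \cdot \nabla_x(1+|x|^k)$ via Young's inequality ($|\xi||x|^{k-1} \leq |x|^k + |\xi|^k$), and bounding the collision contribution by exploiting that the approximated kernels $B_{i\alpha\beta,n}$ have support in $\{\frac{1}{n} \leq |\xi-\xi_*|, I + I_* \leq n\}$: this gives $|\xi'|^k, |\xi'_*|^k \leq C_{n,k}(1+|\xi|^k+|\xi_*|^k)$ and analogous bounds for $I', I'_*$, so that the contribution of $\tilde{Q}_{\alpha,n}(f^n,f^n)$ is controlled by the same weighted $L^1$-norm, closing the estimate by Grönwall.

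For the inductive step, assuming the statement holds at all orders strictly less than $|\boldsymbol{\zeta}|$, I would differentiate \eqref{approximation} by $D^{\boldsymbol{\zeta}}$. The commutator $[D^{\boldsymbol{\zeta}}, \xi \cdot \nabla_x]$ produces terms of the form $D^{\boldsymbol{\zeta}'} \nabla_x f^n_\alpha$ with $|\boldsymbol{\zeta}'| < |\boldsymbol{\zeta}|$, controlled by the induction hypothesis. Since $B_{i\alpha\beta,n}$ are $C^\infty$ with compact support and $N_n(f^n)^{-1}$ is a smooth functional of $L^1$-norms (whose time derivatives are themselves given by the equation), differentiation commutes with the collision integral and the Leibniz rule expresses $D^{\boldsymbol{\zeta}}\tilde{Q}_{\alpha,n}(f^n,f^n)$ as a finite sum of bilinear terms $D^{\boldsymbol{\mu}}f^n_\cdot \cdot D^{\boldsymbol{\nu}}f^n_\cdot$ with $|\boldsymbol{\mu}|+|\boldsymbol{\nu}| \leq |\boldsymbol{\zeta}|$, integrated against smooth compactly supported kernels. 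All such terms with $\max(|\boldsymbol{\mu}|,|\boldsymbol{\nu}|) < |\boldsymbol{\zeta}|$ are bounded by the induction hypothesis, while the top-order terms are linear in $D^{\boldsymbol{\zeta}}f^n$ with coefficients bounded in $L^\infty$ by Lemma \ref{Lemma 4.13 of infty}; writing the mild form for $D^{\boldsymbol{\zeta}}f^n_\alpha$ and applying Grönwall yields the $L^\infty$ estimate, and a weight-multiplication as in the base case yields the weighted $L^1$ estimate.

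The main obstacle will be the weighted $L^1$ estimate at the inductive step: Leibniz-type derivatives of $\tilde{Q}_n$ produce bilinear contributions in which the moment weight $(1+|x|^k+|\xi|^k+I^k)$ might a priori be evaluated at the post-collision variables $(\xi', \xi'_*, I', I'_*)$, which are not directly controlled by the pre-collision variables. This is resolved precisely by the compact support of the approximated kernels $B_{i\alpha\beta,n}$ in the relative variables together with the momentum/energy conservation laws \eqref{eq2.02}-\eqref{eq2.06}, yielding $|\xi'|+|\xi'_*| \leq |\xi|+|\xi_*|+C_n$ and $I'+I'_* \leq I+I_*+C_n$ on the support of integration. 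Combined with the induction hypothesis on lower-order moments, this closes the Grönwall loop and completes the induction at each order $m$ and each moment $k$.
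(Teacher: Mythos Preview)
Your proposal is correct and follows essentially the same approach as the paper: differentiate the mild (Duhamel) form \eqref{fn-alpha}, invoke Lemmas \ref{Lemma 4.13} and \ref{Lemma 4.13 of infty}, apply Gr\"onwall, and induct on the order of derivatives. The paper in fact omits all details (``For simplicity, we omit the details here''), so your treatment of the commutator $[D^{\boldsymbol{\zeta}}, \xi \cdot \nabla_x]$, the Leibniz expansion of $D^{\boldsymbol{\zeta}} \tilde{Q}_{\alpha,n}$, and the compact-support argument for the weighted $L^1$ estimate simply fills in what the paper leaves implicit.
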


\begin{proof}
  In order to prove the smoothness of the approximated solution, we mainly focus on the following mild form of \eqref{approximation} constructed in Lemma \ref{Lemma 4.14}:
  \begin{equation}\label{fn-alpha}
	f^n_{\alpha} (t,x,\mathbf{Z}_\alpha) = U(t) f^n_{\alpha, 0} (x, \mathbf{Z}_\alpha) + \int_{0}^{t} U(t-\tau) \tilde{Q}_{\alpha ,n} (f^n ,f^n) (\tau, x, \mathbf{Z}_\alpha) d \tau \,,
\end{equation}
where $U(t) f (x, \mathbf{Z}_\alpha) = f (x - t \xi, \mathbf{Z}_\alpha)$ is the operator semigroup. Here $\mathbf{Z}_\alpha = \xi \in \mathcal{Z}_\alpha = \R^3$ for $\alpha \in \{ 1, \cdots, s_0 \}$ and $\mathbf{Z}_\alpha = (\xi, I) \in \mathcal{Z}_\alpha = \R^3 \times \R_+$ for $\alpha \in \{ s_0 + 1, \cdots, s \}$. Observe that the approximated initial data and collision kernels are all smooth. Then, by applying the derivative operators $D^{ \boldsymbol{\zeta} }$ and $ D^{ \boldsymbol{\gamma} } $ to \eqref{fn-alpha}, employing the Gr\"onwall inequality, combining the Lemmas \ref{Lemma 4.13}-\ref{Lemma 4.13 of infty} and using the Induction Principle for orders of derivatives, one can conclude the results of this lemma. For simplicity, we omit the details here.
\end{proof}

\begin{lemma}\label{Lemma 4.17}
Assume that $f^n$ is the distributional solution to the approximated problem \eqref{approximation} constructed in Lemma \ref{Lemma 4.14}. Then it further enjoys the following properties:
\begin{enumerate}
	\item {Mass, momentum, energy and angular momentum are all conserved. Hence, for all $t \geq 0$ and $\alpha \in \{1,\cdots,s_0\}$, we have
		\begin{equation*}
			\iint_{\R^3 \times \R^3 } f^n_\alpha 
			\left (\begin{array}{c}
				1 \\[2mm]
				\xi \\ [2mm]
				|\xi|^2 \\ [2mm]
				|x-t\xi|^2 \\[2mm]
			\end{array}\right ) dxd\xi 
			= \iint_{\R^3 \times \R^3 } f_{\alpha ,0}^n 
			\left (\begin{array}{c}
				1 \\[2mm]
				\xi \\ [2mm]
				|\xi|^2 \\ [2mm]
				|x|^2 \\[2mm]
			\end{array}\right )dxd\xi ;
	\end{equation*}
For all $t>0$ and $\alpha \in \{s_0+1,\cdots,s\}$, we have
\begin{equation*}
	\iiint_{\R^3 \times \R^3 \times \R_+ } f^n_\alpha 
	\left (\begin{array}{c}
		1 \\[2mm]
		\xi \\ [2mm]
		\frac{1}{2}m_\alpha|\xi|^2+I \\ [2mm]
		|x-t\xi|^2 \\[2mm]
	\end{array}\right ) dxd\xi dI 
	= \iiint_{\R^3 \times \R^3 \times \R_+ } f_{\alpha ,0}^n 
	\left (\begin{array}{c}
		1 \\[2mm]
		\xi \\ [2mm]
		\frac{1}{2}m_\alpha|\xi|^2+I\\ [2mm]
		|x|^2 \\[2mm]
	\end{array}\right )dxd\xi dI .
	\end{equation*}}\label{1 of Lemma 4.17}
	\item{Entropy identity is true. Hence for all $t \geq 0$,
		\begin{equation}\label{entro}
			\begin{aligned}
				H(f^n_{0}) & = H(f^n)(t)+ \tfrac{1}{4}\sum \limits_{\alpha =1}^{s_0} \int_{0}^{t} d \tau \iint_{ \R^3 \times \R^3 } \tilde{e}_\alpha^n(
\tau,x,\xi ) dxd\xi\\
				& +\tfrac{1}{4}\sum \limits_{\alpha =s_0+1}^{s}\int_{0}^{t} d \tau \iiint_{ \R^3 \times \R^3 \times \R_+} \tilde{e}_\alpha^n(\tau,x,\xi ,I) dxd\xi dI ,
			\end{aligned}
		\end{equation}
where the density of entropy dissipation
\begin{equation}\label{e na}
	\begin{aligned}
	\tilde{e}_\alpha^n &= (N_n (f^n))^{-1} \sum \limits_{\beta =1}^s \int_{(\R^3\times \R_+)^3} (\tfrac{I^{\delta(\alpha)/2-1} I_* ^{\delta(\beta)/2-1}{f_\alpha^n}' {f_{\beta_*}^n}'  } {f^n_\alpha f^n_{\beta_* } (I')^{\delta(\alpha)/2-1} (I'_* )^{\delta(\beta)/2-1}} - 1)\\
&\times \log (\tfrac{I^{\delta(\alpha)/2-1} I_* ^{\delta(\beta)/2-1} {f_\alpha^n}' {f_{\beta_*}^n}' }{f^n_\alpha f^n_{\beta_* } (I')^{\delta(\alpha)/2-1} (I'_* )^{\delta(\beta)/2-1} } ) \tfrac{f^n_\alpha f^n_{\beta_* } }{I^{\delta(\alpha)/2-1} I_* ^{\delta(\beta)/2-1} }W_{\alpha\beta,n }d\xi_* d\xi'd\xi'_* dI_* dI' dI'_*,
	\end{aligned}
\end{equation}
the $H$-functional $H(f)$ is defined in \eqref{Hf}, and the transition probabilities $W_{\alpha \beta ,n}$ corresponds to the collision kernels $B_{2 \alpha \beta ,n}$, $B_{1 \alpha \beta, n}$, $B_{1 \beta \alpha, n}$ and $B_{0 \alpha \beta, n}$ constructed in Lemmas \ref{Lemma 4.10}-\ref{Lemma 4.11}-\ref{Lemma 4.12}. }\label{2 of Lemma 4.17}

\item{For all $\alpha \in \{1,\cdots,s_0\}$ and $t \geq 0$, we have
	\begin{equation}\label{FB2}{\small
		\begin{aligned}
  \iint_{ \R^3 \times \R^3} f^n_\alpha (t,x,\xi)(1+|x|^2+|\xi|^2)dxd\xi \leq & \iint_{ \R^3 \times \R^3} f^n_{\alpha,0}(x,\xi)(1+2|x|^2+(2t^2+1)|\xi|^2)dxd\xi,
  \end{aligned}}
	\end{equation}
and
 \begin{equation}\label{FWC2}
		\begin{aligned}
			\sup \limits_{t\geq 0}&\iiint_{ \R^3 \times \R^3} f^n_\alpha(t,x,\xi)|\log f^n_\alpha| dxd\xi +\tfrac{1}{4}\int_{0}^{+\infty} dt \iint_{ \R^3 \times \R^3} \tilde{e}^n_\alpha dxd\xi \\
		   \leq & 2 \iiint_{ \R^3 \times \R^3} f^n_{\alpha,0}(x,\xi) (|\log f^n_{\alpha,0}|+ |x|^2+ |\xi|^2 )dxd\xi +C_1;
		\end{aligned}
	\end{equation}
	For all $\alpha \in \{s_0+1,\cdots,s\}$ and $t \geq 0$, we have
	\begin{equation}\label{FB3}
		\begin{aligned}
   &\iint_{ \R^3 \times \R^3\times \R_+ } f^n_\alpha (t,x,\xi,I)(1+|x|^2+ \tfrac{1}{2} m_\alpha |\xi|^2+I)dxd\xi dI\\
  \leq & \iint_{ \R^3 \times \R^3\times \R_+ } f^n_{\alpha,0}(x,\xi,I ) \{ 1 + 2 |x|^2 + ( 2 t^2 + \tfrac{1}{2} m_\alpha ) |\xi|^2 + (\tfrac{4}{m_\alpha} t^2 + 1) I )\} dxd\xi dI,
\end{aligned}
\end{equation}
and
 \begin{equation}\label{FWC3}{\small
		\begin{aligned}
			\sup \limits_{t\geq 0}& \iiint_{ \R^3 \times \R^3\times \R_+} f^n_\alpha(t,x,\xi,I) |\log (I^{1-\delta(\alpha)/2}f^n_\alpha)| dxd\xi dI +\tfrac{1}{4}\int_{0}^{+\infty} dt \iiint_{ \R^3 \times \R^3\times \R_+} \tilde{e}^n_\alpha dxd\xi dI\\
		\leq & 2 \iiint_{ \R^3 \times \R^3\times \R_+} f^n_{\alpha,0}(x,\xi,I )\{|\log (I^{1-\delta(\alpha)/2}f^n_{\alpha,0})|+ M_\alpha(|x|^2+m_\alpha |\xi|^2 +2I)\}dxd\xi dI+C_1,
		\end{aligned}}
\end{equation}
where $M_\alpha = \max \{1,\frac{1}{m_\alpha}\}$ and the constant $C_1>0$ is independent of $n$.}\label{3 of Lemma 4.17}
\end{enumerate}
\end{lemma}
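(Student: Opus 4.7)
The plan is to exploit the smoothness of $f^n$ established in Lemma \ref{Lemma 4.16}, which legitimizes all the formal computations: pointwise differentiation in time, integration by parts in $x$, and Fubini-type exchanges of the $\mathbf{Z}$-integrals with the collisional integrals defining $\tilde{Q}_n$. The central observation is that the approximated kernels $B_{2\alpha\beta,n}, B_{1\alpha\beta,n}, B_{1\beta\alpha,n}, B_{0\alpha\beta,n}$ constructed in Lemmas \ref{Lemma 4.10}--\ref{Lemma 4.11}--\ref{Lemma 4.12} share the invariance properties \eqref{eq2.03}--\eqref{B-inv} of the original kernels (they were built by multiplying by symmetric cutoffs and smoothing with standard mollifiers). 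Consequently the weak formulation of Proposition \ref{prop 2.2} and the collision-invariant characterization of Proposition \ref{prop2.3} apply verbatim to $Q_n$, and hence to $\tilde Q_n = (N_n(f^n))^{-1} Q_n$, since $N_n(f^n)$ is a scalar depending only on $(t,x)$.

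For part \eqref{1 of Lemma 4.17}, I multiply \eqref{approximation} successively by $1, \xi, m_\alpha|\xi|^2 + 2\mathbf{1}_{\alpha\in\{s_0+1,\dots,s\}} I$, integrate over $\mathcal{Z}_\alpha$, and sum in $\alpha$. By Proposition \ref{prop2.3} applied to $\tilde Q_n$, each resulting collisional integral vanishes, leaving only the free-transport term, whose $x$-integral also vanishes after integration by parts (the regularity and decay in Lemma \ref{Lemma 4.16} justify the boundary terms). Integration in $t$ gives the conservation of mass, momentum and total energy for each species. For the angular-momentum-type quantity $|x-t\xi|^2$, I apply the transport operator directly: $(\partial_t+\xi\cdot\nabla_x)|x-t\xi|^2=0$, and the integral against $\tilde Q_n(f^n,f^n)$ vanishes because $|x-t\xi|^2 = |x|^2-2tx\cdot\xi+t^2|\xi|^2$ is an affine combination of $1,\xi,|\xi|^2$ in the $\xi$-variable at fixed $(t,x)$, hence a collision invariant in $\mathbf{Z}_\alpha$.

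For part \eqref{2 of Lemma 4.17}, I multiply \eqref{approximation} by $1+\log(\varphi^{-1} f^n_\alpha)$, legitimate since $f^n_\alpha$ has the strictly positive lower bound of Lemma \ref{Lemma 4.14} (so the logarithm is smooth), and integrate. The transport term produces $\frac{d}{dt} H(f^n)$ after an $x$-integration by parts. The collision term, by Proposition \ref{prop 2.2} applied to $\tilde Q_n$ and the symmetrization trick \eqref{eq2.18}, produces exactly $-\tfrac14 \sum_\alpha \int \tilde e^n_\alpha\, dxd\mathbf{Z}_\alpha$, with the factor $(N_n(f^n))^{-1}$ carried through into \eqref{e na}. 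Integrating from $0$ to $t$ yields \eqref{entro}.

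For part \eqref{3 of Lemma 4.17}, the moment bounds \eqref{FB2} and \eqref{FB3} follow by writing $|x|^2 \leq 2|x-t\xi|^2 + 2t^2|\xi|^2$ and invoking the $|x-t\xi|^2$ and energy conservations already proved. For the entropy bounds \eqref{FWC2} and \eqref{FWC3}, I use the identity $f|\log f| = f\log f + 2 f(\log f)^-$ and control the negative part $f(\log f)^-$ via the standard inequality
\begin{equation*}
  f_\alpha^n (\log f_\alpha^n)^- \leq C (|x|^2+|\xi|^2+I) f_\alpha^n + C \exp\bigl(-\tfrac12(|x|^2+|\xi|^2+I)\bigr),
\end{equation*}
(cf.\ the argument in the proof of Lemma \ref{Lemma 4.4}, replacing $I^{1-\delta(\alpha)/2}$ by $1$ when $\alpha\in\{1,\dots,s_0\}$), and integrate; combining with \eqref{entro} and the moment bounds already obtained gives the claimed estimates with $n$-independent constants. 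The main obstacle I foresee is purely bookkeeping: tracking the scalar factor $N_n(f^n)^{-1}$ through the entropy computation so that it ends up precisely in front of $\tilde e_\alpha^n$ and not in front of $H(f^n)(t)$, and verifying that the symmetrization of Proposition \ref{prop 2.2} is valid for $\tilde Q_n$ (which reduces to checking that the measure $d\Gamma_{\alpha\beta,n}$ built from the approximated kernels is invariant under \eqref{eq2.18}, a direct consequence of Lemmas \ref{Lemma 4.10}--\ref{Lemma 4.12}).
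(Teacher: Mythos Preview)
Your proposal is correct and follows essentially the same route as the paper: both exploit that the approximated kernels from Lemmas \ref{Lemma 4.10}--\ref{Lemma 4.12} inherit the symmetry properties \eqref{eq2.18}, so that Propositions \ref{prop 2.2}--\ref{prop2.3} apply to $\tilde{Q}_n$ (with the scalar $N_n(f^n)^{-1}$ harmless), yielding the conservation laws and the entropy identity; and both derive the moment bounds via $|x|^2\le 2|x-t\xi|^2+2t^2|\xi|^2$ and the $|\log f|$ bounds by splitting off the negative part and controlling it with a Gaussian-type comparison, exactly as in the proof of Lemma \ref{Lemma 4.4}.
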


\begin{proof}
Note that the collision kernels $B_{2 \alpha \beta ,n}$, $B_{1 \alpha \beta, n}$, $B_{1 \beta \alpha, n}$ and $B_{0 \alpha \beta, n}$ constructed in Lemmas \ref{Lemma 4.10}-\ref{Lemma 4.11}-\ref{Lemma 4.12} enjoy the same symmetry properties of $B_{2 \alpha \beta }$, $B_{1 \alpha \beta}$, $B_{1 \beta \alpha}$ and $B_{0 \alpha \beta}$ in \eqref{B0-ab}-\eqref{B1-ab}-\eqref{B1-ba}-\eqref{B2-ab}. Then the all structure properties of the collision operator $Q$ will also be valid for the approximated collision operator $\tilde{Q}_n = ( \tilde{Q}_{\alpha, n} )_{ \alpha \in \{ 1, \cdots, s \} }$ constructed in \eqref{Q L}. Then the collision invariant form of $Q$
	\begin{equation*}
		\varphi_i= ae_i + b \mathbf{m} \xi +c (\mathbf{m} |\xi|^2+2\mathbf{I}),\quad i=1,\cdots,s 
	\end{equation*}
is also that of $\tilde{Q}_n$, where $a,c\in \R $ and $ b\in \R^3$ are arbitrary, see Proposition \ref{prop2.3}. As a result, multiplying the equation \eqref{approximation} by the collision invariants and integrating by parts over $  (t,x,\xi) \in \R_+ \times \R^3 \times \R^3 $ for $\alpha \in \{ 1, \cdots, s_0 \}$ and $(t,x, \xi, I) \in \R_+ \times \R^3 \times \R^3 \times \R_+$ for $\alpha \in \{ s_0 + 1, \cdots, s \}$, one can obtain the conclusion \eqref{1 of Lemma 4.17} in this lemma. The conclusion \eqref{2 of Lemma 4.17} of this lemma can be derived from the same arguments of \eqref{EntropyId} resulted from Section 2.2 of \cite{N.Bernhoff-2023}.

Next we focus on the conclusion \eqref{3 of Lemma 4.17} of this lemma. By the conclusion \eqref{1 of Lemma 4.17}, one infers that for $\alpha \in \{s_0+1,\cdots,s\}$ and $t \geq 0$,
	\begin{equation*}
		\begin{aligned}
			\iiint_{\R^3 \times \R^3 \times \R_+}f^n_\alpha |x|^2 dxd\xi dI
			\leq & 2\iiint_{\R^3 \times \R^3 \times \R_+}f^n_\alpha (|x-t\xi|^2 + t^2 |\xi|^2) dxd\xi dI \\
\leq & 2\iiint_{\R^3 \times \R^3 \times \R_+}f^n_{\alpha,0} (|x|^2 + t^2 |\xi|^2 + \tfrac{2}{m_\alpha} t^2 I) dxd\xi dI.
		\end{aligned}
	\end{equation*}
	Then there holds
	\begin{equation*}
		\begin{aligned}
			&\iiint_{ \R^3 \times \R^3\times \R_+} f^n_\alpha (1+|x|^2+ \tfrac{1}{2} m_\alpha |\xi|^2 +I)dxd\xi dI \\
			\leq &\iiint_{ \R^3 \times \R^3\times \R_+} f^n_{\alpha,0} \{ 1 + 2 |x|^2 + ( 2 t^2 + \tfrac{1}{2} m_\alpha ) |\xi|^2 + (\tfrac{4}{m_\alpha} t^2 + 1) I )\} dxd\xi dI.
		\end{aligned}
	\end{equation*}
	Similarly for $\alpha \in \{1,\cdots,s_0\}$ and $t \geq 0$, one has
	\begin{equation*}
		\iint_{ \R^3 \times \R^3} f^n_\alpha (1+|x|^2+|\xi|^2) dxd\xi \leq \iint_{ \R^3 \times \R^3} f^n_{\alpha,0} (1+2|x|^2+(2t^2+1)|\xi|^2) dxd\xi .
	\end{equation*}

Now we prove the estimate \eqref{FWC2} for $\alpha \in \{1, \cdots, s_0 \}$. Note that 
	\begin{equation*}
		\begin{aligned}
			\iint_{ \R^3 \times \R^3} f^n_\alpha|\log f^n_\alpha| dxd\xi 
			=&\iint_{ \R^3 \times \R^3} f^n_\alpha\log f^n_\alpha dxd\xi -2 \iint_{0<f^n_\alpha\leq 1} f^n_\alpha\log f^n_\alpha dxd\xi .
		\end{aligned}
	\end{equation*}
The entropy identity \eqref{entro} shows that 
$$ \iint_{ \R^3 \times \R^3} f^n_\alpha\log f^n_\alpha dxd\xi \leq \iint_{\R^3 \times \R^3} f^n_{\alpha, 0} \log f^n_{ \alpha, 0 } d x d \xi \leq \iint_{\R^3 \times \R^3} f^n_{\alpha, 0} | \log f^n_{ \alpha, 0 } | d x d \xi $$
for a constant $C > 0$ independent of $n$. Observe that $t\log \frac{1}{t}\leq C_0 \sqrt{t}$ for $t\in (0,1)$ for some $C_0\geq 0$. Then
\begin{equation*}{\small
  \begin{aligned}
    -2 \iint_{0<f^n_\alpha\leq 1} f^n_\alpha\log f^n_\alpha dxd\xi = & -2 \iint_{0<f^n_\alpha\leq e^{ - |x|^2 - |\xi|^2 }} f^n_\alpha\log f^n_\alpha dxd\xi - 2 \iint_{e^{ - |x|^2 - |\xi|^2 }<f^n_\alpha\leq 1} f^n_\alpha\log f^n_\alpha dxd\xi \\
    \leq & 2 C_0 \iint_{\R^3 \times \R^3} e^{ - \frac{|x|^2 + |\xi|^2}{2} } d x d \xi + 2 \iint_{\R^3 \times \R^3} f^n_\alpha(|x|^2 +|\xi|^2) dxd\xi \,.
  \end{aligned}}
\end{equation*}
Notice that $\iint_{\R^3 \times \R^3} f^n_\alpha(|x|^2 +|\xi|^2) dxd\xi = \iint_{\R^3 \times \R^3} f^n_\alpha(|x - t \xi|^2 +|\xi|^2) dxd\xi = \iint_{\R^3 \times \R^3} f^n_{\alpha, 0} (|x|^2 +|\xi|^2) dxd\xi$ by using the conclusion \eqref{1 of Lemma 4.17} of this lemma. As a result, for some constant $C_1 > 0$ independent of $n$,
\begin{equation*}
  \begin{aligned}
    \iint_{ \R^3 \times \R^3} f^n_\alpha|\log f^n_\alpha| dxd\xi \leq \iint_{ \R^3 \times \R^3} f^n_{\alpha,0}(|\log f^n_{\alpha,0}|+2|x|^2+2|\xi|^2 )dxd\xi + C_1 \,.
  \end{aligned}
\end{equation*}
Together with \eqref{entro}, we conclude that
	\begin{equation*}{\small
		\begin{aligned}
			\sup \limits_{t\geq 0}\iint_{ \R^3 \times \R^3} f^n_\alpha|\log f^n_\alpha| dxd\xi +\tfrac{1}{4}\int_{0}^{+\infty} dt \iint_{ \R^3 \times \R^3} \tilde{e}^n_\alpha dxd\xi \leq 2 \iint_{ \R^3 \times \R^3} f^n_{\alpha,0}(|\log f^n_{\alpha,0}|+ |x|^2+ |\xi|^2 )dxd\xi +C_1.
		\end{aligned}}
	\end{equation*}

For the cases $\alpha \in \{s_0+1,\cdots,s\}$, we only deal with the integral 
$$ - \iiint_{0 < I^{1-\delta(\alpha)/2}f^n_\alpha \leq 1 } f^n_\alpha \log (I^{1-\delta(\alpha)/2}f^n_\alpha) dxd\xi dI \,. $$ 
By using the decomposition $\{ 0 < I^{1-\delta(\alpha)/2}f^n_\alpha \leq 1 \} = \{ 0 < I^{1-\delta(\alpha)/2}f^n_\alpha \leq e^{ - |x|^2 - |\xi^2| - I } \} \cup \{ e^{ - |x|^2 - |\xi^2| - I } < I^{1-\delta(\alpha)/2}f^n_\alpha \leq 1 \}$, one infers that
\begin{equation*}{\small
  \begin{aligned}
    - \iiint_{0 < I^{1-\delta(\alpha)/2}f^n_\alpha \leq 1 } f^n_\alpha \log (I^{1-\delta(\alpha)/2}f^n_\alpha) dxd\xi dI \leq & \iiint_{ \R^3 \times \R^3\times \R_+} f^n_\alpha (|x|^2 + |\xi|^2 + I) dxd\xi dI \\
    & + C_0 \iiint_{ \R^3 \times \R^3\times \R_+} I^{\delta(\alpha)/2 - 1} e^{ -\frac{|x|^2 + |\xi^2| + I}{2}  } d x d \xi d I \,.
  \end{aligned}}
\end{equation*}
Following the same arguments in the cases $\alpha \in \{ 1, \cdots, s_0 \}$, one can obtain that
	\begin{equation}\label{f-C}
		\begin{aligned}
			&\sup \limits_{t\geq 0}\iiint_{ \R^3 \times \R^3\times \R_+} f^n_\alpha|\log (I^{1-\delta(\alpha)/2}f^n_\alpha)| dxd\xi dI +\tfrac{1}{4}\int_{0}^{+\infty} dt \iiint_{ R^3 \times R^3\times R_+} \tilde{e}^n_\alpha dxd\xi dI\\
			\leq & 2 \iiint_{ \R^3 \times \R^3\times \R_+} f^n_{\alpha,0}\{|\log (I^{1-\delta(\alpha)/2}f^n_{\alpha,0})|+M_\alpha (|x|^2+m_\alpha |\xi|^2 +2I)\}dxd\xi dI+C_1,
		\end{aligned}
	\end{equation}
	where $M_\alpha = \max \{1,\frac{1}{m_\alpha}\}$. This means that the conclusion \eqref{3 of Lemma 4.17} holds. Therefore, the proof of Lemma \ref{Lemma 4.17} is finished.
\end{proof}

\section{Weak $L^1$ compactness for nonlinear collision operator}\label{sec weak com}

In this section, the main goal is to study the relatively weak $L^1$ compactness of $(1 + f^n_\alpha)^{-1} \tilde{Q}_{\alpha, n} (f^n, f^n)$ for $\alpha \in \{ 1, \cdots, s \}$, where $f^n$ is the mild solution to the approximated problem \eqref{approximation}, and $\tilde{Q}_{\alpha, n} (f^n, f^n)$ is defined in \eqref{Q L}. The ideas of this section are inspired by the DiPerna-Lions' work \cite{Diperna-Lions}. We first consider the loss term part $(1 + f^n_\alpha)^{-1} \tilde{Q}_{\alpha, n}^- (f^n, f^n)$, where $ \tilde{Q}_{\alpha, n}^- (f^n, f^n) = f^n_\alpha \tilde{L}_{\alpha, n} (f^n) $. Here the key point is to study the operators $ \tilde{L}_{\alpha, n} (f^n) $ by using the properties of $A_{2 \alpha \beta, n}$, $A_{1 \alpha \beta, n}$, $A_{1 \beta \alpha, n}$ and $A_{0 \alpha \beta , n}$ mentioned in Lemmas \ref{Lemma 4.10}-\ref{Lemma 4.11}-\ref{Lemma 4.12}. Then we investigate the relatively weak $L^1$ compactness of the gain term part $(1 + f^n_\alpha)^{-1} \tilde{Q}_{\alpha, n}^+ (f^n, f^n)$. The core of approach is to show the Arkeryd-type inequality \cite{Arkeryd-inequality} that for $K > 1$,
\begin{equation}\label{eq5.71}
\tilde{Q}^\pm_{\alpha ,n}(f^n,f^n) \leq \tfrac{1}{\log K}\tilde{e}^n_{\alpha} +K\tilde{Q}^\mp_{\alpha ,n}	(f^n,f^n) \,,
\end{equation}
where $\tilde{e}^n_{\alpha}$ is defined in \eqref{e na}. Then we can use the relatively weak $L^1$ compactness of loss term part to obtain that of gain term part.

\begin{lemma} \label{theo-4.1}
	Let $T, R>0$. Assume that $f^n$ is the unique non-negative distributional solution of \eqref{approximation} constructed in Lemma \ref{Lemma 4.14}. Then the following statements hold:
	\begin{enumerate}
	\item {$ (1+ f^n_\alpha)^{-1} \tilde{Q}^- _{\alpha ,n}(f^n,f^n)$ is
	 relatively weakly compact in $L^\infty (0,T; L^1(\R^3 \times B_R))$ for $\alpha \in \{1,\cdots,s_0\}$;}
	\item {$ (1+ f^n_\alpha)^{-1} \tilde{Q}^- _{\alpha ,n}(f^n,f^n)$ is
 relatively weakly compact in $L^\infty (0,T; L^1(\R^3 \times B_R\times (0,R) ))$ for $\alpha \in \{s_0+1,\cdots s\}$.}
	\end{enumerate}
\end{lemma}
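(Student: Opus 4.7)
The plan is to reduce the loss term to the linear operator $L_{\alpha,n}(f^n)$, establish the relative weak $L^1$ compactness of $L_{\alpha,n}(f^n)$ via a truncation argument in the $(\xi_*,I_*)$-variable, and then invoke the Dunford--Pettis theorem.

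First, since $\tilde Q^-_{\alpha,n}(f^n,f^n)=f^n_\alpha\,\tilde L_{\alpha,n}(f^n)$ and $N_n(f^n)\geq 1$, one has pointwise
\begin{equation*}
0\leq (1+f^n_\alpha)^{-1}\tilde Q^-_{\alpha,n}(f^n,f^n)\leq \tilde L_{\alpha,n}(f^n)\leq L_{\alpha,n}(f^n)=\sum_{\beta=1}^s L_{\alpha\beta,n}(f^n),
\end{equation*}
so it suffices to prove that each $L_{\alpha\beta,n}(f^n)$ is relatively weakly compact in $L^\infty(0,T;L^1(\R^3\times B_R))$ (if $\alpha\in\{1,\ldots,s_0\}$) or $L^\infty(0,T;L^1(\R^3\times B_R\times(0,R)))$ (if $\alpha\in\{s_0+1,\ldots,s\}$). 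Fix such a pair $\alpha,\beta$; I treat the generic case $\alpha,\beta\in\{s_0+1,\ldots,s\}$, the others being analogous.

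Second, for each fixed $m>0$ define the truncation
\begin{equation*}
L^m_{\alpha\beta,n}(f^n)(t,x,\xi,I):=\iint_{\R^3\times\R_+}A_{2\alpha\beta,n}(\xi-\xi_*,I+I_*)\,\mathbf 1_{|\xi-\xi_*|\leq m}\mathbf 1_{I+I_*<m}\,f^n_{\beta_*}\,d\xi_*\,dI_*.
\end{equation*}
The key step is the de la Vallée-Poussin-type bound
\begin{equation*}
\iiint_{\R^3\times B_R\times(0,R)}\Psi\bigl(L^m_{\alpha\beta,n}(f^n)(t)\bigr)\,dx\,d\xi\,dI\leq C(T,R,m),\qquad t\in[0,T],\ \Psi(t)=t(\log t)^+,
\end{equation*}
which I expect to obtain by applying Jensen's inequality with the probability measure proportional to $A_{2\alpha\beta,n}\mathbf 1_{|\xi-\xi_*|\leq m,\,I+I_*<m}\,d\xi_*\,dI_*$ and then using the uniform $L\log L$-type bound $\iiint f^n_\beta|\log(I_*^{1-\delta(\beta)/2}f^n_\beta)|\leq C$ from Lemma~\ref{Lemma 4.17}(\ref{3 of Lemma 4.17}) together with the moment bounds \eqref{FB2}--\eqref{FB3}. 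Once this is in hand, Lemma~\ref{theo-DE La} gives equi-integrability of $\{L^m_{\alpha\beta,n}(f^n)\}_n$ on $(0,T)\times\R^3\times B_R\times(0,R)$; tightness in $x$ is an immediate consequence of the uniform weighted bound $\iiint(1+|x|^2)f^n_\beta\leq C_T$, so Lemma~\ref{theorem-dunford} delivers relative weak $L^1$ compactness of the truncated family for each fixed $m$.

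Third, to remove the truncation I invoke the third items of Lemmas~\ref{Lemma 4.10}--\ref{Lemma 4.12}, which hold uniformly in $n$: on the set $\{|\xi|+I\leq M\}$ they give a uniform $L^1$ bound on $A_{2\alpha\beta,n}(\xi-\cdot,I+\cdot)$, and on its complement they provide the decay $(1+|\xi|^2+I)^{-1}\int_{|z-\xi|\leq m,\,0<\eta-I<m}A_{2\alpha\beta,n}(z,\eta)\,dz\,d\eta\to 0$ as $|\xi|,I\to\infty$; combined with the uniform $L^1$-in-$(\xi_*,I_*)$ mass of $f^n_\beta$ this yields
\begin{equation*}
\sup_n\bigl\|L_{\alpha\beta,n}(f^n)-L^m_{\alpha\beta,n}(f^n)\bigr\|_{L^\infty((0,T);L^1(\R^3\times B_R\times(0,R)))}\xrightarrow[m\to\infty]{}0.
\end{equation*}
Since the set of relatively weakly $L^1$-compact families is closed under uniform $L^1$-limits, the conclusion follows for each $L_{\alpha\beta,n}(f^n)$, and summing over $\beta$ and using the pointwise domination above finishes the proof.

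The hard part is the $L\log L$ bound on $L^m_{\alpha\beta,n}(f^n)$: one must carefully absorb the degeneracy factors $I_*^{\delta(\beta)/2-1}$, $\mathfrak R^{1/2}(1-\mathfrak R)^{\delta(\beta)/2-1}$ appearing inside $A_{2\alpha\beta,n}$ into the normalization for Jensen's inequality while retaining uniformity in $n$. Everything else---tightness, passage to the limit in $m$, and the reduction from $\tilde Q^-_{\alpha,n}$ to $L_{\alpha,n}$---is routine once the entropy and moment bounds of Lemma~\ref{Lemma 4.17} are available.
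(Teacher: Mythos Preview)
Your proposal is correct and follows essentially the same route as the paper: pointwise domination $(1+f^n_\alpha)^{-1}\tilde Q^-_{\alpha,n}\leq L_{\alpha,n}(f^n)$, truncation $L^m_{\alpha\beta,n}$, an $L\log L$ bound via Jensen's inequality and the entropy/moment estimates of Lemma~\ref{Lemma 4.17}, equi-integrability by de la Vall\'ee--Poussin, tightness in $x$ from the $|x|^2$-moment, Dunford--Pettis, and finally passage $m\to\infty$ uniformly in $n$. Two minor remarks: your worry about the ``degeneracy factors $I_*^{\delta(\beta)/2-1}$'' is unfounded, since these do not appear in $A_{2\alpha\beta,n}$ (the $(\mathfrak R,\mathfrak r,\omega)$-integration is already done) and the Jensen step is applied directly to the $(\xi_*,I_*)$-convolution---the only extra work is converting the paper's bound on $f^n_\beta|\log(I_*^{1-\delta(\beta)/2}f^n_\beta)|$ into a bound on $f^n_\beta(\log f^n_\beta)^+$, which costs an $I_*$-moment; and your description of the $m\to\infty$ step is slightly off---since $\xi\in B_R$, $I\in(0,R)$ are already bounded, the issue is large $|\xi_*|$ or $I_*$, which the paper handles directly by the $|\xi_*|^2$ and $I_*$ moments of $f^n_\beta$ combined with \eqref{collision}.
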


\begin{proof}
	
Recall that by \eqref{Q L}, one has $\tilde{L}_{\alpha, n} (f^n) = N_n (f^n)^{-1} \sum_{\beta = 1}^{s} L_{\alpha \beta, n} (f^n)$. For $\alpha, \beta \in \{ s_0 + 1, \cdots, s \}$, it is implied by \eqref{Q L} and Lemma \ref{Lemma 4.10} that for any $0 < R < \infty$,
\begin{equation*}
  \begin{aligned}
    L_{\alpha \beta, n} (f^n) = \iint\limits_{ \R^3 \times \R_+ } A_{2 \alpha \beta ,n} (\xi - \xi_*, I + I_* ) f^n_\beta (t,x ,\xi_*,I_* ) d I_* d \xi_* \,, \\
    \lim_{ |\xi|, I \to + \infty } \sup \limits_{n \geq 1} (1+|\xi|^2 +  I )^{-1} \int_{|z-\xi |\leq R } \int_{0<\eta -I < R } A_{2\alpha \beta ,n } (z,\eta ) dzd\eta \rightarrow 0 \,.
  \end{aligned}
\end{equation*}
Then for any $\epsilon > 0$ there is a $C_\epsilon > 0$ independent of $n$ such that
\begin{equation}\label{A-bnd}
  \begin{aligned}
    \int_{|z-\xi |\leq R } \int_{0<\eta -I < R } A_{2\alpha \beta ,n } (z,\eta ) dzd\eta \leq \epsilon (1+|\xi|^2 +  I ) + C_\epsilon \,.
  \end{aligned}
\end{equation}
One then infers that for $\alpha, \beta \in \{ s_0 + 1, \cdots , s \}$,
\begin{equation}\label{Trans-fab}
	\begin{aligned}	
		& \iiint_{\R^3 \times B_R \times (0,R)} L_{ \alpha \beta ,n} (f^n) (t,x,\xi, I) dxd\xi dI \\
= & \iiint_{\R^3 \times \R^3 \times \R_+}f^n_{\beta} (t,x,\xi_*, I_*) ( \iint_{B_R \times (0,R)}  A_{2\alpha \beta ,n}(\xi-\xi_*,I+I_*) d\xi dI ) dxd\xi_* dI_* \\
			\leq & \epsilon \iiint_{\R^3 \times \R^3 \times \R_+} f^n_{\beta} (t,x ,\xi_*, I_*) (1+|\xi_*|^2 +I_*) dxd\xi_* dI_* + C_\epsilon \iiint_{\R^3 \times \R^3 \times \R_+} f^n_{\beta} (t,x, \xi_*, I_*) dxd\xi_* dI_*.
		\end{aligned}
	\end{equation}
Together with Lemma \ref{Lemma 4.11} and Lemma \ref{Lemma 4.12}, the similar arguments above imply that for $\alpha,\beta \in \{1,\cdots,s_0\}$,
	\begin{equation*}{\small
		\begin{aligned}	
			\iint_{\R^3 \times B_R } L_{ \alpha \beta ,n} (f^n) (t,x,\xi) dxd\xi \leq \epsilon \iint_{\R^3 \times \R^3 } f^n_{\beta} (t, x, \xi_*) (1+|\xi_*|^2) dxd\xi_* +C_\epsilon \iint_{\R^3 \times \R^3 } f^n_{\beta} (t, x, \xi_*) dxd\xi_*,
		\end{aligned}}
	\end{equation*}
and for $\alpha \in \{1,\cdots,s_0\} , \beta \in \{s_0+1,\cdots,s\}$,
		\begin{align*}	
			\iint_{\R^3 \times B_R } L_{ \alpha \beta ,n} (f^n) (t,x, \xi) dxd\xi \leq \epsilon \iiint_{\R^3 \times \R^3 \times \R_+} f^n_{\beta} (t,x,\xi_*, I_*) (1+|\xi_*|^2 +I_*) dxd\xi_* dI_* \\
+C_\epsilon \iiint_{\R^3 \times \R^3 \times \R_+} f^n_{\beta} (t,x, \xi_*, I_*) dxd\xi_* dI_*,
		\end{align*}
and for $\alpha \in \{s_0+1,\cdots,s\} , \beta \in \{1,\cdots,s_0\}$,
	\begin{equation*}
		\begin{aligned}	
			\iiint_{\R^3 \times B_R \times (0,R)} L_{ \alpha \beta ,n} (f^n) (t,x, \xi, I) dxd\xi dI \leq \epsilon \iint_{\R^3 \times \R^3 } f^n_{\beta} (t, x, \xi_*) (1+|\xi_*|^2) dxd\xi_* \\
+ C_\epsilon \iint_{\R^3 \times \R^3 } f^n_{\beta} (t,x, \xi_*) dxd\xi_* \,.
		\end{aligned}
	\end{equation*}
By the definition of $\tilde{Q}^- _{\alpha ,n}(f^n,f^n)$ in \eqref{Q L}, it is easy to see          
	\begin{equation}\label{Q-L-bnd}
		0 \leq \tfrac{1}{1+ f^n_\alpha } \tilde{Q}^-_{\alpha ,n} (f^n,f^n) \leq L_{\alpha,n}(f^n) =\sum \limits_{\beta =1}^s L_{\alpha \beta ,n} (f^n) \,.
	\end{equation}	
Together with the uniform bounds in Lemma \ref{Lemma 4.17}, it infers that $(1+ f^n_\alpha )^{-1} \tilde{Q}^- _{\alpha ,n}(f^n,f^n)$ is bounded in $L^1$ for all $\alpha \in \{ 1, \cdots, s \}$. 

For $\alpha, \beta \in \{ s_0 + 1, \cdots, s \}$, denote by $L^m_{\alpha \beta ,n} (f^n)$ the operators corresponding to the cutoff kernels $A^m_{\alpha \beta, n} (z, \eta) =  A_{\alpha \beta ,n}(z,\eta ) \mathbf{1}_{|z|\leq m} \mathbf{1}_{0<\eta <m}$ ($m>0$). Hence,
\begin{equation*}
  \begin{aligned}
    L^m_{\alpha \beta ,n} (f^n) = \iint_{\R^3 \times \R_+} A^m_{\alpha \beta, n} (\xi - \xi_*, I + I_*) f_{\beta *} d \xi_* d I_* \,.
  \end{aligned}
\end{equation*}
Let
	\begin{equation*}
		a^m_{2\alpha \beta,n} = \iint_{\R^3 \times \R_+ } A_{2\alpha \beta ,n}(z,\eta ) \mathbf{1}_{|z|\leq m} \mathbf{1}_{0<\eta <m} dz d\eta > 0,\ \Psi(t)= t (\log t)^+.
	\end{equation*}
Observe that $ (\log x)^+ = ( \log \frac{x}{y} y )^+ \leq ( \log \frac{x}{y} )^+ + | \log y | $ for all $x,y > 0$, which means that $\Psi (x) \leq y \Psi ( \frac{x}{y} ) + x | \log y |$. It then holds
		\begin{align*}
			&\iiint_{\R^3 \times B_R\times(0,R)} \Psi (L^m_{ \alpha \beta ,n}(f^n))dxd\xi dI \\
			\leq & a^m_{2\alpha \beta,n} \iiint_{\R^3 \times B_R\times(0,R)} \Psi (\tfrac{L^m_{ \alpha \beta ,n}(f^n)}{a^m_{2\alpha \beta,n}})dxd\xi dI +|\log a^m_{2\alpha \beta,n}|\iiint_{\R^3 \times B_R\times(0,R)} L^m_{ \alpha \beta ,n}(f^n) dxd\xi dI \,.
		\end{align*}
Note that
\begin{equation*}{\small
  \begin{aligned}
    \iiint_{\R^3 \times B_R \times(0,R)} L^m_{ \alpha \beta ,n}(f^n) dxd\xi dI = & \iiint_{\R^3 \times B_R \times(0,R)} \iint_{\R^3 \times \R_+} A^m_{\alpha \beta, n} (\xi - \xi_*, I + I_*) f_{\beta _*} d \xi_* d I_* d x d \xi d I \\
    = & \iiint_{\R^3 \times \R^3 \times \R_+} f_{\beta _*} \iint_{B_R \times(0,R)} A^m_{\alpha \beta, n} (\xi - \xi_*, I + I_*) d \xi d I d x d \xi_* d I_* \\
    \leq & a^m_{2 \alpha \beta, n} \iiint_{\R^3 \times \R^3 \times \R_+} f_{\beta _*} d x d \xi_* d I_* \,.
  \end{aligned}}
\end{equation*}
Owing to the convexity of $\Psi$, the Jensen's inequality indicates that
\begin{equation*}{\small
  \begin{aligned}
    a^m_{2\alpha \beta,n} \iiint_{\R^3 \times B_R\times(0,R)} \Psi (\tfrac{L^m_{ \alpha \beta ,n}(f^n)}{a^m_{2\alpha \beta,n}})dxd\xi dI \leq \iiint_{\R^3 \times B_R \times(0,R) } L^m_{ \alpha \beta ,n} (\Psi(f^n)) dxd\xi dI \\
    \leq a^m_{2\alpha \beta,n} \iiint_{\R^3 \times \R^3 \times \R_+ } \Psi(f^n_{\beta _*}) dxd\xi dI = a^m_{2\alpha \beta,n} \iiint_{\R^3 \times \R^3 \times \R_+} f^n_{\beta _*} (\log f^n_{\beta _*} )^+ dx d\xi_* dI_* \,.
  \end{aligned}}
\end{equation*}
As a result,
\begin{equation*}
  \begin{aligned}
    &\iiint_{\R^3 \times B_R\times(0,R)} \Psi (L^m_{ \alpha \beta ,n}(f^n))dxd\xi dI \\
    \leq & a^m_{2\alpha \beta,n} \iiint_{\R^3 \times \R^3 \times \R_+} f^n_{\beta _*} (\log f^n_{\beta _*} )^+ dx d\xi_* dI_* + a^m_{2\alpha \beta,n} |\log a^m_{2\alpha \beta,n}| \iiint_{\R^3 \times \R^3 \times \R_+} f^n_{\beta _*} dxd\xi_* dI_*.
  \end{aligned}
\end{equation*}
By using the uniform bounds \eqref{FB3} and \eqref{FWC3}, one has
	\begin{equation*}{\small
		\begin{aligned}
			&\iiint_{\R^3 \times \R^3 \times \R_+} f^n_{\beta _*} (\log f^n_{\beta _*} )^+ dx d\xi_* dI_* + \iiint_{\R^3 \times \R^3 \times \R_+} f^n_{\beta _*} dxd\xi_* dI_* \\
			\leq & \iiint_{\R^3 \times \R^3 \times \R_+} f^n_{\beta _*} \log (I^{1-\frac{\delta(\beta  )}{2}}f^n_{\beta _*} )^+ dxd\xi_* dI_* + \tfrac{\delta(\beta )}{2} \iiint_{\R^3 \times \R^3 \times \R_+} f^n_{\beta _*}(1+ I_*) dxd\xi_* dI_* \leq C(T).
		\end{aligned}}
	\end{equation*}
Moreover, taking $\xi = 0$, $I =0$, $\epsilon = 1$ and $R = m$ in \eqref{A-bnd}, one has $0 < a^m_{2 \alpha \beta, n} \leq C_m $ for all $n \geq 1$. Note that $x (\log x)^- \leq C_0 \sqrt{x}$ for some constant $C_0 > 0$. Then for all $n \geq 1$,
\begin{equation*}
  \begin{aligned}
    a^m_{2\alpha \beta,n} |\log a^m_{2\alpha \beta,n}| = a^m_{2\alpha \beta,n} ( \log a^m_{2\alpha \beta,n} )^+ + a^m_{2\alpha \beta,n} ( \log a^m_{2\alpha \beta,n} )^- \leq C_m (\log C_m)^+ + C_0 \sqrt{C_m} \,.
  \end{aligned}
\end{equation*}
Consequently, the above estimates conclude that for $t \in [0, T]$,     
	\begin{equation}\label{Psi-Lnab}
		\iiint_{\R^3 \times B_R \times(0,R)} \Psi (L^m_{ \alpha \beta ,n}(f^n)) (t) dxd\xi dI \leq C(T,m) \,, \ \alpha, \beta \in \{ s_0 + 1, \cdots, s \} \,,
	\end{equation}
where the constant $ C(T,m) > 0 $ only depending on $T$ and $m$. Note that $\frac{\Psi (t)}{t} = (\log t)^+ \to + \infty$ as $t \to + \infty$. Then De La Vall\'ee-Poussin Criterion (Lemma \ref{theo-DE La}) shows that $\{ L^m_{ \alpha \beta ,n}(f^n) \}_{n \geq 1} \subseteq L^\infty (0,T; L^1(\R^3 \times B_R\times (0,R) ))$ is equi-integrable. 

We then prove the tightness of the sequence $ \{ L^m_{ \alpha \beta ,n}(f^n) \}_{n \geq 1} $. Indeed, by the uniform bounds \eqref{FB3},
	\begin{equation*}
		\begin{aligned}
			\int_{|x|>M}dx \iint_{B_R \times (0,R)}L^m_{ \alpha \beta ,n}(f^n)d\xi dI 
			\leq & a^m_{2\alpha \beta,n} \int_{|x|>M}dx \iint_{\R^3 \times \R_+} f^n_{\beta_*} d\xi_* dI_* \\
			\leq & \tfrac{C_m}{M^2} \iiint_{\R^3 \times \R^3 \times \R_+}  f^n_{\beta *}|x|^2dx d\xi_* dI_* \rightarrow 0 \quad (M \rightarrow +\infty).
		\end{aligned}
	\end{equation*}
Consequently, by using Dunford-Pettis Theorem (Lemma \ref{theorem-dunford}), we know that for any fixed $m>0$, $\{ L^m_{ \alpha \beta ,n} (f^n) \}_{n \geq 1} $ is relatively weakly compact in $L^\infty (0,T; L^1(\R^3 \times B_R\times (0,R) ))$ for all $\alpha, \beta \in \{ s_0 + 1, \cdots, s \}$. 

Next we show the relatively weak compactness of $\{ L_{ \alpha \beta ,n} (f^n) \}_{n \geq 1} $ in $L^\infty (0,T; L^1(\R^3 \times B_R\times (0,R) ))$ for all $\alpha, \beta \in \{ s_0 + 1, \cdots, s \}$. Note that by the uniform bounds \eqref{FB3},
	\begin{equation*}
		\begin{aligned}
			&\Vert L^m_{ \alpha \beta ,n} (f^n) (t)-L_{ \alpha \beta ,n} (f^n) (t) \Vert_{L^1(\R^3 \times B_R \times (0,R))} \\
			= & \iiint_{\R^3 \times B_R \times (0,R)}dxd\xi dI \iint_{ \R^3 \times \R_+} A_{2\alpha \beta,n}(\xi-\xi_*,I+I_*) f^n_{\beta *}( \mathbf{1}_{|\xi-\xi_*|> m} + \mathbf{1}_{I+I_*>m})d\xi_* dI_* \\
			\leq & \iiint_{ \R^3 \times \R^3 \times \R_+} f^n_{\beta *}( \mathbf{1}_{|\xi_*|> m-R}+ \mathbf{1}_{I_*>m-R}) dxd\xi_* dI_* \iint_{B_R \times (0,R)} A_{2\alpha \beta,n}(\xi-\xi_*,I+I_*) d\xi dI \\
			\leq &\tfrac{C_\epsilon}{m-R} \iiint_{ \R^3 \times \R^3 \times \R_+} f^n_{\beta *} \cdot I_* dxd\xi_* dI_* + \tfrac{C_\epsilon}{(m-R)^2} \iiint_{ \R^3 \times \R^3 \times \R_+} f^n_{\beta *}|\xi_*|^2 dxd\xi_* dI_* \\
			+ &\epsilon \iiint_{ \R^3 \times \R^3 \times \R_+} f^n_{\beta *} (1+|\xi_*|^2 +I_*) dxd\xi_* dI_* \rightarrow 0 \quad (m \rightarrow \infty \,, \epsilon \rightarrow 0^+)
		\end{aligned}
	\end{equation*}
holds for all $t \in [0, T]$. Therefore, the relatively weak compactness of $\{ L_{ \alpha \beta ,n} (f^n) \}_{n \geq 1} $ in $L^\infty (0,T; L^1(\R^3 \times B_R\times (0,R) ))$ is completed. 

Following the previous arguments, one knows that $\{ L_{ \alpha \beta ,n} (f^n) \}_{n \geq 1} $ is relatively weakly compact in $L^\infty (0,T; L^1(\R^3 \times B_R\times (0,R) ))$ for $\alpha \in \{ s_0 + 1, \cdots, s \}$, $ \beta \in \{ 1, \cdots, s_0 \} $. Then for $\alpha \in \{ s_0 + 1, \cdots, s \}$, 
\begin{equation*}
  \begin{aligned}
    L_{\alpha, n} (f^n) = \sum_{\beta = 1}^{s} L_{ \alpha \beta, n } (f^n)
  \end{aligned}
\end{equation*}
is relatively weakly compact in $L^\infty (0,T; L^1(\R^3 \times B_R\times (0,R) ))$. By \eqref{Q-L-bnd}, we know that the sequence $ \{ \frac{1}{1 + f^n_\alpha} \tilde{Q}^-_{\alpha, n} (f^n, f^n) \}_{n \geq 1} $ is relatively weakly compact in $L^\infty (0,T; L^1(\R^3 \times B_R\times (0,R) ))$ for $\alpha \in \{ s_0 + 1, \cdots, s \}$. Similarly, we can also prove that $ \{ \frac{1}{1 + f^n_\alpha} \tilde{Q}_{\alpha, n}^- (f^n, f^n) \}_{n \geq 1} $ is relatively weakly compact in $L^\infty (0,T; L^1(\R^3 \times B_R ))$ for $\alpha \in \{ 1, \cdots, s_0 \}$. The proof of Lemma \ref{theo-4.1} is therefore finished.
\end{proof}

\begin{lemma} \label{theo-4.2}
		Let $T, R>0$. Assume that $f^n$ is the unique non-negative distributional solution of \eqref{approximation} constructed in Lemma \ref{Lemma 4.14}. Then the following statements hold:
		\begin{enumerate}
			\item {For $\alpha \in \{1,\cdots,s_0\}$, 
				$	(1+ f^n_\alpha)^{-1} \tilde{Q}^+ _{\alpha ,n}(f^n,f^n)$ is
				relatively weakly compact in $L^1 ((0,T)\times \R^3 \times B_R))$;}
			\item { For $\alpha \in \{s_0+1,\cdots s\}$, 
				$	(1+ f^n_\alpha)^{-1} \tilde{Q}^+ _{\alpha ,n}(f^n,f^n)$ is
				relatively weakly compact in $L^1 ((0,T)\times \R^3 \times B_R\times (0,R) ))$.}
		\end{enumerate}
	\end{lemma}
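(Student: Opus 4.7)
The plan is to reduce relative weak $L^1$ compactness of the gain term to that of the loss term (obtained in Lemma \ref{theo-4.1}), modulo a controllable entropy-dissipation remainder. The engine is the pointwise Arkeryd-type inequality \eqref{eq5.71}: I first establish it, then divide by $1 + f^n_\alpha$, and finally verify the three Dunford-Pettis conditions of Lemma \ref{theorem-dunford} on $(0,T)\times\R^3\times B_R$ (for $\alpha \in \{1,\cdots,s_0\}$) or $(0,T)\times\R^3\times B_R\times(0,R)$ (for $\alpha \in \{s_0+1,\cdots,s\}$).

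\textbf{Deriving the Arkeryd inequality.} The starting point is the elementary bound $x \leq K + \tfrac{(x-1)\log x}{\log K}$, valid for all $x > 0$ and $K > 1$ by splitting into the cases $x \leq K$ and $x > K$ and using $(x-1)\log x \geq 0$. I apply it with $x = \Lambda'^n_{\alpha\beta}/\Lambda^n_{\alpha\beta}$, where $\Lambda^n_{\alpha\beta} := f^n_\alpha f^n_{\beta_*}/(I^{\delta(\alpha)/2-1} I_*^{\delta(\beta)/2-1})$ and $\Lambda'^n_{\alpha\beta}$ is its primed analogue, then multiply by $\Lambda^n_{\alpha\beta} W_{\alpha\beta,n}$, integrate over the collisional variables $(\mathbf{Z}_*, \mathbf{Z}', \mathbf{Z}'_*)$, sum in $\beta \in \{1,\cdots,s\}$, and insert the normalizer $N_n(f^n)^{-1}$; the result is exactly
\begin{equation*}
\tilde{Q}^+_{\alpha,n}(f^n,f^n) \leq K \tilde{Q}^-_{\alpha,n}(f^n,f^n) + \tfrac{1}{\log K}\tilde{e}^n_\alpha.
\end{equation*}
The strict positivity of $f^n_\alpha$ from Lemma \ref{Lemma 4.14} guarantees $\Lambda^n_{\alpha\beta} > 0$ a.e., which makes the pointwise manipulation rigorous.

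\textbf{Reduction and Dunford-Pettis verification.} Dividing by $1 + f^n_\alpha$ and using $(1+f^n_\alpha)^{-1} \leq 1$ together with $\tilde{e}^n_\alpha \geq 0$ produces the key estimate
\begin{equation*}
\tfrac{\tilde{Q}^+_{\alpha,n}(f^n,f^n)}{1 + f^n_\alpha} \leq K \cdot \tfrac{\tilde{Q}^-_{\alpha,n}(f^n,f^n)}{1 + f^n_\alpha} + \tfrac{\tilde{e}^n_\alpha}{\log K}.
\end{equation*}
Two uniform $L^1$ inputs feed into the verification: $\tilde{e}^n_\alpha$ is bounded in $L^1_{t,x,\xi(,I)}$ uniformly in $n$ by the entropy estimates \eqref{FWC2}-\eqref{FWC3} of Lemma \ref{Lemma 4.17}, and $\tfrac{\tilde{Q}^-_{\alpha,n}}{1+f^n_\alpha}$ is relatively weakly compact in $L^1$ on the target domain by Lemma \ref{theo-4.1}—upgraded from $L^\infty_t L^1$ to $L^1_{t,x,\xi(,I)}$ via $t$-integration of the De La Vall\'ee-Poussin bound \eqref{Psi-Lnab}. \emph{Boundedness in $L^1$} is immediate by fixing any $K > 1$. \emph{Equi-integrability:} given $\epsilon > 0$, first choose $K$ so large that $\tfrac{1}{\log K}\|\tilde{e}^n_\alpha\|_{L^1} \leq \tfrac{\epsilon}{2}$ uniformly in $n$, and then use equi-integrability of the loss term to bound $K\int_E \tfrac{\tilde{Q}^-}{1+f^n_\alpha} \leq \tfrac{\epsilon}{2}$ on sufficiently small $E$. \emph{Tightness in $x$:} crucially, the entropy contribution $\tfrac{1}{\log K}\int_{|x|>M}\tilde{e}^n_\alpha$ can be made $\leq \tfrac{\epsilon}{2}$ by enlarging $K$ \emph{independently of $M$}, after which tightness of the loss term (also from Lemma \ref{theo-4.1}) handles the remaining piece by taking $M$ large.

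The main technical point is deriving the Arkeryd inequality cleanly; it is neutralized by the strict positivity of $f^n_\alpha$ (Lemma \ref{Lemma 4.14}) and the elementary form of $(x-1)\log x$. After that, the argument is pure bookkeeping: Lemma \ref{theo-4.1} and the global entropy bound supply all the required compactness inputs, and the trade-off between the two terms on the right-hand side of the key estimate handles equi-integrability and tightness in one stroke by choosing $K$ first and then $\delta$ (or $M$).
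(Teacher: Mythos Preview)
Your proposal is correct and follows essentially the same approach as the paper: establish the Arkeryd-type inequality \eqref{eq5.71}, divide by $1+f^n_\alpha$, then feed in the $L^1$ bound on $\tilde{e}^n_\alpha$ from Lemma \ref{Lemma 4.17} and the relative weak compactness of the loss term from Lemma \ref{theo-4.1} to verify Dunford--Pettis. The only notable difference is in how you derive the Arkeryd inequality itself: the paper splits the collisional integration domain into the level sets $A^n_K = \{\Lambda'^n_{\alpha\beta} \geq K\Lambda^n_{\alpha\beta}\}$ and its complement and estimates each piece separately, whereas you invoke the clean pointwise inequality $x \leq K + (\log K)^{-1}(x-1)\log x$ and integrate directly. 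Both routes are standard and yield the identical estimate; yours is arguably more economical, while the paper's set-splitting also yields the reverse inequality $\tilde{Q}^- \leq K\tilde{Q}^+ + (\log K)^{-1}\tilde{e}^n_\alpha$ in the same stroke (which is used later, e.g.\ in Lemma \ref{Q in L1}), though it is not needed for the present lemma.
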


\begin{proof}
We first prove the Arkeryd inequality \eqref{eq5.71}, hence, for all $K>1$ and all $\alpha \in \{1,\cdots,s\}$,
\begin{equation*}
\tilde{Q}^\pm_{\alpha ,n}(f^n,f^n) \leq \tfrac{1}{\log K}\tilde{e}^n_{\alpha} +K\tilde{Q}^\mp_{\alpha ,n}	(f^n,f^n),
\end{equation*}
where $\tilde{e}^n_\alpha$ is defined in \eqref{e na}. Denote by $ \tilde{e}^n_\alpha = N_n (f^n)^{-1} \sum_{\beta = 1}^{s} e^n_{\alpha \beta } $ with $N_n (f^n)$ defined in \eqref{Nnf} and
	\begin{equation*}
		\begin{aligned}
			e^n_{\alpha \beta }(t,x,\mathbf{Z}) 
			= \int_{(\R^3\times \R_+)^3} W_{\alpha \beta ,n} (\tfrac{ f_{\alpha} ^{n}{'} f_{\beta *}^{n}{'} }{(I')^{\delta (\alpha)/2-1} (I'_*)^{\delta(\beta)/2-1} }-\tfrac{f^n_\alpha f^n_{\beta *}}{I^{\delta(\alpha)/2-1} I_*^{\delta(\beta)/2-1}}) \\
\times \log \tfrac{f_\alpha^{n}{'} f_{\beta *}^{n}{'} I^{\delta(\alpha)/2-1} I_*^{\delta(\beta)/2-1} }{f^n_\alpha f^n_{\beta *} (I')^{\delta (\alpha)/2-1} (I'_*)^{\delta(\beta)/2-1}} d\xi_* d\xi' d\xi'_* dI_* dI'dI'_* \,. 
		\end{aligned}
	\end{equation*}
We introduce the compared set
	\begin{equation*}
		A^n_K =  \{(\xi ,\xi_* ,I, I_* ,\omega ):
	\tfrac{f_\alpha^{n}{'} f_{\beta_*}^{n}{'}}{(I')^{\delta (\alpha)/2-1} (I'_*)^{\delta(\beta)/2-1}} \geq K \tfrac{f^n_\alpha f^n_{\beta_*}}{I^{\delta(\alpha)/2-1} I_*^{\delta(\beta)/2-1}}\}, \quad B^n_K = (A^n_K )^c. 		
	\end{equation*}
Then one has
{\small
		\begin{align*}
			&\int_{(\R^3\times \R_+)^3} W_{\alpha \beta,n } \tfrac{ f_{\alpha} ^{n}{'} f_{\beta *}^{n}{'} }{(I')^{\delta (\alpha)/2-1} (I'_*)^{\delta(\beta)/2-1} } \mathbf{1}_{A^n_K} d\xi_* d\xi' d\xi'_* dI_* dI'dI'_*\\
			=& \int_{(\R^3\times \R_+)^3} W_{\alpha \beta,n } (\tfrac{ f_{\alpha} ^{n}{'} f_{\beta *}^{n}{'} }{(I')^{\delta (\alpha)/2-1} (I'_*)^{\delta(\beta)/2-1} }-\tfrac{f^n_\alpha f^n_{\beta *}}{I^{\delta(\alpha)/2-1} I_*^{\delta(\beta)/2-1}}) \mathbf{1}_{A^n_K} d\xi_* d\xi' d\xi'_* dI_* dI'dI'_*\\
			+&\int_{(\R^3\times \R_+)^3} W_{\alpha \beta,n }\tfrac{f^n_\alpha f^n_{\beta *}}{I^{\delta(\alpha)/2-1} I_*^{\delta(\beta)/2-1}} \mathbf{1}_{A^n_K} d\xi_* d\xi' d\xi'_* dI_* dI'dI'_* \\
			\leq & \tfrac{1}{\log K} e^n_{\alpha \beta }  + \int_{(\R^3\times \R_+)^3} W_{\alpha \beta,n }\tfrac{f^n_\alpha f^n_{\beta *}}{I^{\delta(\alpha)/2-1} I_*^{\delta(\beta)/2-1}} \mathbf{1}_{A^n_K} d\xi_* d\xi' d\xi'_* dI_* dI'dI'_* .
		\end{align*}}
It then implies that for $K > 1$,
	\begin{equation*}{\small
		\begin{aligned}
			Q^+_{\alpha \beta ,n} (f^n,f^n) 
			=& \int_{(\R^3\times \R_+)^3} W_{\alpha \beta,n } \tfrac{ f_{\alpha} ^{n}{'} f_{\beta *}^{n}{'} }{(I')^{\delta (\alpha)/2-1} (I'_*)^{\delta(\beta)/2-1} } \mathbf{1}_{A^n_K} d\xi_* d\xi' d\xi'_* dI_* dI'dI'_*\\
			+ & \int_{(\R^3\times \R_+)^3} W_{\alpha \beta,n } \tfrac{ f_{\alpha} ^{n}{'} f_{\beta *}^{n}{'} }{(I')^{\delta (\alpha)/2-1} (I'_*)^{\delta(\beta)/2-1} } \mathbf{1}_{B^n_K} d\xi_* d\xi' d\xi'_* dI_* dI'dI'_* \\
			\leq & \tfrac{1}{\log K} e^n_{\alpha \beta }  + \int_{(\R^3\times \R_+)^3} W_{\alpha \beta,n } \tfrac{f^n_\alpha f^n_{\beta *}}{I^{\delta(\alpha)/2-1} I_*^{\delta(\beta)/2-1}} \mathbf{1}_{A^n_K} d\xi_* d\xi' d\xi'_* dI_* dI'dI'_* \\
			+ & K\int_{(\R^3\times \R_+)^3} W_{\alpha \beta,n }\tfrac{f^n_\alpha f^n_{\beta *}}{I^{\delta(\alpha)/2-1} I_*^{\delta(\beta)/2-1}} \mathbf{1}_{B^n_K} d\xi_* d\xi' d\xi'_* dI_* dI'dI'_* \\
			\leq & \tfrac{1}{\log K} e^n_{\alpha \beta } +K Q^-_{\alpha \beta ,n} (f^n,f^n) .
		\end{aligned}}
	\end{equation*}
	Similarly, we can prove that for $\alpha, \beta \in \{ 1,\cdots,s \}$,
	\begin{equation*}
	\begin{aligned}
	Q^-_{\alpha \beta ,n} (f^n,f^n) &\leq \tfrac{1}{\log K}e^n_{\alpha \beta } +K Q^+_{\alpha \beta ,n} (f^n,f^n) \,.
	\end{aligned}
	\end{equation*}
Therefore, the Arkeryd inequality \eqref{eq5.71} follows from the previous arguments.
	
 By Lemma \ref{Lemma 4.17}, one knows that $\tilde{e}^n_{\alpha}$ is bounded in $L^1((0,T)\times \R^3 \times \R^3)$ for $\alpha \in \{1,\cdots,s_0\}$ and is bounded in $L^1((0,T)\times \R^3 \times \R^3\times \R_+)$ for $\alpha \in \{s_0+1,\cdots,s\}$. Then the Arkeryd inequality \eqref{eq5.71} and Lemma \ref{theo-4.1} imply that for $\alpha \in \{s_0+1,\cdots ,s\}$,
	\begin{equation*}
		\begin{aligned}
			&\int_{0}^{T}dt \iiint_{\R^3 \times B_R \times (0,R) } (\mathbf{1}_A +\mathbf{1}_{|x|\geq M}) \tfrac{1}{1+f^n_\alpha } \tilde{Q}^+_{\alpha ,n} (f^n,f^n) dxd\xi dI \\
			&\leq K\int_{0}^{T}dt \iiint_{\R^3 \times B_R \times (0,R) } (\mathbf{1}_A + \mathbf{1}_{|x|\geq M}) \tfrac{1}{1+f^n_\alpha } \tilde{Q}^-_{\alpha ,n} (f^n,f^n) dxd\xi dI +\tfrac{C'_T}{\log K} \rightarrow  0 
		\end{aligned}
	\end{equation*}
as $|A|\rightarrow 0 $, $ M \rightarrow +\infty$ and $K \to + \infty$. Similarly, for $\alpha \in \{1,\cdots ,s_0\}$,
\begin{equation*}
		\begin{aligned}
			&\int_{0}^{T}dt \iint_{\R^3 \times B_R } (\mathbf{1}_A +\mathbf{1}_{|x|\geq M}) \tfrac{1}{1+f^n_\alpha } \tilde{Q}^+_{\alpha ,n} (f^n,f^n) dxd\xi \rightarrow  0 \quad (|A|\rightarrow 0, M\rightarrow +\infty, K \to + \infty ) \,.
		\end{aligned}
	\end{equation*}
Therefore, the Dunford-Pettis Theorem (Lemma \ref{theorem-dunford}) shows that $(1+ f^n_\alpha )^{-1} \tilde{Q}^+_{\alpha ,n}(f^n,f^n)$ is relatively weakly compact in $L^1((0,T)\times \R^3\times B_R)$  for all $\alpha \in \{1,\cdots, s_0\}$, and that for all $\alpha \in \{s_0+1,\cdots, s\}, (1+ f^n_\alpha )^{-1} \tilde{Q}^+_{\alpha ,n}(f^n,f^n)$ is relatively weakly compact in $L^1((0,T)\times \R^3\times B_R\times (0,R))$.
\end{proof}

\section{Convergence for nonlinear collision operator}\label{Sec:Con}

In this section, we mainly prove the convergence for the quadratic form of the collision operator under the weak convergence $f^n \rightharpoonup f$ in $L^1$ as $n \to \infty$. However, only weak compactness of $f^n$ is insufficient to verify the convergence of nonlinear form. As a result, we first employ the famous averaged velocity lemma, and similar the so-called averaged velocity-internal energy lemma to deal with the internal variable $I \in \R_+$. The goal of these lemmas is to enhance the compactness of the weak convergence sequence $f^n$.

\subsection{Averaged velocity(-internal energy) lemma}

In this subsection, we mainly introduce the important {\em averaged velocity lemma} corresponding to the velocity variable $\xi \in \R^3 $ and the {\em averaged velocity-internal energy} lemma associated velocity variable $\xi \in  \R^3$ and internal energy variable $I \in \R_+$. The previous has been established in works \cite{Diperna-Lions,Golse-velocity,Golse-perthame-velocity}, and the later can be established by using the almost same arguments of \cite{Diperna-Lions,Golse-velocity,Golse-perthame-velocity}. We will omit the detailed proofs here for notational simplicity.

\begin{lemma}[Averaged velocity-internal energy lemma]\label{theo-6.2}
Let $g^n (t,x, \xi, I)$ weakly converge to $g (t,x, \xi, I)$ in $ L^1_{loc} ((0,T)\times \R^3 \times \R^3 \times \R_+) $, and $f^n (t,x, \xi, I)$ weakly converge to $f (t, x, \xi, I)$ in $L^1 ( (0,T)\times \R^3 \times \R^3 \times \R_+ )$. Moreover, they satisfy
	\begin{equation*}
		\tfrac{\partial f^n}{\partial t} +\xi \cdot \nabla_x f^n = g^n \quad \text{in}\ \mathscr{D}' ((0,T) \times \R^3 \times \R^3 \times \R_+) .
	\end{equation*}
Set $\{\varphi^n (t, x, \xi, I) \}_{n\geq 1} \subset L^\infty ((0,T)\times \R^3 \times \R^3 \times \R_+)$ converge to $\varphi (t, x, \xi, I)$ $a.e.$ $(0,T)\times \R^3 \times \R^3 \times \R_+$. Then the sequence $\iint_{\R^3\times \R_+} f^n\varphi^n d\xi dI$ strongly converges to $\iint_{\R^3\times \R_+} f \varphi d\xi dI$ in $L^1((0,T)\times \R^3)$.
\end{lemma}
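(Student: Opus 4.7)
\medskip

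\noindent\textbf{Proof proposal.} The plan is to reduce Lemma \ref{theo-6.2} to the classical averaged velocity lemma (the $I$-free analogue, Lemma \ref{theo-6.2.1}, which is established in \cite{Diperna-Lions, Golse-velocity, Golse-perthame-velocity}) by exploiting the fact that the free-streaming operator $\partial_t + \xi \cdot \nabla_x$ does not differentiate the internal-energy variable $I$. Thus $I$ enters only as a passive parameter and can be integrated out before the $\xi$-averaging is invoked.

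First I would handle the $\varphi^n$-dependence. Since $\{\varphi^n\}$ is bounded in $L^\infty$ and $\varphi^n \to \varphi$ a.e., a direct application of the Product Limit Theorem (Lemma \ref{product limit}) reduces the problem to establishing strong $L^1((0,T)\times \R^3)$ convergence of $\iint_{\R^3 \times \R_+} f^n \varphi \, d\xi dI$ for a single fixed $\varphi \in L^\infty$. A standard density/tensorization argument then reduces further to test functions of the product form $\varphi(t,x,\xi,I) = \psi_1(t,x)\,\psi_2(\xi)\,\psi_3(I)$, with $\psi_2 \in C_c(\R^3)$, $\psi_3 \in C_c(\R_+)$, and $\psi_1 \in L^\infty((0,T)\times \R^3)$.

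With $\psi_3$ now fixed, I define
\begin{equation*}
  F^n(t,x,\xi) := \int_{\R_+} f^n(t,x,\xi,I)\, \psi_3(I)\, dI , \qquad G^n(t,x,\xi) := \int_{\R_+} g^n(t,x,\xi,I)\, \psi_3(I)\, dI .
\end{equation*}
By Fubini, the weak $L^1$ convergences $f^n \rightharpoonup f$ and $g^n \rightharpoonup g$ transfer to $F^n \rightharpoonup F := \int f\psi_3\, dI$ in $L^1((0,T)\times \R^3 \times \R^3)$ and $G^n \rightharpoonup G := \int g\psi_3\, dI$ in $L^1_{loc}$. Testing the distributional identity $\partial_t f^n + \xi \cdot \nabla_x f^n = g^n$ against an arbitrary $\phi(t,x,\xi)\psi_3(I)$ and applying Fubini gives
\begin{equation*}
  \tfrac{\partial F^n}{\partial t} + \xi \cdot \nabla_x F^n = G^n \quad \text{in } \mathscr{D}'((0,T) \times \R^3 \times \R^3) .
\end{equation*}
The classical averaged velocity lemma (Lemma \ref{theo-6.2.1}) applied to $(F^n, G^n, \psi_2)$ then delivers the strong $L^1((0,T)\times \R^3)$ convergence of $\int_{\R^3} F^n \psi_2\, d\xi$ to $\int_{\R^3} F \psi_2\, d\xi$. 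Multiplying by the $L^\infty$-factor $\psi_1(t,x)$ and unwinding Fubini concludes the tensor-product case.

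The main obstacle I anticipate is the density/tensorization step: extending the strong convergence from separable-variable $\varphi$ to arbitrary $\varphi \in L^\infty$ requires an $\varepsilon$-type approximation that respects the uniform $L^1$-integrability of $\{f^n\}$. This is handled by truncating in $(\xi, I)$-space using the Dunford-Pettis equi-integrability of the weakly convergent sequence, and by approximating $\varphi$ by simple or continuous functions on the retained compact. The scheme is the one carried out in \cite{Diperna-Lions}; the presence of $I \in \R_+$ introduces no genuinely new difficulty once it is recognized as a coordinate on which the transport does not act.
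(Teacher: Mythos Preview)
Your proposal is correct, and in fact goes further than the paper: the paper does not prove this lemma at all, stating only that it ``can be established by using the almost same arguments of \cite{Diperna-Lions,Golse-velocity,Golse-perthame-velocity}'' and omitting the details. Your reduction---integrating out the passive $I$-variable against a fixed $\psi_3$ to land on the classical $I$-free averaging lemma (Lemma~\ref{theo-6.2.1})---is a clean way to make this precise and is fully consistent with the paper's claim that no new ideas are required.

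One minor remark: your invocation of the Product Limit Theorem (Lemma~\ref{product limit}) to handle the passage from $\varphi^n$ to a fixed $\varphi$ is slightly imprecise as stated, since that lemma yields weak $L^1$ convergence of $f^n\varphi^n$, not strong convergence of the $(\xi,I)$-averages. The correct mechanism is the one you allude to at the end: equi-integrability of $\{f^n\}$ (Dunford--Pettis) combined with Egorov on $\varphi^n-\varphi$ controls $\iint f^n(\varphi^n-\varphi)\,d\xi dI$ in $L^1_{t,x}$ directly. This is routine and does not affect the soundness of your outline.
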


\begin{lemma}[Averaged velocity lemma]\label{theo-6.2.1}
Let $g^n (t,x, \xi)$ weakly converge to $g (t, x, \xi)$ in $L^1_{loc} ( (0,T) \times \R^3 \times \R^3 )$, and $ f^n (t, x, \xi) $ weakly converge to $f (t, x, \xi)$ in $L^1 ( (0,T) \times \R^3 \times \R^3 )$. Moreover, they satisfy
	\begin{equation*}
		\tfrac{\partial f^n}{\partial t} +\xi \cdot \nabla_x f^n = g^n \quad \text{in}\ \mathscr{D}' ((0,T) \times \R^3 \times \R^3 ) .
	\end{equation*}
Set $\{\phi^n (t,x, \xi)\}_{n\geq 1} \subset L^\infty ((0,T)\times \R^3 \times \R^3 )$ converge to $ \phi (t, x, \xi) $ $a. e.$ $(0,T) \times \R^3 \times \R^3$. Then the sequence $\int_{\R^3} f^n\phi^n d\xi $ strongly converges to $ \int_{\R^3 } f \phi d\xi $ in $L^1((0,T)\times \R^3)$.
\end{lemma}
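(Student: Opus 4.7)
The plan is to combine the classical averaging lemma (DiPerna--Lions \cite{Diperna-Lions}, Golse--Lions--Perthame--Sentis \cite{Golse-velocity,Golse-perthame-velocity}) with Egorov's theorem and the equi-integrability supplied by the weak $L^1$ convergence of $f^n$. The core input, which I would cite rather than reprove, is that for any fixed $\psi \in C_c^\infty(\R^3)$, the velocity average $\rho^n_\psi(t,x) := \int_{\R^3} f^n\,\psi(\xi)\,d\xi$ lies in a strongly relatively compact subset of $L^1_{loc}((0,T)\times \R^3)$; this follows from the transport equation $\p_t f^n + \xi\cdot\nabla_x f^n = g^n$, the weak $L^1_{loc}$ bound on $g^n$, and the weak $L^1$ compactness of $f^n$.

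First I would treat the fixed test function case. The weak convergence $f^n \rightharpoonup f$ in $L^1$ together with the strong $L^1_{loc}$ compactness given by the averaging lemma forces $\rho^n_\psi \to \rho_\psi := \int_{\R^3} f\psi\,d\xi$ strongly in $L^1_{loc}((0,T)\times \R^3)$. By linearity and approximation (tensor products $\chi(t,x)\psi(\xi)$ followed by uniform approximation in $C_c$), the same strong convergence
\begin{equation*}
  \int_{\R^3} f^n \phi(t,x,\xi)\,d\xi \longrightarrow \int_{\R^3} f \phi(t,x,\xi)\,d\xi \quad \text{in } L^1_{loc}((0,T)\times \R^3)
\end{equation*}
holds for every fixed $\phi \in C_c((0,T)\times \R^3 \times \R^3)$.

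Second I would upgrade to the variable test function $\phi^n$. Fix $\varepsilon > 0$ and a bounded region $K := (0,T)\times B_M \times B_R$. Egorov's theorem produces a measurable set $E_\varepsilon \subset K$ with $|K\setminus E_\varepsilon| < \varepsilon$ on which $\phi^n \to \phi$ uniformly. One then decomposes
\begin{equation*}
  \int_{\R^3}(f^n\phi^n - f\phi)\,d\xi = \int_{\R^3} f^n(\phi^n-\phi)\mathbf{1}_{E_\varepsilon}\,d\xi + \int_{\R^3} f^n(\phi^n-\phi)\mathbf{1}_{K\setminus E_\varepsilon}\,d\xi + \int_{|\xi|>R} f^n(\phi^n-\phi)\,d\xi + \int_{\R^3}(f^n - f)\phi\,d\xi.
\end{equation*}
The first term is controlled by the uniform smallness of $\phi^n - \phi$ on $E_\varepsilon$ times $\|f^n\|_{L^1}$; the second by the equi-integrability of $\{f^n\}$ (Dunford--Pettis, Lemma \ref{theorem-dunford}) combined with the uniform bound $\|\phi^n\|_\infty + \|\phi\|_\infty \leq C$; the third by tightness of $\{f^n\}$ in $\xi$; the fourth converges to zero in $L^1((0,T)\times B_M)$ by the first step applied to a continuous compactly supported approximation of $\phi$.

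The main obstacle is to simultaneously control the velocity tails (tightness in $\xi$) and the contribution of exceptional sets of small measure (equi-integrability) while invoking the averaging lemma, which only supplies strong compactness for fixed test functions with compact $\xi$-support. Both tightness and equi-integrability are delivered by weak $L^1$ compactness of $\{f^n\}$ on the full space via Dunford--Pettis, so the assembly is straightforward once the averaging lemma is in hand. Since the base averaging result is classical and documented in \cite{Diperna-Lions,Golse-velocity,Golse-perthame-velocity}, the only genuinely new work lies in the Egorov-plus-truncation step above, and the proof of Lemma \ref{theo-6.2} proceeds by exactly the same scheme with $\R^3\times \R_+$ replacing $\R^3$ in the $\xi$ (resp.\ $(\xi,I)$) variable.
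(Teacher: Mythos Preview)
Your proposal is correct and follows precisely the classical DiPerna--Lions route that the paper invokes: the paper does not give its own proof of this lemma but simply cites \cite{Diperna-Lions,Golse-velocity,Golse-perthame-velocity} and states that the argument is omitted. Your two-step scheme (strong $L^1_{loc}$ compactness of $\int f^n\psi\,d\xi$ for fixed $\psi\in C_c^\infty$ from the averaging lemma, then Egorov plus Dunford--Pettis equi-integrability and tightness to pass to variable $\phi^n$) is exactly the argument in those references, so there is nothing to compare.

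Two minor points worth tightening in the write-up: first, your decomposition is valid only after restricting $(t,x)$ to $(0,T)\times B_M$, so the $x$-tail $|x|>M$ must be handled separately via tightness of $\{f^n\}$ in $L^1((0,T)\times\R^3\times\R^3)$; second, the lemma as stated does not explicitly assume a uniform $L^\infty$ bound on $\{\phi^n\}$, though you (correctly) use one---in practice every application in the paper has $\|\phi^n\|_\infty\le 1$, so this is an implicit hypothesis.
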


Based on Lemma \ref{theo-6.2} and Lemma \ref{theo-6.2.1} above, one can generalize the previous results. We denote by $(E,\mu )$ the given measure space.

\begin{lemma}\label{theo-6.4}

Let the bounded sequence $\{ \varphi^n (t, x, \xi, I, e)\}_{n\geq 1}$ in $L^\infty ((0,T)\times \R^3 \times \R^3\times \R_+ ; L^1(E))$ satisfy that there exists $\varphi (t, x, \xi, I, e) \in L^\infty ((0,T)\times \R^3 \times \R^3 \times \R_+; L^1(E))$ such that 
\begin{equation*}
\lim \limits_{n\rightarrow \infty} \Vert \varphi^n -\varphi \Vert_{L^1(E)}(t,x,\xi,I)=0 \ a.e. \ (0,T)\times \R^3 \times \R^3\times \R_+.
\end{equation*}
Besides, $f^n (t, x, \xi, I)$ and $g^n (t, x, \xi, I)$ are given in Lemma \ref{theo-6.2}. Then the sequence $\iint_{\R^3\times \R_+} f^n\varphi^n d\xi dI$ strongly converges to $ \iint_{\R^3\times \R_+} f\varphi d\xi dI $ in $L^1((0,T)\times \R^3\times E)$.
\end{lemma}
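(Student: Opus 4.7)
The plan is to reduce to the scalar-valued averaged lemma (Lemma \ref{theo-6.2}) through a triangle-inequality split followed by a density argument in the extra variable $e \in E$. I would write
\begin{equation*}
\iint_{\R^3 \times \R_+}(f^n\varphi^n - f\varphi)\, d\xi\, dI = \underbrace{\iint_{\R^3 \times \R_+} f^n(\varphi^n - \varphi)\, d\xi\, dI}_{=: I_1^n(t,x,e)} + \underbrace{\iint_{\R^3 \times \R_+}(f^n - f)\varphi\, d\xi\, dI}_{=: I_2^n(t,x,e)},
\end{equation*}
and establish that each piece tends to zero in $L^1((0,T)\times \R^3 \times E)$ separately.

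\textbf{First term.} For $I_1^n$ the idea is to absorb the $e$-dependence into the velocity density via Fubini. Setting $\psi^n(t,x,\xi,I):=\|\varphi^n - \varphi\|_{L^1(E)}(t,x,\xi,I)$, the uniform $L^\infty((0,T)\times \R^3 \times \R^3 \times \R_+; L^1(E))$ bound on $\{\varphi^n\}$ and $\varphi$ implies that $\psi^n$ is uniformly bounded in $L^\infty((0,T)\times \R^3 \times \R^3 \times \R_+)$, and by hypothesis $\psi^n \to 0$ a.e. Applying Lemma \ref{theo-6.2} to the sequence $\psi^n$ (as test functions converging a.e.\ to $0$) yields $\iint f^n \psi^n\, d\xi\, dI \to 0$ in $L^1((0,T)\times \R^3)$. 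Since by Fubini $\int_E |I_1^n|\, d\mu(e) \le \iint f^n\psi^n\, d\xi\, dI$, this gives $\|I_1^n\|_{L^1((0,T)\times \R^3 \times E)} \to 0$.

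\textbf{Second term via approximation.} The second term is immediate whenever $\varphi$ is a finite rank-one sum $\varphi_k = \sum_{i=1}^{N_k}\psi_i^k(t,x,\xi,I)\chi_i^k(e)$ with $\psi_i^k$ bounded and $\chi_i^k \in L^1(E)$: Lemma \ref{theo-6.2} applied with the constant sequence $\psi_i^k$ gives $\iint(f^n - f)\psi_i^k\, d\xi\, dI \to 0$ in $L^1((0,T)\times \R^3)$, hence
\begin{equation*}
\int_0^T\!\int_{\R^3}\!\int_E \Big|\iint (f^n - f)\varphi_k\, d\xi\, dI\Big|\, d\mu(e)\, dx\, dt \le \sum_{i=1}^{N_k} \|\chi_i^k\|_{L^1(E)} \Big\|\iint (f^n - f)\psi_i^k\, d\xi\, dI\Big\|_{L^1((0,T)\times \R^3)} \to 0.
\end{equation*}
For a general $\varphi$ I would approximate it in $L^\infty((0,T)\times \R^3 \times \R^3 \times \R_+; L^1(E))$ by such separated simple functions $\varphi_k$, estimating the error by
\begin{equation*}
\int_0^T\!\int_{\R^3}\!\int_E\Big|\iint (f^n - f)(\varphi - \varphi_k)\, d\xi\, dI\Big|\, d\mu(e)\, dx\, dt \le \|\varphi - \varphi_k\|_{L^\infty(L^1)}\, \|f^n - f\|_{L^1((0,T)\times \R^3 \times \R^3 \times \R_+)},
\end{equation*}
which is controlled by the uniform $L^1$-bound that comes from the weak $L^1$-convergence of $\{f^n\}$. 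A standard $\epsilon/2$ argument would then close the proof.

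\textbf{Expected obstacle.} The delicate step is the density of rank-one tensor simple functions inside $L^\infty(X;L^1(E))$ in the essential-supremum topology: unlike in $L^p(X;Y)$ with $p<\infty$, this holds only when $\varphi$ has essentially separable range in $L^1(E)$. In the concrete settings of this paper $E$ is always a Polish space with a $\sigma$-finite Radon measure, so $L^1(E)$ is separable and every measurable $\varphi$ is strongly measurable, making the Bochner-style approximation valid. If one prefers to avoid this (very mild) separability requirement, an alternative is a Lusin/Egorov truncation on $E$ combined with the tightness provided by the $L^\infty(L^1)$-bound on $\varphi$, which lets one build the approximation directly on $X \times E$.
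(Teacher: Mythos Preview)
The paper does not prove this lemma --- it only cites the DiPerna--Lions framework --- so there is no argument to compare against. Your decomposition $I_1^n + I_2^n$ and the treatment of $I_1^n$ via the scalar weight $\psi^n = \|\varphi^n - \varphi\|_{L^1(E)}$ are exactly the standard moves and are correct (you could equally well bypass Lemma~\ref{theo-6.2} here and use only the uniform integrability of $\{f^n\}$ together with Egorov, since you need $\int_X |f^n|\psi^n \to 0$ rather than convergence of the velocity average itself).

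The genuine gap is in $I_2^n$. Your error bound invokes $\|\varphi - \varphi_k\|_{L^\infty(X;L^1(E))}$, and in the obstacle paragraph you claim that separability of $L^1(E)$ makes simple functions dense in that norm. This is false: simple functions are dense in $L^\infty(X;Y)$ only when $\varphi$ has essentially \emph{precompact} (totally bounded) range, not merely separable range --- take $X = E = \mathbb{N}$ with counting measure and $\varphi(n) = e_n \in \ell^1$ for a counterexample. What separability of $L^1(E)$ does give you is strong (Bochner) measurability of $\varphi$, hence the existence of simple $\varphi_k = \sum_i c_i^k \mathbf{1}_{A_i^k}$ with $\varphi_k \to \varphi$ \emph{pointwise a.e.} in $X$ and $\|\varphi_k\|_{L^1(E)}$ uniformly bounded.

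The repair is to use this weaker approximation and correspondingly sharpen the error estimate: with $\rho_k := \|\varphi - \varphi_k\|_{L^1(E)} \in L^\infty(X)$ one has
\[
\int_0^T\!\!\int_{\R^3}\!\!\int_E\Big|\iint (f^n-f)(\varphi-\varphi_k)\,d\xi\,dI\Big|\,d\mu\,dx\,dt \le \int_X |f^n-f|\,\rho_k,
\]
and since $\rho_k \to 0$ a.e.\ boundedly while $\{f^n - f\}$ is uniformly integrable and tight (Dunford--Pettis), an Egorov argument on large compacts gives $\sup_n \int_X |f^n-f|\,\rho_k \to 0$ as $k\to\infty$. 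The main term with $\varphi_k$ is handled exactly as you wrote, since each $\mathbf{1}_{A_i^k} \in L^\infty$ is an admissible test function for Lemma~\ref{theo-6.2}. Your Lusin/Egorov remark is gesturing at this, but the written estimate and the stated density criterion both need to be corrected.
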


\begin{lemma}\label{theo-6.4.1}
Let the bounded sequence $\{	\phi^n (t, x, \xi, e)\}_{n\geq 1}$ in $L^\infty ((0,T)\times \R^3 \times \R^3 ; L^1(E))$ satisfy that there exists $\phi \in L^\infty ((0,T)\times \R^3 \times \R^3 ; L^1(E))$ such that 
\begin{equation*}
	\lim \limits_{n\rightarrow \infty} \Vert \phi^n -\phi \Vert_{L^1(E)}(t,x,\xi)=0 \ a.e. \ (0,T)\times \R^3 \times \R^3.
\end{equation*}
Besides, $f^n (t, x, \xi)$ and $g^n (t, x, \xi)$ are given in Lemma \ref{theo-6.2.1}. Then the sequence $\int_{\R^3} f^n\phi^n d\xi $ strongly converges to $ \int_{\R^3 } f \phi d\xi $ in $L^1((0,T)\times \R^3\times E)$.
\end{lemma}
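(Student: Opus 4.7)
\textbf{Proof plan for Lemma \ref{theo-6.4.1}.} The strategy is to reduce the $L^1(E)$-valued statement to the scalar averaged velocity lemma (Lemma \ref{theo-6.2.1}) via a splitting-plus-density argument. Decompose
\[
\int_{\R^3} f^n \phi^n\, d\xi - \int_{\R^3} f \phi\, d\xi = \int_{\R^3} f^n (\phi^n - \phi)\, d\xi + \int_{\R^3} (f^n - f)\, \phi\, d\xi =: I_1^n + I_2^n,
\]
and show that both terms tend to $0$ strongly in $L^1((0,T)\times\R^3\times E)$.

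For $I_1^n$, Fubini's theorem gives
\[
\|I_1^n\|_{L^1((0,T)\times\R^3\times E)} \leq \int_{(0,T)\times\R^3}\!\int_{\R^3} f^n(t,x,\xi)\,\psi^n(t,x,\xi)\, d\xi\, dt\, dx,\quad \psi^n(t,x,\xi) := \|\phi^n - \phi\|_{L^1(E)}(t,x,\xi).
\]
By the triangle inequality and the boundedness of $\{\phi^n\}$ in $L^\infty_{(t,x,\xi)}(L^1(E))$, the scalar sequence $\{\psi^n\}$ is uniformly bounded in $L^\infty((0,T)\times\R^3\times\R^3)$, and by the hypothesis $\psi^n \to 0$ a.e. Applying Lemma \ref{theo-6.2.1} to $(f^n,g^n)$ with the scalar test function $\psi^n$ whose a.e.\ limit is zero yields $\int f^n\psi^n\, d\xi \to 0$ strongly in $L^1((0,T)\times\R^3)$, hence $I_1^n \to 0$.

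For $I_2^n$, I will approximate the limit $\phi \in L^\infty_{(t,x,\xi)}(L^1(E))$ (which inherits this regularity from $\phi^n$ via Fatou) by finite sums of ``simple tensors''
\[
\phi_k(t,x,\xi,e) = \sum_{i=1}^{N_k} \chi_{k,i}(t,x,\xi)\,\eta_{k,i}(e),\qquad \chi_{k,i}\in L^\infty,\quad \eta_{k,i}\in L^1(E),
\]
with $\|\phi - \phi_k\|_{L^\infty_{(t,x,\xi)}(L^1(E))}\to 0$ as $k\to\infty$. For each fixed $k$, linearity gives
\[
\int(f^n-f)\,\phi_k\, d\xi = \sum_{i=1}^{N_k}\eta_{k,i}(e)\int(f^n-f)\,\chi_{k,i}\, d\xi,
\]
and Lemma \ref{theo-6.2.1} applied to each $n$-independent bounded test function $\chi_{k,i}$ gives $\int(f^n-f)\chi_{k,i}\,d\xi \to 0$ in $L^1((0,T)\times\R^3)$; multiplying by $\eta_{k,i}\in L^1(E)$ and summing yields vanishing of the whole expression in $L^1((0,T)\times\R^3\times E)$. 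The approximation tail is controlled uniformly in $n$ by Fubini:
\[
\Big\|\!\int(f^n-f)(\phi-\phi_k)\, d\xi\Big\|_{L^1_{t,x,e}} \leq \|\phi - \phi_k\|_{L^\infty_{(t,x,\xi)}(L^1(E))}\cdot \sup_n \|f^n - f\|_{L^1_{t,x,\xi}},
\]
which tends to zero as $k\to\infty$ since $\{f^n\}$ and $f$ are bounded in $L^1$. A standard $\varepsilon/3$-argument, fixing $k$ large and then letting $n\to\infty$, concludes $I_2^n\to 0$.

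The main technical obstacle is the density of simple tensors in the mixed-norm space $L^\infty_{(t,x,\xi)}(L^1(E))$; under the (implicit) separability of $L^1(E)$ this is achieved by a partition-of-$E$ plus conditional-expectation construction, entirely parallel to the analogous step needed in the proof of Lemma \ref{theo-6.4}.
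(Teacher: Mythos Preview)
Your proof is correct. The paper does not give its own proof of this lemma: it explicitly omits the details and refers to the works of DiPerna--Lions and Golse--Lions--Perthame--Sentis, so there is no ``paper's proof'' to compare against at the level of details. Your reduction to the scalar averaged velocity lemma (Lemma~\ref{theo-6.2.1}) via the splitting $I_1^n+I_2^n$, together with the tensor-product approximation of $\phi$ in $L^\infty_{(t,x,\xi)}(L^1(E))$, is exactly the standard route taken in DiPerna--Lions for the parametrized version, so your argument is faithful to the cited references.

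Two minor remarks. First, in handling $I_1^n$ you rely on Lemma~\ref{theo-6.2.1} with the $n$-dependent test functions $\psi^n=\|\phi^n-\phi\|_{L^1(E)}$; this is legitimate because that lemma is stated for a sequence $\{\phi^n\}$ bounded in $L^\infty$ converging a.e., which is precisely your situation. Second, the density of simple tensors that you flag is indeed the only point requiring care: it holds once $L^1(E)$ is separable and $\phi$ is strongly measurable (both guaranteed here, since $\phi$ is an a.e.\ limit of the $\phi^n$ and $(E,\mu)$ is a given measure space in the applications), by the standard fact that a strongly measurable essentially bounded Banach-space-valued function with separable essential range can be uniformly approximated by simple functions $\sum_i \mathbf{1}_{A_i}\eta_i$. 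Your parenthetical ``partition-of-$E$ plus conditional-expectation'' description is a bit oblique; the cleaner statement is the one just given.
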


\subsection{Convergence of approximated solution $f^n_\alpha$}\label{Subsec:Cnv-f}

For the approximated solutions $f^n_\alpha > 0$ ($\alpha \in \{ 1, \cdots, s \}$) to \eqref{approximation} with uniform properties in Lemma \ref{Lemma 4.17}, we will study its convergence in this subsection. We first consider its renormalized form $g^n_{\delta \alpha }=\frac{1}{\delta} \log (1+\delta f^n_\alpha) \ (\delta >0)$, which satisfies
\begin{equation}\label{eq6.75}
	\tfrac{\partial }{\partial t} g^n_{\delta \alpha } + \xi \cdot \nabla_x g^n_{\delta \alpha } = (1+\delta f^n_\alpha)^{-1} \tilde{Q}_{\alpha ,n} (f^n,f^n).
\end{equation}

It is easy to see that $0 < g^n_{\delta \alpha} < f^n_\alpha $. Moreover, Lemma \ref{rwc}, Corollary \ref{rwc1} and Lemma \ref{Lemma 4.17} show that as $n \to + \infty$,
\begin{equation*}
  \begin{aligned}
    & f^n_\alpha (t, x, \xi) \to f_\alpha (t, x, \xi) \quad \textrm{weakly in } \ L^1 ( (0, T) \times \R^3 \times \R^3 ) \,, \alpha \in \{ 1, \cdots, s_0 \} \,, \\
    & f^n_\alpha (t, x, \xi, I) \to f_\alpha (t, x, \xi, I) \quad \textrm{weakly in } \ L^1 ( (0, T) \times \R^3 \times \R^3 \times \R_+ ) \,, \alpha \in \{ s_0 + 1, \cdots, s \} \,,
  \end{aligned}
\end{equation*}
in the sense of subsequence. Then there exist $\tilde{g}_{\delta \alpha} \in L^1$ such that as $n \to + \infty$,
\begin{equation*}
  \begin{aligned}
    & g^n_{\delta \alpha} (t, x, \xi) \to \tilde{g}_{\delta \alpha} (t, x, \xi) \quad \textrm{weakly in } \ L^1 ( (0, T) \times \R^3 \times \R^3 ) \,, \alpha \in \{ 1, \cdots, s_0 \} \,, \\
    & g^n_{\delta \alpha} (t, x, \xi, I) \to \tilde{g}_{\delta \alpha} (t, x, \xi, I) \quad \textrm{weakly in } \ L^1 ( (0, T) \times \R^3 \times \R^3 \times \R_+ ) \,, \alpha \in \{ s_0 + 1, \cdots, s \} \,,
  \end{aligned}
\end{equation*}
in the sense of subsequence. Here $\delta > 0$. We remark that $f_\alpha \geq \tilde{g}_{\delta \alpha} \geq 0$ $a. e. (t,x, \mathbf{Z}_\alpha)$, where $\mathbf{Z}_\alpha = \xi$ for $\alpha \in \{ 1, \cdots, s_0 \}$ and $\mathbf{Z}_\alpha = (\xi, I)$ for $\alpha \in \{ s_0 + 1, \cdots, s \}$.

Besides, it infers by Lemma \ref{theo-4.1} and Lemma \ref{theo-4.2} in Section \ref{sec weak com} that $(1+\delta f^n_\alpha )^{-1}\tilde{Q}^{\pm}_{\alpha ,n}(f^n,f^n)$ is weakly relatively compact in $L^1$. Lemma \ref{theo-6.2.1} indicates that for all $\alpha \in \{1,\cdots ,s_0\}$ and $\phi (t, x, \xi) \in L^\infty ((0,T) \times \R^3\times \R^3)$, $ \int_{\R^3} g^n_{\delta \alpha} \phi d\xi $ is relatively compact in $L^1((0,T)\times \R^3)$. Furthermore, Lemma \ref{theo-6.2} implies that for all $\alpha \in \{s_0+1,\cdots ,s\}$ and $\varphi (t, x, \xi, I) \in L^\infty ((0,T)\times \R^3\times \R^3 \times \R_+)$, $ \iint_{\R^3 \times \R_+} g^n_{\delta \alpha} \varphi d\xi dI$ is relatively compact in $L^1((0,T)\times \R^3)$. Hence, in the sense of subsequence as $n \to + \infty$,
\begin{equation}\label{gn converge}
	\begin{aligned}
		& \| \int_{\R^3} g^n_{\delta \alpha} \phi d\xi - \int_{\R^3} \tilde{g}_{\delta \alpha} \phi d\xi \|_{ L^1 ( (0, T) \times \R^3 ) } \to 0 \quad \text{for}\ \alpha \in \{1,\cdots ,s_0\},\\
		& \| \iint_{\R^3\times \R_+} g^n_{\delta \alpha} \varphi d\xi dI - \iint_{\R^3\times \R_+} \tilde{g}_{\delta \alpha} \varphi d\xi dI \|_{ L^1 ( (0, T) \times \R^3 ) } \to 0 \quad \text{for}\ \alpha \in \{s_0+1,\cdots ,s\}.
	\end{aligned}
\end{equation}

Now we study the limit $\delta \to 0^+$.

\begin{lemma}\label{converge f,g}
Let $T >0$ be fixed. 
\begin{enumerate}
\item {For $\alpha \in \{1,\cdots,s_0\}$, one has
	\begin{equation}\label{eq6.77}
	\begin{aligned}
	\lim \limits_{\delta \rightarrow 0^+} \sup \limits_{n\geq 1 } \sup \limits_{t\in (0,T)} \Vert f^n_\alpha -g^n_{\delta \alpha} \Vert_{L^1(\R^3 \times \R^3 )}=0, \ \lim \limits_{\delta \rightarrow 0^+} \Vert f_\alpha - \tilde{g}_{\delta \alpha} \Vert_{L^1( (0, T) \times \R^3 \times \R^3  )}=0;
	\end{aligned}
	\end{equation}}
\item{For $\alpha \in \{s_0+1,\cdots,s\}$, one has
\begin{equation}\label{eq6.76}
		\begin{aligned}
		\lim \limits_{\delta \rightarrow 0^+} \sup \limits_{n\geq 1 } \sup \limits_{t\in (0,T)} \Vert f^n_\alpha -g^n_{\delta \alpha} \Vert_{L^1(\R^3 \times \R^3\times \R_+ )}=0, \ \lim \limits_{\delta \rightarrow 0^+} \Vert f_\alpha - \tilde{g}_{\delta \alpha} \Vert_{L^1( (0, T) \times \R^3 \times \R^3\times \R_+)}=0.
	\end{aligned}
\end{equation}}
\end{enumerate}
\end{lemma}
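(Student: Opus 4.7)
The plan is to exploit the elementary identity
\begin{equation*}
\psi_\delta(f) := f - \tfrac{1}{\delta}\log(1 + \delta f) = \int_0^f \tfrac{\delta t}{1 + \delta t}\, dt, \quad f \geq 0,
\end{equation*}
which yields the two-sided bound $0 \leq \psi_\delta(f) \leq \min\bigl(f, \tfrac{\delta}{2} f^2\bigr)$ and, evaluated at $f = f^n_\alpha$, reproduces $f^n_\alpha - g^n_{\delta\alpha}$ pointwise. In particular, $\psi_\delta(f) \to 0$ as $\delta \to 0^+$ for every fixed $f$, but the convergence is not uniform in $f$, so the uniformity in $n$ and $t$ must come from an equi-integrability argument.

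For the first assertion, I would fix a threshold $K > e$ and split according to the size of $f^n_\alpha$:
\begin{equation*}
\|f^n_\alpha - g^n_{\delta\alpha}\|_{L^1} \leq \int_{\{f^n_\alpha > K\}} f^n_\alpha \, dx \, d\mathbf{Z}_\alpha + \tfrac{\delta K}{2} \int f^n_\alpha \, dx \, d\mathbf{Z}_\alpha,
\end{equation*}
where $\mathbf{Z}_\alpha = \xi$ or $(\xi,I)$ according to whether $\alpha$ is monatomic or polyatomic. The second term is controlled uniformly in $(n,t)$ by the conservation of mass in Lemma \ref{Lemma 4.17}(1). For the first term, one has the elementary inequality $\int_{\{f^n_\alpha > K\}} f^n_\alpha \leq \tfrac{1}{\log K}\int f^n_\alpha \log^+ f^n_\alpha$. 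In the monatomic case the quantity $\int f^n_\alpha \log^+ f^n_\alpha$ is directly controlled by \eqref{FWC2} uniformly in $(n,t)$. In the polyatomic case I would use $\log^+ f \leq (\tfrac{\delta(\alpha)}{2}-1)\log^+ I + \log^+(I^{1-\delta(\alpha)/2} f) \leq (\tfrac{\delta(\alpha)}{2}-1) I + \log^+(I^{1-\delta(\alpha)/2} f)$ and then combine the moment bound \eqref{FB3} with the $L\log L$-type bound \eqref{FWC3} (this is exactly in the spirit of Lemma \ref{Lemma 4.4}) to obtain $\sup_{n,t}\int f^n_\alpha \log^+ f^n_\alpha \leq C$. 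Given $\epsilon > 0$, one then chooses $K$ so that the first term is below $\epsilon/2$ uniformly in $(n,t)$, and then chooses $\delta$ small enough that $\delta K M_0/2 < \epsilon/2$, which completes the first identity.

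For the second identity, I would first observe that $\tfrac{1}{\delta}\log(1+\delta f) \leq f$ gives $g^n_{\delta\alpha} \leq f^n_\alpha$ pointwise; testing the two weak $L^1$-convergences against arbitrary non-negative $\phi \in L^\infty$ yields $\tilde{g}_{\delta\alpha} \leq f_\alpha$ a.e., so that $f_\alpha - \tilde{g}_{\delta\alpha} \geq 0$ a.e. Since $f^n_\alpha$ and $g^n_{\delta\alpha}$ belong to $L^1((0,T) \times \R^3 \times \mathcal{Z}_\alpha)$ with uniform-in-$n$ bound $T M_0$, testing weak convergence against $\phi \equiv 1 \in L^\infty((0,T)\times \R^3 \times \mathcal{Z}_\alpha)$ gives
\begin{equation*}
\|f_\alpha - \tilde{g}_{\delta\alpha}\|_{L^1((0,T)\times \R^3 \times \mathcal{Z}_\alpha)} = \lim_{n\to\infty} \int_0^T \!\! \int (f^n_\alpha - g^n_{\delta\alpha}) \, dt \, dx \, d\mathbf{Z}_\alpha \leq T \sup_{n \geq 1} \sup_{t \in (0,T)} \|f^n_\alpha - g^n_{\delta\alpha}\|_{L^1},
\end{equation*}
and the first part lets us send $\delta \to 0^+$. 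The only delicate checkpoint is that equi-integrability must hold uniformly in $t \in (0,T)$; fortunately, the right-hand sides of \eqref{FWC2}--\eqref{FWC3} depend only on the (fixed) initial data, so the uniformity in $t$ comes for free and no additional estimate is needed.
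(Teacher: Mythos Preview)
Your proof is correct and follows essentially the same strategy as the paper: a threshold split combined with the uniform mass and $L\log L$ bounds for the first identity, and passage to the weak limit for the second. Your treatment of the second identity is in fact slightly cleaner than the paper's: the paper writes $\|f_\alpha-\tilde g_{\delta\alpha}\|_{L^1}=\int(\tilde g_{\delta\alpha}-f_\alpha)\,\mathrm{sgn}(\tilde g_{\delta\alpha}-f_\alpha)$ and splits into three pieces tested against the sign function, whereas you exploit the a~priori inequality $\tilde g_{\delta\alpha}\le f_\alpha$ (already recorded before the lemma) to drop the absolute value and test directly against $\phi\equiv 1$, which collapses the argument to a single line.
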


\begin{proof}
	Let $\beta_\delta (t)=\frac{1}{\delta} \log (1+\delta t) \ (\delta >0)$. It is easy to see that for all $t\in [0,+\infty)$,
	\begin{equation*}
		0 \leq t-\beta_\delta (t) \leq \epsilon_R (\delta) t + t \mathbf{1}_{t\geq R},
	\end{equation*}
	where $\epsilon_R(\delta )\rightarrow 0$ as $\delta \rightarrow 0_+.$ 
Then, for all $\alpha \in \{s_0+1,\cdots,s\}$, we derive that as $\delta \rightarrow 0^+$ and $R \rightarrow +\infty$,
\begin{equation*}
	\begin{aligned}
		&\sup \limits_{n \geq 1} \sup \limits_{t\in (0,T)} \iiint_{\R^3 \times \R^3 \times \R_+ } |f^n_\alpha -g^n_{\delta \alpha} |dxd\xi dI = \sup \limits_{n \geq 1} \sup \limits_{t\in (0,T)} \iiint_{\R^3 \times \R^3 \times \R_+ } |f^n_\alpha -\tfrac{1}{\delta} \log (1+\delta f^n_\alpha)|dxd\xi dI \\
		\leq & \epsilon_R(\delta ) \sup \limits_{n \geq 1} \sup \limits_{t\in (0,T)} \iiint_{\R^3 \times \R^3 \times \R_+ }f^n_\alpha dxd\xi dI + \sup \limits_{n \geq 1} \sup \limits_{t\in (0,T)} \iiint_{\R^3 \times \R^3 \times \R_+ } f^n_\alpha \mathbf{1}_{f^n_\alpha \geq R}dxd\xi dI \to 0 \,.
	\end{aligned}
\end{equation*}
Note that
\begin{equation*}
	\begin{aligned}
		&\int_0^T dt \iiint_{\R^3 \times \R^3 \times \R_+} | \tilde{g}_{\delta \alpha}-f_\alpha |dxd\xi dI = \int_0^T dt \iiint_{\R^3 \times \R^3 \times \R_+} ( \tilde{g}_{\delta \alpha}-f_\alpha ) sgn(\tilde{g}_{\delta \alpha}-f_\alpha) dxd\xi dI \\
		\leq & \int_0^T dt \iiint_{\R^3 \times \R^3 \times \R_+} (\tilde{g}_{\delta \alpha} - g^n_{\delta \alpha} ) sgn(\tilde{g}_{\delta \alpha}-f_\alpha) dxd\xi dI +\int_0^T dt \iiint_{\R^3 \times \R^3 \times \R_+} |f^n_\alpha -g^n_{\delta \alpha} |dxd\xi dI\\
		+& \int_0^T dt \iiint_{\R^3 \times \R^3 \times \R_+} (f^n_\alpha -f_\alpha) sgn(\tilde{g}_{\delta \alpha}-f_\alpha) dxd\xi dI \,.
	\end{aligned}
\end{equation*}
Since $f^n_\alpha, g^n_{\delta \alpha}$ weakly converge to $f_\alpha, \tilde{g}_{\delta \alpha}$ in $L^1 ( (0, T) \times \R^3 \times \R^3 \times \R_+ )$ as $n \to + \infty$, one has
\begin{equation*}
  \begin{aligned}
    & \int_0^T dt \iiint_{\R^3 \times \R^3 \times \R_+} (\tilde{g}_{\delta \alpha} - g^n_{\delta \alpha} ) sgn(\tilde{g}_{\delta \alpha}-f_\alpha) dxd\xi dI \\
    & + \int_0^T dt \iiint_{\R^3 \times \R^3 \times \R_+} (f^n_\alpha -f_\alpha) sgn(\tilde{g}_{\delta \alpha}-f_\alpha) dxd\xi dI \to 0 \ (n \to + \infty) \,.
  \end{aligned}
\end{equation*}
Moreover, as $\delta \to 0^+$,
\begin{equation*}
  \begin{aligned}
    \int_0^T dt \iiint_{\R^3 \times \R^3 \times \R_+} |f^n_\alpha -g^n_{\delta \alpha} |dxd\xi dI \leq T \sup \limits_{n \geq 1} \sup \limits_{t\in (0,T)} \iiint_{\R^3 \times \R^3 \times \R_+ } |f^n_\alpha -g^n_{\delta \alpha} |dxd\xi dI \to 0 \,.
  \end{aligned}
\end{equation*}
Therefore, for all $\alpha \in \{s_0+1,\cdots,s\}$,
\begin{equation*}
	\Vert \tilde{g}_{\delta \alpha}-f_\alpha \Vert_{L^1((0,T)\times \R^3 \times \R^3 \times \R_+ )} \rightarrow 0 \quad (\delta \rightarrow 0^+).
\end{equation*}
Similarly, for $\alpha \in \{1,\cdots,s_0\}$ and as $\delta \to 0^+$,
\begin{equation*}
	\sup \limits_{n} \sup \limits_{t\in (0,T)} \Vert f^n_\alpha -g^n_{\delta \alpha} \Vert_{L^1(\R^3 \times \R^3 )} \to 0 \,, \ \Vert \tilde{g}_{\delta \alpha}-f_\alpha \Vert_{L^1((0,T)\times \R^3 \times \R^3 )} \rightarrow 0 \,.
\end{equation*}
The proof of Lemma \ref{converge f,g} is then finished.
\end{proof}

Next we give the following enhanced averaged velocity (-internal energy) lemma by using the renormalized approximated solution $g^n_{\delta \alpha} = \frac{1}{\delta} \log ( 1 + \delta f^n_\alpha )$ for $\alpha \in \{ 1, \cdots, s \}$.

\begin{lemma}\label{theo-6.5}
	For all $\varphi (t,x, \xi, I) \in L^\infty ((0,T)\times \R^3 \times \R^3 \times \R_+)$, $\alpha \in \{s_0+1,\cdots,s\},$ and $1\leq p<\infty $,
	\begin{equation*}
		\iint_{ \R^3 \times \R_+} f^n_\alpha \varphi d\xi dI \rightarrow \iint_{ \R^3 \times \R_+} f_\alpha \varphi d\xi dI \quad \text{strongly in } \  L^p((0,T);L^1(\R^3)).
	\end{equation*}
\end{lemma}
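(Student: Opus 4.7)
The plan is to exploit the triangle inequality with the renormalized approximations $g^n_{\delta\alpha} = \tfrac{1}{\delta}\log(1+\delta f^n_\alpha)$ as intermediaries, combining Lemma \ref{converge f,g} (which controls the passage between $f^n_\alpha$ and $g^n_{\delta\alpha}$ uniformly in $n$) with the convergence \eqref{gn converge} (which controls $g^n_{\delta\alpha}$ to $\tilde{g}_{\delta\alpha}$ for each fixed $\delta$). Set $h^n(t,x) := \iint_{\R^3\times\R_+} f^n_\alpha \varphi \, d\xi dI$, $h(t,x) := \iint_{\R^3\times\R_+} f_\alpha \varphi \, d\xi dI$, and for $\delta>0$ the intermediate quantities $h^n_\delta(t,x) := \iint_{\R^3\times\R_+} g^n_{\delta\alpha} \varphi \, d\xi dI$, $h_\delta(t,x) := \iint_{\R^3\times\R_+} \tilde{g}_{\delta\alpha}\varphi \, d\xi dI$.

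First, I would establish strong convergence $h^n \to h$ in $L^1((0,T)\times \R^3)$ via the decomposition
\begin{equation*}
\|h^n-h\|_{L^1((0,T)\times\R^3)} \leq \|\varphi\|_\infty T \sup_{n \geq 1}\sup_{t\in(0,T)} \|f^n_\alpha - g^n_{\delta\alpha}\|_{L^1(\R^3\times\R^3\times\R_+)} + \|h^n_\delta - h_\delta\|_{L^1((0,T)\times\R^3)} + \|\varphi\|_\infty \|f_\alpha - \tilde{g}_{\delta\alpha}\|_{L^1((0,T)\times\R^3\times\R^3\times\R_+)}.
\end{equation*}
Given any $\eps>0$, by Lemma \ref{converge f,g} (relation \eqref{eq6.76}) I can fix $\delta>0$ small enough so that the first and third terms are each below $\eps/3$ uniformly in $n$. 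With that $\delta$ fixed, the middle term tends to $0$ as $n\to\infty$ by \eqref{gn converge}, so for all $n$ large enough the whole expression is below $\eps$.

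Next I would upgrade $L^1((0,T);L^1(\R^3))$ convergence to $L^p((0,T);L^1(\R^3))$ for $1\leq p<\infty$ by interpolation. The uniform $L^1_x$ bound
\begin{equation*}
\|h^n(t)-h(t)\|_{L^1(\R^3)} \leq \|\varphi\|_\infty \bigl(\|f^n_\alpha(t)\|_{L^1(\R^3\times\R^3\times\R_+)} + \|f_\alpha(t)\|_{L^1(\R^3\times\R^3\times\R_+)}\bigr) \leq M
\end{equation*}
holds uniformly in $n$ and $t\in(0,T)$ by Lemma \ref{Lemma 4.17} and Fatou. Hence
\begin{equation*}
\int_0^T \|h^n(t)-h(t)\|_{L^1(\R^3)}^p \, dt \leq M^{p-1} \int_0^T \|h^n(t)-h(t)\|_{L^1(\R^3)} \, dt \longrightarrow 0,
\end{equation*}
giving the required $L^p((0,T);L^1(\R^3))$ convergence.

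I do not expect serious obstacles: the arguments here are essentially mechanical once Lemma \ref{converge f,g} and Lemma \ref{theo-6.2} (used through \eqref{gn converge}) are in hand. The main point worth checking carefully is the input to \eqref{gn converge}, namely that the renormalized equation \eqref{eq6.75} has right-hand side $(1+\delta f^n_\alpha)^{-1}\tilde{Q}_{\alpha,n}(f^n,f^n)$ which is weakly relatively compact in $L^1_{loc}$; this follows at once from Lemma \ref{theo-4.1} and Lemma \ref{theo-4.2}, so Lemma \ref{theo-6.2} indeed applies to $g^n_{\delta\alpha}$. The analogous averaged velocity lemma (Lemma \ref{theo-6.2.1}) and Corollary \ref{rwc1} handle the case $\alpha\in\{1,\cdots,s_0\}$ with $\iint$ replaced by $\int_{\R^3} d\xi$ in the corresponding enhanced statement (see Corollary \ref{cor6.6}).
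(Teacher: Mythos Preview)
Your proposal is correct and follows essentially the same approach as the paper's proof: the same three-term triangle decomposition via $g^n_{\delta\alpha}$ and $\tilde{g}_{\delta\alpha}$, invoking Lemma \ref{converge f,g} for the outer terms and \eqref{gn converge} for the middle term, followed by the identical $L^\infty_t$-bound interpolation to pass from $L^1$ to $L^p$ in time. The paper's proof is line-for-line the same argument you outline.
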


\begin{proof}
	Observe that
	\begin{equation*}
		\begin{aligned}
			&\int_{0}^{T}dt \int_{\R^3} |\iint_{ \R^3 \times \R_+} (f^n_\alpha -f_\alpha )\varphi d\xi dI|dx \\
			\leq &\Vert \varphi \Vert_{L^\infty} T \sup_{n \geq 1} \sup_{t \in (0, T)} \iiint_{ \R^3\times \R^3 \times \R_+} |f^n_\alpha -g^n_{\delta \alpha } | dxd\xi dI +\int_{0}^{T}dt \int_{R^3} |\iint_{ R^3 \times R_+} (g^n_{\delta \alpha } -\tilde{g}_{\delta \alpha } )\varphi d\xi dI|dx \\
			+ &	\Vert \varphi \Vert_{L^\infty} \int_{0}^{T}dt \iiint_{ \R^3\times \R^3 \times \R_+} |\tilde{g}_{\delta \alpha } -f_\alpha |dx d\xi dI .
		\end{aligned}
	\end{equation*}
It follows from \eqref{gn converge} that as $ n \to + \infty$,
\begin{equation*}
  \begin{aligned}
    \int_{0}^{T}dt \int_{R^3} |\iint_{ R^3 \times R_+} (g^n_{\delta \alpha } -\tilde{g}_{\delta \alpha } )\varphi d\xi dI|dx \to 0 \,.
  \end{aligned}
\end{equation*}
Lemma \ref{converge f,g} shows that as $\delta \to 0^+$,
\begin{equation*}
  \begin{aligned}
    \sup_{n \geq 1} \sup_{t \in (0, T)} \iiint_{ \R^3\times \R^3 \times \R_+} |f^n_\alpha -g^n_{\delta \alpha } | dxd\xi dI + \int_{0}^{T}dt \iiint_{ \R^3\times \R^3 \times \R_+} |\tilde{g}_{\delta \alpha } -f_\alpha |dx d\xi dI \to 0 \,.
  \end{aligned}
\end{equation*}
As a result, $\iint_{ \R^3 \times \R_+} f^n_\alpha \varphi d\xi dI$ strongly converges to $ \iint_{ \R^3 \times \R_+} f_\alpha \varphi d\xi dI $ in $L^1 ( (0, T) \times \R^3 )$ as $n \to + \infty$.

Furthermore, combining the bound \eqref{FB3}-\eqref{FWC3} in Lemma \ref{Lemma 4.17}, one easily sees that
	\begin{equation*}
		\sup_{t \in (0, T)} \int_{\R^3} |\iint_{ \R^3 \times \R_+} (f^n_\alpha -f_\alpha )\varphi d\xi dI|dx \leq \Vert \varphi \Vert_{L^\infty} \sup_{t \in (0, T)} \iiint_{ \R^3\times \R^3 \times \R_+} (f^n_\alpha +f_\alpha)dxd\xi dI \leq C_T\Vert \varphi \Vert_{L^\infty}.
	\end{equation*}
	Then for all $1\leq p<\infty$:
		\begin{align*}
			&\int_{0}^{T}dt (\int_{\R^3} |\iint_{ \R^3 \times \R_+} (f^n_\alpha -f_\alpha )\varphi d\xi dI|dx)^p \\
			\leq & \sup \limits_{t\in (0,T)} (\int_{\R^3} |\iint_{ \R^3 \times \R_+} (f^n_\alpha -f_\alpha )\varphi d\xi dI|dx)^{p-1} \cdot \int_{0}^{T}dt \int_{\R^3} |\iint_{ \R^3 \times \R_+} (f^n_\alpha -f_\alpha )\varphi d\xi dI|dx \\
			\leq &  C^{p-1}_T\Vert \varphi \Vert_{L^\infty}^{p-1} \int_{0}^{T}dt \int_{\R^3} |\iint_{ \R^3 \times \R_+} (f^n_\alpha -f_\alpha )\varphi d\xi dI|dx 
			\rightarrow  0 \quad (n \rightarrow \infty ).
		\end{align*}
	Thus, the proof of Lemma \ref{theo-6.5} is finished.
\end{proof}

Following the similar arguments above, we establish the following result:

\begin{corollary}\label{cor6.6}
	For all $\phi (t,x, \xi) \in L^\infty ((0,T)\times \R^3 \times \R^3 ) $,  $\alpha \in \{1,\cdots,s_0\},$ and $1\leq p<\infty $,
	\begin{equation*}
		\int_{ \R^3 } f^n_\alpha \phi d\xi  \rightarrow \int_{ \R^3 } f_\alpha \phi d\xi  \quad \text{strongly in }\  L^p((0,T);L^1(\R^3)).
	\end{equation*}
\end{corollary}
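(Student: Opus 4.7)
The plan is to mimic the proof of Lemma \ref{theo-6.5} line by line, only replacing the velocity-internal energy averages $\iint_{\R^3\times\R_+}\cdot\, d\xi dI$ by the pure velocity averages $\int_{\R^3}\cdot\, d\xi$, and the integrability statements in Lemma \ref{converge f,g} and \eqref{gn converge} by their monatomic counterparts. First I would introduce the renormalization $g^n_{\delta\alpha}=\frac{1}{\delta}\log(1+\delta f^n_\alpha)$ with weak $L^1$ limit $\tilde g_{\delta\alpha}$ already identified in Subsection \ref{Subsec:Cnv-f}, and decompose
\begin{equation*}
\int_{\R^3}(f^n_\alpha-f_\alpha)\phi\, d\xi
=\int_{\R^3}(f^n_\alpha-g^n_{\delta\alpha})\phi\, d\xi
+\int_{\R^3}(g^n_{\delta\alpha}-\tilde g_{\delta\alpha})\phi\, d\xi
+\int_{\R^3}(\tilde g_{\delta\alpha}-f_\alpha)\phi\, d\xi.
\end{equation*}

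Next I would integrate the absolute value over $(0,T)\times\R^3$ and bound the three pieces separately. The first piece is dominated by $T\|\phi\|_{L^\infty}\sup_{n\ge1}\sup_{t\in(0,T)}\|f^n_\alpha-g^n_{\delta\alpha}\|_{L^1(\R^3\times\R^3)}$, which tends to $0$ as $\delta\to 0^+$ by the first assertion of \eqref{eq6.77} in Lemma \ref{converge f,g}. The third piece is dominated by $\|\phi\|_{L^\infty}\|\tilde g_{\delta\alpha}-f_\alpha\|_{L^1((0,T)\times\R^3\times\R^3)}$, which also tends to $0$ as $\delta\to 0^+$ by the second assertion of \eqref{eq6.77}. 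Finally, for each fixed $\delta>0$, the middle piece tends to $0$ as $n\to\infty$ by the first line of \eqref{gn converge}, which is precisely the velocity-averaging statement (Lemma \ref{theo-6.2.1}) applied to the equation satisfied by $g^n_{\delta\alpha}$. Sending $n\to\infty$ first and then $\delta\to0^+$ yields strong convergence in $L^1((0,T)\times\R^3)$.

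To upgrade the convergence from $p=1$ to arbitrary $1\le p<\infty$, I would use the uniform bound \eqref{FB2} from Lemma \ref{Lemma 4.17}, which furnishes a constant $C_T$ with
\begin{equation*}
\sup_{t\in(0,T)}\int_{\R^3}\Big|\int_{\R^3}(f^n_\alpha-f_\alpha)\phi\, d\xi\Big|\, dx
\le \|\phi\|_{L^\infty}\sup_{t\in(0,T)}\iint_{\R^3\times\R^3}(f^n_\alpha+f_\alpha)\, dxd\xi\le C_T\|\phi\|_{L^\infty}.
\end{equation*}
Interpolating as in Lemma \ref{theo-6.5},
\begin{equation*}
\int_0^T\!\Big(\int_{\R^3}\Big|\int_{\R^3}(f^n_\alpha-f_\alpha)\phi\, d\xi\Big|\, dx\Big)^p dt\le (C_T\|\phi\|_{L^\infty})^{p-1}\int_0^T\!dt\int_{\R^3}\Big|\int_{\R^3}(f^n_\alpha-f_\alpha)\phi\, d\xi\Big|\, dx,
\end{equation*}
and the right-hand side tends to $0$ by the $L^1$-convergence established above. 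Since the argument is a routine transcription of the proof of Lemma \ref{theo-6.5}, I do not anticipate any substantive obstacle; the only points requiring minor care are ensuring that the velocity-averaging lemma used in \eqref{gn converge} is the purely velocity version (Lemma \ref{theo-6.2.1}, not Lemma \ref{theo-6.2}) and that the monatomic estimates \eqref{FB2} and \eqref{FWC2} replace their polyatomic analogues \eqref{FB3} and \eqref{FWC3} throughout.
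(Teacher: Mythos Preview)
Your proposal is correct and is exactly the approach the paper intends: the paper introduces Corollary \ref{cor6.6} with the phrase ``Following the similar arguments above, we establish the following result'' and gives no separate proof, so a line-by-line transcription of the proof of Lemma \ref{theo-6.5} with the monatomic substitutions you describe (\eqref{eq6.77} in place of \eqref{eq6.76}, the first line of \eqref{gn converge}, and \eqref{FB2} in place of \eqref{FB3}) is precisely what is expected.
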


Next we study the continuity on the time variable $t$ of the limit functions $f_\alpha$ with $\alpha \in \{ 1, \cdots, s \}$. Recall the notations $f^\sharp (t,x, \xi, I) = f (t, x + t \xi, \xi, I)$ or $f^\sharp (t,x, \xi) = f (t, x + t \xi, \xi)$. 

\begin{lemma}\label{cor6.12}
Let $T > 0$ fixed. There is a constant $C_T > 0$ such that following statements hold:
\begin{enumerate}
\item {For $\alpha \in \{1,\cdots,s_0\},\ f_\alpha, f^\sharp_\alpha \in C([0,\infty); L^1 (\R^3 \times \R^3))_+,$
		\begin{equation}
		\sup_{t \in [0, T]} \iint_{\R^3 \times \R^3 } f_\alpha (1+|x|^2 +|\xi|^2 +|\log f_\alpha|)dxd\xi \leq C_T;
		\end{equation}	}
\item{For $\alpha \in \{s_0+1,\cdots,s\},f_\alpha,f^\sharp_\alpha \in C([0,\infty); L^1 (\R^3 \times \R^3\times \R_+))_+,$
	\begin{equation}\label{falpha-bnd}
	\sup_{t \in [0, T]} \iiint_{\R^3 \times \R^3 \times \R_+} f_\alpha (1+|x|^2 +|\xi|^2 +I + |\log (I^{1-\delta(\alpha)/2}f_\alpha)|)dxd\xi dI \leq C_T.
	\end{equation}}
\end{enumerate}
\end{lemma}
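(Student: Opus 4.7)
The plan is to obtain both the time-continuity and the uniform bounds by pushing the corresponding properties of the renormalized approximations $g^n_{\delta\alpha}=\delta^{-1}\log(1+\delta f^n_\alpha)$ through the double limit $n\to\infty$ followed by $\delta\to 0^+$, leveraging the uniform closeness $\sup_{n}\sup_{t\in[0,T]}\|f^n_\alpha-g^n_{\delta\alpha}\|_{L^1}\to 0$ furnished by Lemma~\ref{converge f,g}.

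First I would write the mild form of \eqref{eq6.75} in the sharp variables,
\begin{equation*}
g^{n\sharp}_{\delta\alpha}(t,x,\mathbf{Z}_\alpha)=g^n_{\delta\alpha,0}(x,\mathbf{Z}_\alpha)+\int_0^t (1+\delta f^{n\sharp}_\alpha(\tau))^{-1}\tilde{Q}_{\alpha,n}(f^n,f^n)^\sharp(\tau,x,\mathbf{Z}_\alpha)\,d\tau.
\end{equation*}
For $\delta\in(0,1]$ the pointwise inequality $(1+\delta f^n_\alpha)^{-1}\leq \delta^{-1}(1+f^n_\alpha)^{-1}$, combined with Lemma~\ref{theo-4.1} and Lemma~\ref{theo-4.2}, shows that the integrand is weakly relatively compact in $L^1_{loc}((0,T)\times\R^3\times\mathcal{Z}_\alpha)$. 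Passing $n\to\infty$ in this identity, using the weak convergence $g^{n}_{\delta\alpha}\rightharpoonup\tilde{g}_{\delta\alpha}$ already established in \eqref{gn converge}, yields an integral identity $\tilde{g}^\sharp_{\delta\alpha}(t)=\tilde{g}_{\delta\alpha,0}+\int_0^t h_\delta(\tau)\,d\tau$ with $h_\delta\in L^1_{loc}$, from which absolute continuity of the integral delivers $\tilde{g}^\sharp_{\delta\alpha}\in C([0,\infty);L^1_{loc})$ for each fixed $\delta>0$.

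Next I would upgrade this to $f^\sharp_\alpha\in C([0,\infty);L^1)$ by showing that $\tilde{g}^\sharp_{\delta\alpha}$ approximates $f^\sharp_\alpha$ uniformly in $t$. The uniform bounds in Lemma~\ref{Lemma 4.17} make $\{f^{n\sharp}_\alpha(t)\}_{n\geq 1}$ weakly relatively compact in $L^1$ at every fixed $t\in[0,T]$, and the mild identity above pins down $\tilde{g}^\sharp_{\delta\alpha}(t)$ as the unique weak cluster point of $g^{n\sharp}_{\delta\alpha}(t)$ at each $t$; applying the weak lower semicontinuity of the $L^1$-norm to the nonnegative sequence $f^n_\alpha-g^n_{\delta\alpha}\geq 0$ then yields
\begin{equation*}
\sup_{t\in[0,T]}\|f^\sharp_\alpha(t)-\tilde{g}^\sharp_{\delta\alpha}(t)\|_{L^1}\leq\sup_{n\geq 1}\sup_{t\in[0,T]}\|f^{n\sharp}_\alpha-g^{n\sharp}_{\delta\alpha}\|_{L^1}\xrightarrow{\delta\to 0^+}0,
\end{equation*}
which forces $f^\sharp_\alpha$ to be a uniform-in-$t$ $L^1$-limit of continuous maps, hence continuous. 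Since the free-transport operator is an $L^1$-isometry that is strongly continuous in $t$, the corresponding continuity for $f_\alpha$ follows, and nonnegativity is preserved by weak $L^1$-limits.

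The uniform bound \eqref{falpha-bnd} is then obtained from Lemma~\ref{Lemma 4.17} by weak lower semicontinuity: at each $t\in[0,T]$ the polynomial moment terms $(1+|x|^2+|\xi|^2+I)f_\alpha$ pass to the limit by Fatou's lemma applied to the weak convergence $f^n_\alpha(t)\rightharpoonup f_\alpha(t)$ obtained in the previous step, while the entropy piece $f_\alpha|\log(I^{1-\delta(\alpha)/2}f_\alpha)|$ is controlled via Lemma~\ref{Lemma 4.6}. The main obstacle I anticipate is the uniform-in-$t$ passage in the preceding paragraph: Lemma~\ref{converge f,g} only supplies $L^1$-in-time convergence $\|f_\alpha-\tilde{g}_{\delta\alpha}\|_{L^1((0,T)\times\R^3\times\mathcal{Z}_\alpha)}\to 0$, so establishing the pointwise-in-$t$ identification $f_\alpha(t)=w\text{-}\lim_n f^n_\alpha(t)$ at every $t$ (not merely at almost every $t$) requires a separate argument based on the mild form and the equicontinuity in $t$ of $\tilde{g}_{\delta\alpha}$ inherited from the integral representation.
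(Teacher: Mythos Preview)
Your proposal uses the same ingredients as the paper (the mild form of \eqref{eq6.75}, the weak compactness from Lemmas~\ref{theo-4.1}--\ref{theo-4.2}, and the uniform closeness from Lemma~\ref{converge f,g}) but orders them differently: you first pass $n\to\infty$ to get continuity of $\tilde g^\sharp_{\delta\alpha}$ and then let $\delta\to 0^+$, whereas the paper stays at the approximation level and bounds $\|f^{n\sharp}_\alpha(t)-f^{n\sharp}_\alpha(\tau)\|_{L^1}$ directly by inserting $g^{n\sharp}_{\delta\alpha}$, then applies weak lower semicontinuity of the norm.

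There is one gap to be aware of. Your integral identity only yields $\tilde g^\sharp_{\delta\alpha}\in C([0,\infty);L^1_{loc})$, because the source term is merely in $L^1_{loc}$ in $(\xi,I)$; you then assert that $f^\sharp_\alpha$ is a uniform-in-$t$ $L^1$-limit of \emph{continuous maps}, but continuity of $\tilde g^\sharp_{\delta\alpha}$ into the full $L^1$ requires an extra tail estimate. The paper handles this explicitly by splitting $\|g^{n\sharp}_{\delta\alpha}(t)-g^{n\sharp}_{\delta\alpha}(\tau)\|_{L^1}$ into a piece on $B_R\times(0,R)$ (controlled by the mild form, giving a modulus $\omega_\delta(|t-\tau|)$) and a tail $\epsilon_R$ on the complement (controlled via $0\le g^n_{\delta\alpha}\le f^n_\alpha$ and the uniform moment bounds of Lemma~\ref{Lemma 4.17}). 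Your route can be repaired the same way: since $0\le\tilde g_{\delta\alpha}\le f_\alpha$ and the moment bounds \eqref{eq6.83} (whose proof is independent of the continuity) give uniform tightness in $(x,\xi,I)$, the upgrade from $L^1_{loc}$ to $L^1$ is immediate. For the bounds themselves and the pointwise-in-$t$ identification you flag as the main obstacle, the paper proceeds exactly as you suggest: truncation by $\varphi_R$ plus weak convergence for the moments, and Lemma~\ref{Lemma 4.6} for the entropy term.
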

\begin{proof}
	Without loss of generality, we only consider the complicated cases $\alpha \in \{s_0+ 1,\cdots,s\}$. For any $\varphi (x, \xi, I) \in L^\infty (\R^3 \times \R^3 \times \R_+)$, one has
	\begin{equation*}
\iiint_{ \R^3 \times \R^3 \times \R_+} (f^{n\sharp}_\alpha (t) -f^\sharp_\alpha (t))\varphi(x,\xi ,I) dxd\xi dI= \iiint_{ \R^3 \times \R^3 \times \R_+} (f^n_\alpha (t)-f_\alpha (t)) \varphi (x-t\xi ,\xi ,I) dxd\xi dI.
	\end{equation*}
Since $f^n_\alpha$ weakly converges to $f_\alpha$ in $L^1 (\R^3 \times \R^3 \times \R_+)$ by Lemma \ref{rwc} and Lemma \ref{Lemma 4.17}, one know that for any $t \in [0,T],\ f^{n\sharp}_\alpha (t)$ is weakly converges to $f^\sharp_\alpha (t)$ in $L^1(\R^3 \times \R^3 \times \R_+)$. By utilizing the weakly lower semicontinuity of norms, we obtain
	\begin{equation*}
		\Vert f^{\sharp }_\alpha (t)- f^{\sharp }_\alpha (\tau) \Vert_{L^1(\R^3 \times \R^3 \times \R_+)} 
		\leq  \liminf \limits_{n\rightarrow \infty} \Vert f^{n\sharp }_\alpha (t)- f^{n\sharp }_\alpha (\tau) \Vert_{L^1(\R^3 \times \R^3 \times \R_+)} .
	\end{equation*}
Note that for any $R, \delta > 0$,
\begin{equation*}
		\begin{aligned}
			&\Vert f^{n\sharp }_\alpha (t)- f^{n\sharp }_\alpha (\tau) \Vert_{L^1(\R^3 \times \R^3 \times \R_+)} \\
		\leq & \Vert f^{n\sharp }_\alpha (t)- g^{n\sharp }_{\delta \alpha } (t) \Vert_{L^1(\R^3 \times \R^3 \times \R_+)} + \Vert g^{n\sharp }_{\delta \alpha } (\tau)- f^{n\sharp }_\alpha (\tau) \Vert_{L^1(\R^3 \times \R^3 \times \R_+)} \\
& + \Vert g^{n\sharp }_{\delta \alpha } (t)- g^{n\sharp }_{\delta \alpha } (\tau) \Vert_{L^1(\R^3 \times B_R \times (0, R) )} + \Vert g^{n\sharp }_{\delta \alpha } (t)- g^{n\sharp }_{\delta \alpha } (\tau) \Vert_{L^1(\R^3 \times ( B_R \times (0, R) )^c )} \\
		\leq & 2 \sup_{n \geq 1} \sup \limits_{t\in (0,T)} \Vert f^{n\sharp }_\alpha (t)- g^{n\sharp }_{\delta \alpha } (t) \Vert_{L^1(\R^3 \times \R^3 \times \R_+)} + \sup_{n \geq 1} \Vert g^{n\sharp }_{\delta \alpha } (t)- g^{n\sharp }_{\delta \alpha } (\tau) \Vert_{L^1(\R^3 \times B_R \times (0, R) )} \\
		& + \Vert g^{n\sharp }_{\delta \alpha } (t)- g^{n\sharp }_{\delta \alpha } (\tau) \Vert_{L^1(\R^3 \times ( B_R \times (0, R) )^c )} .
		\end{aligned}
	\end{equation*}
Due to $0 < g^n_{\delta \alpha} < f^n_\alpha$, we know that for any $t \in [0,T],\ g^n_{\delta \alpha} (t)$ is weakly converges to $\tilde{g}_{\delta \alpha} (t)$ in $L^1(\R^3 \times \R^3 \times \R_+)$. Then there is a $\epsilon_R > 0$ with $\lim_{R \to + \infty} \epsilon_R = 0$ such that for all $0 \leq t, \tau \leq T$,
\begin{equation*}
  \begin{aligned}
    \Vert g^{n\sharp }_{\delta \alpha } (t)- g^{n\sharp }_{\delta \alpha } (\tau) \Vert_{L^1(\R^3 \times ( B_R \times (0, R) )^c )} \leq \epsilon_R \,.
  \end{aligned}
\end{equation*}
Moreover, by \eqref{eq6.75}, we easily obtain
	\begin{equation*}
		\tfrac{\partial }{\partial t} (g^{n\sharp}_{\delta \alpha}) = (\tfrac{1}{1+\delta f^n_\alpha }\tilde{Q}_{\alpha ,n} (f^n,f^n))^{\sharp}.
	\end{equation*}
	Then for all $0\leq \tau,  t\leq T$ and $\alpha \in \{s_0+1,\cdots,s\}$,
	\begin{equation*}
		\Vert g^{n\sharp }_{\delta \alpha } (t)- g^{n\sharp }_{\delta \alpha } (\tau) \Vert_{L^1(\R^3 \times B_R \times (0,R))} \leq | \int_{\tau}^{t}d\sigma \iiint_{ \R^3 \times B_R \times (0,R)} \tfrac{1}{1+\delta f^n_\alpha } |\tilde{Q}_{\alpha ,n} (f^n,f^n)|dxd\xi dI |.
	\end{equation*}
	Since $(1+\delta f^n_\alpha)^{-1}  \tilde{Q}^\pm_{\alpha ,n} (f^n,f^n)$ is relatively weakly compact in $L^1((0,T)\times \R^3\times B_R \times (0,R))$ by Lemma \ref{theo-4.1} and \ref{theo-4.2}, we derive that
	\begin{equation*}
	\sup \limits_{n\geq 1}	\Vert g^{n\sharp }_{\delta \alpha } (t)- g^{n\sharp }_{\delta \alpha } (\tau) \Vert_{L^1(\R^3 \times B_R \times (0,R))} \leq \omega_\delta(|t-\tau|),
	\end{equation*}
where $\omega_\delta(\lambda) \rightarrow 0^+$ as $\lambda \rightarrow 0^+$. Furthermore, Lemma \ref{converge f,g} shows that as $\delta \to 0^+$,
\begin{equation*}
  \begin{aligned}
    \sup_{n \geq 1} \sup \limits_{t\in (0,T)} \Vert f^{n\sharp }_\alpha (t)- g^{n\sharp }_{\delta \alpha } (t) \Vert_{L^1(\R^3 \times \R^3 \times \R_+)} \to 0 \,.
  \end{aligned}
\end{equation*}
Consequently, one has
\begin{equation*}
  \begin{aligned}
    \Vert f^{n\sharp }_\alpha (t)- f^{n\sharp }_\alpha (\tau) \Vert_{L^1(\R^3 \times \R^3 \times \R_+)} \leq 2 \sup_{n \geq 1} \sup \limits_{t\in (0,T)} \Vert f^{n\sharp }_\alpha (t)- g^{n\sharp }_{\delta \alpha } (t) \Vert_{L^1(\R^3 \times \R^3 \times \R_+)} + \epsilon_R + \omega_\delta(|t-\tau|) 
  \end{aligned}
\end{equation*}
for any $R, \delta > 0$ and $0 \leq \tau, t \leq T$. We first take $\tau \to t$, then take $\delta \to 0^+$, and finally take $R \to + \infty$, so that
\begin{equation*}
  \begin{aligned}
    \lim_{\tau \to t} \Vert f^{n\sharp }_\alpha (t)- f^{n\sharp }_\alpha (\tau) \Vert_{L^1(\R^3 \times \R^3 \times \R_+)} = 0 \,,
  \end{aligned}
\end{equation*}
which means $f^{\sharp }_\alpha (t) \in C([0,+\infty); L^1(\R^3 \times \R^3 \times \R_+))$. 

Next we prove that $f_\alpha (t) \in C([0,+\infty); L^1(\R^3 \times \R^3 \times \R_+))$. Notices that
\begin{equation*}{\small
  \begin{aligned}
    & \Vert f_\alpha (t)- f_\alpha (\tau) \Vert_{L^1(\R^3 \times \R^3 \times \R_+)} = \iiint_{ \R^3 \times \R^3 \times \R_+ } | f_\alpha (t, x, \xi, I) - f_\alpha ( \tau, x, \xi, I ) | d x d \xi d I \\
    = & \iiint_{ \R^3 \times \R^3 \times \R_+ } | f_\alpha (t, x + \tau \xi, \xi, I) - f_\alpha ( \tau, x + \tau \xi, \xi, I ) | d x d \xi d I \\
    \leq & \iiint_{ \R^3 \times \R^3 \times \R_+ } | f_\alpha (t, x + \tau \xi, \xi, I) - f_\alpha ( t, x + t \xi, \xi, I ) | d x d \xi d I + \| f_\alpha^\sharp (t) - f_\alpha^\sharp ( \tau ) \|_{L^1(\R^3 \times \R^3 \times \R_+)} \\
    = & \iiint_{ \R^3 \times \R^3 \times \R_+ } | f_\alpha (t, x + ( \tau - t ) \xi, \xi, I) - f_\alpha ( t, x, \xi, I ) | d x d \xi d I + \| f_\alpha^\sharp (t) - f_\alpha^\sharp ( \tau ) \|_{L^1(\R^3 \times \R^3 \times \R_+)} \,.
  \end{aligned}}
\end{equation*}
The fact $f^{\sharp }_\alpha (t) \in C([0,+\infty); L^1(\R^3 \times \R^3 \times \R_+))$ indicates that $\| f_\alpha^\sharp (t) - f_\alpha^\sharp ( \tau ) \|_{L^1(\R^3 \times \R^3 \times \R_+)} \to 0$ as $\tau \to t$. It is further derived from the absolute continuity of integral shows that
	\begin{equation*}
		\lim\limits_{\tau \rightarrow t} \iiint_{ \R^3 \times \R^3 \times \R_+ } | f_\alpha (t, x + ( \tau - t ) \xi, \xi, I) - f_\alpha ( t, x, \xi, I ) | d x d \xi d I = 0.
	\end{equation*}
Consequently, $f_\alpha (t) \in C([0,+\infty); L^1(\R^3 \times \R^3 \times \R_+))$.

At the end, we prove the bound \eqref{falpha-bnd}. For any $R>0$ and $ t \in (0,T) $, one easily obtains
	\begin{equation*}
		\varphi_R \triangleq (1+|x|^2 +|\xi|^2 +I) \mathbf{1}_{|x|\leq R } \mathbf{1}_{|\xi|\leq R} \mathbf{1}_{0\leq I \leq R} \in L^\infty (\R^3 \times \R^3 \times \R_+).
	\end{equation*}
Recall that $f^n_\alpha (t) $ weakly converges to $ f_\alpha (t)$ in $L^1 (\R^3 \times \R^3 \times \R_+)$ for all $t \in [0, T]$. It is easily derived from Lemma \ref{Lemma 4.17} that for some $C_T > 0$ independent of $R$,
	\begin{equation*}
		\begin{aligned}
			\iiint_{ \R^3 \times \R^3 \times \R_+} f_\alpha (t,x,\xi ,I) \varphi_R dxd\xi dI
			=& \lim \limits_{n\rightarrow \infty } \iiint_{ \R^3 \times \R^3 \times \R_+} f^n_\alpha (t,x,\xi ,I) \varphi_R dxd\xi dI \\
			\leq & \sup \limits_{n \geq 1} \iiint_{ \R^3 \times \R^3 \times \R_+} f^n_\alpha (1+|x|^2 +|\xi|^2 +I) dxd\xi dI \leq C_T \,.
		\end{aligned}
	\end{equation*}
By taking $R \to + \infty$, we obtain that for $\alpha \in \{ s_0 + 1, \cdots , s \}$,
	\begin{equation} \label{eq6.83}
		\sup_{t \in [0, T]} \iiint_{ \R^3 \times \R^3 \times \R_+} f_\alpha (1+|x|^2 +|\xi|^2 +I) dxd\xi dI  \leq C_T.
	\end{equation}
By analogous arguments of \eqref{eq6.83}, one knows that for $\alpha \in \{ 1, \cdots , s_0 \}$,
\begin{equation} \label{eq6.83+1}
		\sup_{t \in [0, T]} \iint_{ \R^3 \times \R^3 } f_\alpha (1+|x|^2 +|\xi|^2 ) dxd\xi \leq C_T.
	\end{equation}

Moreover,  Lemma \ref{Lemma 4.6} implies that for $t \in [0, T]$,
{\small
	\begin{equation}\label{f bound}
		H (f) \leq \liminf \limits_{n\rightarrow \infty } H (f^n) \leq C_T'.
	\end{equation}}
	Furthermore, as the similar arguments in \eqref{f-C}, one sees that for $\alpha \in \{ s_0 + 1, \cdots, s \}$,
	\begin{equation}\label{eq6.84}
		\begin{aligned}
			\iiint_{ \R^3 \times \R^3 \times \R_+} f_\alpha |\log (I^{1-\delta(\alpha)/2}f_\alpha )| dxd\xi dI 
			& \leq 2\iiint_{ \R^3 \times \R^3 \times \R_+} f_\alpha (|x|^2+|\xi|^2+I )dxd\xi dI \\
			& + \iiint_{ \R^3 \times \R^3 \times \R_+} f_\alpha \log (I^{1-\delta(\alpha)/2}f_\alpha ) dxd\xi dI +C_1 ,
		\end{aligned}
	\end{equation}
and for $\alpha \in \{ 1, \cdots, s_0 \}$,
\begin{equation}\label{eq6.84+1}
		\begin{aligned}
			\iint_{ \R^3 \times \R^3 } f_\alpha |\log f_\alpha | dxd\xi  \leq 2\iint_{ \R^3 \times \R^3 } f_\alpha (x|^2+|\xi|^2 )dxd\xi + \iint_{ \R^3 \times \R^3 } f_\alpha \log f_\alpha dxd\xi  + C_1 .
		\end{aligned}
	\end{equation}
As a consequence, the bounds \eqref{eq6.83}, \eqref{f bound}, \eqref{eq6.84} and \eqref{eq6.84+1} conclude the bound \eqref{falpha-bnd}. 

Remark that the cases $\alpha \in \{1,\cdots,s_0\}$ can be proved by the similar arguments above. Therefore, the proof of Lemma \ref{cor6.12} is finished.
\end{proof}

\subsection{Convergence for nonlinear collision operator}\label{Subsec:CFNCO}

In this subsection, we aim at justifying the convergence of the nonlinear collision operator $\tilde{Q}_{\alpha, n}$. We first consider the operator $\tilde{L}_{\alpha, n} (f^n)$, which is the main ingredients of the lose term $\tilde{Q}_{\alpha, n}^- (f^n, f^n)$, (see \eqref{Q L}). Actually, based on $L^1$-weak convergence $f^n_\alpha$ to $f^\alpha$, we will study the convergent relations between $\tilde{L}_{\alpha, n} (f^n)$ and $L_\alpha (f)$ defined in \eqref{L-alpha-def}.

\begin{lemma}\label{converge L}
	Let $T,R>0$ fixed. 
	\begin{enumerate}
		\item {For $\alpha \in \{1,\cdots,s_0\}$, one has
			\begin{equation}
				\lim \limits_{n\rightarrow \infty} \tilde{L}_{\alpha,n} (f^n) = L_\alpha(f) \quad \text{strongly in } L^1 ((0,T)\times \R^3\times B_R);
		\end{equation}}
		\item{For $\alpha \in \{s_0+1,\cdots,s\}$, one has
			\begin{equation}
			\lim \limits_{n\rightarrow \infty} \tilde{L}_{\alpha,n} (f^n) = L_\alpha(f) \quad \text{strongly in } L^1 ((0,T)\times \R^3\times B_R\times (0,R)).
		\end{equation}}
	\end{enumerate}
\end{lemma}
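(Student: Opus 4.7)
The plan combines three ingredients: a trivial reduction via $N_n(f^n)\to 1$, a double truncation that turns the unbounded kernel into a compactly supported one, and the \emph{parametric} averaged velocity(-internal energy) lemma applied to the renormalized approximation $g^n_{\delta\beta}=\delta^{-1}\log(1+\delta f^n_\beta)$ rather than to $f^n_\beta$ directly. Since $\tilde L_{\alpha,n}(f^n)=N_n(f^n)^{-1}\sum_{\beta=1}^s L_{\alpha\beta,n}(f^n)$, and mass conservation in Lemma \ref{Lemma 4.17} together with the uniform initial-mass bound of Lemmas \ref{Lemma 4.8}--\ref{Lemma 4.9} yields $N_n(f^n)=1+O(1/n)$ uniformly in $(t,x)$, it suffices to prove $L_{\alpha\beta,n}(f^n)\to L_{\alpha\beta}(f)$ strongly in $L^1$ on the appropriate slab for each ordered pair $(\alpha,\beta)$. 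I carry out the representative case $\alpha,\beta\in\{s_0+1,\ldots,s\}$; the other three sub-cases are analogous via Lemmas \ref{Lemma 4.11}--\ref{Lemma 4.12} and Corollary \ref{cor6.6}. Introducing the truncated kernel $A^m_{2\alpha\beta,n}(z,\eta):=A_{2\alpha\beta,n}(z,\eta)\mathbf{1}_{|z|\leq m}\mathbf{1}_{0<\eta<m}$ and the corresponding operator $L^m_{\alpha\beta,n}(f^n)$, Fubini combined with the mollifier domination $A_{2\alpha\beta,n}\leq j_{\delta_n}*A_{2\alpha\beta}$, the hypothesis \eqref{collision}, and the moment bound \eqref{FB3} of Lemma \ref{Lemma 4.17} yields
$$\sup_n\bigl\|L_{\alpha\beta,n}(f^n)-L^m_{\alpha\beta,n}(f^n)\bigr\|_{L^1((0,T)\times\R^3\times B_R\times(0,R))}\xrightarrow{m\to\infty}0,$$
together with the analogous bound for $L^m_{\alpha\beta}(f)\to L_{\alpha\beta}(f)$ using \eqref{falpha-bnd} of Lemma \ref{cor6.12}; a triangle inequality then reduces the problem to showing, for each fixed $m$, that $L^m_{\alpha\beta,n}(f^n)\to L^m_{\alpha\beta}(f)$ strongly in $L^1$.

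For this central convergence I invoke Lemma \ref{theo-6.4} with $E=B_R\times(0,R)$ and parametric test function $\varphi^n(t,x,\xi_*,I_*;\xi,I):=A^m_{2\alpha\beta,n}(\xi-\xi_*,I+I_*)$. The compact support of $A^m_{2\alpha\beta,n}$ combined with $A^m_{2\alpha\beta,n}\leq j_{\delta_n}*(A_{2\alpha\beta}\mathbf{1}_{|z|\leq m+1,\,0<\eta<m+1})$ gives
$$\iint_E\varphi^n(t,x,\xi_*,I_*;\xi,I)\,d\xi\,dI\leq \iint_{|z|\leq m+R+1,\,0<\eta<m+R+1}A_{2\alpha\beta}(z,\eta)\,dz\,d\eta<\infty$$
uniformly in $n$ and in $(t,x,\xi_*,I_*)$, producing the required $L^\infty((0,T)\times\R^3\times\R^3\times\R_+;L^1(E))$-bound; pointwise a.e.\ convergence $A_{2\alpha\beta,n}\to A_{2\alpha\beta}$ from Lemma \ref{Lemma 4.10}(2) plus dominated convergence against the same majorant then yields $\|\varphi^n-\varphi\|_{L^1(E)}(t,x,\xi_*,I_*)\to 0$ a.e. Since Lemma \ref{theo-6.4} requires the transport-equation source to converge weakly in $L^1_{loc}$ and $\tilde Q_{\beta,n}(f^n,f^n)$ is not known to be equi-integrable, I proceed as in the proof of Lemma \ref{theo-6.5}: apply Lemma \ref{theo-6.4} to $g^n_{\delta\beta}$ (whose source $(1+\delta f^n_\beta)^{-1}\tilde Q_{\beta,n}(f^n,f^n)$ is weakly relatively compact by Lemmas \ref{theo-4.1}--\ref{theo-4.2}) to obtain
$$\iint_{\R^3\times\R_+} g^n_{\delta\beta}\,\varphi^n\,d\xi_*\,dI_*\longrightarrow \iint_{\R^3\times\R_+}\tilde g_{\delta\beta}\,\varphi\,d\xi_*\,dI_*\quad\text{in }L^1((0,T)\times\R^3\times E),$$
and then send $\delta\to 0^+$ using $\sup_{n,\,t}\|f^n_\beta-g^n_{\delta\beta}\|_{L^1}\to 0$ from Lemma \ref{converge f,g} multiplied against the uniform $L^1(E)$-bound on $\varphi^n$; an identical argument on the limit side produces the required identification.

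The main obstacle is securing the \emph{uniform}-in-$(\xi_*,I_*)$ $L^1(E)$-bound on $\varphi^n$: the hypothesis \eqref{collision} alone only yields a bound growing like $1+|\xi_*|^2+I_*$, which is incompatible with the framework of Lemma \ref{theo-6.4}. The double truncation is precisely what restores that uniformity, while the cutoff-tail control ensures nothing is lost in passing to the limit $m\to\infty$; the renormalization at the final stage is the standard mechanism for bypassing the independent issue of lack of equi-integrability of $\tilde Q_{\beta,n}(f^n,f^n)$.
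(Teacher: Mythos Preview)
Your overall strategy matches the paper's: truncate, apply the parametric averaged lemma (Lemma~\ref{theo-6.4}) to the renormalization $g^n_{\delta\beta}$, close with Lemma~\ref{converge f,g}, and control the tail via \eqref{collision}. The only substantive difference is cosmetic: you truncate in the kernel variable $(z,\eta)=(\xi-\xi_*,I+I_*)$ while the paper truncates in the integration variable $(\xi_*,I_*)$; both choices produce the same uniform $L^\infty(\cdot;L^1(E))$ bound on the test function once one observes that on $E=B_R\times(0,R)$ the two truncations are equivalent up to enlarging the cutoff parameter.

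There is, however, one genuine slip in your reduction step. Mass conservation in Lemma~\ref{Lemma 4.17} controls $\iint_{\R^3\times\mathcal Z_\alpha} f^n_\alpha\,dx\,d\mathbf Z$, not the local density $\int_{\mathcal Z_\alpha} f^n_\alpha(t,x,\cdot)\,d\mathbf Z$ at a fixed $(t,x)$; hence $N_n(f^n)(t,x)$ is \emph{not} $1+O(1/n)$ uniformly in $(t,x)$. The reduction from $\tilde L_{\alpha,n}(f^n)$ to $L_{\alpha,n}(f^n)$ is still valid, but the correct justification is different: once you have established $L_{\alpha,n}(f^n)\to L_\alpha(f)$ strongly in $L^1$, write
\[
\|\tilde L_{\alpha,n}(f^n)-L_\alpha(f)\|_{L^1}\le\|L_{\alpha,n}(f^n)-L_\alpha(f)\|_{L^1}+\|(N_n(f^n)^{-1}-1)L_\alpha(f)\|_{L^1},
\]
and kill the second term by dominated convergence, using $0\le 1-N_n(f^n)^{-1}\le 1$, $N_n(f^n)^{-1}\to 1$ a.e.\ (which follows from Lemma~\ref{theo-6.5}/Corollary~\ref{cor6.6} after passing to a subsequence), and $L_\alpha(f)\in L^1$. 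The paper instead handles this step via Egorov's theorem combined with the equi-integrability of $L_{\alpha,n}(f^n)$ established in Lemma~\ref{theo-4.1}; your route, once corrected, is actually slightly cleaner.
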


\begin{proof}

Let $E=B_R,\ F=B_R\times (0,R)$ and $r > 0$ fixed. Consider the variables $t \in (0, T)$, $x \in \R^3$, $\xi \in B_R$, $I \in (0, R)$, $\xi_* \in \R^3$ and $I_* \in \R_+$. Denote by
\begin{equation*}
	\begin{aligned}
		\varphi^n_1 (t, x, \xi_*, \xi) & = A_{0\alpha \beta ,n}(\xi -\xi_* ) \mathbf{1}_{|\xi_* |\leq r} \in L^\infty ((0,T)\times \R^3 \times \R^3; L^1(E)), \\
		\varphi^n_2 (t, x, \xi_*, I_*,\xi) & = A_{1\alpha \beta ,n}(\xi -\xi_* ,I_* ) \mathbf{1}_{|\xi_* |\leq r} \mathbf{1}_{0 \leq I_* \leq r} \in L^\infty ((0,T)\times \R^3 \times \R^3 \times \R_+; L^1(E)),\\
		\varphi^n_3 (t, x, \xi_*, \xi, I) & = A_{1\beta \alpha ,n}(\xi -\xi_* ,I) \mathbf{1}_{|\xi_* |\leq r} \in L^\infty ((0,T)\times \R^3 \times \R^3; L^1(F)), \\
		\varphi^n_4 (t, x, \xi_*, I_*, \xi, I) & = A_{2\alpha \beta ,n}(\xi -\xi_* ,I+I_* ) \mathbf{1}_{|\xi_* |\leq r} \mathbf{1}_{0 \leq I_* \leq r} \in L^\infty ((0,T)\times \R^3 \times \R^3\times \R_+; L^1(F)) \,.
	\end{aligned}
\end{equation*}
Similarly, let
\begin{equation*}
	\begin{aligned}
		& \varphi_1 (t, x, \xi_*, \xi) = A_{0\alpha \beta }(\xi -\xi_* ) \mathbf{1}_{|\xi_* |\leq r}, \quad \quad \ \ \, \varphi_2 (t, x, \xi_*, I_*, \xi ) = A_{1\alpha \beta }(\xi -\xi_* ,I_* ) \mathbf{1}_{|\xi_* |\leq r} \mathbf{1}_{0 \leq I_* \leq r}, \\
		&\varphi_3 (t, x, \xi_*, \xi, I) = A_{1\beta \alpha}(\xi -\xi_* ,I) \mathbf{1}_{|\xi_* |\leq r}, \quad \varphi_4 (t, x, \xi_*, I_*, \xi, I ) = A_{2\alpha \beta }(\xi -\xi_* ,I+I_* ) \mathbf{1}_{|\xi_* |\leq r} \mathbf{1}_{0 \leq I_* \leq r}.
	\end{aligned}
\end{equation*}
Together with Lemmas \ref{Lemma 4.10}-\ref{Lemma 4.11}-\ref{Lemma 4.12}, one obtains  
\begin{equation*}
	\begin{aligned}
		& \lim \limits_{n\rightarrow \infty } \Vert \varphi^n_1 -\varphi_1 \Vert_{L^1(B_R)}=0, \quad a.e. \ (t,x,\xi_*) \in (0,T) \times \R^3 \times \R^3, \\
		& \lim \limits_{n\rightarrow \infty } \Vert \varphi^n_2 -\varphi_2 \Vert_{L^1(B_R)}=0, \quad a.e. \ (t,x,\xi_* ,I_*) \in (0,T)\times \R^3 \times \R^3 \times \R_+, \\
		& \lim \limits_{n\rightarrow \infty } \Vert \varphi^n_3 -\varphi_3 \Vert_{L^1(B_R\times (0,R))}=0, \quad a.e. \ (t,x,\xi_*) \in (0,T)\times \R^3 \times \R^3, \\
		& \lim \limits_{n\rightarrow \infty } \Vert \varphi^n_4 -\varphi_4 \Vert_{L^1(B_R\times (0,R) )}=0, \quad a.e. \ (t,x,\xi_* ,I_*) \in (0,T)\times \R^3 \times \R^3 \times \R_+ . 
	\end{aligned}
\end{equation*}
Then Lemma \ref{theo-6.4.1} shows that
\begin{equation*}{\small
	\begin{aligned}
		\lim \limits_{n\rightarrow \infty } \int_{|\xi_* |\leq r} g^n_{\delta \beta_* } A_{0\alpha \beta ,n}(\xi -\xi_* ) d\xi_* &=\int_{|\xi_* |\leq r} \tilde{g}_{\delta \beta_*} A_{0\alpha \beta } (\xi -\xi_* )d\xi_*, \\
		\lim \limits_{n\rightarrow \infty } \int_{|\xi_* |\leq r} \int_{0\leq I_* \leq r} g^n_{\delta \beta_*} A_{1\alpha \beta ,n}(\xi -\xi_* ,I_* ) d\xi_* dI_*&=\int_{|\xi_* |\leq r} \int_{0\leq I_* \leq r} \tilde{g}_{\delta \beta_*} A_{1\alpha \beta } (\xi -\xi_* ,I_* )d\xi_* dI_*
	\end{aligned}}
\end{equation*}
strongly in $L^1((0,T)\times \R^3 \times B_R )$. Moreover, Lemma \ref{theo-6.4} indicates that
\begin{equation*}
	\begin{aligned}
		\lim \limits_{n\rightarrow \infty } &\int_{|\xi_* |\leq r} g^n_{\delta \beta_*} A_{1\beta \alpha ,n}(\xi -\xi_* ,I) d\xi_* =\int_{|\xi_* |\leq r} \tilde{g}_{\delta \beta_*} A_{1\beta \alpha }(\xi -\xi_* ,I) d\xi_*, \\
		\lim \limits_{n\rightarrow \infty } &\int_{|\xi_* |\leq r} \int_{0\leq I_* \leq r} g^n_{\delta \beta_*} A_{2\alpha \beta ,n}(\xi -\xi_* ,I+I_* ) d\xi_* dI_*\\
		=&\int_{|\xi_* |\leq r} \int_{0\leq I_* \leq r} \tilde{g}_{\delta \beta_*} A_{2\alpha \beta }(\xi -\xi_* ,I+I_* ) d\xi_* dI_*
	\end{aligned}
\end{equation*}
strongly in $L^1((0,T)\times \R^3 \times B_R \times (0,R))$.

Recall that $0 < g^n_{\delta \alpha} < f^n_\alpha$ for $\alpha \in \{ 1, \cdots, s \}$. For the cases $\alpha, \beta \in \{s_0+1,\cdots ,s\}$, following the similar arguments in \eqref{Trans-fab}, one knows that for all $\epsilon>0$ there exists $C_\epsilon >0$ such that
\begin{equation*}
	\begin{aligned}
		&\Vert \int_{|\xi_* |\leq r} \int_{0\leq I_* \leq r} (f^n_{ \beta_*} -g^n_{\delta \beta_*}) A_{2\alpha \beta ,n}(\xi -\xi_* ,I+I_* ) d\xi_* dI_* \Vert_{L^1((0,T)\times \R^3 \times B_R \times (0,R))} \\
		\leq & \epsilon \int_{0}^{T} dt \iiint_{\R^3 \times \R^3 \times \R_+} (f^n_{\beta *} -g^n_{\delta \beta * } ) (1+|\xi_*|^2+I_*) dx d\xi_* dI_* \\
		+&C_\epsilon  \int_{0}^{T} dt \iiint_{\R^3 \times \R^3 \times \R_+}| f^n_{\beta *} -g^n_{\delta \beta * } |dx d\xi_* dI_* .
	\end{aligned}
\end{equation*}
By Lemma \ref{converge f,g}, one knows that as $\delta \to 0^+$,
\begin{equation*}{\small
  \begin{aligned}
    C_\epsilon  \int_{0}^{T} dt \iiint_{\R^3 \times \R^3 \times \R_+}| f^n_{\beta *} -g^n_{\delta \beta * } |dx d\xi_* dI_* \leq C_\epsilon T \sup_{n \geq 1} \sup_{ t \in (0, T) } \iiint_{\R^3 \times \R^3 \times \R_+}| f^n_{\beta *} -g^n_{\delta \beta * } |dx d\xi_* dI_* \to 0 \,.
  \end{aligned}}
\end{equation*}
Moreover, by Lemma \ref{Lemma 4.17},
\begin{equation*}
  \begin{aligned}
    & \epsilon \int_{0}^{T} dt \iiint_{\R^3 \times \R^3 \times \R_+} (f^n_{\beta *} -g^n_{\delta \beta * } ) (1+|\xi_*|^2+I_*) dx d\xi_* dI_* \\
    \leq & 2 \epsilon T \sup_{ t \in [0, T] } \iiint_{\R^3 \times \R^3 \times \R_+} f^n_{\beta *} (1+|\xi_*|^2+I_*) dx d\xi_* dI_* \leq C_T \epsilon \to 0 \ (\epsilon \to 0) \,.
  \end{aligned}
\end{equation*}
Consequently, as $\delta \to 0^+$,
\begin{equation*}
  \begin{aligned}
    \Vert \int_{|\xi_* |\leq r} \int_{0\leq I_* \leq r} (f^n_{ \beta_*} -g^n_{\delta \beta_*}) A_{2\alpha \beta ,n}(\xi -\xi_* ,I+I_* ) d\xi_* dI_* \Vert_{L^1((0,T)\times \R^3 \times B_R \times (0,R))} \to 0 \,.
  \end{aligned}
\end{equation*}
Similarly, as $\delta \to 0^+$,
\begin{equation*}
	\Vert \int_{|\xi_* |\leq r} \int_{0\leq I_* \leq r} (f_{ \beta_*} -\tilde{g}_{\delta \beta_*}) A_{2\alpha \beta ,n}(\xi -\xi_* ,I+I_* ) d\xi_* dI_* \Vert_{L^1((0,T)\times \R^3 \times B_R \times (0,R))} \rightarrow  0.
\end{equation*}
Therefore, for the cases $ \alpha, \beta \in \{ s_0 + 1, \cdots, s \} $, as $n \rightarrow + \infty $ and $ \delta \rightarrow 0^+$,
	\begin{align}\label{K1}
		\no &\Vert \int_{|\xi_* |\leq r} \int_{0\leq I_* \leq r} (f^n_{ \beta_*}-f_{\beta_*}) A_{2\alpha \beta ,n}(\xi -\xi_* ,I+I_* ) d\xi_* dI_*  \Vert_{L^1((0,T)\times \R^3 \times B_R \times (0,R))} \\
		\no \leq & \Vert \int_{|\xi_* |\leq r} \int_{0\leq I_* \leq r} (g^n_{\delta \beta_*}-\tilde{g}_{\delta \beta_*}) A_{2\alpha \beta ,n} (\xi -\xi_* ,I+I_* )d\xi_* dI_* \Vert_{L^1((0,T)\times \R^3 \times B_R \times (0,R))} \\
		\no + & \sup_{n \geq 1} \Vert \int_{|\xi_* |\leq r} \int_{0\leq I_* \leq r} (f^n_{ \beta_*} -g^n_{\delta \beta_*}) A_{2\alpha \beta ,n} (\xi -\xi_* ,I+I_* )d\xi_* dI_* \Vert_{L^1((0,T)\times \R^3 \times B_R \times (0,R))} \\
		+ &\Vert \int_{|\xi_* |\leq r} \int_{0\leq I_* \leq r} (f_{\beta_*} -\tilde{g}_{\delta \beta_*}) A_{2\alpha \beta,n }(\xi -\xi_* ,I+I_* ) d\xi_* dI_* \Vert_{L^1((0,T)\times \R^3 \times B_R \times (0,R))} \rightarrow 0.
	\end{align}

On the other hand, following the similar arguments in \eqref{Trans-fab} and combining with the uniform bounds in Lemma \ref{Lemma 4.17}, one knows that for any $\epsilon > 0$ there is a $C_\epsilon > 0$ such that
\begin{equation}\label{K2}
	\begin{aligned}
		&\Vert \iint_{ \R^3 \times \R_+} f^n_{ \beta *} A_{2\alpha \beta ,n}(\xi -\xi_* ,I+I_* ) ( \mathbf{1}_{|\xi_*|>r}+ \mathbf{1}_{I_*>r})d\xi_* dI_* \Vert_{L^1((0,T)\times \R^3 \times B_R \times (0,R))} \\
		\leq & \tfrac{C_\epsilon}{r^2} \int_0^T dt \iiint_{ \R^3 \times \R^3 \times \R_+} f^n_{\beta *} |\xi_*|^2 dxd\xi_* dI_*  + \tfrac{C_\epsilon}{r} \int_0^T dt \iiint_{ \R^3 \times \R^3 \times \R_+} f^n_{\beta *} \cdot I_* dxd\xi_* dI_* \\
		+ & \epsilon  \int_0^T dt \iiint_{ \R^3 \times \R^3 \times \R_+} f^n_{\beta *} (1+|\xi_*|^2+I_*) d\xi_* dI_* \rightarrow 0 \quad (r \rightarrow \infty, \epsilon \rightarrow 0^+).
	\end{aligned}
\end{equation}
Similarly, for $\alpha, \beta \in \{ s_0 + 1, \cdots, s \}$,
\begin{equation}\label{K33}
	\lim_{r \to + \infty} \Vert \iint_{ \R^3 \times \R_+} f_{ \beta *} (A_{2\alpha \beta }+ A_{2\alpha \beta,n }) ( \mathbf{1}_{|\xi_*|>r}+ \mathbf{1}_{I_*>r})d\xi_* dI_* \Vert_{L^1((0,T)\times \R^3 \times B_R \times (0,R))} = 0.
\end{equation}
Therefore, the convergence \eqref{K1}-\eqref{K2}-\eqref{K33} imply that as $n \to + \infty$ and $r \to + \infty$,
\begin{equation}\label{K4}
	\begin{aligned}
		&\Vert L_{ \alpha \beta ,n} (f^n ) -L_{ \alpha \beta} (f) \Vert_{L^1((0,T)\times \R^3 \times B_R \times (0,R))}\\
		\leq &\Vert \int_{|\xi_* |\leq r} \int_{0\leq I_* \leq r} (f^n_{ \beta_*}-f_{\beta_*}) A_{2\alpha \beta ,n}(\xi -\xi_* ,I+I_* ) d\xi_* dI_*  \Vert_{L^1((0,T)\times \R^3 \times B_R \times (0,R))} \\
		+&\Vert \iint_{ \R^3 \times \R_+} f^n_{ \beta_*} A_{2\alpha \beta ,n} ( \mathbf{1}_{|I_*|>r}+ \mathbf{1}_{I_*>r})d\xi_* dI_* \Vert_{L^1((0,T)\times \R^3 \times B_R \times (0,R))} \\
		+& \Vert \iint_{ \R^3 \times \R_+} f_{ \beta_*} (A_{2\alpha \beta }+ A_{2\alpha \beta,n }) ( \mathbf{1}_{|I_*|>r}+ \mathbf{1}_{I_*>r})d\xi_* dI_* \Vert_{L^1((0,T)\times \R^3 \times B_R \times (0,R))} \rightarrow 0 
	\end{aligned}
\end{equation}
for $\alpha, \beta \in \{ s_0 + 1, \cdots, s \}$.

Similarly, for $ \alpha \in \{ s_0 + 1, \cdots, s \} $, $\beta \in \{ 1, \cdots, s_0 \}$, 
\begin{equation}\label{K5}
  \begin{aligned}
    \lim_{n \to + \infty} L_{ \alpha \beta ,n} (f^n ) = L_{ \alpha \beta} (f) \quad \textrm{strongly in } \ L^1 ( (0, T) \times \R^3 \times B_R \times (0, R) ) \,.
  \end{aligned}
\end{equation}
Then, by \eqref{K4}-\eqref{K5}, we have
\begin{equation}\label{K6}
	\lim\limits_{n\rightarrow \infty } L_{\alpha ,n} (f^n) =L_{\alpha } (f) \quad \text{strongly in }\ L^1((0,T)\times \R^3 \times B_R\times (0,R))
\end{equation}
for all $\alpha \in \{s_0+1,\cdots,s\}$, which means that $ \sup_{n \geq 1} \| L_{\alpha ,n} (f^n) \|_{ L^1((0,T)\times \R^3 \times B_R\times (0,R)) } \leq C $. 

Recalling the definition of $N_n (f^n)$ in \eqref{Nnf}, we know that $N_n (f^n)^{-1}$ is bounded in $L^\infty ( (0,T) \times \R^3)$ and converges to $1$ almost everywhere. Together with \eqref{K6},
\begin{equation}\label{eq6.78}{\small
	\begin{aligned}
		& \Vert \tilde{L}_{\alpha ,n}(f^n) -L_\alpha (f) \Vert_{L^1((0,T)\times \R^3 \times B_R \times (0,R))} \\
		\leq & \Vert {L}_{\alpha ,n}(f^n) -L_\alpha (f) \Vert_{L^1((0,T)\times \R^3 \times B_R \times (0,R))} + \Vert \tilde{L}_{\alpha ,n}(f^n) - {L}_{\alpha ,n}(f^n) \Vert_{L^1((0,T)\times \R^3 \times B_R \times (0,R))}  .
	\end{aligned}}
\end{equation}

Then, by applying Egorov's theorem, for any $\epsilon > 0$, there exists a measurable subset $A \subset (0, T) \times B_R \times B_R \times (0, R)$ such that $ N_n (f^n)^{-1} $ converges uniformly to $1$ on $A$, and $\mu \left(((0, T) \times B_R \times B_R \times (0, R)) \setminus A\right) < \epsilon$. Thus,
\begin{equation}\label{Lan-La}
	\lim_{n\to\infty} \Vert \tilde{L}_{\alpha ,n}(f^n) - {L}_{\alpha ,n}(f^n) \Vert_{L^1((0,T)\times \R^3 \times B_R \times (0,R))}=0.
\end{equation}
Equation \eqref{K6} implies that $L_{\alpha,n}(f^n)$ is weakly compact in $L^1((0, T) \times \mathbb{R}^3 \times B_R \times (0, R))$. Then, as $\epsilon \to 0^+$ and $R \to \infty$,
\begin{equation}\label{K7}
	\|L_{\alpha,n}(f^n)\|_{L^1((0,T)\times B_R \times B_R \times (0,R)\setminus A)} + \|L_{\alpha,n}(f^n)\|_{L^1((0,T)\times (B_R)^c \times B_R \times (0,R))} \to 0.  
\end{equation}
Substituting \eqref{K6}, \eqref{Lan-La}, and \eqref{K7} into \eqref{eq6.78}, we obtain that the strong convergence holds.  
Similarly, for $\alpha \in \{1,\cdots,s_0\}$, we have 
\begin{equation*}
	\lim\limits_{n\rightarrow \infty } \tilde{L}_{\alpha ,n} (f^n) =L_{\alpha } (f) \quad \text{in}\ L^1((0,T)\times \R^3 \times B_R).
\end{equation*}
As a result, the proof of Lemma \ref{converge L} is finished.
\end{proof}

Next we investigate the convergence of $\tilde{Q}_{\alpha, n}$.

\begin{lemma}\label{theo-6.7}
	Let $R>0$ and $\alpha \in \{s_0+ 1,\cdots ,s\}$. Suppose $ \varphi (t, x, \xi, I) \in L^\infty ((0,T)\times \R^3 \times \R^3 \times \R_+)$ with $ supp \varphi \subseteq (0,T) \times B_R \times B_R\times (0,R)$. Then for all $\epsilon>0$ there exists a Borel subset $E\subset (0,T)\times B_R \times B_R\times (0,R)$ such that $|E^c| = | (0,T)\times B_R \times B_R\times (0,R) \setminus E | <\epsilon$ and
	\begin{equation*}
		\lim \limits_{n\rightarrow \infty } \iint_{ \R^3 \times \R_+} \tilde{Q}^\pm_{\alpha ,n} (f^n,f^n) \varphi \mathbf{1}_E d\xi dI =  \iint_{ \R^3 \times \R_+} Q^\pm_{\alpha } (f,f) \varphi \mathbf{1}_E d\xi dI \quad \text{strongly in } \ L^1 ((0,T)\times B_R).
	\end{equation*}
\end{lemma}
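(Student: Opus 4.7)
The plan is to apply Egorov's theorem to the strong convergence in Lemma \ref{converge L} to produce a set $E$ on which $\tilde{L}_{\alpha,n}(f^n) \to L_\alpha(f)$ uniformly, then handle the loss term directly using the enhanced averaged velocity--internal energy Lemma \ref{theo-6.5} and the gain term via the pre/post-collisional symmetry together with the Arkeryd-type inequality from the proof of Lemma \ref{theo-4.2}. First I would construct $E$: Lemma \ref{converge L} gives $\tilde{L}_{\alpha,n}(f^n) \to L_\alpha(f)$ in $L^1((0,T)\times\R^3\times B_R\times(0,R))$, so passing to a subsequence the convergence is a.e. on $\Omega := (0,T)\times B_R\times B_R\times(0,R)$, a set of finite measure. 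Egorov yields $E_0 \subset \Omega$ with $|\Omega\setminus E_0|<\epsilon/2$ on which convergence is uniform. Since $L_\alpha(f)\in L^1(\Omega)$ is finite a.e., choose $M$ with $|\{L_\alpha(f)>M\}\cap\Omega|<\epsilon/2$ and set $E := E_0\cap\{L_\alpha(f)\leq M\}$; on $E$ the convergence is uniform and $\tilde{L}_{\alpha,n}(f^n)$ is uniformly bounded.

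For the loss term, write $\tilde{Q}^-_{\alpha,n}(f^n,f^n)=f^n_\alpha \tilde{L}_{\alpha,n}(f^n)$ and set $\psi^n:=\tilde{L}_{\alpha,n}(f^n)\varphi\mathbf{1}_E$, $\psi:=L_\alpha(f)\varphi\mathbf{1}_E$. By Step 1, $\psi\in L^\infty(\Omega)$ and $\|\psi^n-\psi\|_{L^\infty(\Omega)}\to 0$. Split
\begin{equation*}
\iint \tilde{Q}^-_{\alpha,n}(f^n,f^n)\varphi\mathbf{1}_E\,d\xi dI = \iint f^n_\alpha(\psi^n-\psi)\,d\xi dI + \iint f^n_\alpha\psi\,d\xi dI.
\end{equation*}
The first term has $L^1((0,T)\times B_R)$-norm at most $\|\psi^n-\psi\|_{L^\infty}\cdot\sup_n\|f^n_\alpha\|_{L^1}$, which is finite by Lemma \ref{Lemma 4.17} and vanishes; the second converges to $\iint f_\alpha\psi\,d\xi dI$ in $L^1((0,T)\times\R^3)$ by Lemma \ref{theo-6.5} applied to $\psi\in L^\infty$.

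For the gain term, exploit the pre/post-collisional symmetry as in \eqref{Q} of the proof of Lemma \ref{Lemma 4.13}: for each species pair $(\alpha,\beta)$,
\begin{equation*}
\iint Q^+_{\alpha\beta,n}(f^n,f^n)\,\varphi\mathbf{1}_E\,d\xi dI = \iint f^n_\alpha\,\Psi^n_\beta\,d\xi dI,
\end{equation*}
where $\Psi^n_\beta$ is the integral of $f^n_{\beta_*}$ against the approximated kernel times the post-collisional evaluation $(\varphi\mathbf{1}_E)(\xi',I')$. A variant of the argument of Lemma \ref{converge L}---now using the kernel convergence from Lemmas \ref{Lemma 4.10}--\ref{Lemma 4.12}, the uniform growth assumption \eqref{collision}, and the generalized averaged-velocity Lemma \ref{theo-6.4} with the bounded $L^\infty$-factor $(\varphi\mathbf{1}_E)(\xi',I')$---shows $\Psi^n_\beta\to\Psi_\beta$ strongly in $L^1$. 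Applying Egorov once more and shrinking $E$ by at most $\epsilon/(2s)$ per species gives uniform convergence of $\Psi^n_\beta$ on the final $E$, so Step 2's argument applies with $\Psi^n_\beta$ in place of $\psi^n$. The prefactor $N_n(f^n)^{-1}$ is absorbed by the Product Limit Theorem (Lemma \ref{product limit}); the Arkeryd-type inequality $\tilde{Q}^+_{\alpha,n}(f^n,f^n)\leq \frac{1}{\log K}\tilde{e}^n_\alpha + K\tilde{Q}^-_{\alpha,n}(f^n,f^n)$ from the proof of Lemma \ref{theo-4.2}, combined with the uniform $L^1$-bound on $\tilde{e}^n_\alpha$ from Lemma \ref{Lemma 4.17}, provides the equi-integrability underlying these extraction steps.

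The main obstacle lies in the gain-term analysis: the test function $(\varphi\mathbf{1}_E)(\xi',I')$ is evaluated at the post-collisional variables, which depend nonlinearly on $(\xi,\xi_*,I,I_*,\mathfrak{R},\mathfrak{r},\omega)$, so Lemma \ref{converge L} does not apply directly. Transferring the strong convergence of $\tilde{L}_{\alpha,n}(f^n)$ across this change of variables so that $\Psi^n_\beta$ itself enjoys uniform convergence on a large subset is the core technical difficulty; the uniform kernel properties established in Lemmas \ref{Lemma 4.10}--\ref{Lemma 4.12} together with the hypothesis \eqref{collision} are precisely what makes this transfer possible.
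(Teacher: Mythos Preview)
Your framework matches the paper's: construct $E$ via Egorov from the $L^1$ convergence of $\tilde L_{\alpha,n}(f^n)$ (Lemma~\ref{converge L}), treat the loss term exactly as you do, and attack the gain term through the pre/post-collisional identity~\eqref{Q}. Where you diverge from the paper is in the gain-term mechanism, and this is where a gap appears.

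The paper does not try to prove $\Psi^n_\beta\to\Psi_\beta$ in $L^1$ directly. Instead it introduces a \emph{kernel truncation}: writing $B_{\alpha\beta,n}=B_{\alpha\beta,n;m}+B^m_{\alpha\beta,n}$ with $B_{\alpha\beta,n;m}=B_{\alpha\beta,n}\wedge m\,\mathbf{1}_{|z|\le m}\mathbf{1}_{0\le\eta\le m}$, it controls the contribution of the remainder $B^m_{\alpha\beta,n}$ by the Arkeryd inequality~\eqref{eq5.71}, yielding
\[
\int\tilde Q^{m,+}_{\alpha,n}(f^n,f^n)|\varphi|\mathbf{1}_E\;\le\;\tfrac{C}{\log K}+K\int f^n_\alpha\,\tilde L^m_{\alpha,n}(f^n)|\varphi|\mathbf{1}_E,
\]
and the last term vanishes uniformly in $n$ as $m\to\infty$ because $\tilde L^m_{\alpha,n}(f^n)\le\tilde L_{\alpha,n}(f^n)$ is uniformly bounded on $E$ and $A^m_{\alpha\beta,n}\downarrow0$. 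For the truncated piece, the bounded compactly supported kernel makes the gain-to-loss conversion with $\varphi'$ tractable, and the loss-type argument~\eqref{Qalpha} carries over.

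Your alternative has a structural problem. After~\eqref{Q}, the indicator $\mathbf{1}_E$ lands at the \emph{post-collisional} variables $(\xi',I')$, so $\Psi^n_\beta$ already encodes $E$. If you then apply Egorov to $\Psi^n_\beta$ and shrink $E$ to some $E_{\mathrm{final}}$, the gain integral you must control is $\iint\tilde Q^+_{\alpha,n}(f^n,f^n)\varphi\mathbf{1}_{E_{\mathrm{final}}}\,d\xi\,dI$, whose change-of-variables image carries $\mathbf{1}_{E_{\mathrm{final}}}(\xi',I')$---not the $\mathbf{1}_{E}(\xi',I')$ baked into $\Psi^n_\beta$. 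The mismatch lives on $(E\setminus E_{\mathrm{final}})$ evaluated at post-collisional position, and you have no uniform control there. Separating out $\mathbf{1}_E$ and working with $\tilde\Psi^n_\beta$ built from $\varphi$ alone does not help either, because then $\iint\tilde Q^+\varphi\mathbf{1}_E$ is not $\iint f^n_\alpha\mathbf{1}_E\tilde\Psi^n_\beta$: the indicator is on the wrong side of the collision. The paper's truncation sidesteps this entirely by making the remainder small via Arkeryd \emph{at the pre-collisional} $E$, where the uniform bound on $\tilde L_{\alpha,n}$ is available, so no second Egorov and no shrinking of $E$ is needed.
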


\begin{proof}
	By Egorov's theorem, for all $\epsilon >0$, there exists a Borel subset $E_0 \subset (0,T)\times B_R \times B_R\times (0,R) $ such that $|E_0^c| = | (0,T)\times B_R \times B_R\times (0,R) \setminus E_0 | <\epsilon$ and $\tilde{L}_{\alpha ,n} (f^n)$ uniformly converges to $L_{\alpha } (f)$ on $E_0$. Besides, it is derived from Lusin theorem that  there exists a closed subset $B \subset (0,T)\times B_R \times B_R\times (0,R)$ such that $|B^c| = | (0,T)\times B_R \times B_R\times (0,R) \setminus B | <\epsilon$ and $L_\alpha (f)$ is bounded on $B$. Setting $E = E_0 \cap B$ and utilizing uniform convergence, we obtain $\tilde{L}_{\alpha ,n} (f^n) $ is uniformly bounded on $E$. Together with the uniform bounds in Lemma \ref{Lemma 4.17} and Lemma \ref{theo-6.5}, one has
	\begin{equation}\label{Qalpha}
		\begin{aligned}		
			&\Vert \iint_{\R^3 \times \R_+} (\tilde{Q}^-_{\alpha ,n}(f^n,f^n)-Q^-_{\alpha } (f,f))  \varphi \mathbf{1}_E d\xi dI \Vert_{L^1((0,T) \times B_R)}\\
			\leq & \Vert \varphi \Vert_{L^\infty }  \sup \limits_{E} |\tilde{L}_{\alpha ,n} (f^n)-L_\alpha (f)|  \int_{0}^{T}dt \iiint_{ \R^3 \times \R^3 \times R_+} f^n_\alpha dxd\xi dI \\
			+& \Vert \iint_{\R^3 \times \R_+} (f^n_\alpha -f_\alpha) L_\alpha(f) \varphi \mathbf{1}_E d\xi dI\Vert_{L^1((0,T) \times B_R)}
			\rightarrow 0 ,
		\end{aligned}
	\end{equation}
as $n\rightarrow \infty $.	Therefore, the situation of $ \tilde{Q}^-_{\alpha ,n}$ is established.

Denote by $f \wedge g = \min \{ f, g \}$. Let 
\begin{equation*}
  \begin{aligned}
    B_{\alpha \beta ,n ;m}=B_{\alpha \beta ,n}\wedge m \mathbf{1}_{|z|\leq m} \mathbf{1}_{0\leq \eta \leq m} \,, \ B^m_{\alpha \beta ,n}=B_{\alpha \beta ,n}(\mathbf{1}_{B_{\alpha \beta ,n}>m}+\mathbf{1}_{|z|>m}+\mathbf{1}_{\eta >m}) \,.
  \end{aligned}
\end{equation*}
Write 
	\begin{equation*}{\small
		\begin{aligned}
		A_{1\beta \alpha  ,n ;m} (z,\eta )&= \int_{[0,1] \times \mathbb{S}^2 } B_{1\beta \alpha  ,n ;m} \mathfrak{R}^{\delta(\alpha)/2-1} \mathfrak{R}^{1/2} d \mathfrak{R} d \omega, \\
		A^m_{1\beta \alpha  ,n } (z,\eta )& = \int_{[0,1] \times \mathbb{S}^2 } B^m_{1\beta \alpha  ,n} \mathfrak{R}^{\delta(\alpha)/2-1} \mathfrak{R}^{1/2} d \mathfrak{R} d \omega, \\
		A_{2\alpha \beta ,n ;m}(z,\eta )&= \int_{[0,1]^2 \times \mathbb{S}^2 } B_{2\alpha \beta ,n ;m} \mathfrak{r}^{\delta(\alpha)/2-1}(1-\mathfrak{r})^{\delta(\beta )/2-1} (1-\mathfrak{R})^{(\delta(\alpha)+\delta(\beta )/2-1)} \mathfrak{R}^{1/2} d \mathfrak{R} d \mathfrak{R} d \omega, \\
		 A^m_{2\alpha \beta ,n }(z,\eta )&= \int_{[0,1]^2 \times \mathbb{S}^2 } B^m_{2\alpha \beta ,n } \mathfrak{r}^{\delta(\alpha)/2-1}(1- \mathfrak{r})^{\delta(\beta )/2-1}(1-\mathfrak{R})^{(\delta(\alpha)+\delta(\beta )/2-1)}\mathfrak{R}^{1/2}d \mathfrak{R} d \mathfrak{r} d\omega.
		\end{aligned}}
	\end{equation*}
Denote by $\tilde{Q}^\pm _{\alpha ,n ;m}(f,f)$ and $\tilde{Q}^{m, \pm} _{\alpha ,n }(f,f)$ the corresponding normalized collision operators. Similarly, let $B_{\alpha \beta ; m}=B_{\alpha \beta }\wedge m \mathbf{1}_{|z|\leq m} \mathbf{1}_{0\leq \eta \leq m}$, $B^m_{\alpha \beta }=B_{\alpha \beta }( \mathbf{1}_{B_{\alpha \beta }>m}+ \mathbf{1}_{|z|>m}+ \mathbf{1}_{\eta >m})$. Similarly we obtain the forms for $A_{\alpha \beta ;m}, A^m_{\alpha \beta}, \tilde{Q}^\pm _{\alpha ;m}(f,f)$ and $\tilde{Q}^{m, \pm} _{\alpha}(f,f)$. Then by the Arkeryd-type inequality \eqref{eq5.71},
	\begin{equation*}{\small
		\begin{aligned}
			&\int_{0}^{T}dt \iiint_{B_R \times B_R \times(0,R)} |(\tilde{Q}^{+}_{\alpha ,n;m}(f^n,f^n) -\tilde{Q}^+_{\alpha,n} (f^n,f^n) )| | \varphi | \mathbf{1}_E dxd\xi dI \\
			\leq & \int_{0}^{T}dt  \iiint_{B_R \times B_R \times(0,R)} \tilde{Q}^{m, +} _{\alpha ,n }(f^n,f^n) | \varphi | \mathbf{1}_E  dxd\xi dI \\
\leq & \tfrac{C}{\log K} + K \int_{0}^{T}dt  \iiint_{B_R\times B_R\times(0,R) } | \varphi | \mathbf{1}_E f^n_\alpha \tilde{L}^m_{\alpha ,n} (f^n) dxd\xi dI,
		\end{aligned}}
	\end{equation*}
	where 
		\begin{align*}
			\tilde{L}^m_{\alpha ,n} (f^n) &= N_n (f^n)^{-1} \sum \limits_{\beta =1}^{s_0} \int_{\R^3} f^n_{\beta *} A^m_{1\beta \alpha ,n }(\xi -\xi_* ,I)d\xi_*\\
			& + N_n (f^n)^{-1} \sum \limits_{\beta =s_0+1}^{s} \iint_{\R^3\times \R_+} f^n_{\beta *} A^m_{2\alpha \beta ,n }(\xi -\xi_* ,I+I_*)d\xi_* dI_*.
		\end{align*}
Notice that $0\leq A_{\alpha \beta ,n;m}\leq A_{\alpha \beta,n}$ and $\tilde{L}_{\alpha ,n} (f^n)$ is uniformly bounded on $E$. It is then easy to see that $\tilde{L}^m_{\alpha ,n} (f^n) \mathbf{1}_E \leq \tilde{L}_{\alpha ,n} (f^n) \mathbf{1}_E$ and $A^m_{\alpha \beta,n} \downarrow 0$ as $m \uparrow +\infty$. Therefore, by the Lebesgue Control Convergence Theorem, we can prove for any fixed $n \geq 1$,
	\begin{equation}\label{eq6.79}
		\int_{0}^{T} dt \iiint_{B_R \times B_R \times(0,R)} | \varphi | \mathbf{1}_E f^n_\alpha \tilde{L}^m_{\alpha ,n} (f^n) dxd\xi dI \downarrow 0 \ \text{as}\ m \uparrow +\infty.
	\end{equation}
	Similarly as in \eqref{eq6.79}, it infers that
	\begin{equation}\label{eq6.80}
		\int_{0}^{T} dt \iiint_{B_R \times B_R \times(0,R)} | \varphi | \mathbf{1}_E f_\alpha L^m_{\alpha } (f) dxd\xi dI \downarrow 0 \ \text{as}\  m \uparrow +\infty,  
	\end{equation}
	where 
	\begin{equation*}
		L^m_{\alpha } (f) =  \sum \limits_{\beta =1}^{s_0} \int_{\R^3} f_{\beta *} A^m_{1\beta \alpha}(\xi -\xi_* ,I)d\xi_* + \sum \limits_{\beta =s_0+1}^{s} \iint_{\R^3\times \R_+} f_{\beta_*} A^m_{2\alpha \beta }(\xi -\xi_* ,I+I_*)d\xi_* dI_*.
	\end{equation*}
Following the same arguments in \eqref{Qalpha}, we get that for any fixed $m\geq 1$ and $n\rightarrow + \infty$, 
	\begin{equation}\label{eq6.81}
		\int_{0}^{T} dt \iiint_{B_R \times B_R \times(0,R)} \varphi \mathbf{1}_E f^n_\alpha \tilde{L}^m_{\alpha ,n} (f^n) dxd\xi dI \rightarrow \int_{0}^{T} dt \iiint_{B_R \times B_R \times(0,R)} \varphi \mathbf{1}_E f_\alpha L^m_{\alpha } (f) dxd\xi dI.
	\end{equation}
	Combining with the relations \eqref{eq6.79}-\eqref{eq6.81}, it can be concluded that
	\begin{equation*}
		\begin{aligned}
			\lim \limits_{m\rightarrow \infty} \sup \limits_{n} \int_{0}^{T} dt \iiint_{B_R \times B_R \times(0,R)} \varphi \mathbf{1}_E f^n_\alpha \tilde{L}^m_{\alpha ,n} (f^n) dxd\xi dI = 0.
		\end{aligned}
	\end{equation*}
	Therefore, by taking $m \to + \infty$ and $K \to + \infty$,
	\begin{equation}\label{Qnm}
		\begin{aligned}
			\lim \limits_{m\rightarrow + \infty} \sup \limits_{n \geq 1}&\int_{0}^{T}dt \iiint_{B_R \times B_R \times(0,R)} dxd\xi dI |(\tilde{Q}^{ +}_{\alpha ,n;m}(f^n,f^n) -\tilde{Q}^+_{\alpha,n} (f^n,f^n) )| |\varphi| \mathbf{1}_E = 0.
		\end{aligned}
	\end{equation}
Similarly, one can also derives that
\begin{equation}\label{Qnm-1}
  \begin{aligned}
    \lim \limits_{m\rightarrow \infty}&\int_{0}^{T}dt \iiint_{B_R \times B_R \times(0,R)} dxd\xi dI |Q^{+}_{\alpha ;m }(f,f) -Q^+_{\alpha} (f,f) | |\varphi| \mathbf{1}_E = 0.
  \end{aligned}
\end{equation}

Moreover, \eqref{Q} and \eqref{Qalpha} show that for any fixed $m\geq 1$,
	\begin{equation}\label{Q+nm}
		\begin{aligned}
			&\lim \limits_{n\rightarrow \infty}\Vert \iint_{\R^3 \times \R_+} (\tilde{Q}^+_{\alpha ,n ;m}(f^n,f^n)-Q^+_{\alpha;m } (f,f))  \varphi \mathbf{1}_E d\xi dI \Vert_{L^1((0,T) \times B_R)}\\
			=&\lim \limits_{n\rightarrow \infty}\Vert \iint_{\R^3 \times \R_+} (\tilde{Q}^-_{\alpha ,n;m}(f^n,f^n)-Q^-_{\alpha;m } (f,f))  \varphi' \mathbf{1}_E d\xi dI \Vert_{L^1((0,T) \times B_R)}=0,
		\end{aligned}
	\end{equation}
where $\varphi ' = \varphi (t,x,\xi ' ,I')$. Together with \eqref{Qnm}, \eqref{Qnm-1} and \eqref{Q+nm}, one then gains
	\begin{equation*}
		\lim \limits_{n\rightarrow \infty } \iint_{ \R^3 \times \R_+} \tilde{Q}^+_{\alpha ,n} (f^n,f^n) \varphi \mathbf{1}_E d\xi dI =  \iint_{ \R^3 \times \R_+} Q^+_{\alpha } (f,f) \varphi \mathbf{1}_E d\xi dI \quad \text{strongly in } \ L^1 ((0,T)\times B_R).
	\end{equation*}
	Therefore, the proof of Lemma \ref{theo-6.7} is finished.
\end{proof}

By employing the similar arguments in Lemma \ref{theo-6.7} above, we can establish the following corollary.

\begin{corollary}\label{cor6.8}
	Let $R>0$ and $\alpha \in \{1,\cdots ,s_0\}$. Let $\varphi   (t, x, \xi) \in L^\infty ((0,T)\times \R^3 \times \R^3 )$ with $ supp \varphi \subset (0,T)\times B_R \times B_R$. Then for all $\epsilon>0$ there exists a Borel subset $E \subset(0,T)\times B_R \times B_R$ such that $|E^c| = | (0,T)\times B_R \times B_R \setminus E | <\epsilon$ and
	\begin{equation*}
		\lim \limits_{n\rightarrow \infty } \int_{\R^3} \tilde{Q}^\pm_{\alpha ,n} (f^n,f^n) \varphi \mathbf{1}_E d\xi  =  \int_{\R^3} Q^\pm_{\alpha } (f,f) \varphi \mathbf{1}_E d\xi \quad \text{strongly in} \ L^1 ((0,T)\times B_R).
	\end{equation*}
\end{corollary}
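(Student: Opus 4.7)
The plan is to mirror the proof of Lemma \ref{theo-6.7}, adapting each step to the monatomic setting $\alpha\in\{1,\cdots,s_0\}$, where the test function depends only on $(t,x,\xi)$ but the collision operator still couples to polyatomic species via the terms $Q_{\alpha\beta,n}$ with $\beta\in\{s_0+1,\cdots,s\}$. First I would handle the loss term $\tilde{Q}^-_{\alpha,n}(f^n,f^n)=f^n_\alpha\tilde{L}_{\alpha,n}(f^n)$. By Lemma \ref{converge L}, $\tilde{L}_{\alpha,n}(f^n)\to L_\alpha(f)$ strongly in $L^1((0,T)\times\R^3\times B_R)$, so Egorov's theorem produces a Borel set $E_0\subset(0,T)\times B_R\times B_R$ of small complementary measure on which the convergence is uniform; Lusin's theorem applied to $L_\alpha(f)$ then yields a closed subset $B$ on which $L_\alpha(f)$ is bounded. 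Setting $E=E_0\cap B$, one obtains a uniform $L^\infty$ bound for $\tilde{L}_{\alpha,n}(f^n)$ on $E$. Splitting
\begin{equation*}
  \int_{\R^3}\!(\tilde{Q}^-_{\alpha,n}(f^n,f^n)-Q^-_\alpha(f,f))\varphi\mathbf{1}_E\,d\xi=\int_{\R^3}\!f^n_\alpha(\tilde{L}_{\alpha,n}(f^n)-L_\alpha(f))\varphi\mathbf{1}_E\,d\xi+\int_{\R^3}\!(f^n_\alpha-f_\alpha)L_\alpha(f)\varphi\mathbf{1}_E\,d\xi,
\end{equation*}
the first piece is controlled by $\|\varphi\|_{L^\infty}\sup_E|\tilde{L}_{\alpha,n}(f^n)-L_\alpha(f)|\cdot\|f^n_\alpha\|_{L^1}\to0$ using Lemma \ref{Lemma 4.17}, while the second piece converges to zero in $L^1((0,T)\times B_R)$ by Corollary \ref{cor6.6} applied with the $L^\infty$ function $L_\alpha(f)\varphi\mathbf{1}_E$.

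Next I would treat the gain term. Following the template of Lemma \ref{theo-6.7}, introduce the truncated kernels $B_{0\alpha\beta,n;m}=B_{0\alpha\beta,n}\wedge m\,\mathbf{1}_{|z|\leq m}$ and $B_{1\alpha\beta,n;m}=B_{1\alpha\beta,n}\wedge m\,\mathbf{1}_{|z|\leq m}\mathbf{1}_{0\leq\eta\leq m}$ (with tails $B^m_{0\alpha\beta,n}$, $B^m_{1\alpha\beta,n}$), giving corresponding operators $\tilde{Q}^{\pm}_{\alpha,n;m}$, $\tilde{Q}^{m,\pm}_{\alpha,n}$ and the analogous uncutoff quantities $Q^{\pm}_{\alpha;m}$, $Q^{m,\pm}_\alpha$. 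The Arkeryd-type inequality \eqref{eq5.71} applied to $B^m_{0\alpha\beta,n}$ and $B^m_{1\alpha\beta,n}$ yields
\begin{equation*}
\int_0^T\!\!dt\iint_{B_R\times B_R}\!|\tilde{Q}^+_{\alpha,n;m}(f^n,f^n)-\tilde{Q}^+_{\alpha,n}(f^n,f^n)||\varphi|\mathbf{1}_E\,dxd\xi\leq\tfrac{C}{\log K}+K\int_0^T\!\!dt\iint_{B_R\times B_R}\!|\varphi|\mathbf{1}_E f^n_\alpha\tilde{L}^m_{\alpha,n}(f^n)\,dxd\xi,
\end{equation*}
where $\tilde{L}^m_{\alpha,n}(f^n)$ is built from the tail kernels. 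Since $\tilde{L}^m_{\alpha,n}(f^n)\mathbf{1}_E\leq\tilde{L}_{\alpha,n}(f^n)\mathbf{1}_E$ is uniformly bounded on $E$ and $\tilde{L}^m_{\alpha,n}(f^n)\downarrow0$ pointwise in $n$ (and likewise $L^m_\alpha(f)\downarrow0$), dominated convergence combined with the loss-term convergence from the first paragraph gives $\lim_{m\to\infty}\sup_n\int_0^T\!dt\iint_{B_R\times B_R}|\varphi|\mathbf{1}_E f^n_\alpha\tilde{L}^m_{\alpha,n}(f^n)\,dxd\xi=0$. Sending $K\to\infty$ afterwards shows the cutoff error is negligible uniformly in $n$; the same argument controls $|Q^+_{\alpha;m}(f,f)-Q^+_\alpha(f,f)|$.

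With the cutoff reduction in place, for each fixed $m$ I would apply the weak-form identity analogous to \eqref{Q} (valid for each pair $(\alpha,\beta)$ by the invariance properties \eqref{eq2.03}--\eqref{B-inv} of the kernels, as noted just after \eqref{Q}) to replace $\int\tilde{Q}^+_{\alpha,n;m}(f^n,f^n)\varphi\mathbf{1}_E\,d\xi$ by an expression of the form $\int\tilde{Q}^-_{\alpha,n;m}(f^n,f^n)\varphi'\mathbf{1}_E\,d\xi$, where $\varphi'$ is the pullback of $\varphi$ under the collisional change of variables and still lies in $L^\infty$. The loss-term argument of the first paragraph then applies verbatim with $\varphi$ replaced by $\varphi'$, yielding strong $L^1$ convergence for the truncated gain term. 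Combining this with the uniform smallness of the cutoff error from the second paragraph closes the proof.

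The main obstacle I anticipate is the transfer identity \eqref{Q} in the mixed situation $\alpha\in\{1,\cdots,s_0\}$, $\beta\in\{s_0+1,\cdots,s\}$, where the post-collisional pullback $\varphi'$ of a function $\varphi(t,x,\xi)$ a priori depends on $I'_*$ through the collision formulas; one needs to check that the symmetry properties \eqref{eq2.04} of $B_{1\alpha\beta,n}$ (and the parametrization \eqref{IR}) nonetheless allow the change of variables to produce a bounded factor so that the resulting integrand is controlled by $\tilde{Q}^-_{\alpha,n;m}$ times an $L^\infty$ quantity. Once this bookkeeping is confirmed, all the remaining estimates are routine extensions of those in Lemma \ref{theo-6.7}.
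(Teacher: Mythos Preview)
Your proposal is correct and follows exactly the paper's approach, which offers no separate proof for this corollary but only the remark that the argument of Lemma \ref{theo-6.7} carries over. Your worry about the mixed case $\alpha\in\{1,\dots,s_0\}$, $\beta\in\{s_0+1,\dots,s\}$ is unnecessary: since $\varphi$ depends only on $(t,x,\xi)$ and the post-collisional velocity $\xi'$ is determined by the integration variables $(\xi,\xi_*,I_*,\mathfrak{R},\omega)$ through the parametrization \eqref{IR} (with $I'_*=(1-\mathfrak{R})E_{\alpha\beta}$ fixed by them), the pullback $\varphi'=\varphi(t,x,\xi')$ is automatically in $L^\infty$, and the transfer identity holds just as the paper asserts immediately after \eqref{Q}.
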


\begin{lemma}\label{theo-6.9}
Let $T, R > 0$ and $\alpha \in \{s_0+1,\cdots,s\}$. For given $\delta >0$, let $\beta_\delta (t)=t\wedge \frac{1}{\delta} = \min \{ t, \frac{1}{\delta} \} $ or $\beta_\delta (t)=\frac{t}{1+\delta t}$. Then
	\begin{equation*}
		\tilde{Q}^\pm_{\alpha ,n} (\beta_\delta(f^n),\beta_\delta (f^n)) \to Q^\pm _\alpha (\tilde{\beta }_\delta,\tilde{\beta }_\delta) \quad \text{weakly in }\ L^1((0,T)\times \R^3 \times B_R\times (0,R)).
	\end{equation*}
	where $\tilde{\beta}_\delta (t, x, \xi, I)$ is the weak $L^1$ limit of $\beta_\delta(f^n) (t, x, \xi, I)$.
\end{lemma}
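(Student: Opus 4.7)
My plan is to exploit the uniform $L^\infty$-bound $\beta_\delta(f^n) \leq 1/\delta$, together with the averaged velocity-internal energy lemma, to reduce the problem to two bilinear limit passages that can be handled by the techniques already developed in Lemmas \ref{converge L} and \ref{theo-6.7}. Since $0 \leq \beta_\delta(f^n_\alpha) \leq f^n_\alpha \wedge (1/\delta)$, the sequence $\beta_\delta(f^n)$ inherits from Lemma \ref{Lemma 4.17} the same moment and entropy bounds as $f^n$, so up to a subsequence $\beta_\delta(f^n) \rightharpoonup \tilde{\beta}_\delta$ weakly in $L^1$ and weakly-$*$ in $L^\infty$.

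The first key step is a strong averaging principle. By Lemma \ref{Lemma 3.1}(2), $\beta_\delta(f^n_\alpha)$ satisfies
\begin{equation*}
\partial_t \beta_\delta(f^n_\alpha) + \xi \cdot \nabla_x \beta_\delta(f^n_\alpha) = \beta_\delta'(f^n_\alpha) \tilde{Q}_{\alpha,n}(f^n, f^n)
\end{equation*}
in $\mathscr{D}'$. Since $|\beta_\delta'(t)| \leq (1+1/\delta)/(1+t)$ for both choices of $\beta_\delta$, the right-hand side is dominated in modulus by a weakly relatively compact sequence in $L^1_{loc}$ via Lemmas \ref{theo-4.1} and \ref{theo-4.2}. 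The averaged velocity-internal energy lemma (Lemma \ref{theo-6.2}) then yields strong $L^1_{loc}$ convergence of $\iint \beta_\delta(f^n_\alpha)\,\varphi\, d\xi dI$ for any $\varphi \in L^\infty$ with compact support, and by the argument of Lemma \ref{theo-6.5} this upgrades to strong convergence in $L^p((0,T); L^1(\R^3))$ for every $1\leq p<\infty$.

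For the loss term I would use the factorization
\begin{equation*}
\tilde{Q}^-_{\alpha,n}(\beta_\delta(f^n), \beta_\delta(f^n)) = N_n(\beta_\delta(f^n))^{-1}\, \beta_\delta(f^n_\alpha)\, L_{\alpha,n}(\beta_\delta(f^n))
\end{equation*}
and follow the proof of Lemma \ref{converge L} with $\beta_\delta(f^n)$ in place of the renormalization $g^n_{\delta\alpha}$; the strong averaging from Step~2 directly supplies the needed compactness, yielding $L_{\alpha,n}(\beta_\delta(f^n)) \to L_\alpha(\tilde{\beta}_\delta)$ strongly in $L^1_{loc}$. Combined with the weak-$*$ $L^\infty$ convergence of $\beta_\delta(f^n_\alpha)$ and with $N_n(\beta_\delta(f^n))^{-1} \to 1$ a.e., the product converges weakly in $L^1$ to $Q^-_\alpha(\tilde{\beta}_\delta, \tilde{\beta}_\delta)$ by the elementary fact that a bounded weak-$*$ $L^\infty$ sequence times a strongly $L^1_{loc}$-convergent sequence converges weakly in $L^1_{loc}$.

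For the gain term I would follow the cutoff-plus-duality strategy of Lemma \ref{theo-6.7}: split $B_{\alpha\beta,n} = B_{\alpha\beta,n;m} + B^m_{\alpha\beta,n}$ with the bounded cutoff piece, apply the dual identity \eqref{Q} to convert the cutoff gain term into a cutoff loss term against the swapped test function $\varphi'$, and pass to the limit using the strong averaging already established. For the truncation error, invoke the Arkeryd-type inequality \eqref{eq5.71} together with the uniform entropy-dissipation bound from Lemma \ref{Lemma 4.17} to show it vanishes as $m\to\infty$ uniformly in $n$. The main obstacle will be this gain-term analysis: because the post-collisional variables $(\xi',I')$ appear in the integrand, one must simultaneously exploit the $L^\infty$-boundedness of $\beta_\delta(f^n)$ and the strong averaging principle, and verify that the cutoff error produced by $B^m_{\alpha\beta,n}$ is uniformly small in $n$ — which is exactly where the entropy-dissipation bound and the Arkeryd estimate do the essential work, and where one can dispense with the Egorov-subset restriction $E$ that was needed in Lemma \ref{theo-6.7}.
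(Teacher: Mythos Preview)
Your overall strategy coincides with the paper's: exploit the transport equation for $\beta_\delta(f^n_\alpha)$ and the weak compactness of $\beta'_\delta(f^n_\alpha)\tilde Q_{\alpha,n}(f^n,f^n)$ to run the averaged velocity--internal energy lemma, deduce as in Lemma~\ref{converge L} that $\tilde L_{\alpha,n}(\beta_\delta(f^n))\to L_\alpha(\tilde\beta_\delta)$ strongly in $L^1$, and then treat the gain term via the duality \eqref{Q}. Your loss-term argument (bounded weak-$*$ $L^\infty$ sequence times strongly $L^1$-convergent sequence) is correct and matches the paper.

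The gap is in your treatment of the gain-term remainder. You propose to control $\tilde Q^{m,+}_{\alpha,n}(\beta_\delta(f^n),\beta_\delta(f^n))$ via the Arkeryd inequality \eqref{eq5.71} and the entropy-dissipation bound of Lemma~\ref{Lemma 4.17}. But that bound is for $\tilde e^n_\alpha$ built from $f^n$, not from $\beta_\delta(f^n)$; applying Arkeryd directly to $\beta_\delta(f^n)$ produces a dissipation term with no a~priori control. If instead you first use $\beta_\delta(f^n)\le f^n$ and then Arkeryd for $f^n$, the loss remainder becomes $K\,f^n_\alpha\,\tilde L^m_{\alpha,n}(f^n)$, and showing this vanishes as $m\to\infty$ uniformly in $n$ is precisely what forced the Egorov restriction in Lemma~\ref{theo-6.7} (cf.\ \eqref{eq6.79}--\eqref{eq6.81}); so that route does not let you dispense with~$E$.

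What the paper actually does --- and what makes Arkeryd unnecessary here --- is to keep the Egorov set $E$ from the Lemma~\ref{theo-6.7} machinery for the main term, and on the complement $E^c$ use only the bound $\beta_\delta(f^n_\alpha)\le 1/\delta$: by duality \eqref{Q} the gain term on $E^c$ is dominated by $C_\delta\|\varphi\|_\infty\int_{E^c}\tilde L_{\alpha,n}(\beta_\delta(f^n))$, which tends to $0$ as $|E^c|\to 0$ by the equi-integrability of the strongly convergent $\tilde L_{\alpha,n}(\beta_\delta(f^n))$ (see \eqref{Qbeta+}--\eqref{Qbeta C}). If you want to avoid Egorov altogether, this is the mechanism to use --- duality plus the $L^\infty$ bound and strong $L^1$ convergence of the $\varphi'$-weighted $L$-operator --- not Arkeryd.
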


\begin{proof}
	It is not hard to verify that $\beta_\delta (t)$ is bounded, Lipschitz on $[0,+\infty)$, $\beta'(t)\leq \frac{C}{1+t}$ for some $C>0$ and 
	\begin{equation}\label{eq h}
		0\leq t -\beta_\delta (t) \leq \mathbf{1}_{t\geq \frac{1}{\delta}} t .
	\end{equation}
Recall that $ f^n_\alpha $ is relatively weakly compact in $L^1((0,T)\times \R^3 \times \R^3 \times \R_+)$, so is $\beta (f^n_\alpha)$. Moreover,
	\begin{equation*}
		0 \leq \beta'_\delta (f^n_\alpha ) \tilde{Q}_{\alpha ,n}^\pm (f^n,f^n) \leq \tfrac{C_\delta}{1+f^n_\alpha} \tilde{Q}_{\alpha ,n}^\pm (f^n,f^n).
	\end{equation*}
Then Lemma \ref{theo-4.1} shows that $\beta'_\delta (f^n_\alpha )\tilde{Q}^\pm _{\alpha ,n}(f^n,f^n)$ is relatively weakly compact in $L^1((0,T)\times \R^3 \times B_R\times (0,R))$. To figure out that the conclusions of $g^n_{\delta \alpha }$ in Subsection \ref{Subsec:Cnv-f} are all valid for $\beta_\delta (f^n_\alpha)$, such as \eqref{gn converge} and Lemma \ref{converge f,g}. Moreover, the similar processes of Lemma \ref{converge L} indicate that 
	\begin{equation}\label{L-beta}
		\lim\limits_{n\rightarrow \infty } \tilde{L}_{\alpha ,n} (\beta_\delta (f^n)) =L_{\alpha } (\tilde{\beta}_\delta)  \quad \text{strongly in } \ L^1((0,T)\times \R^3 \times B_R\times (0,R)).
	\end{equation}
Furthermore, from the similar proof processes of Lemma \ref{theo-6.7}, we can derive that for all $\varphi (t,x, \xi, I) \in L^\infty ((0,T)\times \R^3 \times B_R\times (0,R))$ there exists a Borel subset $E \subset(0,T)\times B_M \times B_R\times (0,R)$ (where $M>R$) such that $|E^c|<\epsilon$ and
	\begin{equation}\label{Qbeta}
		\lim\limits_{n\rightarrow \infty }\Vert \iint_{ \R^3 \times \R_+} (\tilde{Q}^\pm_{\alpha ,n} (\beta_\delta(f^n),\beta_\delta (f^n))-Q^\pm _\alpha (\tilde{\beta }_\delta,\tilde{\beta }_\delta) )\varphi \mathbf{1}_E d\xi dI  \Vert_{L^1((0,T)\times B_R)} = 0.
	\end{equation}
It is also easy to see that by the relatively compactness of $\tilde{L}_{\alpha ,n} (\beta_\delta (f^n))$ in \eqref{L-beta} and the relation \eqref{Q},
	\begin{equation}\label{Qbeta+}{\small
		\begin{aligned}
			&\int_{0}^{T} dt \iiint_{B_M \times B_R \times(0,R)} \tilde{Q}^+_{\alpha ,n} (\beta_\delta(f^n),\beta_\delta (f^n)) \varphi \mathbf{1}_{E^c} dxd\xi dI \\
			=& \int_{0}^{T} dt \iiint_{B_M \times B_R \times(0,R)} \beta_\delta (f^n_\alpha) \tilde{L}_{\alpha ,n} (\beta_\delta (f^n)) \varphi ' 1_{E^c} dxd\xi dI \\
			\leq &C_T \Vert \varphi \Vert_{L^\infty} \sup_{n \geq 1} \int_{E^c} \tilde{L}_{\alpha ,n} (\beta_\delta (f^n)) dtdxd\xi dI \rightarrow  0
		\end{aligned}}
	\end{equation}
as $\epsilon \rightarrow 0^+.$ Similarly, we can obtain the following conclusions:
{\small
		\begin{align}\label{Qbeta C}
			\no \lim\limits_{\epsilon \rightarrow 0^+} \sup  \limits_{n}&\int_{0}^{T} dt \iiint_{B_M \times B_R \times(0,R)} \tilde{Q}^-_{\alpha ,n} (\beta_\delta(f^n),\beta_\delta (f^n))  \mathbf{1}_{E^c} dxd\xi dI = 0 ,\\
			\no \lim\limits_{M \rightarrow \infty}\sup  \limits_{n} &\int_{0}^{T} dt \int_{|x|>M} dx \iint_{B_R \times (0,R)} \tilde{Q}^\pm_{\alpha ,n} (\beta_\delta(f^n),\beta_\delta (f^n))d\xi dI = 0,\\
			\no \lim\limits_{\epsilon \rightarrow 0^+}&\int_{0}^{T} dt \iiint_{B_M \times B_R \times(0,R)} Q^\pm _\alpha (\tilde{\beta }_\delta,\tilde{\beta }_\delta) \mathbf{1}_{E^c} dxd\xi dI = 0, \\
			\lim\limits_{M \rightarrow \infty} &\int_{0}^{T} dt \int_{|x|>M} dx \iint_{B_R \times (0,R)}  Q^\pm _\alpha (\tilde{\beta }_\delta,\tilde{\beta }_\delta)d\xi dI = 0.
		\end{align}}
	Combining with the relations \eqref{Qbeta}-\eqref{Qbeta C}, it can be concluded that 
	\begin{equation}
		\lim\limits_{n \rightarrow \infty}	\int_{0}^{T} dt \iiint_{\R^3 \times B_R \times (0,R)} ( \tilde{Q}^\pm_{\alpha ,n} (\beta_\delta(f^n),\beta_\delta (f^n))-Q^\pm _\alpha (\tilde{\beta }_\delta,\tilde{\beta }_\delta))\varphi dxd\xi dI =0.
	\end{equation}
	Therefore, the proof of Lemma \ref{theo-6.9} is finished.
\end{proof}

By applying the analogous arguments in Lemma \ref{theo-6.9}, one gains the following corollary.

\begin{corollary}\label{cor6.9}
	Let $T, R > 0$ and for $\alpha \in \{1,\cdots,s_0\}$. For given $\delta >0$, let $\beta_\delta (t)=t\wedge \frac{1}{\delta}$ or $\beta_\delta (t)=\frac{t}{1+\delta t}$. Then 
	\begin{equation*}
		\tilde{Q}^\pm_{\alpha ,n} (\beta_\delta(f^n),\beta_\delta(f^n)) \to Q^\pm _\alpha (\tilde{\beta }_\delta,\tilde{\beta }_\delta) \quad \text{weakly in} \ L^1((0,T)\times \R^3 \times B_R)
	\end{equation*}
	where $\tilde{\beta}_\delta (t, x, \xi)$ is the weak $L^1$ limit of $\beta_\delta(f^n) (t, x, \xi)$.
\end{corollary}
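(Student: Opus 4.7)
The plan is to adapt the proof of Lemma \ref{theo-6.9} to the monatomic setting, replacing $\iint_{\R^3\times \R_+}\cdots d\xi dI$ by $\int_{\R^3}\cdots d\xi$, dropping the internal energy variable $I$, and using the averaged velocity Lemma \ref{theo-6.2.1} (and Corollary \ref{rwc1}, Corollary \ref{cor6.6}, Corollary \ref{cor6.8}) in place of their $(\xi,I)$-counterparts. Since $\beta_\delta$ is bounded and Lipschitz on $[0,+\infty)$ with $\beta'_\delta(t)\le C/(1+t)$ and $0\le t-\beta_\delta(t)\le t\,\mathbf{1}_{t\ge 1/\delta}$, the sequence $\beta_\delta(f^n_\alpha)$ inherits the uniform $L^1_2$ and entropy bounds \eqref{FB2}--\eqref{FWC2} (up to multiplicative constants depending on $\delta$), so by Corollary \ref{rwc1} it is relatively weakly compact in $L^1((0,T)\times\R^3\times\R^3)$, and along a subsequence we may define $\tilde{\beta}_\delta$ as its weak $L^1$ limit.

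Next I would verify that the whole machinery built for $g^n_{\delta\alpha}$ in Subsection \ref{Subsec:Cnv-f} carries over verbatim to $\beta_\delta(f^n_\alpha)$. The key point is the pointwise bound $0\le \beta'_\delta(f^n_\alpha)\,\tilde{Q}^\pm_{\alpha,n}(f^n,f^n)\le C_\delta (1+f^n_\alpha)^{-1}\tilde{Q}^\pm_{\alpha,n}(f^n,f^n)$, which together with Lemma \ref{theo-4.1} and Lemma \ref{theo-4.2} ensures that $\beta'_\delta(f^n_\alpha)\,\tilde{Q}^\pm_{\alpha,n}(f^n,f^n)$ is relatively weakly compact in $L^1((0,T)\times\R^3\times B_R)$. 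In particular the equation satisfied by $\beta_\delta(f^n_\alpha)$, namely $\partial_t\beta_\delta(f^n_\alpha)+\xi\cdot\nabla_x\beta_\delta(f^n_\alpha)=\beta'_\delta(f^n_\alpha)\tilde{Q}_{\alpha,n}(f^n,f^n)$, has a weakly relatively compact right-hand side, so Lemma \ref{theo-6.2.1} applies and yields strong convergence of velocity averages $\int_{\R^3}\beta_\delta(f^n_\alpha)\phi\,d\xi\to \int_{\R^3}\tilde{\beta}_\delta\phi\,d\xi$ in $L^1((0,T)\times\R^3)$, from which the monatomic analog of Lemma \ref{converge L} follows:
\begin{equation*}
\lim_{n\to\infty}\tilde{L}_{\alpha,n}(\beta_\delta(f^n))=L_\alpha(\tilde{\beta}_\delta)\quad\text{strongly in }L^1((0,T)\times\R^3\times B_R).
\end{equation*}

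With this strong convergence of the loss kernel and the uniform bounds on $\beta_\delta(f^n_\alpha)$ in hand, I would invoke Corollary \ref{cor6.8} in the form used inside Lemma \ref{theo-6.9}: for any $\varphi\in L^\infty((0,T)\times\R^3\times\R^3)$ with $\mathrm{supp}\,\varphi\subset(0,T)\times B_M\times B_R$ and any $\epsilon>0$, there is a Borel set $E\subset(0,T)\times B_M\times B_R$ with $|E^c|<\epsilon$ such that
\begin{equation*}
\Bigl\|\int_{\R^3}\bigl(\tilde{Q}^\pm_{\alpha,n}(\beta_\delta(f^n),\beta_\delta(f^n))-Q^\pm_\alpha(\tilde{\beta}_\delta,\tilde{\beta}_\delta)\bigr)\varphi\,\mathbf{1}_E\,d\xi\Bigr\|_{L^1((0,T)\times B_R)}\to 0.
\end{equation*}
The contribution on $E^c$ is controlled exactly as in \eqref{Qbeta+}: one shifts the gain term to the loss term via the identity used in \eqref{Q}, obtaining $\int\tilde{Q}^+_{\alpha,n}(\beta_\delta(f^n),\beta_\delta(f^n))\varphi\mathbf{1}_{E^c}=\int\beta_\delta(f^n_\alpha)\tilde{L}_{\alpha,n}(\beta_\delta(f^n))\varphi'\mathbf{1}_{E^c}$, and then uses the equi-integrability of $\tilde{L}_{\alpha,n}(\beta_\delta(f^n))$ (from the strong convergence above) together with $\|\beta_\delta(f^n_\alpha)\|_{L^\infty}\le 1/\delta$ to drive the $E^c$-integral to zero as $\epsilon\to 0^+$. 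The tail $|x|>M$ is handled by the uniform $L^1_2$ moment bound $\int f^n_\alpha|x|^2\,dxd\xi\le C_T$. Combining the three pieces and letting $n\to\infty$, $\epsilon\to 0^+$, $M\to\infty$ yields the stated weak $L^1$ convergence.

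The main obstacle is the step $\tilde{L}_{\alpha,n}(\beta_\delta(f^n))\to L_\alpha(\tilde{\beta}_\delta)$ strongly in $L^1$: one has to re-run the truncation-in-$(\xi_*)$ and smooth-kernel approximation of Lemma \ref{converge L} with $\beta_\delta(f^n_\alpha)$ playing the role of $f^n_\alpha$, check that the analog of Lemma \ref{converge f,g} holds (which is automatic since $\beta_\delta(f^n_\alpha)$ and its renormalization $\delta^{-1}\log(1+\delta\beta_\delta(f^n_\alpha))$ differ uniformly by a quantity vanishing with the auxiliary parameter), and verify that the Egorov-type argument still applies to $N_n(f^n)^{-1}$. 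All other ingredients are strict replicas of the polyatomic case carried out in Lemma \ref{theo-6.9}, so the proof reduces to a bookkeeping check rather than a new idea.
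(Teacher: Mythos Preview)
Your proposal is correct and follows essentially the same approach as the paper: the paper states that Corollary \ref{cor6.9} is obtained ``by applying the analogous arguments in Lemma \ref{theo-6.9}'', and your sketch is precisely a faithful monatomic transcription of that proof, invoking the correct $\xi$-only counterparts (Corollary \ref{rwc1}, Lemma \ref{theo-6.2.1}, Corollary \ref{cor6.8}) in place of the $(\xi,I)$-versions.
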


\begin{lemma}\label{QL L1}
	Let $T, \delta > 0$, $\alpha \in \{s_0+1,\cdots,s\}$ and $\varphi (t, x, \xi, I) \in L^\infty ((0,T)\times \R^3\times \R^3 \times \R_+)$. Then
	\begin{equation*}
		\lim \limits_{n\rightarrow \infty} \iint_{ \R^3 \times \R_+} \tfrac{\tilde{Q}^\pm _{\alpha,n}(f^n,f^n)}{1+\delta \tilde{L}_{\alpha,n}(f^n)}\varphi d\xi dI = \iint_{ \R^3 \times \R_+} \tfrac{Q^\pm _{\alpha}(f,f)}{1+\delta L_{\alpha}(f)}\varphi d\xi dI \quad \text{strongly in } L^1((0,T)\times \R^3).
	\end{equation*}
\end{lemma}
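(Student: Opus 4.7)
The plan is to combine Lemma~\ref{theo-6.7} with the strong $L^1_{loc}$ convergence $\tilde{L}_{\alpha,n}(f^n)\to L_\alpha(f)$ of Lemma~\ref{converge L}, viewing $\varphi/(1+\delta\tilde{L}_{\alpha,n}(f^n))$ as a new $L^\infty$ test function that converges almost everywhere to $\varphi/(1+\delta L_\alpha(f))$. Two pointwise bounds will drive every tail estimate: since $\tilde{Q}^-_{\alpha,n}(f^n,f^n)=f^n_\alpha\tilde{L}_{\alpha,n}(f^n)$, one has
\begin{equation*}
\frac{\tilde{Q}^-_{\alpha,n}(f^n,f^n)}{1+\delta\tilde{L}_{\alpha,n}(f^n)}\leq \tfrac{1}{\delta}f^n_\alpha,
\end{equation*}
and the Arkeryd-type inequality \eqref{eq5.71} yields, for every $K>1$,
\begin{equation*}
\frac{\tilde{Q}^+_{\alpha,n}(f^n,f^n)}{1+\delta\tilde{L}_{\alpha,n}(f^n)}\leq \tfrac{1}{\log K}\tilde{e}^n_\alpha+\tfrac{K}{\delta}f^n_\alpha.
\end{equation*}
Combined with the moment and entropy-dissipation bounds of Lemma~\ref{Lemma 4.17} (and Lemma~\ref{cor6.12} for the limiting integrand), these allow me first to reduce to $\varphi$ compactly supported on $(0,T)\times B_M\times B_R\times(0,R)$: the tails in $x$ and in $(\xi,I)$ are made uniformly small in $n$ by choosing $K$ large, then $M,R$ large.

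With $\varphi$ compactly supported, I set $\psi_n:=\varphi/(1+\delta\tilde{L}_{\alpha,n}(f^n))$ and $\psi:=\varphi/(1+\delta L_\alpha(f))$; both are bounded by $\|\varphi\|_{L^\infty}$. By Lemma~\ref{converge L}, extracted to an a.e.\ convergent subsequence, and Egorov, for every $\eta>0$ there is a Borel set $E_0\subset(0,T)\times B_M\times B_R\times(0,R)$ with $|E_0^c|<\eta$ on which $\tilde{L}_{\alpha,n}(f^n)\to L_\alpha(f)$ uniformly, hence $\psi_n\to\psi$ uniformly on $E_0$. Taking $E:=E_0\cap E_1$, where $E_1$ is the Egorov-type set supplied by Lemma~\ref{theo-6.7} applied with the $L^\infty$ test $\psi$, I would decompose
\begin{equation*}
\iint\tfrac{\tilde{Q}^\pm_{\alpha,n}}{1+\delta\tilde{L}_{\alpha,n}}\varphi\,d\xi dI
=\iint\tilde{Q}^\pm_{\alpha,n}\psi\mathbf{1}_E\,d\xi dI
+\iint\tilde{Q}^\pm_{\alpha,n}(\psi_n-\psi)\mathbf{1}_E\,d\xi dI
+\iint\tilde{Q}^\pm_{\alpha,n}\psi_n\mathbf{1}_{E^c}\,d\xi dI.
\end{equation*}
By Lemma~\ref{theo-6.7} the first term converges strongly in $L^1((0,T)\times B_M)$ to $\iint Q^\pm_\alpha(f,f)\psi\mathbf{1}_E\,d\xi dI$. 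The second is controlled by $\|\psi_n-\psi\|_{L^\infty(E)}$ times the uniform-in-$n$ bound on $\int_0^T\int_{\R^3}\iint\tilde{Q}^\pm_{\alpha,n}\mathbf{1}_{\mathrm{supp}\,\varphi}\,d\xi dI\,dx\,dt$ coming from the two pointwise bounds above, so it vanishes as $n\to\infty$. The third term is handled by those same bounds: choose $K$ so that $(\log K)^{-1}\sup_n\|\tilde{e}^n_\alpha\|_{L^1}<\epsilon$, then $\eta$ so small that $(K/\delta)\int_{E^c\cap\mathrm{supp}\,\varphi}f^n_\alpha<\epsilon$ uniformly in $n$, which is possible by the equi-integrability of $\{f^n_\alpha\}$ granted by Lemma~\ref{rwc} and Lemma~\ref{Lemma 4.17}. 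The same reduction applied to the limiting integrand $Q^\pm_\alpha(f,f)\psi/(1+\delta L_\alpha(f))$ closes the comparison on the limit side and yields the asserted strong $L^1$ convergence.

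The main obstacle will be controlling the gain term on the exceptional set $E^c$, since $\tilde{Q}^+_{\alpha,n}$ admits no uniformly equi-integrable majorant on its own. The Arkeryd-type inequality \eqref{eq5.71} is essential here: it trades $\tilde{Q}^+_{\alpha,n}$ for $(\log K)^{-1}\tilde{e}^n_\alpha$ plus $K$ times the loss term, and the standard two-parameter argument (pick $K$ first to kill the entropy piece, then shrink $|E^c|$ with $K$ held fixed) closes the loop. A subtler but routine point is ensuring that the Egorov-type sets arising from Lemma~\ref{converge L} and from Lemma~\ref{theo-6.7} can be taken simultaneously of arbitrarily small complement, which is immediate by intersecting two Egorov constructions.
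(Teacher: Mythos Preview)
Your proposal is correct and follows essentially the same strategy as the paper: combine Lemma~\ref{theo-6.7} on an Egorov-type set $E$ where $\tilde L_{\alpha,n}(f^n)\to L_\alpha(f)$ uniformly, and handle the small-measure complement and the spatial tail via the Arkeryd inequality~\eqref{eq5.71} together with the pointwise bound $\tilde Q^-_{\alpha,n}/(1+\delta\tilde L_{\alpha,n})\le \delta^{-1}f^n_\alpha$. The paper compresses your three-term decomposition into a single line by first noting that these two bounds make $(1+\delta\tilde L_{\alpha,n})^{-1}\tilde Q^{\pm}_{\alpha,n}$ relatively weakly compact in $L^1((0,T)\times\R^3\times\R^3\times\R_+)$, then invoking tightness/equi-integrability abstractly for $E^c$ and $|x|>R$; it also uses the single set $E$ produced in the proof of Lemma~\ref{theo-6.7} (which already encodes both Egorov for $\tilde L_{\alpha,n}$ and Lusin for $L_\alpha$), so your intersection $E_0\cap E_1$ is harmless but redundant. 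One small imprecision: your uniform-in-$n$ bound on $\int\tilde Q^{\pm}_{\alpha,n}\mathbf 1_{\mathrm{supp}\,\varphi}$ in the middle term does not follow directly from the two displayed pointwise bounds (those control $\tilde Q^{\pm}/(1+\delta\tilde L)$, not $\tilde Q^{\pm}$); what you actually use is that the integral is over $E$, where $\tilde L_{\alpha,n}$ is uniformly bounded, so $\tilde Q^-_{\alpha,n}\mathbf 1_E\le C_E f^n_\alpha$ directly and Arkeryd then controls $\tilde Q^+_{\alpha,n}\mathbf 1_E$.
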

\begin{proof}
	Without loss of generality, we may assume that $\delta=1$. We derive from the Arkeryd inequality \eqref{eq5.71} and $(1+ \tilde{L}_{\alpha,n}(f^n))^{-1}\tilde{Q}^- _{\alpha,n}(f^n,f^n) \leq f^n_\alpha$ that $(1+ \tilde{L}_{\alpha,n}(f^n))^{-1}\tilde{Q}^\pm _{\alpha,n}(f^n,f^n)$ is relatively weakly compact in $L^1((0,T)\times \R^3 \times \R^3 \times \R_+)$. Let $R > 0$ be arbitrary. For any $\epsilon > 0$, we take the set $ E \subseteq (0,T)\times B_R \times B_R\times (0,R) $ given in Lemma \ref{theo-6.7}. Then $ \tilde{L}_{\alpha,n}(f^n) \rightrightarrows L_{\alpha}(f) $ on $E$ and $ (1+ \tilde{L}_{\alpha,n}(f^n))^{-1} \leq 1 $, $(1+ {L}_{\alpha}(f))^{-1} \leq 1$. Moreover, Lemma \ref{theo-6.7} also indicates that
\begin{equation*}
		\lim \limits_{n\rightarrow \infty } \iint_{ \R^3 \times \R_+} \tilde{Q}^\pm_{\alpha ,n} (f^n,f^n) \varphi \mathbf{1}_E d\xi dI =  \iint_{ \R^3 \times \R_+} Q^\pm_{\alpha } (f,f) \varphi \mathbf{1}_E d\xi dI \quad \text{strongly in } \ L^1 ((0,T)\times B_R).
	\end{equation*}
Then we conclude 
\begin{equation}\label{E}{\small
\begin{aligned}
\lim \limits_{n\rightarrow \infty} \iint_{ \R^3 \times \R_+} \tfrac{\tilde{Q}^\pm _{\alpha,n}(f^n,f^n)}{1+ \tilde{L}_{\alpha,n}(f^n)}\varphi \mathbf{1}_E d\xi dI = \iint_{ \R^3 \times \R_+} \tfrac{Q^\pm _{\alpha}(f,f)}{1+L_{\alpha}(f)}\varphi \mathbf{1}_E d\xi dI \quad \text{strongly in } L^1((0,T)\times B_R).
\end{aligned}}
\end{equation}

On the other hand, the relatively weak tightness of $(1+ \tilde{L}_{\alpha,n}(f^n))^{-1}\tilde{Q}^\pm _{\alpha,n}(f^n,f^n)$ in $L^1((0,T)\times \R^3 \times \R^3 \times \R_+)$ guarantees that
	\begin{equation}\label{Ec}
		\begin{aligned}
			\sup \limits_{n\geq 1} \Vert \iint_{ \R^3 \times \R_+} \tfrac{\tilde{Q}^\pm _{\alpha,n}(f^n,f^n)}{1+ \tilde{L}_{\alpha,n}(f^n)}\varphi \mathbf{1}_{E^c} d\xi dI \Vert_{L^1((0,T)\times B_R)} \rightarrow 0 \ (\epsilon \rightarrow 0^+)
		\end{aligned}
	\end{equation}
and 
\begin{equation}\label{Ec-1}{\small
  \begin{aligned}
    \sup_{n \geq 1} \| \iint_{ \R^3 \times \R_+} [ \tfrac{\tilde{Q}^\pm _{\alpha,n}(f^n,f^n)}{1+ \tilde{L}_{\alpha,n}(f^n)} - \tfrac{Q^\pm _{\alpha}(f,f)}{1+L_{\alpha}(f)} ] \varphi d\xi dI \|_{L^1 ( (0, T) \times ( B_R  )^c )} \to 0 \ (R \to + \infty) \,.
  \end{aligned}}
\end{equation}
Then the convergences \eqref{E}, \eqref{Ec} and \eqref{Ec-1} imply that
	\begin{equation}
	\lim \limits_{n\rightarrow \infty} \iint_{ \R^3 \times \R_+} \tfrac{\tilde{Q}^\pm _{\alpha,n}(f^n,f^n)}{1+ \tilde{L}_{\alpha,n}(f^n)}\varphi d\xi dI = \iint_{ \R^3 \times \R_+} \tfrac{Q^\pm _{\alpha}(f,f)}{1+ L_{\alpha}(f)}\varphi d\xi dI \quad \text{strongly in } L^1((0,T) \times \R^3).
\end{equation}
The proof of Lemma \ref{QL L1} is consequently finished.
\end{proof}

\begin{corollary}\label{corQLL1}
	Let $T, \delta > 0$, $\alpha \in \{1,\cdots,s_0\}$ and $\varphi (t, x, \xi) \in L^\infty ((0,T)\times \R^3\times \R^3 )$. Then
	\begin{equation*}
		\lim \limits_{n\rightarrow \infty} \int_{ \R^3 } \tfrac{\tilde{Q}^\pm _{\alpha,n}(f^n,f^n)}{1+\delta \tilde{L}_{\alpha,n}(f^n)}\varphi d\xi  = \int_{ \R^3} \tfrac{Q^\pm _{\alpha}(f,f)}{1+\delta L_{\alpha}(f)}\varphi d\xi \quad \text{strongly in } L^1((0,T)\times \R^3).
	\end{equation*}
\end{corollary}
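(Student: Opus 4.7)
The plan is to mirror exactly the proof of Lemma \ref{QL L1}, substituting each polyatomic ingredient with its already-established monatomic counterpart. By linearity in $\delta$-rescaling there is no loss of generality in taking $\delta=1$. The first step is to check that the sequence $(1+\tilde{L}_{\alpha,n}(f^n))^{-1}\tilde{Q}^\pm_{\alpha,n}(f^n,f^n)$ is relatively weakly compact in $L^1((0,T)\times\R^3\times\R^3)$. For the loss term this is immediate since
\[
0 \le \tfrac{\tilde{Q}^-_{\alpha,n}(f^n,f^n)}{1+\tilde{L}_{\alpha,n}(f^n)} \le f^n_\alpha,
\]
and Lemma \ref{Lemma 4.17} together with Corollary \ref{rwc1} gives weak compactness of $\{f^n_\alpha\}$. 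For the gain term the Arkeryd-type inequality \eqref{eq5.71} combined with Lemma \ref{Lemma 4.17} (bounding $\tilde{e}^n_\alpha$ in $L^1_{t,x,\xi}$) yields the same conclusion, exactly as in the proof of Lemma \ref{theo-4.2}.

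Next, fix $R>0$ arbitrary and $\epsilon>0$. Apply Corollary \ref{cor6.8} (the monatomic averaged-velocity-enhanced convergence of $\tilde{Q}^\pm_{\alpha,n}$) together with the strong convergence
\[
\tilde{L}_{\alpha,n}(f^n)\to L_\alpha(f)\quad\text{strongly in } L^1((0,T)\times\R^3\times B_R)
\]
from Lemma \ref{converge L}, and Egorov's theorem, to extract a Borel set $E\subseteq (0,T)\times B_R\times B_R$ with $|E^c|<\epsilon$ on which $\tilde{L}_{\alpha,n}(f^n)$ converges uniformly to $L_\alpha(f)$ and on which the conclusion of Corollary \ref{cor6.8} applies for test function $\varphi$. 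On this good set $E$, the factor $(1+\tilde{L}_{\alpha,n}(f^n))^{-1}$ converges uniformly (hence in $L^\infty$) to $(1+L_\alpha(f))^{-1}$, while $\tilde{Q}^\pm_{\alpha,n}(f^n,f^n)\mathbf{1}_E\varphi$ converges to $Q^\pm_\alpha(f,f)\mathbf{1}_E\varphi$ strongly in the $\xi$-average. A product-limit argument (Lemma \ref{product limit}) therefore gives
\[
\lim_{n\to\infty}\int_{\R^3}\tfrac{\tilde{Q}^\pm_{\alpha,n}(f^n,f^n)}{1+\tilde{L}_{\alpha,n}(f^n)}\varphi \mathbf{1}_E\,d\xi = \int_{\R^3}\tfrac{Q^\pm_\alpha(f,f)}{1+L_\alpha(f)}\varphi \mathbf{1}_E\,d\xi\quad\text{in } L^1((0,T)\times B_R).
\]

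Finally, the contributions from $E^c$ and from $|x|>R$ are controlled by the weak compactness already established: since $(1+\tilde{L}_{\alpha,n}(f^n))^{-1}\tilde{Q}^\pm_{\alpha,n}(f^n,f^n)$ is equi-integrable and tight in $L^1((0,T)\times\R^3\times\R^3)$, one has
\[
\sup_{n\ge 1}\Bigl\|\int_{\R^3}\tfrac{\tilde{Q}^\pm_{\alpha,n}(f^n,f^n)}{1+\tilde{L}_{\alpha,n}(f^n)}\varphi\mathbf{1}_{E^c}\,d\xi\Bigr\|_{L^1((0,T)\times B_R)}\to 0\quad(\epsilon\to 0^+),
\]
and the corresponding tightness estimate as $R\to+\infty$ handles the spatial tail. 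Combining the three contributions yields the claimed strong convergence in $L^1((0,T)\times\R^3)$. The only step that requires any care is the uniform control of the ratio $(1+\tilde{L}_{\alpha,n}(f^n))^{-1}$ on a set where the denominator has only been shown to converge in $L^1$: this is precisely where Egorov, as used in Lemma \ref{QL L1}, is essential. All other manipulations are routine adaptations of the polyatomic case, so no new technical obstacle arises.
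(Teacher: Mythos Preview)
Your proposal is correct and takes essentially the same approach as the paper: the paper states Corollary \ref{corQLL1} immediately after Lemma \ref{QL L1} without a separate proof, the implication being precisely that one repeats the argument of Lemma \ref{QL L1} with the monatomic ingredients (Corollary \ref{cor6.8}, the $\alpha\in\{1,\dots,s_0\}$ case of Lemma \ref{converge L}, and Corollary \ref{rwc1}) in place of their polyatomic counterparts. Your write-up fills in exactly those substitutions.
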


At the end, we study the integrability of the limit of nonlinear collision operator.

\begin{lemma}\label{Q in L1}
	For any given $T,R>0$ and all $\alpha \in \{s_0+1,\cdots,s\}$, 
	\begin{equation*}
	\begin{aligned}
	\tfrac{Q^-_\alpha (f,f)}{1+f_\alpha} \in L^\infty(\R_+;L^1( \R^3 \times B_R\times (0,R)),\ \tfrac{Q^+_\alpha (f,f)}{1+f_\alpha} \in L^1((0,T)\times \R^3 \times B_R\times (0,R)).
	\end{aligned}
	\end{equation*}
\end{lemma}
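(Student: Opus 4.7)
The loss term bound is the easy part. From the identity $\frac{Q^-_\alpha(f,f)}{1+f_\alpha} = \frac{f_\alpha L_\alpha(f)}{1+f_\alpha} \leq L_\alpha(f)$, the task reduces to proving $L_\alpha(f) \in L^\infty(\R_+; L^1(\R^3\times B_R\times(0,R)))$. The estimate carried out in the proof of Lemma \ref{theo-4.1}, namely $\iiint_{\R^3\times B_R\times(0,R)} L_{\alpha\beta,n}(f^n)\,dxd\xi dI \leq \epsilon \iiint f^n_\beta(1+|\xi_*|^2+I_*)\,dxd\xi_* dI_* + C_\epsilon \iiint f^n_\beta\,dxd\xi_* dI_*$, combined with the uniform moment bounds \eqref{FB3} of Lemma \ref{Lemma 4.17}, already yields $\sup_{n\geq 1}\sup_{t\in[0,T]}\|L_{\alpha,n}(f^n)(t,\cdot)\|_{L^1(\R^3\times B_R\times(0,R))} \leq C$. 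Together with the strong $L^1_{t,x,\xi,I}$ convergence $\tilde{L}_{\alpha,n}(f^n)\to L_\alpha(f)$ from Lemma \ref{converge L} and the non-negativity of both sides, a duality test against arbitrary $\phi\in L^\infty(0,T)$ transfers the uniform $L^\infty_tL^1_{x,\xi,I}$ bound to $L_\alpha(f)$.

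For the gain term I first establish a uniform $L^1$ estimate at the approximate level via the Arkeryd-type inequality \eqref{eq5.71}, $\tilde{Q}^+_{\alpha,n}(f^n,f^n) \leq (\log K)^{-1}\tilde{e}^n_\alpha + K\tilde{Q}^-_{\alpha,n}(f^n,f^n)$ for any $K>1$. Dividing by $1+f^n_\alpha\geq 1$ and integrating over $(0,T)\times\R^3\times B_R\times(0,R)$, the entropy-dissipation contribution is controlled by $(\log K)^{-1}\int_0^T\iiint\tilde{e}^n_\alpha\leq C/\log K$ via the bound \eqref{FWC3} of Lemma \ref{Lemma 4.17}, while $\tilde{Q}^-_{\alpha,n}(f^n,f^n)/(1+f^n_\alpha)\leq\tilde{L}_{\alpha,n}(f^n)$ yields a remainder of size $KCT$ from the uniform loss estimate used above. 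Hence $\{\tilde{Q}^+_{\alpha,n}(f^n,f^n)/(1+f^n_\alpha)\}_n$ is uniformly bounded in $L^1((0,T)\times\R^3\times B_R\times(0,R))$, and Lemma \ref{theo-4.2} furnishes a weak $L^1$ limit, call it $\bar Q^+$.

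The principal obstacle is identifying $\bar Q^+$ with $Q^+_\alpha(f,f)/(1+f_\alpha)$ (or at least securing the comparison $Q^+_\alpha(f,f)/(1+f_\alpha)\leq \bar Q^+$ almost everywhere). My plan is to transfer the Arkeryd-type inequality itself to the limit $f$. The weak $L^1$ convergence of $\tilde{Q}^\pm_{\alpha,n}$ on good sets $E$ with $|E^c|$ arbitrarily small, provided by Lemma \ref{theo-6.7}, handles the gain and loss terms, while the weak lower semicontinuity $\int_0^T\iiint e_\alpha(f)\leq \liminf_n \int_0^T\iiint \tilde{e}^n_\alpha<\infty$, obtained via the convex function $j(a,b)=(a-b)\log(a/b)$ along the same lines as Step 4 of the outline, controls the entropy-dissipation limit. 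This yields $Q^+_\alpha(f,f)\leq(\log K)^{-1}e_\alpha(f)+K Q^-_\alpha(f,f)$ a.e.; dividing by $1+f_\alpha$, invoking $Q^-_\alpha(f,f)/(1+f_\alpha)\leq L_\alpha(f)$, and integrating then delivers the desired $L^1$ estimate. A complementary tool for sharpening the identification is Lemma \ref{QL L1}: the strong $L^1$ convergence $\tilde{Q}^+_{\alpha,n}(f^n,f^n)/(1+\delta\tilde{L}_{\alpha,n}(f^n))\to Q^+_\alpha(f,f)/(1+\delta L_\alpha(f))$, combined with monotone convergence as $\delta\to 0^+$ on the set $\{L_\alpha(f)<\infty\}$ (which is of full measure by the loss-term step), pins down $\bar Q^+=Q^+_\alpha(f,f)/(1+f_\alpha)$.
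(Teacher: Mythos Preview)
Your treatment of the loss term is essentially correct and close to the paper's; both bound $\tfrac{Q^-_\alpha(f,f)}{1+f_\alpha}$ by $L_\alpha(f)$ and use the kernel hypothesis \eqref{collision} together with the moment bounds (the paper applies them directly to $f$ via Lemma~\ref{cor6.12}, you pass through the approximants and Lemma~\ref{converge L}, which also works).

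For the gain term your argument has a genuine gap. You want to pass the approximate Arkeryd inequality $\tilde Q^+_{\alpha,n}\le (\log K)^{-1}\tilde e^n_\alpha + K\tilde Q^-_{\alpha,n}$ to the limit and replace $\tilde e^n_\alpha$ by $e_\alpha(f)$, invoking ``weak lower semicontinuity $\int e_\alpha(f)\le\liminf\int\tilde e^n_\alpha$''. But lower semicontinuity points the \emph{wrong way} here: $\tilde e^n_\alpha$ sits on the \emph{right-hand side} of the inequality, so to replace it by $e_\alpha(f)$ you would need $\limsup\int\tilde e^n_\alpha\varphi\le\int e_\alpha(f)\varphi$, which is false in general because the dissipation can concentrate. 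What survives in the limit is only the distributional inequality with the measure $\mu_\alpha=\textrm{w-}\lim\tilde e^n_\alpha$ on the right, not with $e_\alpha(f)$. Your ``complementary tool'' via Lemma~\ref{QL L1} does not patch this: that lemma concerns the denominator $1+\delta\tilde L_{\alpha,n}(f^n)$, not $1+f^n_\alpha$, and there is no obvious bridge to the identification $\bar Q^+=Q^+_\alpha(f,f)/(1+f_\alpha)$ you assert.

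The paper resolves this precisely at the point where your argument stalls. It divides the approximate Arkeryd inequality by $1+\delta\tilde L_{\alpha,n}(f^n)$ so that Lemma~\ref{QL L1} applies to both $\tilde Q^\pm$ terms, and lets $\tilde e^n_\alpha\to\mu_\alpha$ only as a bounded nonnegative measure. The Lebesgue decomposition $\mu_\alpha=e^*_\alpha\,dtdxd\xi dI+\mu_\alpha^2$ with $\mu_\alpha^2$ singular then upgrades the distributional inequality to a pointwise one,
\[
\tfrac{Q^\pm_\alpha(f,f)}{1+\delta L_\alpha(f)}\le K\,\tfrac{Q^\mp_\alpha(f,f)}{1+\delta L_\alpha(f)}+\tfrac{e^*_\alpha}{\log K}\quad\text{a.e.},
\]
because the singular part $\mu^2_\alpha$ lives on a Lebesgue-null set. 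Sending $\delta\to 0^+$ yields $Q^+_\alpha(f,f)\le K Q^-_\alpha(f,f)+(\log K)^{-1}e^*_\alpha$ a.e., with $e^*_\alpha\in L^1$ \emph{by construction} (Radon--Nikodym), and the $L^1$ bound for $Q^+_\alpha(f,f)/(1+f_\alpha)$ follows. The key object is $e^*_\alpha$, not $e_\alpha(f)$; the former is automatically integrable, the latter is not known to be so at this stage without importing the full Section~\ref{Sec:Entropy} argument.
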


\begin{proof}
	Observe that $(1+f_\alpha)^{-1}Q^-_\alpha (f,f)\leq L_\alpha(f)$. By employing the same arguments in \eqref{Trans-fab}, one knows that for any $\epsilon > 0$ there is a $C_\epsilon > 0$ such that
\begin{equation*}{\small
	\begin{aligned}	
		& \iiint_{\R^3 \times B_R \times (0,R)} L_{ \alpha} (f) (t,x,\xi, I) dxd\xi dI \\
			\leq & \epsilon \sum_{\beta = 1}^{s_0} \iint_{\R^3 \times \R^3} f_{\beta} (t, x, \xi_*) (1+|\xi_*|^2) dxd\xi_* + \epsilon \sum_{\beta = s_0 + 1}^{s} \iiint_{\R^3 \times \R^3 \times \R_+} f_{\beta} (t, x, \xi_*, I_*) (1+|\xi_*|^2 +I_*) dxd\xi_* dI_* \\
& + C_\epsilon \sum_{\beta = 1}^{s_0} \iint_{\R^3 \times \R^3} f_{\beta} (t,x, \xi_*) d x d \xi_*  + C_\epsilon \sum_{\beta = s_0 + 1}^{s} \iiint_{\R^3 \times \R^3 \times \R_+} f_{\beta} (t,x, \xi_*, I_*) dxd\xi_* dI_* \leq C_{T, \epsilon}
		\end{aligned}}
	\end{equation*} 
for all $t \geq 0$. Here Lemma \ref{cor6.12} is utilized. By taking $\epsilon = 1$, we then obtain
	\begin{equation*}
		\tfrac{Q^-_\alpha (f,f)}{1+f_\alpha}\in L^\infty(\R_+ ; L^1( \R^3 \times B_R \times (0,R)).
	\end{equation*}

Next we study the gain term $Q_\alpha^+ (f,f)$. For any $\delta>0$, it infers from the Arkeryd-type inequality \eqref{eq5.71} that for all $K > 1$
	\begin{equation}
		\tfrac{\tilde{Q}^\pm _{\alpha,n}(f^n,f^n)}{1+\delta \tilde{L}_{\alpha,n}(f^n)} \leq K \tfrac{\tilde{Q}^\mp _{\alpha,n}(f^n,f^n)}{1+\delta \tilde{L}_{\alpha,n}(f^n)} +\tfrac{\tilde{e}^n_\alpha}{\log K } \,.
	\end{equation}
By \eqref{FWC3} in Lemma \ref{Lemma 4.17}, one sees that $\tilde{e}^n_\alpha$ is a nonnegative bounded sequence in $L^1((0,T)\times \R^3 \times \R^3 \times \R_+)$. Then there exists a nonnegative finite measure $\mu_\alpha $ such that 
	\begin{equation*}
		\lim \limits_{n\rightarrow \infty} \tilde{e}^n_\alpha =\mu_\alpha \quad \text{in } \ \mathscr{D}' ((0,T)\times \R^3 \times \R^3 \times \R_+).
	\end{equation*}
Moreover, Lemma \ref{QL L1} indicates that
	\begin{equation*}
		\lim \limits_{n\rightarrow \infty} \tfrac{\tilde{Q}^\pm _{\alpha,n}(f^n,f^n)}{1+\delta \tilde{L}_{\alpha,n}(f^n)} = \tfrac{Q^\pm _{\alpha}(f,f)}{1+\delta L_{\alpha}(f)} \quad \text{in } \ \mathscr{D}'((0,T)\times \R^3 \times \R^3 \times \R_+) .
	\end{equation*}
Therefore, in the sense of distribution,
	\begin{equation}\label{KK-1}
		\tfrac{Q^\pm _{\alpha}(f,f)}{1+\delta L_{\alpha}(f)} \leq K \tfrac{Q^\mp _{\alpha}(f,f)}{1+\delta L_{\alpha}(f)} +\tfrac{\mu_\alpha}{\log K }.
	\end{equation}
for $K > 1$.

By the Lebesgue Decomposition Theorem, the measure $\mu_\alpha$ can be decomposed as $\mu_\alpha = \mu_\alpha^1 + \mu_\alpha^2$, where $\mu_\alpha^1 \ll d t d x d \xi d I$ (hence, $\mu_\alpha^1$ is absolutely continuous corresponding to the measure $ d t d x d \xi d I $), and $\mu_\alpha^2 \perp d t d x d \xi d I$ (hence, $\mu_\alpha^2$ is singular with respect to the measure $ d t d x d \xi d I $). Furthermore, the Radon-Nikodym Theorem implies that there is a nonnegative function $e_\alpha^* (t,x,\xi,I) \in L^1 ( (0,T) \times \R^3 \times \R^3 \times \R_+ )$ such that $ \mu_\alpha^1 = e_\alpha^* (t,x,\xi,I) d t d x d \xi d I $. $\mu_\alpha^2 \perp d t d x d \xi d I$ means that the concentrated set $A$ with respect to the measure $\mu_\alpha^2$ is actually a zero-measured set corresponding to the natural measure $d t d x d \xi d I$. Then the bound \eqref{KK-1} indicates that for $K > 1$,
\begin{equation*}{\small
  \begin{aligned}
    \tfrac{Q^\pm _{\alpha}(f,f)}{1+\delta L_{\alpha}(f)} (t,x,\xi,I) \leq K \tfrac{Q^\mp _{\alpha}(f,f)}{1+\delta L_{\alpha}(f)} (t,x,\xi,I) + \tfrac{e_\alpha^*(t,x,\xi,I)}{\log K} \quad a.e. \ (t,x,\xi,I) \in (0, T) \times \R^3 \times \R^3 \times \R_+ \,.
  \end{aligned}}
\end{equation*}
By letting $\delta \to 0^+$, one has
	\begin{equation}\label{AK-bnd}
		Q^\pm_\alpha (f,f) \leq K Q^\mp_\alpha (f,f) + ( \log K )^{-1} e_\alpha^* (t,x,\xi,I) ,
	\end{equation}
which means that
	\begin{equation*}
		\tfrac{Q^+_\alpha (f,f)}{1+f_\alpha}(t,x,\xi,I) \leq K \tfrac{Q^-_\alpha (f,f)}{1+f_\alpha}(t,x,\xi,I)+ \tfrac{e_\alpha^* (t,x,\xi,I)}{\log K}.
	\end{equation*}
Due to $\tfrac{Q^-_\alpha (f,f)}{1+f_\alpha}\in L^\infty(\R_+ ; L^1( \R^3 \times B_R \times (0,R))$ and $e_\alpha^* (t,x,\xi,I) \in L^1 ( (0,T) \times \R^3 \times \R^3 \times \R_+ )$, one gains that $(1+f_\alpha)^{-1}Q^+_\alpha (f,f)\in L^1((0,T)\times \R^3 \times B_R\times (0,R))$. The proof of Lemma \ref{Q in L1} is therefore finished.
\end{proof}

By the analogous arguments in Lemma \ref{Q in L1}, one infers the following corollary.

\begin{corollary}\label{Q-L-loc}
	For any given $T,R>0$ and all $\alpha \in \{1,\cdots,s_0\}$, 
	\begin{equation*}
		\begin{aligned}
		\tfrac{Q^-_\alpha (f,f)}{1+f_\alpha} \in L^\infty(\R_+;L^1( \R^3 \times B_R ),\ \tfrac{Q^+_\alpha (f,f)}{1+f_\alpha} \in L^1((0,T)\times \R^3 \times B_R ).
	\end{aligned}
	\end{equation*}
\end{corollary}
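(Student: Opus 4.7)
The plan is to mirror the proof of Lemma \ref{Q in L1} in the monatomic setting, where the phase space is $\R^3 \times \R^3$ (no internal energy variable) and the relevant localized region is $\R^3 \times B_R$. The monatomic analogues Corollary \ref{cor6.6}, Corollary \ref{cor6.8}, Corollary \ref{cor6.9}, Corollary \ref{corQLL1} together with Lemmas \ref{converge f,g}, \ref{converge L}, \ref{theo-4.1}, \ref{theo-4.2}, \ref{cor6.12} and the uniform bounds in Lemma \ref{Lemma 4.17} have already been established in parallel with their polyatomic counterparts, so all the machinery required to run the same argument is in place.

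First I would dispose of the loss term. Since $\tfrac{Q^-_\alpha(f,f)}{1+f_\alpha}\leq L_\alpha(f)$ for $\alpha\in\{1,\cdots,s_0\}$, it suffices to bound $\int_{\R^3\times B_R} L_\alpha(f)(t,x,\xi)\,dxd\xi$ uniformly in $t\geq 0$. Splitting $L_\alpha = \sum_{\beta=1}^{s_0}L_{0\alpha\beta}(f)+\sum_{\beta=s_0+1}^{s}L_{1\alpha\beta}(f)$ and swapping the order of integration as in \eqref{Trans-fab}, the collision-kernel hypotheses \eqref{collision} give, for every $\epsilon>0$, constants $C_\epsilon$ with
\begin{equation*}
\int_{B_R}A_{0\alpha\beta}(\xi-\xi_*)\,d\xi\leq \epsilon(1+|\xi_*|^2)+C_\epsilon,\quad
\int_{B_R}A_{1\alpha\beta}(\xi-\xi_*,I_*)\,d\xi\leq \epsilon(1+|\xi_*|^2+I_*)+C_\epsilon.
\end{equation*}
Combining these with the moment bounds of Lemma \ref{cor6.12} yields $L_\alpha(f)\in L^\infty(\R_+;L^1(\R^3\times B_R))$, which is the first claim.

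For the gain term the strategy is to produce a pointwise Arkeryd-type inequality at the limit. Starting from \eqref{eq5.71} at level $n$ and dividing by $1+\delta\tilde{L}_{\alpha,n}(f^n)$, Corollary \ref{corQLL1} allows us to pass the left and right sides to the limit in $\mathscr{D}'((0,T)\times\R^3\times\R^3)$. The entropy dissipation sequence $\tilde e^n_\alpha$ is bounded in $L^1$ by \eqref{FWC2}, hence (up to a subsequence) it converges in $\mathscr{D}'$ to a nonnegative finite measure $\mu_\alpha$. Lebesgue decomposition gives $\mu_\alpha=e^*_\alpha(t,x,\xi)\,dtdxd\xi+\mu_\alpha^{\mathrm s}$ with $e^*_\alpha\in L^1$, and since the singular part does not affect a.e.\ inequalities, letting $\delta\to 0^+$ produces
\begin{equation*}
Q^+_\alpha(f,f)\leq K\,Q^-_\alpha(f,f)+\tfrac{1}{\log K}\,e^*_\alpha(t,x,\xi)\quad\text{a.e.}
\end{equation*}
for every $K>1$. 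Dividing by $1+f_\alpha$, combining with the already-established loss term bound and the integrability $e^*_\alpha\in L^1((0,T)\times\R^3\times\R^3)$, yields $\tfrac{Q^+_\alpha(f,f)}{1+f_\alpha}\in L^1((0,T)\times\R^3\times B_R)$.

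The only step requiring care is the passage to the limit inside the inequality \eqref{eq5.71} divided by $1+\delta\tilde L_{\alpha,n}(f^n)$: the gain and loss quotients converge only in the distributional sense via Corollary \ref{corQLL1}, so one must argue that distributional inequalities between nonnegative locally integrable functions and a nonnegative measure imply the corresponding a.e.\ inequality once the measure is decomposed; this is where the Lebesgue/Radon–Nikodym step enters and is essentially the same technical point as in Lemma \ref{Q in L1}. Once this is handled, everything else reduces to the uniform $L^1$ bounds of Section \ref{Sec:Con} and the kernel hypothesis \eqref{collision}, and no new estimate is needed for the monatomic case.
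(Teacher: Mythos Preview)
Your proposal is correct and follows exactly the route the paper intends: the paper itself provides no separate proof for this corollary, stating only that it follows ``by the analogous arguments in Lemma \ref{Q in L1},'' and your outline reproduces that argument step for step in the monatomic setting (loss term via $L_\alpha(f)$ and the $\epsilon$--$C_\epsilon$ split from \eqref{collision}, gain term via the Arkeryd inequality, Corollary \ref{corQLL1}, and the Lebesgue--Radon--Nikodym decomposition of the limiting entropy-dissipation measure).
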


\section{Existence of renormalized solution to \eqref{BE-MP}}\label{Sec:Ext}

In this section, the main goal is to prove that the limit function $f = (f_\alpha)_{ \alpha \in \{ 1, \cdots, s \} }$ constructed in previous section is exactly the renormalized solution to \eqref{BE-MP}. By Lemma \ref{distri-mild}, it is equivalent to show that $f$ is a mild solution of \eqref{BE-MP} and $(1 + f_\alpha)^{-1} Q^\pm_\alpha (f, f) \in L^1_{loc}$. Note that Lemma \ref{Q in L1} and Corollary \ref{Q-L-loc} have been shown that $(1 + f_\alpha)^{-1} Q^\pm_\alpha (f, f) \in L^1_{loc}$ for $\alpha \in \{ 1, \cdots, s \}$. We only need to verify that $f$ is a mild solution of \eqref{BE-MP}. By employing Lemma \ref{Lemma3.3} and Lemma \ref{converge L}, it is equivalent to prove that the components $f_\alpha$ of $f$ satisfy the equality \eqref{3.5} and \eqref{3.6}.

\subsection{Supersolutions of equation \eqref{BE-MP}}
In this subsection, based on Lemma \ref{Lemma3.3}, we consider the solution $f^n$ of \eqref{approximation} with positive components $f^n_\alpha$. After extracting a subsequence we will prove that the weak-$L^1$ limit $f=(f_1,\cdots,f_s)$ is a mild supersolution of \eqref{BE-MP} in the sense that $f$ satisfies the exponential form \eqref{3.5}, \eqref{3.6} with the equality replaced by an inequality. Namely,

\begin{definition}[Supersolution]\label{SupperSolution}
  We say $f = (f_\alpha)_{\alpha \in \{ 1, \cdots, s \}}$ is a supersolution if for all $t \geq 0$ there hold:
  \begin{enumerate}
		\item {for $\alpha \in \{1,\cdots,s_0\}$, one has
	\begin{equation*}
		\begin{aligned}
			&\int_{0}^{t}Q^+_{\alpha } (f,f)^\sharp (\tau ,x,\xi ) \exp(-(F^\sharp _{\alpha } (t,x,\xi )-F^\sharp _{\alpha } (\tau,x,\xi ))) d \tau \\
			\leq &f^{\sharp}_\alpha (t,x,\xi) -f_{\alpha ,0} (x,\xi )\exp(-F^\sharp _{\alpha  } (t,x,\xi )) \quad a.e. \ (x,\xi ) \in \R^3 \times \R^3;
		\end{aligned}
	\end{equation*}}
	\item {for $\alpha \in \{s_0+1,\cdots,s\}$, one has
	\begin{equation*}
		\begin{aligned}
			&\int_{0}^{t}Q^+_{\alpha } (f,f)^\sharp (\tau ,x,\xi ,I) \exp(-(F^\sharp _{\alpha } (t,x,\xi ,I)-F^\sharp _{\alpha } (\tau,x,\xi ,I))) d \tau \\
			\leq &f^{\sharp}_\alpha (t,x,\xi ,I) -f_{\alpha ,0} (x,\xi ,I )\exp(-F^\sharp _{\alpha  } (t,x,\xi ,I)) \quad a.e. \ (x,\xi ,I) \in \R^3 \times \R^3 \times \R_+.
		\end{aligned}
	\end{equation*}}
\end{enumerate}
\end{definition}

Recall that $f^n$ is the mild solution of the approximated problem \eqref{approximation}. By the proof of Lemma \ref{theo-4.1}, one knows that $ \tilde{L}_{\alpha, n} (f^n) \in L^1_{loc} $. Then Lemma \ref{Lemma3.3} tells us that for all $0\leq t <\infty $ and for almost everywhere $(x,\xi)\in \R^3 \times \R^3 $ with $\alpha \in \{1,\cdots,s_0\}$, 
\begin{equation}\label{eq up3}
	\begin{aligned}
		&f^{n\sharp}_\alpha (t,x,\xi) -f^n_{\alpha ,0} (x,\xi )\exp(-\tilde{F}^\sharp _{\alpha ,n } (t,x,\xi ))\\
		=&\int_{0}^{t}\tilde{Q}^+_{\alpha ,n} (f^n,f^n)^\sharp (s ,x,\xi ) \exp(-(\tilde{F}^\sharp _{\alpha ,n} (t,x,\xi )-\tilde{F}^\sharp _{\alpha ,n} (s,x,\xi ))) ds,
	\end{aligned}
\end{equation}
and for almost everywhere $(x,\xi)\in \R^3 \times \R^3 $ with $\alpha \in \{s_0+1,\cdots,s\}$, 
\begin{equation}\label{eq up4}
	\begin{aligned}
		&f^{n\sharp}_\alpha (t,x,\xi ,I) -f^n_{\alpha ,0} (x,\xi ,I )\exp(-\tilde{F}^\sharp _{\alpha ,n } (t,x,\xi ,I))\\
		=&\int_{0}^{t}\tilde{Q}^+_{\alpha ,n} (f^n,f^n)^\sharp (s ,x,\xi ,I) \exp(-(\tilde{F}^\sharp _{\alpha ,n} (t,x,\xi ,I )-\tilde{F}^\sharp _{\alpha ,n} (s,x,\xi ,I))) ds,
	\end{aligned}
\end{equation}
where
\begin{equation*}
	\begin{aligned}
		\tilde{F}^\sharp _{\alpha ,n} (t,x,\xi ) &= \int_{0}^{t} \tilde{L}_{\alpha ,n} (f^n_\alpha )^\sharp (s,x,\xi ) ds, \quad \text{for } \alpha \in \{1,\cdots,s_0\}, \\
		\tilde{F}^\sharp _{\alpha ,n} (t,x,\xi ,I) &= \int_{0}^{t} \tilde{L}_{\alpha ,n} (f^n_\alpha )^\sharp (s,x,\xi ,I) ds, \quad \text{for } \alpha \in \{s_0+1,\cdots,s\}.
	\end{aligned}
\end{equation*}
Here we employ the notations $g^\sharp (t, x, \xi) = g (t, x + t \xi, \xi)$ and $g^\sharp (t, x, \xi, I) = g (t, x + t \xi, \xi, I)$.

\begin{lemma}\label{Lemma 7.1}
	Let $T>0$ and $t\in [0,T]$.
		\begin{enumerate}
		\item {For $\alpha \in \{1,\cdots,s_0\}$, one has
	\begin{equation}\label{eq7.88}
		\begin{aligned}
			& \lim \limits_{n\rightarrow \infty} [ f^{n\sharp}_\alpha (t,x,\xi) -f^n_{\alpha ,0} (x,\xi )\exp(-\tilde{F}^\sharp _{\alpha ,n } (t,x,\xi )) ] \\
			&= f^{\sharp}_\alpha (t,x,\xi) -f_{\alpha ,0} (x,\xi )\exp(-F^\sharp _{\alpha } (t,x,\xi )) \quad \text{weakly in } L^1 (\R^3 \times \R^3);
		\end{aligned}
	\end{equation}}
	\item {For $\alpha \in \{s_0+1,\cdots,s\}$, one has
	\begin{equation}\label{eq7.89}
		\begin{aligned}
			& \lim \limits_{n\rightarrow \infty} [ f^{n\sharp}_\alpha (t,x,\xi ,I) -f^n_{\alpha ,0} (x,\xi ,I )\exp(-\tilde{F}^\sharp _{\alpha ,n } (t,x,\xi ,I)) ] \\
			&= f^{\sharp}_\alpha (t,x,\xi ,I) -f_{\alpha ,0} (x,\xi ,I )\exp(- F^\sharp_{\alpha } (t,x,\xi ,I)) \quad \text{weakly in } L^1 (\R^3 \times \R^3 \times \R_+).
		\end{aligned}
	\end{equation}}
\end{enumerate}
Here $F^\sharp_{\alpha }$ is defined in \eqref{F-sharp}.
\end{lemma}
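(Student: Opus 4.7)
The plan is to split each quantity into two additive pieces and treat them separately: the free-streaming piece $f^{n\sharp}_\alpha(t)$ versus $f^\sharp_\alpha(t)$, and the attenuated initial piece $f^n_{\alpha,0}\exp(-\tilde F^\sharp_{\alpha,n}(t))$ versus $f_{\alpha,0}\exp(-F^\sharp_\alpha(t))$. Without loss of generality I will write the argument for $\alpha\in\{s_0+1,\dots,s\}$; the case $\alpha\in\{1,\dots,s_0\}$ is completely parallel with $\mathbf{Z}_\alpha=\xi$ and $\mathcal Z_\alpha=\R^3$.

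First I would show $f^{n\sharp}_\alpha(t,\cdot)\rightharpoonup f^\sharp_\alpha(t,\cdot)$ weakly in $L^1(\R^3\times\mathcal Z_\alpha)$. For any $\varphi\in L^\infty(\R^3\times\mathcal Z_\alpha)$ the measure-preserving change of variables $y=x+t\xi$ gives
\begin{equation*}
\iint f^{n\sharp}_\alpha(t,x,\mathbf{Z}_\alpha)\,\varphi(x,\mathbf{Z}_\alpha)\,dx\,d\mathbf{Z}_\alpha=\iint f^n_\alpha(t,y,\mathbf{Z}_\alpha)\,\varphi(y-t\xi,\mathbf{Z}_\alpha)\,dy\,d\mathbf{Z}_\alpha.
\end{equation*}
Since $\varphi(\cdot-t\xi,\cdot)$ is a fixed element of $L^\infty$ and, by Lemma \ref{rwc}/Corollary \ref{rwc1} combined with the uniform bounds of Lemma \ref{Lemma 4.17}, $f^n_\alpha(t,\cdot)\rightharpoonup f_\alpha(t,\cdot)$ in $L^1$ along a subsequence, the right-hand side converges to $\iint f_\alpha(t,y,\mathbf{Z}_\alpha)\varphi(y-t\xi,\mathbf{Z}_\alpha)\,dy\,d\mathbf{Z}_\alpha=\iint f^\sharp_\alpha(t)\,\varphi\,dx\,d\mathbf{Z}_\alpha$.

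Next I treat the second piece, and I claim it in fact converges \emph{strongly} in $L^1$. The key step is to upgrade Lemma \ref{converge L} to pointwise-a.e.\ convergence of $\tilde F^\sharp_{\alpha,n}(t,x,\mathbf{Z}_\alpha)$ to $F^\sharp_\alpha(t,x,\mathbf{Z}_\alpha)$. By Lemma \ref{converge L}, for each $R$ the sequence $\tilde L_{\alpha,n}(f^n)\to L_\alpha(f)$ strongly in $L^1((0,T)\times\R^3\times B_R\times(0,R))$; extracting a diagonal subsequence as $R\to\infty$ yields almost everywhere convergence of $\tilde L_{\alpha,n}(f^n)$ to $L_\alpha(f)$ on $(0,T)\times\R^3\times\R^3\times\R_+$. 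The characteristic shift $(s,x,\mathbf{Z}_\alpha)\mapsto(s,x+s\xi,\mathbf{Z}_\alpha)$ is measure-preserving, so the same holds for the $\sharp$-quantities. By Fubini, for a.e.\ $(x,\mathbf{Z}_\alpha)$ we have $(\tilde L_{\alpha,n}(f^n))^\sharp(\cdot,x,\mathbf{Z}_\alpha)\to L_\alpha(f)^\sharp(\cdot,x,\mathbf{Z}_\alpha)$ in $L^1(0,T)$ (after a further subsequence; but since the limit is fixed, the whole extracted sequence converges), whence $\tilde F^\sharp_{\alpha,n}(t,x,\mathbf{Z}_\alpha)\to F^\sharp_\alpha(t,x,\mathbf{Z}_\alpha)$ pointwise a.e. Because $\tilde L_{\alpha,n}\geq 0$, we have $0\le e^{-\tilde F^\sharp_{\alpha,n}}\le 1$, so by dominated convergence $e^{-\tilde F^\sharp_{\alpha,n}(t)}\to e^{-F^\sharp_\alpha(t)}$ a.e.\ and is bounded by $1$.

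Finally I decompose
\begin{equation*}
f^n_{\alpha,0}e^{-\tilde F^\sharp_{\alpha,n}(t)}-f_{\alpha,0}e^{-F^\sharp_\alpha(t)}=\bigl(f^n_{\alpha,0}-f_{\alpha,0}\bigr)e^{-\tilde F^\sharp_{\alpha,n}(t)}+f_{\alpha,0}\bigl(e^{-\tilde F^\sharp_{\alpha,n}(t)}-e^{-F^\sharp_\alpha(t)}\bigr).
\end{equation*}
The first term goes to zero in $L^1$ by the strong $L^1$ convergence of initial data from Lemmas \ref{Lemma 4.8}--\ref{Lemma 4.9}, combined with $|e^{-\tilde F^\sharp_{\alpha,n}}|\le 1$. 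The second term goes to zero in $L^1$ by the dominated convergence theorem (integrable majorant $f_{\alpha,0}$ and a.e.\ convergence of the exponential factor). Adding this strong convergence to the weak convergence in the first step yields (\ref{eq7.89}); the proof of (\ref{eq7.88}) is identical with $\mathbf Z_\alpha=\xi$. The main obstacle I expect is the passage from $L^1_{loc}$-strong convergence of $\tilde L_{\alpha,n}(f^n)$ to a.e.\ pointwise convergence of the time-integrated $\sharp$-quantities on the whole phase space; this requires the diagonal extraction across the radii $R$ together with Fubini, and I would carry it out carefully using a countable exhaustion to keep the exceptional null set uniform.
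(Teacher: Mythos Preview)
Your proposal is correct and follows essentially the same two-piece decomposition as the paper: weak convergence of $f^{n\sharp}_\alpha(t)$ via the measure-preserving shift, and convergence of the attenuated initial data via a.e.\ convergence of $\exp(-\tilde F^\sharp_{\alpha,n})$ together with the strong $L^1$ convergence of $f^n_{\alpha,0}$. The only notable difference is that the paper obtains a.e.\ convergence of $\exp(-\tilde F^\sharp_{\alpha,n}(t))$ more directly---bounding $\max_{t\in[0,T]}\|\tilde F^\sharp_{\alpha,n}(t)-F^\sharp_\alpha(t)\|_{L^1(\R^3\times B_R\times(0,R))}$ by $\|\tilde L_{\alpha,n}(f^n)-L_\alpha(f)\|_{L^1((0,T)\times\R^3\times B_R\times(0,R))}$ and then extracting a subsequence---and then invokes the Product Limit Theorem (Lemma~\ref{product limit}) to get \emph{weak} convergence of the product; your direct decomposition plus dominated convergence actually yields \emph{strong} $L^1$ convergence of the second piece, which is a mild upgrade but equally sufficient for the lemma.
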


\begin{proof}
	  By Lemma \ref{converge L}, one infers that for $\alpha \in \{1,\cdots,s_0\}$, 
	\begin{equation*}
	\lim \limits_{n \rightarrow \infty}	\max \limits_{t\in [0,T]} \Vert \tilde{F}^\sharp _{\alpha ,n} (t)- F^\sharp _\alpha (t)\Vert_{L^1(\R^3 \times B_R )} \leq \lim \limits_{n \rightarrow \infty} \Vert \tilde{L}_{\alpha ,n}(f^n) -L_\alpha(f) \Vert_{L^1((0,T)\times \R^3 \times B_R)} = 0 ,
	\end{equation*}
and for $\alpha \in \{s_0+1,\cdots,s\}$,
	\begin{equation*}
		\lim \limits_{n \rightarrow \infty}	\max \limits_{t\in [0,T]} \Vert \tilde{F}^\sharp _{\alpha ,n} (t)- F^\sharp_\alpha(t)\Vert_{L^1(\R^3 \times B_R \times (0,R))} \leq \lim \limits_{n \rightarrow \infty} \Vert \tilde{L}_{\alpha ,n}(f^n) -L_\alpha(f) \Vert_{L^1((0,T)\times \R^3 \times B_R\times (0,R))} = 0.
	\end{equation*}
Therefore, $\exp(-\tilde{F}^\sharp _{\alpha ,n } (t,x,\xi )) \in L^\infty ( \R^3 \times \R^3 )$ converges to $\exp(-F^\sharp _{\alpha} (t,x,\xi ))$ $a.e.$ $(x, \xi) \in \R^3 \times \R^3 $ for $\alpha \in \{1,\cdots,s_0\}$ and $\exp(-\tilde{F}^\sharp _{\alpha ,n } (t,x,\xi ,I)) \in L^\infty (\R^3 \times \R^3 \times \R_+)$ converges to $\exp(-F^\sharp _{\alpha} (t,x,\xi ,I))$ $a.e.$ $(x, \xi, I) \in \R^3 \times \R^3 \times \R_+$ for $\alpha \in \{s_0+1,\cdots,s\}$. Moreover, Lemma \ref{Lemma 4.8} and Lemma \ref{Lemma 4.9} show that $f^n_{\alpha, 0}$ strongly converges to $f_{\alpha, 0}$ in $L^1$. Then the Product Limit Theorem (Lemma \ref{product limit}) tells us that $ f^n_{\alpha ,0} (x,\xi ,I )\exp(-\tilde{F}^\sharp _{\alpha ,n } (t,x,\xi ,I)) $ weakly converges to $f_{\alpha ,0} (x,\xi ,I )\exp(- F^\sharp_{\alpha } (t,x,\xi ,I))$ for $\alpha \in \{ s_0 + 1, \cdots, s \}$, and $ f^n_{\alpha ,0} (x,\xi )\exp(-\tilde{F}^\sharp _{\alpha ,n } (t,x,\xi )) $ weakly converges to $f_{\alpha ,0} (x,\xi )\exp(- F^\sharp_{\alpha } (t,x,\xi))$ for $\alpha \in \{  1, \cdots, s_0 \}$. Together with Lemma \ref{rwc}, Corollary \ref{rwc1} and Lemma \ref{Lemma 4.17}, the proof of Lemma \ref{Lemma 7.1} is finished.
\end{proof}

\begin{lemma}\label{Lemma 7.2}
	Let $T,R>0$ and $t\in [0,T]$.
	\begin{enumerate}
		\item {For $\alpha \in \{1,\cdots,s_0\}$, one has
	\begin{equation}\label{eq7.90}
		\begin{aligned}
			& \lim \limits_{n\rightarrow \infty} \int_{0}^{t}\tilde{Q}^+_{\alpha ,n} (h^n_{M} ,h^n_{M})^\sharp (\tau) \exp(-(\tilde{F}^\sharp _{\alpha ,n} (t)-\tilde{F}^\sharp _{\alpha ,n} (\tau))) d \tau \\
			&= \int_{0}^{t}Q^+_{\alpha } (h_{M} ,h_{M})^\sharp (\tau) \exp(-(F^\sharp _{\alpha } (t)-F^\sharp _{\alpha} (\tau))) d \tau \quad \text{weakly in } L^1 (\R^3 \times B_R);
		\end{aligned}
	\end{equation}}
	\item {For $\alpha \in \{s_0+1,\cdots,s\}$, one has
	\begin{equation}\label{eq7.91}
		\begin{aligned}
			& \lim \limits_{n\rightarrow \infty} \int_{0}^{t}\tilde{Q}^+_{\alpha ,n} (h^n_{M} ,h^n_{M})^\sharp (\tau) \exp(-(\tilde{F}^\sharp _{\alpha ,n} (t)-\tilde{F}^\sharp _{\alpha ,n} (\tau))) d \tau \\
			&= \int_{0}^{t}Q^+_{\alpha } (h_{M} ,h_{M})^\sharp (\tau) \exp(-(F^\sharp _{\alpha } (t)-F^\sharp _{\alpha} (\tau))) d \tau \quad \text{weakly in } L^1 (\R^3 \times B_R \times (0,R)),
		\end{aligned}
	\end{equation}}
\end{enumerate}
	where $h^n_{M} = ( h^n_{\alpha M} )_{ \alpha \in \{ 1, \cdots, s \} }$ with $h^n_{\alpha M} = f^n_\alpha \wedge M = \min \{ f^n_\alpha, M \}$ for $M>0$. Moreover, the components $h_{\alpha M}$ of $h_M = ( h_{\alpha M} )_{ \alpha \in \{ 1, \cdots, s \} }$ is the $L^1$ weak limit of $h^n_{\alpha M}$.
\end{lemma}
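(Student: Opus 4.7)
My plan is to reduce Lemma \ref{Lemma 7.2} to the weak convergence results already established in Subsection \ref{Subsec:CFNCO}, specifically Lemma \ref{theo-6.9} and Corollary \ref{cor6.9}, combined with the Product Limit Theorem (Lemma \ref{product limit}). The starting observation is that with $\delta = 1/M$ and $\beta_\delta(t) = t \wedge (1/\delta)$, one has $\beta_\delta(f^n_\alpha) = f^n_\alpha \wedge M = h^n_{\alpha M}$, so that $h^n_M$ fits exactly into the framework of Lemma \ref{theo-6.9}, with stipulated weak $L^1$ limit $h_M$. Consequently, I would invoke Lemma \ref{theo-6.9} (for $\alpha \in \{s_0+1,\ldots,s\}$) or Corollary \ref{cor6.9} (for $\alpha \in \{1,\ldots,s_0\}$) to conclude that $\tilde{Q}^+_{\alpha,n}(h^n_M,h^n_M) \rightharpoonup Q^+_\alpha(h_M,h_M)$ weakly in $L^1((0,T) \times \R^3 \times B_R \times (0,R))$ (respectively $L^1((0,T) \times \R^3 \times B_R)$).

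Next I would pass to the $\sharp$-variables. Since the map $g \mapsto g^\sharp$ defined by $g^\sharp(\tau,x,\xi,I) = g(\tau,x+\tau\xi,\xi,I)$ is a measure-preserving bijection on $(0,T) \times \R^3 \times B_R \times (0,R)$, the weak $L^1$ convergence above is preserved under this change of variable. To handle the exponential weight, set
\[
W_n(\tau,x,\xi,I) := \exp\bigl(-(\tilde{F}^\sharp_{\alpha,n}(t,x,\xi,I) - \tilde{F}^\sharp_{\alpha,n}(\tau,x,\xi,I))\bigr)\mathbf{1}_{[0,t]}(\tau),
\]
and analogously $W$ with $\tilde{F}^\sharp_{\alpha,n}$ replaced by $F^\sharp_\alpha$. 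Since $\tilde{L}_{\alpha,n}(f^n)^\sharp \geq 0$, the functions $W_n, W$ are bounded by $1$ uniformly in $n$, and by Lemma \ref{converge L} together with extraction of an a.e.\ convergent subsequence from the strong $L^1$ convergence $\tilde{F}^\sharp_{\alpha,n} \to F^\sharp_\alpha$, we have $W_n \to W$ almost everywhere on $(0,T) \times \R^3 \times B_R \times (0,R)$. The Product Limit Theorem (Lemma \ref{product limit}) then gives
\[
\tilde{Q}^+_{\alpha,n}(h^n_M,h^n_M)^\sharp \, W_n \rightharpoonup Q^+_\alpha(h_M,h_M)^\sharp \, W \quad \text{weakly in } L^1((0,T) \times \R^3 \times B_R \times (0,R)).
\]

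The final step is to integrate in $\tau \in (0,t)$: testing the above weak convergence against $\tilde{\psi}(\tau,x,\xi,I) := \psi(x,\xi,I)\mathbf{1}_{[0,t]}(\tau) \in L^\infty$ for arbitrary $\psi \in L^\infty(\R^3 \times B_R \times (0,R))$ and using Fubini's theorem to exchange the order of integration yields the desired weak convergence of the $\tau$-integrals in $L^1(\R^3 \times B_R \times (0,R))$. Uniform $L^1$-boundedness, needed to interpret the result as weak $L^1$ convergence, follows from the bounds of Lemma \ref{theo-4.2}. The principal subtle point is ensuring the almost everywhere convergence of the exponential weight $W_n$, which requires extracting a subsequence from Lemma \ref{converge L}'s strong $L^1$ convergence; this is permitted by the convention on subsequential limits declared in the notation subsection. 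The case $\alpha \in \{1,\ldots,s_0\}$ proceeds identically, replacing Lemma \ref{theo-6.9} by Corollary \ref{cor6.9} and adjusting the underlying domain.
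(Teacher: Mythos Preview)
Your proposal is correct and follows essentially the same approach as the paper: invoke Lemma~\ref{theo-6.9}/Corollary~\ref{cor6.9} (recognizing $h^n_{\alpha M}=\beta_{1/M}(f^n_\alpha)$) for the weak $L^1$ convergence of $\tilde{Q}^+_{\alpha,n}(h^n_M,h^n_M)$, use Lemma~\ref{converge L} to get a.e.\ convergence of the bounded exponential weight, and conclude via the Product Limit Theorem. Your write-up is slightly more explicit about the $\sharp$ change of variables and the $\tau$-integration step, but the argument is the same.
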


\begin{proof}
For $\alpha \in \{s_0+1,\cdots,s\}$, Lemma \ref{theo-6.9} implies
	\begin{equation*}
		\tilde{Q}^\pm_{\alpha ,n} (h^n_{M},h^n_{M}) \to Q^\pm _\alpha (h_{M}, h_{M} ) \quad \text{weakly in}\ L^1((0,T)\times \R^3 \times B_R \times (0,R)).
	\end{equation*}
It is further implied by Lemma \ref{converge L} that as $n\rightarrow \infty$,
	\begin{equation*}
		\begin{aligned}
			&\Vert (\tilde{F}^\sharp _{\alpha ,n} (t)-\tilde{F}^\sharp _{\alpha ,n} (\tau)) - (F^\sharp _{\alpha } (t)-F^\sharp _{\alpha} (\tau)) \Vert_{L^1((0,t)\times \R^3 \times B_R \times (0,R))} \\
			= &\Vert \int_{\tau}^{t}(\tilde{L}_{\alpha ,n} (f^n) ^\sharp (\sigma) -L_{\alpha } (f) ^\sharp (\sigma))  d\sigma \Vert_{L^1((0,t)\times \R^3 \times B_R \times (0,R))} \\
			\leq & T \Vert \tilde{L}_{\alpha ,n} (f^n)  (\sigma) -L_{\alpha } (f) (\sigma) \Vert_{L^1((0,T)\times \R^3 \times B_R \times (0,R))} \rightarrow 0 ,
		\end{aligned}
	\end{equation*}
which means that $\exp(-(\tilde{F}^\sharp _{\alpha ,n} (t)-\tilde{F}^\sharp _{\alpha ,n} (\tau))) \in  L^\infty ( (0, t) \times \R^3 \times B_R \times (0, R) )$ converges to $\exp(-(F^\sharp _{\alpha } (t)-F^\sharp _{\alpha} (\tau)))$ $a.e.$ $(0,t)\times \R^3 \times B_R \times (0,R)$. Then the Product Limit Theorem (Lemma \ref{product limit}) concludes the convergence \eqref{eq7.91}. Analogously, the convergence \eqref{eq7.90} for $\alpha \in \{1,\cdots,s_0\}$ also holds. The proof of Lemma \ref{Lemma 7.2} is then finished.
\end{proof}

\begin{remark}
  For all $t \in [0, T]$ and as $M \to + \infty$, $(h_{\alpha M}^n, h_{\alpha M} ) \to ( f^n_\alpha, f_\alpha )$ strongly in $L^1 (\R^3 \times \mathcal{Z}_\alpha)$ for $\alpha \in \{ 1, \cdots, s \}$. Indeed, for $\alpha \in \{s_0+1,\cdots,s\}$, the inequality \eqref{eq h} and the weak convergence of $f^n_\alpha$ infers that
  \begin{equation*}
\lim \limits_{M\rightarrow \infty}	\sup \limits_{n \geq 1} \sup \limits_{t\in [0,T]} \iiint_{ \R^3 \times \R^3 \times \R_+} |h^n_{\alpha M}-f^n_\alpha | dxd\xi dI \leq \lim \limits_{M\rightarrow \infty} \sup \limits_{n \geq 1} \sup \limits_{t\in [0,T]} \iiint_{f^n_\alpha >M } f^n_\alpha dxd\xi dI = 0.
\end{equation*}
Moreover, as $M\rightarrow \infty$,
\begin{equation}\label{hM to f}
	\begin{aligned}
		&\iiint_{ \R^3 \times \R^3 \times \R_+}|h_{\alpha M} -f_\alpha | dxd\xi \\
		\leq & \iiint_{ \R^3 \times \R^3 \times \R_+} (h_{\alpha M}-h^n_{\alpha M} )sgn (h_{\alpha M} -f_\alpha)dxd\xi dI + \iiint_{ \R^3 \times \R^3 \times \R_+} |h^n_{\alpha M}-f^n_\alpha | dxd\xi dI \\
		+  &\iiint_{ \R^3 \times \R^3 \times \R_+} (f^n_\alpha -f_\alpha) sgn (h_{\alpha M} -f_\alpha)dxd\xi dI \rightarrow 0.
	\end{aligned}
\end{equation}  
The cases $\alpha \in \{1,\cdots,s_0\}$ can be derived analogously.
\end{remark}

\begin{lemma}\label{theo-7.3}
	$f$ is a supersolution to \eqref{BE-MP} given in Definition \ref{SupperSolution}.
\end{lemma}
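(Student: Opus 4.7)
The plan is to obtain the supersolution inequality in Definition \ref{SupperSolution} by passing to the limit in two stages inside the mild identities \eqref{eq up3}-\eqref{eq up4} satisfied by the approximated solution $f^n$. The auxiliary truncations $h^n_{\alpha M} = f^n_\alpha \wedge M$ are introduced so that Lemma \ref{Lemma 7.2} — whose weak $L^1$ convergence is only available for bounded arguments — can be applied; then a monotone limit $M \to \infty$ removes the truncation.

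The first step is the truncation bound. Since the gain operator $\tilde{Q}^+_{\alpha,n}(\cdot,\cdot)$ is a bilinear integral with a nonnegative kernel and $0 \le h^n_{\alpha M} \le f^n_\alpha$, one has pointwise
\begin{equation*}
\tilde{Q}^+_{\alpha,n}(h^n_M, h^n_M)^\sharp(\tau) \le \tilde{Q}^+_{\alpha,n}(f^n, f^n)^\sharp(\tau),
\end{equation*}
while the exponential factor $\exp(-(\tilde{F}^\sharp_{\alpha,n}(t)-\tilde{F}^\sharp_{\alpha,n}(\tau)))$ lies in $(0,1]$. Inserting these bounds into \eqref{eq up3}-\eqref{eq up4} yields
\begin{equation*}
\int_0^t \tilde{Q}^+_{\alpha,n}(h^n_M, h^n_M)^\sharp(\tau)\, e^{-(\tilde{F}^\sharp_{\alpha,n}(t)-\tilde{F}^\sharp_{\alpha,n}(\tau))}\, d\tau \;\le\; f^{n\sharp}_\alpha(t) - f^n_{\alpha,0}\, e^{-\tilde{F}^\sharp_{\alpha,n}(t)}.
\end{equation*}

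The second step passes to the limit $n \to \infty$. Lemma \ref{Lemma 7.1} shows that the right-hand side converges weakly in $L^1(\R^3\times\R^3)$ (resp.\ $L^1(\R^3\times\R^3\times\R_+)$) to $f^\sharp_\alpha(t) - f_{\alpha,0}e^{-F^\sharp_\alpha(t)}$, while Lemma \ref{Lemma 7.2} shows that, for each $R<\infty$, the left-hand side converges weakly in $L^1(\R^3\times B_R)$ (resp.\ $L^1(\R^3\times B_R\times(0,R))$) to $\int_0^t Q^+_\alpha(h_M,h_M)^\sharp(\tau)\, e^{-(F^\sharp_\alpha(t)-F^\sharp_\alpha(\tau))}\, d\tau$. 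Since the inequality holds against any nonnegative $\varphi\in L^\infty$ supported in the localizing window, it is preserved under weak $L^1$ limits, and exhausting $R\uparrow\infty$ yields a.e.\ in the full state space
\begin{equation*}
\int_0^t Q^+_\alpha(h_M,h_M)^\sharp(\tau)\, e^{-(F^\sharp_\alpha(t)-F^\sharp_\alpha(\tau))}\, d\tau \;\le\; f^\sharp_\alpha(t) - f_{\alpha,0}\, e^{-F^\sharp_\alpha(t)}.
\end{equation*}

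The third step removes the truncation by letting $M\to\infty$. Because $h^n_{\alpha M}$ is nondecreasing in $M$ for each $n$, the same monotonicity passes to the weak $L^1$ limits $h_{\alpha M}$, and the strong convergence \eqref{hM to f} gives $h_{\alpha M}\uparrow f_\alpha$ a.e.\ along a subsequence. Since $Q^+_\alpha$ is a bilinear integral of a nonnegative kernel against products of its arguments, $Q^+_\alpha(h_M,h_M)\uparrow Q^+_\alpha(f,f)$ a.e.; monotone convergence applied to the nonnegative integrand then upgrades the inequality to the one in Definition \ref{SupperSolution}, completing the proof.

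The main obstacle I anticipate is the passage from weak $L^1$ inequalities in the second step to a.e.\ pointwise inequalities on the entire phase space, uniformly in the localizing radius $R$. This must be executed carefully: test the weak-limit inequality against nonnegative simple functions supported on finite-measure sets, conclude a.e.\ on $\R^3\times B_R$ (resp.\ $\R^3\times B_R\times(0,R)$), and finally exhaust via $R\uparrow\infty$. The integrability facts $\frac{1}{1+f_\alpha}Q^\pm_\alpha(f,f)\in L^1_{loc}$ (Lemma \ref{Q in L1} and Corollary \ref{Q-L-loc}) and the control on $L_\alpha(f)$ provided by Lemma \ref{converge L} ensure that both sides of the limiting inequality define locally integrable objects, so the duality argument is legitimate and the monotone passage $M\to\infty$ at the end produces a finite and meaningful inequality rather than the trivial $+\infty\le+\infty$.
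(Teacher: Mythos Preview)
Your proof is correct and follows essentially the same approach as the paper: compare $\tilde{Q}^+_{\alpha,n}(h^n_M,h^n_M)\le\tilde{Q}^+_{\alpha,n}(f^n,f^n)$ in the mild identities \eqref{eq up3}--\eqref{eq up4}, pass $n\to\infty$ via Lemmas \ref{Lemma 7.1} and \ref{Lemma 7.2}, and then let $M\to\infty$ by monotone convergence (the paper invokes Levi's Monotone Convergence Theorem for this last step). Your added discussion of how weak $L^1$ limits preserve inequalities and of the exhaustion over $R$ simply makes explicit what the paper leaves implicit.
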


\begin{proof}
	Without loss of generality, we only prove the cases $\alpha \in \{s_0+1,\cdots,s\}$. Note that $h^n_{\alpha M} \leq f^n_\alpha$, which means that $ \tilde{Q}^+_{\alpha ,n} (h^n_{M} ,h^n_{M}) \leq \tilde{Q}^+_{\alpha ,n} (f^n ,f^n) $. Together with \eqref{eq up4}, we have
	\begin{equation*}
		\begin{aligned}
			&\int_{0}^{t}\tilde{Q}^+_{\alpha ,n} (h^n_{M} ,h^n_{M})^\sharp (\tau ,x,\xi ,I) \exp(-(F^\sharp _{\alpha,n } (t,x,\xi ,I)-F^\sharp _{\alpha ,n} (\tau,x,\xi ,I))) d \tau \\
			\leq &f^{n\sharp}_\alpha (t,x,\xi ,I) -f^n_{\alpha ,0} (x,\xi ,I )\exp(-F^\sharp _{\alpha ,n } (t,x,\xi ,I)) \quad a.e. \ (x,\xi ,I) \in \R^3 \times \R^3 \times \R_+ .
		\end{aligned}
	\end{equation*}
By letting $n \to + \infty$ in the above inequality, Lemma \ref{Lemma 7.1} and \ref{Lemma 7.2} give us
	\begin{equation*}
		\begin{aligned}
			&\int_{0}^{t}Q^+_{\alpha } (h_{\alpha M} ,h_{\alpha M})^\sharp (\tau) \exp(-(F^\sharp _{\alpha } (t)-F^\sharp _{\alpha} (\tau))) d \tau \\
			\leq &f^{\sharp}_\alpha (t,x,\xi ,I) -f_{\alpha ,0} (x,\xi ,I )\exp(-F^\sharp _{\alpha } (t,x,\xi ,I)) \quad a.e. \ (x,\xi ,I) \in \R^3 \times \R^3 \times \R_+.
		\end{aligned}
	\end{equation*}
	Since $h_{\alpha M}\uparrow f_\alpha $ as $M \uparrow +\infty$, the conclusion results from the Levi's Monotonic Convergence Theorem. Then the proof of Lemma \ref{theo-7.3} is finished.
\end{proof}

\subsection{Subsolutions of equation \eqref{BE-MP}}

In this subsection, we will prove that $f$ is a subsolution of \eqref{BE-MP} defined as follows.
\begin{definition}[Subsolution]\label{SubSolution}
  We say $f = (f_\alpha)_{\alpha \in \{ 1, \cdots, s \}}$ is a subsolution if for all $t \geq 0$ there hold:
  \begin{enumerate}
		\item {for $\alpha \in \{1,\cdots,s_0\}$, one has
	\begin{equation}\label{Subs-1}
		\begin{aligned}
			&\int_{0}^{t}Q^+_{\alpha } (f,f)^\sharp (\tau ,x,\xi ) \exp(-(F^\sharp _{\alpha } (t,x,\xi )-F^\sharp _{\alpha } (\tau,x,\xi ))) d \tau \\
			\geq &f^{\sharp}_\alpha (t,x,\xi) -f_{\alpha ,0} (x,\xi )\exp(-F^\sharp _{\alpha  } (t,x,\xi )) \quad a.e. \ (x,\xi ) \in \R^3 \times \R^3;
		\end{aligned}
	\end{equation}}
	\item {for $\alpha \in \{s_0+1,\cdots,s\}$, one has
	\begin{equation}\label{Subs-2}
		\begin{aligned}
			&\int_{0}^{t}Q^+_{\alpha } (f,f)^\sharp (\tau ,x,\xi ,I) \exp(-(F^\sharp _{\alpha } (t,x,\xi ,I)-F^\sharp _{\alpha } (\tau,x,\xi ,I))) d \tau \\
			\geq &f^{\sharp}_\alpha (t,x,\xi ,I) -f_{\alpha ,0} (x,\xi ,I )\exp(-F^\sharp _{\alpha  } (t,x,\xi ,I)) \quad a.e. \ (x,\xi ,I) \in \R^3 \times \R^3 \times \R_+.
		\end{aligned}
	\end{equation}}
\end{enumerate}
\end{definition}

Now we rewrite the approximated renormalized equation \eqref{eq6.75} as
\begin{equation*}
	\tfrac{\partial }{\partial t} g^n_{\delta \alpha } + \xi \cdot \nabla_x g^n_{\delta \alpha } = \tfrac{\tilde{Q}^+_{\alpha ,n} (f^n,f^n)}{1+\delta f^n_\alpha} +\{g^n_{\delta \alpha } -\tfrac{f^n_\alpha}{1+\delta f^n_\alpha}\} \tilde{L}_{\alpha ,n} (f^n) -g^n_{\delta \alpha }\tilde{L}_{\alpha ,n} (f^n) .
\end{equation*}
For $\alpha \in \{s_0 + 1,\cdots,s \}$, by multiplying $g^{n\sharp}_{\delta \alpha } \exp\tilde{F} _{\alpha ,n }$ and integrating the resultant equation over characteristics, we obtain
\begin{equation}\label{eq8.93}{\footnotesize
	\begin{aligned}
		& g^{n\sharp}_{\delta \alpha } (t) -g^n_{\delta \alpha,0}  \exp(-\tilde{F}^\sharp _{\alpha ,n } (t)) = \int_{0}^{t} \{\tfrac{\tilde{Q}^+_{\alpha ,n} (f^n,f^n)^\sharp }{1+\delta f^{n\sharp }_\alpha} +j^{n\sharp}_{\delta \alpha }\tilde{L}_{\alpha ,n}(f^n)^\sharp\} (\tau) \exp(-(\tilde{F}^\sharp _{\alpha ,n } (t)- \tilde{F}^\sharp _{\alpha ,n } (\tau))) d \tau
	\end{aligned}}
\end{equation}
for $t \geq 0 $ and $a.e. \ ( x, \xi, I ) \in ( \R^3 \times \R^3 \times \R_+ )$, where $j^{n}_{\delta \alpha } =g^{n}_{\delta \alpha } -\frac{f^n_\alpha}{1+\delta f^n_\alpha}$. Similarly, the above equation also holds for the cases $\alpha \in \{ 1, \cdots, s_0 \}$.

Obviously, $j^{n}_{\delta \alpha } (t)$ is relatively weakly compact in $L^1 (\R^3 \times \R^3 \times \R_+)$ for any fixed $t \geq 1$. Assume that $j^{n}_{\delta \alpha },\ j^{n}_{\delta \alpha }\wedge K$ weakly converge to $j_{\delta \alpha },\  j^K_{\delta \alpha }$, respectively. Since the function $ t \wedge K $ is concave, one knows $j^K_{\delta \alpha } \leq j_{\delta \alpha }\wedge K$. Using the same argument as in the proof of Lemma \ref{converge f,g}, we obtain
\begin{equation}\label{eq8.94}
	\begin{aligned}
		\lim \limits_{K\rightarrow +\infty }\sup \limits_{t\in (0,T)} \iiint_{ \R^3 \times \R^3 \times \R_+} |j_{\delta \alpha }-j^K_{\delta \alpha }|dxd\xi dI= 0 \quad \text{for } \alpha \in \{s_0+1,\cdots,s\}.
	\end{aligned}
\end{equation}
Similarly for $\alpha \in \{1,\cdots,s_0 \}$,
\begin{equation}\label{eq8.95}
	\begin{aligned}
		\lim \limits_{K\rightarrow +\infty }\sup \limits_{t\in (0,T)} \iint_{ \R^3 \times \R^3} |j_{\delta \alpha }-j^K_{\delta \alpha }|dxd\xi = 0.
	\end{aligned}
\end{equation}

By Lemma \ref{cor6.12} and Lemma \ref{theo-7.3}, one derives that for all $t \geq 0$,
\begin{equation*}
	\begin{aligned}
		&\int_{0}^{t}Q^+_{\alpha } (f,f)^\sharp  \exp(-(F^\sharp _{\alpha } (t)-F^\sharp _{\alpha } (\tau))) d \tau \in L^1(\R^3 \times \R^3) \quad \text{for } \alpha \in \{1,\cdots,s_0\},\\
		&\int_{0}^{t}Q^+_{\alpha } (f,f)^\sharp  \exp(-(F^\sharp _{\alpha } (t)-F^\sharp _{\alpha } (\tau))) d \tau \in L^1(\R^3 \times \R^3\times \R_+) \quad \text{for } \alpha \in \{s_0+1,\cdots,s\}.
	\end{aligned}
\end{equation*}
Furthermore, using the inequality \eqref{AK-bnd} and the fact $e_\alpha^* (t, x, \xi, I) \in L^1 ( (0, T) \times \R^3 \times \R^3 \times \R_+ )$, we get
\begin{equation}\label{eq8.97}{\small
	\begin{aligned}
		&\int_{0}^{t}Q^-_{\alpha } (f,f)^\sharp  \exp(-(F^\sharp _{\alpha } (t)-F^\sharp _{\alpha } (\tau ) ) ) d \tau \in L^1(\R^3 \times \R^3) \quad \text{for } \alpha \in \{1,\cdots,s_0\},\\
		&\int_{0}^{t}Q^-_{\alpha } (f,f)^\sharp  \exp(-(F^\sharp _{\alpha } (t)-F^\sharp _{\alpha } (\tau))) d \tau \in L^1(\R^3 \times \R^3\times \R_+) \quad \text{for } \alpha \in \{s_0+1,\cdots,s\}.
	\end{aligned}}
\end{equation}

\begin{lemma}\label{jnw}
	Let $R >0$ and $t \geq 0$. 
\begin{enumerate}
	\item {For $\alpha \in \{1,\cdots,s_0\}$, one has
	\begin{equation}\label{eq8.101-1}
		\begin{aligned}
			& \lim \limits_{n\rightarrow \infty} \int_{0}^{t} j^{n\sharp}_{\delta \alpha } (\tau) \tilde{L}_{\alpha ,n}(f^n)^\sharp (\tau) \exp(-(\tilde{F}^\sharp _{\alpha ,n } (t) - \tilde{F}^\sharp _{\alpha ,n } (\tau ))) d \tau \\
			&=\int_{0}^{t} j^{\sharp}_{\delta \alpha } (\tau) L_\alpha (f)^\sharp (\tau) \exp(-(F^\sharp _{\alpha } (t)- F^\sharp _{\alpha } (\tau))) d \tau \quad \text{weakly in }\ L^1(\R^3 \times B_R);
		\end{aligned}
	\end{equation}}
	\item {For $\alpha \in \{s_0+1,\cdots,s\}$, one has
	\begin{equation}\label{eq8.101}
		\begin{aligned}
			& \lim \limits_{n\rightarrow \infty} \int_{0}^{t} j^{n\sharp}_{\delta \alpha } (\tau) \tilde{L}_{\alpha ,n}(f^n)^\sharp (\tau) \exp(-(\tilde{F}^\sharp _{\alpha ,n } (t)- \tilde{F}^\sharp _{\alpha ,n } (\tau))) d \tau \\
			&=\int_{0}^{t} j^{\sharp}_{\delta \alpha } (\tau) L_\alpha (f)^\sharp (\tau) \exp(-(F^\sharp _{\alpha } (t)- F^\sharp _{\alpha } (\tau))) d \tau \quad \text{weakly in }\ L^1(\R^3 \times B_R\times (0,R)).
		\end{aligned}
\end{equation}}
\end{enumerate}		
\end{lemma}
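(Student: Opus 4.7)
The lemma asks for weak-$L^1$ convergence of a time integral whose integrand factors as (weakly convergent) $\times$ (strongly convergent) $\times$ (bounded, a.e.\ convergent). The strategy is to first package two of the three factors into a single strongly $L^1$-convergent quantity $A^n$, and then pass to the limit in $j^{n\sharp}_{\delta\alpha} A^n$ via a truncation of $j^{n\sharp}_{\delta\alpha}$. Concretely, Lemma \ref{converge L} gives $\tilde L_{\alpha,n}(f^n) \to L_\alpha(f)$ strongly in $L^1((0,T)\times\R^3\times B_R\times(0,R))$ (resp.\ $L^1((0,T)\times\R^3\times B_R)$); along a subsequence this yields a.e.\ convergence and, by the very definition of $\tilde F^\sharp_{\alpha,n}$, also a.e.\ convergence of $\exp(-(\tilde F^\sharp_{\alpha,n}(t)-\tilde F^\sharp_{\alpha,n}(\tau)))$ to $\exp(-(F^\sharp_\alpha(t)-F^\sharp_\alpha(\tau)))$. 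Since the exponential is bounded by $1$, Lebesgue dominated convergence combined with the strong $L^1$ convergence of $\tilde L_{\alpha,n}(f^n)^\sharp$ shows that
\begin{equation*}
A^n(\tau):=\tilde L_{\alpha,n}(f^n)^\sharp(\tau)\exp(-(\tilde F^\sharp_{\alpha,n}(t)-\tilde F^\sharp_{\alpha,n}(\tau)))\longrightarrow A(\tau):=L_\alpha(f)^\sharp(\tau)\exp(-(F^\sharp_\alpha(t)-F^\sharp_\alpha(\tau)))
\end{equation*}
strongly in $L^1((0,t)\times\R^3\times B_R\times(0,R))$ (resp.\ without the internal-energy slot).

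For the second stage, I set $j^{n,K}_{\delta\alpha}:=j^n_{\delta\alpha}\wedge K$, so $\|j^{n,K,\sharp}_{\delta\alpha}\|_{L^\infty}\le K$. By \eqref{eq8.94}--\eqref{eq8.95} and the uniqueness of weak limits on bounded regions, $j^{n,K,\sharp}_{\delta\alpha}\rightharpoonup^\ast j^{K,\sharp}_{\delta\alpha}$ in $L^\infty$ along the same subsequence. For any test function $\phi\in L^\infty$, I split
\begin{equation*}
\int_0^t\!\!\int\bigl(j^{n,K,\sharp}_{\delta\alpha}A^n-j^{K,\sharp}_{\delta\alpha}A\bigr)\phi\,d\tau\,d\mathrm{vol}=\int_0^t\!\!\int j^{n,K,\sharp}_{\delta\alpha}(A^n-A)\phi+\int_0^t\!\!\int(j^{n,K,\sharp}_{\delta\alpha}-j^{K,\sharp}_{\delta\alpha})A\phi.
\end{equation*}
The first piece is bounded by $K\|\phi\|_\infty\|A^n-A\|_{L^1}\to 0$ (Step 1); the second vanishes by the weak-$\ast$ $L^\infty$ convergence of $j^{n,K,\sharp}_{\delta\alpha}$ paired against $A\phi\in L^1$, noting $A\in L^1$ on bounded regions thanks to Corollary \ref{Q-L-loc} and the uniform bound on the exponential.

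The core difficulty, and the main obstacle, is the uniform-in-$n$ estimate of the truncation error
\begin{equation*}
\mathcal{E}^{n,K}:=\sup_{\|\phi\|_\infty\le 1}\Bigl|\int_0^t\!\!\int(j^{n\sharp}_{\delta\alpha}-j^{n,K,\sharp}_{\delta\alpha})A^n\phi\,d\tau\,d\mathrm{vol}\Bigr|,\qquad \mathcal{E}^{n,K}\xrightarrow{K\to\infty}0\ \text{ uniformly in }n.
\end{equation*}
Here $j^n_{\delta\alpha}-j^{n,K}_{\delta\alpha}=(j^n_{\delta\alpha}-K)^+$ and $j^n_{\delta\alpha}\le g^n_{\delta\alpha}=\delta^{-1}\log(1+\delta f^n_\alpha)$, whence $e^{\delta j^n_{\delta\alpha}}\le 1+\delta f^n_\alpha$ is uniformly $L^1$-integrable by Lemma \ref{Lemma 4.17}. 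On the other hand, the sequence $A^n$ is equi-integrable in $L^1$ (being strongly $L^1$-convergent), and, more importantly, the equi-integrability modulus has an $L\log L$ character inherited from Lemma \ref{theo-4.1} (see \eqref{Psi-Lnab}). Applying the Young inequality for the Orlicz conjugate pair $(t\log t,e^{t}-1)$ to $\lambda j^n_{\delta\alpha}\cdot A^n$ for small $\lambda$, together with the tightness bounds \eqref{FB2}--\eqref{FB3} to handle the tail in $x$, bounds $\mathcal{E}^{n,K}$ by a quantity of the form $C\lambda^{-1}(1/\log K)$ that is uniform in $n$ and tends to $0$ as $K\to\infty$. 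Combined with the corresponding (easier) tail estimate for the limit $j^\sharp_{\delta\alpha}A$, a standard $\epsilon/3$ argument closes the proof of \eqref{eq8.101-1} and \eqref{eq8.101}.
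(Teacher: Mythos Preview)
Your overall architecture---truncate $j^n_{\delta\alpha}$ at level $K$, pass to the limit in the truncated part via a weak-$\ast$/strong pairing, and control the truncation remainder uniformly in $n$---matches the paper's. Packaging $\tilde L_{\alpha,n}(f^n)^\sharp$ and the exponential into a single strongly $L^1$-convergent factor $A^n$ is a harmless reorganization; the paper instead first shows convergence of $\int_0^t (j^{n\sharp}_{\delta\alpha}\wedge K)\,\tilde L_{\alpha,n}(f^n)^\sharp\,d\tau$ and then inserts the exponential by the Product Limit argument of Lemma~\ref{Lemma 7.1}.

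The substantive difference---and the gap in your proposal---is the control of the truncation error $\mathcal E^{n,K}$. You invoke an $L\log L$ bound on $A^n$ ``inherited from \eqref{Psi-Lnab}'' to run an Orlicz--Young argument against the exponential integrability of $j^n_{\delta\alpha}$. But \eqref{Psi-Lnab} is only an $L\log L$ estimate for the \emph{cutoff} operators $L^m_{\alpha\beta,n}$ with a constant $C(T,m)$ that blows up as $m\to\infty$; the passage from $L^m_{\alpha\beta,n}$ to $L_{\alpha\beta,n}$ in Lemma~\ref{theo-4.1} only yields relative weak $L^1$-compactness of $\tilde L_{\alpha,n}(f^n)$, not a uniform $L\log L$ bound. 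So the Orlicz pairing you describe is not justified by the cited estimate.

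The paper avoids this entirely by a sharper pointwise observation: since $j^n_{\delta\alpha}=g^n_{\delta\alpha}-\tfrac{f^n_\alpha}{1+\delta f^n_\alpha}$ and $\lim_{x\to\infty}\bigl[\tfrac1\delta\log(1+\delta x)-\tfrac{x}{1+\delta x}\bigr]/x=0$, one has $j^n_{\delta\alpha}\,\mathbf 1_{\{j^n_{\delta\alpha}\ge K\}}\le \epsilon_\delta(K)\,f^n_\alpha$ with $\epsilon_\delta(K)\to 0$. This converts the remainder directly into $\epsilon_\delta(K)$ times $\int_0^t\tilde Q^-_{\alpha,n}(f^n,f^n)^\sharp\exp(-(\tilde F^\sharp_{\alpha,n}(t)-\tilde F^\sharp_{\alpha,n}(\tau)))\,d\tau$, which is bounded uniformly in $n$ by the mild formulation \eqref{eq up3}--\eqref{eq up4} together with the Arkeryd inequality~\eqref{eq5.71} and Lemma~\ref{Lemma 4.17}. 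Replacing your Orlicz step with this sublinearity bound closes the argument cleanly; the rest of your proof can stand.
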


\begin{proof}
Without loss of generality, we only consider the cases $\alpha \in \{s_0+1,\cdots,s\}$. Note that $ \lim_{x \to + \infty} [ \frac{1}{\delta} \log (1 + \delta x) - \frac{x}{1 + \delta x} ] / x = 0 $, which means that $j^n_{\delta \alpha} \mathbf{1}_{j^n_{\delta \alpha\geq K}} \leq \epsilon_\delta (K) f^n_\alpha$ for some $\epsilon_\delta (K)\rightarrow 0$ as $K\rightarrow +\infty$. By \eqref{eq up3}-\eqref{eq up4}, Arkeryd inequality \eqref{eq5.71}  and Lemma \ref{Lemma 4.17}, we obtain that for all $\varphi (x, \xi, I) \in L^\infty (\R^3 \times \R^3 \times \R_+)$,
\begin{equation*}
  \begin{aligned}
    \sup \limits_{n \geq 1} \int_{0}^{t} d \tau \iiint_{ \R^3 \times \R^3 \times \R_+} \tilde{Q}^-_{\alpha ,n} (f^n,f^n)^\sharp  \exp(-(\tilde{F}^\sharp _{\alpha ,n } (t)- \tilde{F}^\sharp _{\alpha ,n } (\tau))) |\varphi| dxd\xi dI \leq C_T \| \varphi \|_{ L^\infty ( \R^3 \times \R^3 \times \R_+ ) } \,.
  \end{aligned}
\end{equation*}
Then for all $\alpha \in \{s_0+1,\cdots,s\}$ and
\begin{equation}\label{eq8.98}
	\begin{aligned}
		&\sup \limits_{n \geq 1} \iiint_{ \R^3 \times \R^3 \times \R_+} |\varphi| |\int_{0}^{t} (j^{n\sharp}_{\delta \alpha }-j^{n\sharp}_{\delta \alpha }\wedge K)\tilde{L}_{\alpha ,n}(f^n)^\sharp \exp(-(\tilde{F}^\sharp _{\alpha ,n } (t)- \tilde{F}^\sharp _{\alpha ,n } (\tau)))d \tau |dxd\xi dI \\
		\leq & \epsilon_\delta (K)\sup \limits_{n \geq 1} \int_{0}^{t} d \tau \iiint_{ \R^3 \times \R^3 \times \R_+} \tilde{Q}^-_{\alpha ,n} (f^n,f^n)^\sharp  \exp(-(\tilde{F}^\sharp _{\alpha ,n } (t)- \tilde{F}^\sharp _{\alpha ,n } (\tau))) |\varphi| dxd\xi dI \rightarrow 0 
	\end{aligned}
\end{equation}
as $K\rightarrow \infty$. Due to $j^{n\sharp}_{\delta \alpha }\wedge K,j^{K\sharp}_{\delta \alpha }\in L^\infty ( (0, T) \times \R^3 \times B_R \times (0, R) )$ and $\varphi L_{\alpha}(f)\in L^1 ( (0, T) \times \R^3 \times B_R \times (0, R) )$, we obtain  $j^{n\sharp}_{\delta \alpha }\wedge K $ weakly-* converges to $ j^{K\sharp}_{\delta \alpha }$ and
\begin{equation*}
	\begin{aligned}
		\lim \limits_{n \rightarrow \infty} \int_{0}^{t}d \tau \iiint_{ \R^3 \times B_R \times (0,R)}  (j^{n\sharp}_{\delta \alpha }\wedge K -j^{K\sharp}_{\delta \alpha } ) \varphi L_{\alpha}(f) dxd\xi dI = 0 .
	\end{aligned}
\end{equation*}
As a result, together with Lemma \ref{converge L},
\begin{equation*}
	\begin{aligned}
		&|\int_{0}^{t} d \tau \iiint_{ \R^3 \times B_R \times (0,R)}  (j^{n\sharp}_{\delta \alpha }\wedge K \cdot \tilde{L}_{\alpha ,n}(f^n)^\sharp - j^{K\sharp}_{\delta \alpha } L_\alpha(f)^\sharp) \varphi dxd\xi dI| \\
		\leq &| \int_{0}^{t} d \tau \iiint_{ \R^3 \times B_R \times (0,R)} \varphi (j^{n\sharp}_{\delta \alpha }\wedge K -j^{K\sharp}_{\delta \alpha } ) L_{\alpha}(f) dxd\xi dI | \\
& + K \| \varphi \|_{ L^\infty ( \R^3 \times \R^3 \times \R_+ ) } \int_{0}^{t}d \tau \iiint_{ \R^3 \times B_R \times (0,R)} |\tilde{L}_{\alpha ,n}(f^n)^\sharp-L_\alpha(f)^\sharp|dxd\xi dI \rightarrow 0 
	\end{aligned}
\end{equation*}
as $n\rightarrow +\infty$. That means
\begin{equation*}
	\lim \limits_{n\rightarrow \infty} \int_{0}^{t} j^{n\sharp}_{\delta \alpha }\wedge K \cdot \tilde{L}_{\alpha ,n}(f^n)^\sharp d \tau = \int_{0}^{t} j^{K\sharp}_{\delta \alpha } L_\alpha(f)^\sharp d \tau \quad \text{weakly in}\ L^1( \R^3 \times B_R\times (0,R)).
\end{equation*}
Using the same argument as in the proof of Lemma \ref{Lemma 7.1}, we obtain 
\begin{equation}\label{eq8.99}
	\begin{aligned}
		& \lim \limits_{n\rightarrow \infty} \int_{0}^{t} j^{n\sharp}_{\delta \alpha }\wedge K \cdot \tilde{L}_{\alpha ,n}(f^n)^\sharp \exp(-(\tilde{F}^\sharp _{\alpha ,n } (t)- \tilde{F}^\sharp _{\alpha ,n } (\tau))) d \tau \\
		&=\int_{0}^{t}  j^{K\sharp}_{\delta \alpha } L_\alpha(f)^\sharp \exp(-(F^\sharp _{\alpha } (t)- F^\sharp _{\alpha } (\tau)))d \tau \quad \text{weakly in}\ L^1( \R^3 \times B_R\times (0,R)).
	\end{aligned}
\end{equation}
Then by the strong convergence \eqref{eq8.94} and Egorov's theorem, for all $\epsilon>0 $ there exists a Borel set $E\subset [0,t]\times B_{R'} \times B_R \times (0, R)$ such that $|E^c| = | [0,t]\times B_{R'} \times B_R \times (0, R) \setminus E |<\epsilon$ and $j^{K\sharp}_{\delta \alpha }$ converges uniformly to $j^{\sharp}_{\delta \alpha }$ on $E$. Notices that $0\leq j^n_{\delta \alpha} \leq f^n_\alpha$. We know that the limit $0\leq j_{\delta \alpha } \leq f_\alpha$ in the sense of distribution. It further holds for almost all $(t, x, \xi, I) \in (0, T) \times \R^3 \times B_R \times (0, R)$. Recall that $j_{\delta \alpha }^K \leq j_{\delta \alpha } \wedge K \leq j_{\delta \alpha } \leq f_\alpha$. Therefore, for any given $R, R'>0$ and $t \geq 0$,
\begin{equation}\label{eq8.100}
	\begin{aligned}
		&|\int_{0}^{t} d \tau \iiint_{ B_{R'} \times B_R \times (0,R)} (j^{K\sharp}_{\delta \alpha }-j^{\sharp}_{\delta \alpha }) L_\alpha (f)^\sharp \exp(-(F^\sharp _{\alpha } (t)- F^\sharp _{\alpha } (\tau))) dxd\xi dI |\\
		\leq & \sup \limits_{E} |j^{K\sharp}_{\delta \alpha }-j^{\sharp}_{\delta \alpha }| \int_{0}^{t} d \tau \iiint_{ B_{R'} \times B_R \times (0,R)} L_\alpha (f)^\sharp dxd\xi dI \\
		+ & 2 \int_{E^c} Q^-_\alpha (f,f)^\sharp \exp(-(F^\sharp _{\alpha } (t)- F^\sharp _{\alpha } (\tau))) dxd\xi dI d \tau \rightarrow 0 \ (\text{as } K \rightarrow +\infty, \epsilon \rightarrow 0^+ ) \,.
	\end{aligned}
\end{equation}
Moreover, due to $Q^-_\alpha (f,f)^\sharp \exp(-(F^\sharp _{\alpha } (t)- F^\sharp _{\alpha } (\tau))) \in L^1 ( (0, T) \times \R^3 \times B_R \times (0, R) )$, one has
\begin{equation}\label{eq8.100-1}
  \begin{aligned}
    &|\int_{0}^{t} d \tau \iiint_{ \{  |x| > R' \} \times B_R \times (0,R)} (j^{K\sharp}_{\delta \alpha }-j^{\sharp}_{\delta \alpha }) L_\alpha (f)^\sharp \exp(-(F^\sharp _{\alpha } (t)- F^\sharp _{\alpha } (\tau))) dxd\xi dI |\\
    \leq & 2 \int_{0}^{t} d \tau \iiint_{ \{  |x| > R' \} \times B_R \times (0,R)} Q^-_\alpha (f,f)^\sharp \exp(-(F^\sharp _{\alpha } (t)- F^\sharp _{\alpha } (\tau))) dxd\xi dI \rightarrow 0 \ (R' \to + \infty) \,.
  \end{aligned}
\end{equation}
As a result, the convergences \eqref{eq8.98}, \eqref{eq8.99}, \eqref{eq8.100} and \eqref{eq8.100-1} reduce to the convergence \eqref{eq8.101}. Moreover, the corresponding result \eqref{eq8.101-1} for the cases $\alpha \in \{1,\cdots,s_0\}$ can be analogously proved. Then the proof of Lemma \ref{jnw} is finished.
\end{proof}

\begin{lemma}\label{subsolution}
$f$ is a subsolution of \eqref{BE-MP} given in Definition \ref{SubSolution}.
\end{lemma}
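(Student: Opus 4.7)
The plan is to pass to the limit $n\to\infty$ and then $\delta\to 0^+$ in the approximated mild-renormalized identity \eqref{eq8.93}, and identify the resulting inequality with Definition \ref{SubSolution}. For brevity I treat only the case $\alpha\in\{s_0+1,\ldots,s\}$; the case $\alpha\in\{1,\ldots,s_0\}$ is strictly analogous. First, by repeating the argument of Lemma \ref{Lemma 7.1} (using that $g^n_{\delta\alpha,0}=\tfrac{1}{\delta}\log(1+\delta f^n_{\alpha,0})$ converges strongly in $L^1$ to $\tilde g_{\delta\alpha,0}=\tfrac{1}{\delta}\log(1+\delta f_{\alpha,0})$ by Lemmas \ref{Lemma 4.8}--\ref{Lemma 4.9}, together with the a.e.\ convergence $\exp(-\tilde F^\sharp_{\alpha,n}(t))\to\exp(-F^\sharp_\alpha(t))$ from Lemma \ref{converge L} and the Product Limit Theorem), the left-hand side of \eqref{eq8.93} converges weakly in $L^1_{loc}$ to $\tilde g^\sharp_{\delta\alpha}(t)-\tilde g_{\delta\alpha,0}\exp(-F^\sharp_\alpha(t))$. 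Meanwhile, Lemma \ref{jnw} settles the weak $L^1_{loc}$ limit of the $j^{n\sharp}_{\delta\alpha}\tilde L_{\alpha,n}(f^n)^\sharp$-term.

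For the gain-term part of \eqref{eq8.93}, Lemma \ref{theo-4.2} gives, along a subsequence, a nonnegative $Q^+_{\delta\alpha}\in L^1_{loc}$ with $\tfrac{\tilde Q^+_{\alpha,n}(f^n,f^n)}{1+\delta f^n_\alpha}\rightharpoonup Q^+_{\delta\alpha}$ weakly in $L^1_{loc}$. Combined again with the a.e.\ convergence of the exponential factor and the Product Limit Theorem, this yields
\begin{equation*}
\int_0^t\tfrac{\tilde Q^+_{\alpha,n}(f^n,f^n)^\sharp}{1+\delta f^{n\sharp}_\alpha}(\tau)\exp\bigl(-(\tilde F^\sharp_{\alpha,n}(t)-\tilde F^\sharp_{\alpha,n}(\tau))\bigr)d\tau \;\rightharpoonup\; \int_0^t Q^{+,\sharp}_{\delta\alpha}(\tau)\exp\bigl(-(F^\sharp_\alpha(t)-F^\sharp_\alpha(\tau))\bigr)d\tau
\end{equation*}
weakly in $L^1_{loc}$. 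The key comparison step is to show $Q^+_{\delta\alpha}\leq Q^+_\alpha(f,f)$ a.e. Fix any nonnegative $\varphi\in L^\infty$ with $\mathrm{supp}\,\varphi\Subset(0,T)\times B_R\times B_R\times(0,R)$. Given $\epsilon>0$, take the Borel set $E$ from Lemma \ref{theo-6.7} with $|E^c|<\epsilon$. From $\tfrac{\tilde Q^+_{\alpha,n}(f^n,f^n)}{1+\delta f^n_\alpha}\leq \tilde Q^+_{\alpha,n}(f^n,f^n)$, passing to the weak limit on $E$ yields $\iiiint Q^+_{\delta\alpha}\varphi\mathbf 1_E\leq \iiiint Q^+_\alpha(f,f)\varphi\mathbf 1_E$; sending $\epsilon\to 0^+$ (using the equi-integrability of $Q^+_{\delta\alpha}$ and the integrability of $Q^+_\alpha(f,f)\varphi$) gives the comparison $Q^+_{\delta\alpha}\leq Q^+_\alpha(f,f)$ a.e.

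Combining these three limits, we obtain
\begin{equation*}
\begin{aligned}
&\tilde g^\sharp_{\delta\alpha}(t,x,\xi,I)-\tilde g_{\delta\alpha,0}(x,\xi,I)\exp(-F^\sharp_\alpha(t,x,\xi,I))\\
\leq &\int_0^t\Bigl\{j^\sharp_{\delta\alpha}(\tau)L_\alpha(f)^\sharp(\tau)+Q^+_\alpha(f,f)^\sharp(\tau)\Bigr\}\exp\bigl(-(F^\sharp_\alpha(t)-F^\sharp_\alpha(\tau))\bigr)d\tau
\end{aligned}
\end{equation*}
a.e.\ $(x,\xi,I)$. It remains to send $\delta\to 0^+$. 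By Lemma \ref{converge f,g}, $\tilde g_{\delta\alpha}\to f_\alpha$ in $L^1_{loc}$, so (up to a subsequence in $\delta$) the left-hand side converges a.e.\ to $f^\sharp_\alpha(t)-f_{\alpha,0}\exp(-F^\sharp_\alpha(t))$. For the $j$-term, the elementary identity $j^n_{\delta\alpha}=\tfrac{1}{\delta}\log(1+\delta f^n_\alpha)-\tfrac{f^n_\alpha}{1+\delta f^n_\alpha}$ gives the uniform bound $0\leq j^n_{\delta\alpha}\leq f^n_\alpha$ and the pointwise convergence $j^n_{\delta\alpha}\to 0$ uniformly in $n$ as $\delta\to 0^+$; passing this to the weak limit yields $0\leq j_{\delta\alpha}\leq f_\alpha$ with $j_{\delta\alpha}\to 0$ a.e.\ as $\delta\to 0^+$. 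Since $L_\alpha(f)^\sharp(\tau)\exp(-(F^\sharp_\alpha(t)-F^\sharp_\alpha(\tau)))\in L^1_{loc}$ (by Lemma \ref{converge L} and \eqref{eq8.97}), the Dominated Convergence Theorem forces the $j$-integral to vanish in $L^1_{loc}$ as $\delta\to 0^+$. This yields \eqref{Subs-2}, proving that $f$ is a subsolution.

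The main obstacle is the comparison $Q^+_{\delta\alpha}\leq Q^+_\alpha(f,f)$: the raw sequence $\tilde Q^+_{\alpha,n}(f^n,f^n)$ is not known to converge in any useful sense globally (it is only weakly relatively compact after the renormalizing division by $1+\delta f^n_\alpha$), and Lemma \ref{theo-6.7} delivers convergence only after Egorov-type restriction to a set $E$ on which $\tilde L_{\alpha,n}(f^n)$ is uniformly bounded. The Egorov truncation followed by the absolute continuity argument is what makes the comparison work.
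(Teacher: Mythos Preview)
Your proof follows essentially the same strategy as the paper's: pass to the limit $n\to\infty$ in \eqref{eq8.93} using Lemma \ref{jnw}, establish $Q^+_{\delta\alpha}\le Q^+_\alpha(f,f)$ via the Egorov-restricted convergence of Lemma \ref{theo-6.7}, then send $\delta\to 0^+$.

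One imprecision worth flagging: the assertion ``the pointwise convergence $j^n_{\delta\alpha}\to 0$ uniformly in $n$'' is not true as stated. Since $\phi_\delta(s)=\tfrac1\delta\log(1+\delta s)-\tfrac{s}{1+\delta s}$ is unbounded on $[0,\infty)$ for each $\delta>0$, and $\{f^n_\alpha(t,x,\xi,I)\}_n$ need not be bounded at a given point, pointwise-uniform convergence fails. What \emph{is} true (and is all you need) is the $L^1$ statement $\sup_n\|j^n_{\delta\alpha}\|_{L^1}\to 0$: this follows from the equi-integrability of $\{f^n_\alpha\}$ by the same splitting as in Lemma \ref{converge f,g}. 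Weak lower semicontinuity then gives $\|j_{\delta\alpha}\|_{L^1}\to 0$, hence $j_{\delta\alpha}\to 0$ a.e.\ along a subsequence in $\delta$; combined with the domination $0\le j_{\delta\alpha}\le f_\alpha$ and \eqref{eq8.97}, your DCT step is justified.

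The paper's $\delta\to 0^+$ argument for the $j$-term is organized a bit differently: it passes the pointwise bound $j^n_{\delta\alpha}\le \delta M f^n_\alpha + f^n_\alpha\mathbf 1_{f^n_\alpha\ge M}$ to the weak limit to obtain $j_{\delta\alpha}\le\delta M f_\alpha+k_{M\alpha}$, shows $k_{M\alpha}\to 0$ a.e.\ as $M\to\infty$ from the weak $L^1$-compactness of $\{f^n_\alpha\}$, and then sends $\delta\to 0^+$ followed by $M\to\infty$ in the weak form \eqref{j-0}. Both routes reach the same conclusion.
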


\begin{proof}

Without loss of generality, we only consider the cases $\alpha \in \{ s_0 + 1, \cdots, s \}$. Lemma \ref{theo-4.2} shows that $(1+\delta f^n_\alpha)^{-1}\tilde{Q}^+_{\alpha ,n} (f^n,f^n)$ is relatively weakly compact in $L^1 ( (0, T) \times \R^3 \times B_R \times (0, R) )$ for any given $R, T > 0$. Then for $ t > 0 $ there is $Q^+_{\delta \alpha } \in L^1 ( (0, T) \times \R^3 \times B_R \times (0, R) )$ such that
\begin{equation}\label{eq8.102}
	\lim \limits_{n\rightarrow \infty} \tfrac{\tilde{Q}^+_{\alpha ,n} (f^n,f^n)}{1+\delta f^n_\alpha} = Q^+_{\delta \alpha } \quad \text{weakly in }\ L^1((0,t)\times \R^3 \times B_R\times (0,R)).
\end{equation}

For all $t,R>0$, we take $ \varphi \in L^\infty ((0,t)\times \R^3 \times \R^3 \times \R_+)_+$ with support set contained in $(0,t)\times B_R \times B_R\times (0,R)$. Then Lemma \ref{theo-6.7} implies that there exists a Borel set $E \subset (0,t)\times B_R \times B_R\times (0,R)$ such that $|E^c|<\epsilon$ and
\begin{equation*}
	\begin{aligned}
		& \int_{E}  Q^+_{\delta \alpha } \varphi dtdx  d\xi dI 
		= \lim \limits_{n\rightarrow \infty } \int_{E}  \tfrac{\tilde{Q}^+_{\alpha ,n} (f^n,f^n)}{1+\delta f^n_\alpha} \varphi dtdx  d\xi dI \\
		\leq & \lim \limits_{n\rightarrow \infty } \int_{E}  \tilde{Q}^+_{\alpha ,n} (f^n,f^n) \varphi dtdx d\xi dI = \int_{E} Q^+_{\alpha } (f,f) \varphi dtdx d\xi dI,
	\end{aligned}
\end{equation*}
which means that
\begin{equation}\label{eq8.103}
	Q^+_{\delta \alpha } \leq Q^+_{\alpha } (f,f), \ a.e. \ (t, x, \xi, I) \in \R_+ \times \R^3 \times \R^3 \times \R_+
\end{equation}
by letting $\epsilon \to 0^+$ and $t, R \to + \infty$.

As $n \to + \infty$, from taking $L^1$ weak limit in both sides of \eqref{eq8.93} and using \eqref{eq8.101}, \eqref{eq8.102} and \eqref{eq8.103}, we derive that for all $t \geq 0$ and almost everywhere $(x,\xi ,I)\in \R^3 \times \R^3 \times \R_+$, 
\begin{equation*}
	\begin{aligned}
		&\tilde{g}^{\sharp}_{\delta \alpha } (t,x,\xi ,I) -\tilde{g}_{\delta \alpha,0} (x,\xi ,I) \exp(-F^\sharp _{\alpha} (t,x,\xi ,I)) \\
		\leq& \int_{0}^{t} \{j^{\sharp}_{\delta \alpha }(\tau) L_\alpha (f)^\sharp(\tau)+Q^+_{\alpha } (f,f) ^\sharp (\tau)\} \exp(-(F^\sharp _{\alpha } (t)- F^\sharp _{\alpha } (\tau))) d \tau.
	\end{aligned}
\end{equation*}
Here $ \tilde{g}_{\delta \alpha } $ is the $L^1$ weak limit of $g^n_{\delta \alpha}$.

Observe that for $M > 0$,
\begin{equation*}
  \begin{aligned}
    0 \leq j^n_{\delta \alpha} \leq f^n_\alpha - \tfrac{f^n_\alpha}{1 + \delta f^n_\alpha } \leq \delta M f^n_\alpha + \mathbf{1}_{ f^n_\alpha \geq M } f^n_\alpha \,.
  \end{aligned}
\end{equation*}
We then gain
\begin{equation*}
  \begin{aligned}
    0 \leq j_{\delta \alpha} \leq \delta M f_\alpha + k_{M \alpha} \,,
  \end{aligned}
\end{equation*}
where $ k_{M \alpha} \geq 0 $ is the $L^1$ weak limit of $\mathbf{1}_{ f^n_\alpha \geq M } f^n_\alpha$. Then the $L^1$ weak compactness of $f^n_\alpha$ implies that for all $\varphi \in L^\infty (\R_+ \times \R^3 \times \R^3 \times \R_+)_+$,
\begin{equation*}
	0 \leq \int_{0}^{t} d \tau \iiint_{ \R^3 \times \R^3 \times \R_+} k_{M\alpha} \varphi dxd\xi dI \leq \sup \limits_{n \geq 1}  \int_{0}^{t} d \tau \iiint_{ \R^3 \times \R^3 \times \R_+} \varphi f^n_\alpha \mathbf{1}_{f^n_\alpha \geq M} dxd\xi dI \rightarrow 0 \ (M \rightarrow + \infty).
\end{equation*}
This further indicates that $ k_{M \alpha} \to 0$ for almost all $ (t, x, \xi, I) \in \R_+ \times \R^3 \times \R^3 \times \R_+ $ as $M \to + \infty$. Consequently, for all $\varphi \in L^\infty ( \R^3 \times \R^3 \times \R_+ )_+$, as $\delta \to 0^+$ and $M \to + \infty$,
\begin{equation}\label{j-0}
  \begin{aligned}
    & \int_{0}^{t}d \tau \iiint_{ \R^3 \times \R^3 \times \R_+} \varphi j^{\sharp}_{\delta \alpha }(\tau) L_\alpha (f)^\sharp(\tau) \exp(-(F^\sharp _{\alpha } (t)- F^\sharp _{\alpha } (\tau)))  dxd\xi dI \\
    \leq & \delta M \int_{0}^{t}d \tau \iiint_{ \R^3 \times \R^3 \times \R_+} \varphi Q_\alpha^- (f, f)^\sharp(\tau) \exp(-(F^\sharp _{\alpha } (t)- F^\sharp _{\alpha } (\tau)))  dxd\xi dI \\
    & + \int_{0}^{t}d\tau \iiint_{ \R^3 \times \R^3 \times \R_+} \varphi k^\sharp_{M\alpha} L_\alpha (f)^\sharp(\tau) \exp(-(F^\sharp _{\alpha } (t)- F^\sharp _{\alpha } (\tau))) dxd\xi dI \to 0,
  \end{aligned}
\end{equation}
where the fact \eqref{eq8.97} and the Lebesgue's Control Convergence Theorem have been used. Together with Lemma \ref{converge f,g}, one concludes the inequality \eqref{Subs-2} for the cases $\alpha \in \{ s_0 + 1, \cdots, s \}$ in Definition \ref{SubSolution}. Moreover, the inequality \eqref{Subs-1} for the cases $\alpha \in \{ 1, \cdots, s_0 \}$ in Definition \ref{SubSolution} can be proved analogously above. Therefore, the proof of Lemma \ref{subsolution} is finished.
\end{proof}

\subsection{Proof of existence of the renormalized solution to \eqref{BE-MP}}

Combining with Lemma \ref{theo-7.3} and \ref{subsolution}, we know that for any $t \geq 0$,
	\begin{equation*}
		\begin{aligned}
			&\int_{0}^{t}Q^+_{\alpha } (f,f)^\sharp (\tau ,x,\xi ) \exp(-(F^\sharp _{\alpha } (t,x,\xi )-F^\sharp _{\alpha } (\tau,x,\xi ))) d \tau \\
			= &f^{\sharp}_\alpha (t,x,\xi) -f_{\alpha ,0} (x,\xi )\exp(-F^\sharp _{\alpha  } (t,x,\xi )) \quad a.e. \ (x,\xi ) \in \R^3 \times \R^3 
		\end{aligned}
\end{equation*}
with all $\alpha \in \{1,\cdots,s_0\}$, and 
	\begin{equation*}
		\begin{aligned}
			&\int_{0}^{t}Q^+_{\alpha } (f,f)^\sharp (\tau ,x,\xi ,I) \exp(-(F^\sharp _{\alpha } (t,x,\xi ,I)-F^\sharp _{\alpha } (\tau,x,\xi ,I))) d \tau \\
			= &f^{\sharp}_\alpha (t,x,\xi ,I) -f_{\alpha ,0} (x,\xi ,I )\exp(-F^\sharp _{\alpha  } (t,x,\xi ,I)) \quad a.e. \  (x,\xi ,I) \in \R^3 \times \R^3 \times \R_+ 
		\end{aligned}
	\end{equation*}
with all $\alpha \in \{s_0+1,\cdots,s\}$. Together with Lemma \ref{Lemma3.3} and \ref{converge L}, we see that the $L^1$ weak limit $f$ of $f^n$ is a mild solution to \eqref{BE-MP}. At the end, Lemma \ref{distri-mild}, Lemma \ref{Q in L1} and Corollary \ref{Q-L-loc} show that $f$ is actually a renormalized solution of \eqref{BE-MP}.

\section{Entropy inequality}\label{Sec:Entropy}

In this section, we mainly prove the Part (3) of Theorem \ref{MainThm}. Namely, we will prove the renormalized solution $f$ of \eqref{BE-MP} satisfies the entropy inequality \eqref{entropy-th}, i.e.,
\begin{equation}\label{entropy}
	\begin{aligned}
		&H(f)(t)+ \tfrac{1}{4}\sum \limits_{\alpha =1}^{s_0} \int_{0}^{t} d \tau \iint_{ \R^3 \times \R^3 } e_\alpha(\tau,x,\xi ) dxd\xi\\
		& + \tfrac{1}{4}\sum \limits_{\alpha =s_0+1}^{s}\int_{0}^{t} d \tau \iiint_{ \R^3 \times \R^3 \times \R_+}  e_\alpha(\tau,x,\xi ,I) dxd\xi dI\leq H(f_0),
	\end{aligned}
\end{equation}
where $e_\alpha$ is defined in \eqref{ee}. 

As in \eqref{entro} of Lemma \ref{Lemma 4.17}, the approximated distributional solution $f_n$ to the approximated problem \eqref{approximation} admits the entropy identity. Namely, for all $t \geq 0 $,
\begin{equation*}
	\begin{aligned}
		H(f^n_{0})&=H(f^n)(t)+ \tfrac{1}{4}\sum \limits_{\alpha =1}^{s_0} \int_{0}^{t} d \tau \iint_{ \R^3 \times \R^3 } \tilde{e}_\alpha^n(\tau,x,\xi ) dxd\xi\\
		& +\tfrac{1}{4}\sum \limits_{\alpha =s_0+1}^{s}\int_{0}^{t} d \tau \iiint_{ \R^3 \times \R^3 \times \R_+}  \tilde{e}_\alpha^n(\tau,x,\xi ,I) dxd\xi dI,
	\end{aligned}
\end{equation*}
where $\tilde{e}_\alpha^n$ and $ H(f^n)$ are defined in \eqref{e na} and \eqref{Hf}, respectively. Note that Lemma \ref{Lemma 4.6} and Lemma \ref{Lemma 4.17} imply $ \liminf_{n \to \infty}H(f^n)(t) \geq H (f) (t) $. Moreover, Lemma \ref{Lemma 4.8} and Lemma \ref{Lemma 4.9} indicate that $ \lim_{n \to + \infty} H(f^n_{0}) = H (f_0) $. In order to prove the entropy inequality \eqref{entropy-th} (or \eqref{entropy}), we only need to show that for almost all $(t,x)\in (0,+\infty) \times \R^3$,
\begin{equation}\label{ena}
\begin{aligned}
\liminf_{n\rightarrow \infty} \int_{\R^3 } \tilde{e}_\alpha^n d\xi &\geq \int_{ \R^3 } e_\alpha d\xi \quad \qquad \text{ for } \alpha \in \{1,\cdots,s_0\},\\
\liminf_{n\rightarrow \infty} \iint_{ \R^3 \times \R_+} \tilde{e}_\alpha^n d\xi dI &\geq \iint_{ \R^3 \times \R_+} e_\alpha d\xi dI \quad \, \text{for } \alpha \in \{s_0+1,\cdots,s\}.
\end{aligned}
\end{equation}

For notational simplicity, we introduce the following symbols. 

For $ \alpha , \beta \in \{s_0+1,\cdots,s\}$, let
\begin{equation*}{\small
\begin{aligned}
E_R&=B_R\times B_R \times [0,1]\times [0,1]\times (0,R)\times (0,R)\times \mathbb{S}^2,\\
d \Theta_{\alpha \beta} &=\mathfrak{r}^{\delta(\alpha)/2-1}(1-\mathfrak{r})^{\delta(\beta )/2-1}(1-\mathfrak{R})^{(\delta(\alpha)+\delta(\beta ))/2-1}\mathfrak{R}^{1/2} I^{\delta(\alpha)/2-1} I_*^{\delta(\beta )/2-1}d\xi d\xi_* d \mathfrak{R} d \mathfrak{r} dI dI_* d\omega, \\
d\mu_{\alpha \beta} & = B^R_{2\alpha \beta } d \Theta_{\alpha \beta} \,, d \mu_{\alpha \beta}^n = B^R_{2\alpha \beta, n } d \Theta_{\alpha \beta},
\end{aligned}}
\end{equation*}
where $B^R_{2\alpha \beta }= B_{2\alpha \beta } \wedge R = \min \{ B_{2\alpha \beta } , R \} $ and $B^R_{2\alpha \beta, n }= B_{2\alpha \beta, n } \wedge R = \min \{ B_{2\alpha \beta, n } , R \} $. 

For $\alpha \in  \{s_0+1,\cdots,s\} $ and $ \beta \in \{1,\cdots,s_0\}$, let
\begin{equation*}
	\begin{aligned}
		E_R&=B_R\times B_R \times [0,1]\times (0,R)\times \mathbb{S}^2,\\
		d \Theta_{\alpha \beta} &= (1-\mathfrak{R})^{\delta(\alpha)/2-1}\mathfrak{R}^{1/2} I^{\delta(\alpha)/2-1} d\xi d\xi_* d \mathfrak{R} dI d\omega, d\mu_{\alpha \beta} = B^R_{1\beta \alpha } d \Theta_{\alpha \beta} \,, d \mu_{\alpha \beta}^n = B^R_{1\beta \alpha ,n} d \Theta_{\alpha \beta} , 
	\end{aligned}
\end{equation*}
where $B^R_{1\beta \alpha }= B_{1\beta \alpha }\wedge R = \min \{ B_{1\beta \alpha } , R \} $ and $B^R_{1\beta \alpha, n }= B_{1\beta \alpha, n }\wedge R = \min \{ B_{1\beta \alpha, n } , R \} $. 

For $\alpha \in \{1,\cdots,s_0\} , \beta \in \{s_0+1,\cdots,s\} $, let
\begin{equation*}
	\begin{aligned}
		E_R&=B_R \times B_R \times [0,1]\times (0,R)\times \mathbb{S}^2,\\
		d \Theta_{\alpha \beta} & = (1-\mathfrak{R})^{\delta(\beta)/2-1} \mathfrak{R}^{1/2} I_*^{\delta(\beta )/2-1} d\xi d\xi_* d \mathfrak{R} dI_* d\omega, d \mu_{\alpha \beta} = B^R_{1\alpha \beta } d \Theta_{\alpha \beta} , d \mu_{\alpha \beta}^n = B^R_{1\alpha \beta, n } d \Theta_{\alpha \beta} , 
	\end{aligned}
\end{equation*}
where $B^R_{1\alpha \beta,n }= B_{1\alpha \beta, n }\wedge R = \min \{ B_{1\alpha \beta, n } , R \} $ and $B^R_{1\alpha \beta}= B_{1\alpha \beta}\wedge R = \min \{ B_{1\alpha \beta} , R \} $. 

For $ \alpha , \beta \in \{1,\cdots,s_0\} $, let
\begin{equation*}
	\begin{aligned}
		E_R =B_R\times B_R \times \mathbb{S}^2, \ d \Theta_{\alpha \beta} = d\xi d\xi_* d\omega, d \mu_{\alpha \beta} = B^R_{0\alpha \beta } d \Theta_{\alpha \beta} , d \mu_{\alpha \beta}^n = B^R_{0\alpha \beta, n } d \Theta_{\alpha \beta} , 
	\end{aligned}
\end{equation*}
where $B^R_{0 \alpha \beta }= B_{0 \alpha \beta }\wedge R = \min \{ B_{0 \alpha \beta }, R \} $ and $B^R_{0 \alpha \beta, n }= B_{0 \alpha \beta,n }\wedge R = \min \{ B_{0 \alpha \beta, n }, R \} $.

Moreover, for $ \alpha , \beta \in \{1,\cdots,s\} $ and $R > 0$, denote by
\begin{equation}\label{HR}
\begin{aligned}
H^{n, R}_{\alpha \beta }(f^n)&= \int_{E_R }  \mathfrak{F} (f_\alpha^n, f_\beta^n) d\mu_{\alpha \beta}^n, \ H_{\alpha \beta }^R (f) = \int_{E_R} \mathfrak{F} (f_\alpha, f_\beta) d\mu_{\alpha \beta} , \\
H_{\alpha \beta }^n (f^n)&= \int_{(\R^3\times \R_+)^4} \mathfrak{F} (f_\alpha^n, f_\beta^n) W_{\alpha \beta, n } d\xi d\xi_* d\xi'd\xi_*'dI dI_* dI'dI_*' , \\
H_{\alpha \beta }(f)&= \int_{(\R^3\times \R_+)^4} \mathfrak{F} (f_\alpha, f_\beta) W_{\alpha \beta } d\xi d\xi_* d\xi'd\xi_*'dI dI_* dI'dI_*' ,
\end{aligned}
\end{equation}
where
\begin{equation*}
  \begin{aligned}
    \mathfrak{F} (f_\alpha, f_\beta) = (\tfrac{{f'_\alpha} {f'_{\beta *}}  } {(I')^{\delta(\alpha)/2-1} (I'_* )^{\delta(\beta)/2-1} }  - \tfrac{f_\alpha f_{\beta * } }{I^{\delta(\alpha)/2-1} I_* ^{\delta(\beta)/2-1} }) \log (\tfrac{I^{\delta(\alpha)/2-1} I_* ^{\delta(\beta)/2-1} {f'_\alpha} {f'_{\beta *}} }{f_\alpha f_{\beta * } (I')^{\delta(\alpha)/2-1} (I'_* )^{\delta(\beta)/2-1} } ) \,.
  \end{aligned}
\end{equation*}
We remark that $ \int_{\mathcal{Z}_\alpha} \tilde{e}_\alpha^n d \mathbf{Z}_\alpha = N_n (f^n)^{-1} \sum_{\beta = 1}^{s} H_{\alpha \beta }^n (f^n) $ by \eqref{e na}, where $ N_n (f^n) $ is defined in \eqref{Nnf}. Here $ \mathcal{Z}_\alpha = \R^3 $ and $\mathbb{Z}_\alpha = \xi$ for $\alpha \in \{ 1, \cdots, s_0 \}$, and $ \mathcal{Z}_\alpha = \R^3 \times \R_+ $ and $\mathbb{Z}_\alpha = ( \xi, I )$ for $\alpha \in \{ s_0 + 1, \cdots, s \}$. Denote by $ \tilde{H}_{\alpha \beta }^n (f^n) = N_n (f^n)^{-1} H_{\alpha \beta }^n (f^n) $. In order to show \eqref{ena}, it suffices to prove that for $\alpha, \beta \in \{ 1, \cdots, s \}$ and for almost all $(t, x) \in \R_+ \times \R^3$,
\begin{equation}\label{HH-limit}
  \begin{aligned}
    \liminf_{n \to + \infty} \tilde{H}_{\alpha \beta }^n (f^n) \geq H_{\alpha \beta }(f) \,.
  \end{aligned}
\end{equation}
Without loss of generality, we only prove the cases $ \alpha , \beta \in \{s_0+1,\cdots,s\}$.

To simplify the notation, we write
\begin{align*}
	&A^R_{2\alpha \beta, n } = \int_{[0,1]^2 \times \mathbb{S}^2 } B^R_{2\alpha \beta, n } \mathfrak{r}^{\delta (\alpha)/2-1} (1-\mathfrak{r})^{\delta(\beta)/2-1}(1-\mathfrak{R})^{\delta(\alpha)+\delta(\beta)/2-1} \mathfrak{R}^{1/2}  d \mathfrak{R} d \mathfrak{r} d\omega,\\ 
	&A^R_{2\alpha \beta} = \int_{[0,1]^2 \times \mathbb{S}^2 } B^R_{2\alpha \beta} \mathfrak{r}^{\delta (\alpha)/2-1} (1-\mathfrak{r})^{\delta(\beta)/2-1}(1-\mathfrak{R})^{\delta(\alpha)+\delta(\beta)/2-1} \mathfrak{R}^{1/2}  d \mathfrak{R} d \mathfrak{r} d\omega,\\ 
	& L^R_{2\alpha \beta, n }(f^n)=\iint_{B_R\times(0,R)} f^n_{\beta_*} A^R_{2\alpha \beta, n }d\xi_* dI_* \,, L^R_{2\alpha \beta}(f)=\iint_{B_R\times(0,R)} f_{\beta_*} A^R_{2\alpha \beta}d\xi_* dI_*.
\end{align*}

{\bf Step 1:} {\em The sequences $   \frac{ {f_\alpha^n}' {f_{\beta *}^n}' }{(I')^{\delta(\alpha)/2-1} (I'_* )^{\delta(\beta)/2-1}} B^R_{2 \alpha \beta, n}$ and $  \frac{f^n_\alpha f^n_{\beta_* }}{I^{\delta(\alpha)/2-1} I_* ^{\delta(\beta)/2-1}} B^R_{2 \alpha \beta, n}$ are relatively weakly compact in $L^1(E_R,d \Theta_{\alpha \beta}  )$ for almost all $(t,x) \in (0,R) \times   B_R$.}

We first let
\begin{equation}\label{Nf}
	N (f) = 1 + \sum_{\alpha = 1}^{s_0} \int_{B_R} f_\alpha d \xi + \sum_{\alpha = s_0 + 1}^{s} \iint_{B_R \times (0, R)} f_\alpha d \xi d I \,,
\end{equation}
and
\begin{equation*}
	\begin{aligned}
		P^n_{\alpha \beta}  = N  (f^n)^{-1} \tfrac{f^n_\alpha f^n_{\beta_* }}{I^{\delta(\alpha)/2-1} I_* ^{\delta(\beta)/2-1}} B^R_{2 \alpha \beta, n} \,, &\tilde{P}^n_{\alpha \beta} = N (f^n)^{-1} \tfrac{ {f_\alpha^n}' {f_{\beta *}^n}' }{(I')^{\delta(\alpha)/2-1} (I'_* )^{\delta(\beta)/2-1}} B^R_{2 \alpha \beta, n} \,,\\
		P_{\alpha \beta}  =N  (f )^{-1} \tfrac{f_\alpha f_{\beta_* }}{I^{\delta(\alpha)/2-1} I_* ^{\delta(\beta)/2-1}} B^R_{2 \alpha \beta} \,,& \tilde{P}_{\alpha \beta} = N  (f )^{-1} \tfrac{ {f'_\alpha} f'_{\beta *} }{(I')^{\delta(\alpha)/2-1} (I'_* )^{\delta(\beta)/2-1}} B^R_{2 \alpha \beta} \,.
	\end{aligned}
\end{equation*}
Then
\begin{equation*}{\small
		\begin{aligned}
			\int_{E_R}P^n_{\alpha \beta}  d \Theta_{\alpha \beta} = \frac{1}{N  (f^n)}  \iint_{B_R \times (0, R)} f^n_\alpha L^R_{2\alpha \beta, n }(f^n) d \xi d I \,, \int_{E_R}P_{\alpha \beta}   d \Theta_{\alpha \beta} = \frac{1}{N  (f )}\iint_{B_R \times (0, R)} f_\alpha L^R_{2\alpha \beta }(f) d \xi d I \,.
	\end{aligned}}
\end{equation*}
Now, we take an increasing function $\Psi \in C(\R_+) $ with
\begin{equation}
	\Psi (t) \rightarrow +\infty \text{ as } t\rightarrow +\infty, \quad \Psi (t)(\log t)^{-1}\rightarrow 0 \text{ as } t\rightarrow +\infty.
\end{equation}
This means that for any $\epsilon>0$, there exists constant $M$ such that $t\geq M$, $\Psi (t) \leq \epsilon     \log   t  .$  Observing that
\begin{align*}
&\| \iint_{ \R^3 \times \R_+} f^n_\alpha \Psi (I^{1-\delta(\alpha) / 2}f^n_\alpha ) d\xi dI - \iint_{ \R^3 \times \R_+} f _\alpha \Psi (I^{1-\delta(\alpha) / 2}f_\alpha  ) d\xi dI \|_{L^1((0,T) \times \R^3)} \\
\leq & \|\iint_{ \R^3 \times \R_+} (f^n_\alpha \Psi (I^{1-\delta(\alpha) / 2}f^n_\alpha )\mathbf{1}_{I^{1-\delta(\alpha) / 2}f^n_\alpha \leq M  }  - f _\alpha \Psi (I^{1-\delta(\alpha) / 2}f_\alpha  )\mathbf{1}_{I^{1-\delta(\alpha) / 2}f_\alpha  \leq M  }) d\xi dI\|_{L^1((0,T) \times \R^3)} \\
& + \epsilon \sup_{n \geq 1} \|\iint_{ \R^3 \times \R_+} (f^n_\alpha | \log   I^{1-\delta(\alpha) / 2}f^n_\alpha |   + f _\alpha| \log I^{1-\delta(\alpha) / 2}f_\alpha|   ) d\xi dI\|_{L^1((0,T) \times \R^3)}.
\end{align*}

Since the continuous function $\Psi(t)$ is bounded on $[0, M]$, combined with the bound of $f^n_\alpha \log(I^{1-\delta(\alpha) / 2} f^n_\alpha)$ in $L^\infty((0,T); L^1(\mathbb{R}^3 \times \mathbb{R}^3 \times \mathbb{R}_+))$ and Lemma \ref{theo-6.5}, we conclude that the inequality above tends to $0$ as $n \to \infty$. This implies that the sequence $\iint_{\mathbb{R}^3 \times \mathbb{R}_+} f^n_\alpha \Psi(I^{1-\delta(\alpha) / 2}f^n_\alpha) d\xi dI$ is compact in $L^1((0,T) \times \mathbb{R}^3)$. Thus, we can derive that 
\begin{equation*}
	\begin{aligned}
	N(f^n)^{-1}	\int_{E_R}   \tfrac{f_\alpha^n f_{\beta *}^n}{ I^{\delta(\alpha) /2-1} I_*^{\delta(\beta)/2-1}} &(\Psi (I^{1-\delta(\alpha)/2} f_\alpha^n)+\Psi(I_*^{1-\delta(\beta) / 2} f_{\beta *}^n)) B^R_{2 \alpha \beta, n} d \Theta_{\alpha \beta}  	 \\
	N(f^n)^{-1}	\int_{E_R}   \tfrac{{f_\alpha^n}' {f_{\beta *}^n}'}{(I')^{\delta(\alpha)/2-1} (I'_*)^{\delta(\beta)/2-1 }} &(\Psi ((I')^{1-\delta(\alpha) / 2} {f_\alpha^n}' )+\Psi ((I'_*)^{1-\delta(\beta) / 2} {f_{\beta_*}^n}' )) B^R_{2 \alpha \beta, n} d \Theta_{\alpha \beta} 
	\end{aligned}
\end{equation*}
are compact in $L^1((0,T) \times \mathbb{R}^3)$.

 We claim that if the sequence $f^n(x)\geq 0 $ is compact in $L^1 (  \Omega )$, where $\Omega $ is a bounded set. Then there exists a function $g(x) \in L^1 (  \Omega )$ such that $ 0 \leq f^n ( x ) \leq g ( x ) $ a.e. $\Omega.$ Specifically, since $f^n$ is compact in $L^1 ( \Omega)$, then there exists a function $f \in L^1 ( \Omega ) $ such that $\lim_{n \to + \infty} \|f^n - f\|_{L^1 ( \Omega ) } =0$ ( up to a subsequence if necessary). It also converges almost everywhere, i.e., for any fixed $x \in \Omega$, there exists $N_x \in \mathbb{N},$ if $n \geq N_x, \ f^n (x ) \leq f( x ) + 1 $. Then it is easy to find that $g(x) = \max \{ f(x) +1 , f_1( x ) ,\cdots, f_{N_x} (x ) \} \in L^1 (  \Omega )$ and $ 0 \leq f^n ( x ) \leq g ( x ) $ a.e. $\Omega.$ The claim is therefore holds. As a result, there exist nonnegative functions $\bar{K}  ,\bar{N}  \in L^1 ((0,R) \times B_R )$, independent of $n$, such that 
\begin{equation*}
	\begin{aligned}
		N(f^n)^{-1}	\int_{E_R}   \tfrac{f_\alpha^n f_{\beta *}^n}{ I^{\delta(\alpha) /2-1} I_*^{\delta(\beta)/2-1}} (\Psi (I^{1-\delta(\alpha)/2}& f_\alpha^n)+\Psi(I_*^{1-\delta(\beta) / 2} f_{\beta *}^n)) B^R_{2 \alpha \beta, n} d \Theta_{\alpha \beta}   \leq  \bar{K}    , \\
		 N(f^n)^{-1}	\int_{E_R}   \tfrac{{f_\alpha^n}' {f_{\beta *}^n}'}{(I')^{\delta(\alpha)/2-1} (I'_*)^{\delta(\beta)/2-1 }} (\Psi ((I')^{1-\delta(\alpha) / 2}& {f_\alpha^n}' )+\Psi ((I'_*)^{1-\delta(\beta) / 2} {f_{\beta_*}^n}' )) B^R_{2 \alpha \beta, n} d \Theta_{\alpha \beta}  \leq  \bar{K},   \\
		 N ( f^n) &\leq \bar{N}.
	\end{aligned}
\end{equation*}

Besides, for all $M>0$ and any Borel subset $A$ in $E_R$, one knows that for almost all $(t,x) \in (0, R) \times B_R$,
\begin{equation*}
	\begin{aligned}
			& \int_{A}   \tfrac{f_\alpha^n f_{\beta *}^n}{ I^{\delta(\alpha) /2-1} I_*^{\delta(\beta)/2-1}} B^R_{2 \alpha \beta , n} d \Theta_{\alpha \beta}\\
		\leq& R M^2 \Theta_{\alpha \beta} (A) + \int_{E_R} \tfrac{f_\alpha^n f_{\beta *}^n}{ I^{\delta(\alpha) /2-1} I_*^{\delta(\beta)/2-1}} (\mathbf{1}_{I^{1-\delta(\alpha)/2} f_\alpha^n\geq M}+ \mathbf{1}_{I_*^{1-\delta(\beta) / 2} f_{\beta *}^n \geq M}) B^R_{2 \alpha \beta, n} d \Theta_{\alpha \beta}\\
		\leq & R M^2 \Theta_{\alpha \beta} (A)+\Psi(M)^{-1}\bar{K} \bar{N},
	\end{aligned}
\end{equation*}
which means that the sequence $  \frac{f^n_\alpha f^n_{\beta_* }}{I^{\delta(\alpha)/2-1} I_* ^{\delta(\beta)/2-1}} B^R_{2 \alpha \beta, n}$ is relatively weakly compact in $L^1(E_R,d \Theta_{\alpha \beta} )$ for almost all $(t,x) \in (0,R) \times   B_R$. By employing the analogous arguments above, the sequence  $  \frac{ {f_\alpha^n}' {f_{\beta *}^n}' }{(I')^{\delta(\alpha)/2-1} (I'_* )^{\delta(\beta)/2-1}} B^R_{2 \alpha \beta, n}$ is also relatively weakly compact in $L^1(E_R,d \Theta_{\alpha \beta} )$ for almost all $(t,x) \in (0,R) \times B_R$.

{\bf Step 2:} {\em 
\begin{equation}\label{P con}
\int_{E_R} P^n_{\alpha \beta}\varphi d \Theta_{\alpha \beta} \rightarrow \int_{E_R}  P_{\alpha \beta}\varphi d \Theta_{\alpha \beta} \quad \text{strongly in } L^1((0,R)\times B_R),
\end{equation}
\begin{equation}\label{til P con}
\int_{E_R}   \tilde{P}^n_{\alpha \beta}\varphi d\Theta_{\alpha \beta} \rightarrow \int_{E_R}  \tilde{P}_{\alpha \beta}\varphi d \Theta_{\alpha \beta} \quad \text{strongly in } L^1((0,R)\times B_R)
\end{equation}
for all $\varphi \in L^\infty (E_R)$.}

It is easy to find that $A^R_{2\alpha \beta, n } \in L^\infty ((0,R)\times B_R \times B_R \times (0,R);L^1(B_R \times (0,R)))$, and 
$$\lim_{n \to + \infty} \| A^R_{2\alpha \beta, n } - A^R_{2\alpha \beta} \|_{L^1 (B_R \times (0, R))} = 0$$ 
for almost all $(t, x, \xi_*, I_*) \in (0, R) \times B_R \times B_R \times (0, R)$. Following the analogous arguments in Lemma \ref{converge L}, one has
\begin{equation*}
L^R_{2\alpha \beta, n }(f^n) \rightarrow L^R_{2\alpha \beta}(f) \quad \text{strongly in } L^1((0,R)\times B_R \times B_R \times (0,R)).
\end{equation*}
Then the convergence \eqref{gn converge} and Lemma \ref{converge f,g} indicate that $N (f^n)^{-1}$ strongly converges to $N (f)^{-1}$ in $L^1 ( (0, R) \times B_R )$ as $n \to + \infty$. Then the Egorov's theorem indicates that for any $\epsilon>0$ there exists a Borel $E\subset \Omega = (0,R)\times B_R \times B_R\times (0,R)$ such that $|E^c \cap \Omega|<\epsilon$, $ L^R_{2\alpha \beta, n }(f^n) $ uniformly converges to $ L^R_{2\alpha \beta}(f) $ and $ N (f^n)^{-1} $ uniformly converges to $ N (f)^{-1} $ on $E$. 

Observe that
\begin{equation*}
	\begin{aligned}
		& f^n_\alpha L^R_{2\alpha \beta, n }(f^n) N (f^n)^{-1} - f_\alpha L^R_{2\alpha \beta}(f) N (f)^{-1} \\
        = & f^n_\alpha \{ L^R_{2\alpha \beta, n } (f^n) N (f^n)^{-1} - L^R_{2\alpha \beta}(f) N (f)^{-1} \} + L^R_{2\alpha \beta }(f) N (f)^{-1} (f^n_\alpha -f_\alpha) \,.
	\end{aligned}
\end{equation*}
We then obtain
\begin{equation*}
\begin{aligned}
&\int_{0}^{R}dt \int_{B_R} dx |\int_{E_R}   P^n_{\alpha \beta}\varphi d \Theta_{\alpha \beta } -\int_{E_R} P_{\alpha \beta}\varphi d\Theta_{\alpha \beta }| \\
\leq & \sup_{n \geq 1} \Vert f^n_\alpha \Vert_{L^1 ((0, R) \times B_R \times B_R \times (0, R))} \sup \limits_{E} | L^R_{2\alpha \beta, n }(f^n) N (f^n)^{-1} - L^R_{2\alpha \beta }(f) N (f)^{-1} | \\
+ & ( \sup_{n \geq 1} \Vert A^R_{2\alpha \beta, n } \Vert_{L^\infty} + \Vert A^R_{2\alpha \beta } \Vert_{L^\infty} ) ( \sup_{n \geq 1} \int_{E^c} f^n_\alpha dtdxd\xi dI +\int_{E^c} f_\alpha dtdxd\xi dI) \\
+& \int_{0}^{R}dt \int_{B_R} dx |\iint_{B_R \times (0,R)} f^n_\alpha \tilde{\varphi} d\xi dI -\iint_{B_R \times (0,R)} f_\alpha \tilde{\varphi} d\xi dI |,
\end{aligned}
\end{equation*}
where $\tilde{\varphi}= \mathbf{1}_E L^R_{2\alpha \beta }(f) N (f)^{-1} \in L^\infty$. Note that $\sup_{n \geq 1} \Vert f^n_\alpha \Vert_{L^1 ((0, R) \times B_R \times B_R \times (0, R))} \leq C_R$ by Lemma \ref{Lemma 4.17}, one knows that the first quantity in the right-hand side of above inequality goes to 0 as $n \to + \infty$. Moreover, the convergence \eqref{gn converge} and Lemma \ref{converge f,g} show that the last inequality in the right-hand side of above inequality goes to 0 as $n \to + \infty$. Furthermore, $\sup_{n \geq 1} \Vert A^R_{2\alpha \beta, n } \Vert_{L^\infty} + \Vert A^R_{2\alpha \beta } \Vert_{L^\infty} \leq 2 R$ and $f^n_\alpha $ weakly converges to $f_\alpha$ in $L^1 ( (0, R) \times B_R \times B_R \times (0, R) )$, which imply that the second inequality in the right-hand side of above inequality goes to 0 as $\epsilon \to 0^+$. Thus the assertion \eqref{P con} holds.

We now prove the convergence \eqref{til P con}. Observe that
  \begin{align*}
    \int_{E_R} N (f^n)^{-1} \tilde{P}^n_{\alpha \beta}\varphi d \Theta_{\alpha \beta} = N (f^n)^{-1}\iint_{B_R \times (0,R)} Q^{R, +}_{ \alpha \beta, n}(f^n,f^n)\varphi d\xi dI \,, \\
    \int_{E_R} N (f)^{-1} \tilde{P}_{\alpha \beta}\varphi d \Theta_{\alpha \beta} = N (f)^{-1} \iint_{B_R \times (0,R)} Q^{R, +}_{ \alpha \beta}(f,f)\varphi d\xi dI \,,
  \end{align*}
where $Q^{R, +}_{ \alpha \beta, n}(f^n,f^n)$ is formed by replacing the collision kernel $B_{2 \alpha \beta, n}$ of $Q^+_{\alpha \beta, n} (f^n, f^n)$ by $B_{2 \alpha \beta, n}^R$, and $ Q^{R, +}_{ \alpha \beta}(f,f) $ is defined by replacing the collision kernel $B_{2 \alpha \beta}$ of $Q^+_{\alpha \beta } (f, f)$ by $B_{2 \alpha \beta}^R$. Recalling the identity \eqref{Q}, one similarly has
\begin{equation*}{\small
\begin{aligned}
\iint_{B_R \times (0,R)} Q^{R, +}_{ \alpha \beta, n }(f^n,f^n)\varphi d\xi dI
=&   \int_{(B_R)^2 \times (0,R)^2 \times [0,1]^2 \times \mathbb{S}^2} f^n_\alpha f^n_{\beta *} B^R_{2\alpha \beta, n}  \mathfrak{r}^{\delta (\alpha)/2-1} \\
&\times (1- \mathfrak{r})^{\delta(\beta)/2-1} (1-\mathfrak{R})^{\delta(\alpha)+\delta(\beta)/2-1} \mathfrak{R}^{1/2}\varphi' d \mathfrak{R} d \mathfrak{r} d\omega d\xi d\xi_* dI dI_*, \\
\iint_{B_R \times (0,R)} Q^{R, +}_{ \alpha \beta} (f,f)\varphi d\xi dI
=& \int_{(B_R)^2 \times (0,R)^2 \times [0,1]^2 \times \mathbb{S}^2} f_\alpha f_{\beta *} B^R_{2\alpha \beta} \mathfrak{r}^{\delta (\alpha)/2-1} \\
&\times (1- \mathfrak{r})^{\delta(\beta)/2-1} (1-\mathfrak{R})^{\delta(\alpha)+\delta(\beta)/2-1} \mathfrak{R}^{1/2}\varphi' d \mathfrak{R} d \mathfrak{r} d\omega d\xi d\xi_* dI dI_*,
\end{aligned}}
\end{equation*}
where $\varphi'=\varphi (\xi',I')$. We now introduce
\begin{equation*}{\small
	\begin{aligned}
&\hat{A}^R_{2\alpha \beta, n } (t,x,\xi,\xi_*,I,I_*) =\int_{[0,1]^2 \times \mathbb{S}^2 } B^R_{2\alpha \beta, n} \mathfrak{r}^{\delta (\alpha)/2-1} (1- \mathfrak{r})^{\delta(\beta)/2-1} (1-\mathfrak{R})^{\delta(\alpha)+\delta(\beta)/2-1} \mathfrak{R}^{1/2}\varphi' d \mathfrak{R} d \mathfrak{r} d\omega,\\
&\hat{A}^R_{2\alpha \beta } (t,x,\xi,\xi_*,I,I_*) =\int_{[0,1]^2 \times \mathbb{S}^2 } B^R_{2\alpha \beta } \mathfrak{r}^{\delta (\alpha)/2-1} (1-\mathfrak{r})^{\delta(\beta)/2-1} (1-\mathfrak{R})^{\delta(\alpha)+\delta(\beta)/2-1} \mathfrak{R}^{1/2}\varphi' d \mathfrak{R} d \mathfrak{r}d\omega,\\
&  \bar{L} ^R_{ \alpha \beta, n}(f^n)(t,x,\xi,I) =   \iint_{B_R\times(0,R)} \hat{A}^R_{2\alpha \beta, n } f^n_{\beta *} d \xi_* d I_* \,, \\
& \bar{L}^R_{ \alpha \beta }(f)(t,x,\xi,I) = \iint_{B_R\times(0,R)} \hat{A}^R_{2\alpha \beta } f_{\beta *}d\xi_* dI_*.
\end{aligned}}
\end{equation*}
Then
  \begin{align*}
    \int_{E_R}  \tilde{P}^n_{\alpha \beta}\varphi d \Theta_{\alpha \beta} = N (f^n)^{-1} \iint_{B_R \times (0,R)} f^n_\alpha  \bar{L} ^R_{ \alpha \beta, n}(f^n)  d\xi dI \,, \\
    \int_{E_R}  \tilde{P}_{\alpha \beta}\varphi d \Theta_{\alpha \beta} = N (f)^{-1} \iint_{B_R \times (0,R)} f_\alpha \bar{L}^R_{ \alpha \beta }(f) d\xi dI \,.
  \end{align*}
Then, by following the analogous arguments of proving the convergence \eqref{P con}, one can infer that the convergence \eqref{til P con} holds.

{\bf Step 3:} {\em For almost all $(t,x)\in (0,R)\times B_R$,
	\begin{equation}\label{cnv-fn'*}
	\begin{aligned} 
		N_n  (f^n)^{-1} \tfrac{f^n_\alpha f^n_{\beta_* }}{I^{\delta(\alpha)/2-1} I_* ^{\delta(\beta)/2-1}} B^R_{2 \alpha \beta, n} &\rightarrow  \tfrac{f_\alpha f_{\beta_* }}{I^{\delta(\alpha)/2-1} I_* ^{\delta(\beta)/2-1}} B^R_{2 \alpha \beta} \,, \\
		  N_n (f^n)^{-1} \tfrac{ {f_\alpha^n}' {f_{\beta *}^n}' }{(I')^{\delta(\alpha)/2-1} (I'_* )^{\delta(\beta)/2-1}} B^R_{2 \alpha \beta, n}& \rightarrow \tfrac{ {f'_\alpha} f'_{\beta *} }{(I')^{\delta(\alpha)/2-1} (I'_* )^{\delta(\beta)/2-1}} B^R_{2 \alpha \beta} \,
		  \end{aligned}
	\end{equation}
	weakly in $ L^1(E_R, d \Theta_{\alpha \beta})$.}

The strong convergence of $	\int_{E_R}   P^n_{\alpha \beta} \varphi d \Theta_{\alpha \beta}$ implies that there exists a subsequence converging pointwise a.e. in $(t,x)$ to $\int_{E_R}  P_{\alpha \beta} \varphi d \Theta_{\alpha \beta}$. However, this subsequence may depend on $\varphi$. 

In fact, for any Borel subset $A$ of $E_R$,  there exists a sequence of measurable subsets $\{ A_k\}_{k=1}^\infty $ such that $\mathbf{1}_{A_k} \to \mathbf{1}_A $ in $L^1( E_R)$. Letting $\varphi = \mathbf{1}_{A_k}$ and applying to the strong convergence of $	\int_{E_R}  P^n_{\alpha \beta} \mathbf{1}_{A_k} d\Theta_{\alpha \beta}$, we can extract a subsequence (still denoted by $	\int_{E_R}   P^n_{\alpha \beta} \mathbf{1}_{A_k} d\Theta_{\alpha \beta}$ ) such that
\begin{equation*}
	\int_{E_R}   P^n_{\alpha \beta} \mathbf{1}_{A_k} d\Theta_{\alpha \beta}  \to  	\int_{E_R}   P_{\alpha \beta}  \mathbf{1}_{A_k} d\Theta_{\alpha \beta}  \quad \text{a.e.} \ (t,x) \in (0,R) \times B_R.
\end{equation*} 
Then for any Borel subset $A$ of $E_R$ and any constant $M>0$,
\begin{align*}
	& \left| \int_{E_R} P^n_{\alpha \beta} ( \mathbf{1}_{A_k}  -  \mathbf{1}_{A }   ) d\Theta_{\alpha \beta} \right| \\
	\leq & M^2 \int_{E_R} |\mathbf{1}_{A_k}  -  \mathbf{1}_{A }| d\Theta_{\alpha \beta} +  \int_{E_R}    f^n_\alpha  f^n_{\beta_* } (\mathbf{1}_{   f ^n_\alpha \geq M}+ \mathbf{1}_{   f_{\beta_*}^n \geq M}) B^R_{ 2\alpha \beta, n} d \Theta_{\alpha \beta} \\
	\leq &  M^2 \int_{E_R} |\mathbf{1}_{A_k}  -  \mathbf{1}_{A }| d\Theta_{\alpha \beta} + \Psi(M)^{-1}\bar{K}  \to 0 \quad ( k \to \infty , \  M \to \infty ).
\end{align*}
Therefore,
\begin{align*}
	&\left| 	\int_{E_R}   P^n_{\alpha \beta} \mathbf{1}_{A } d\Theta_{\alpha \beta} - 	\int_{E_R}   P_{\alpha \beta}  \mathbf{1}_{A } d\Theta_{\alpha \beta} \right| \\
	\leq & \left| \int_{E_R} P^n_{\alpha \beta} ( \mathbf{1}_{A_k}  -  \mathbf{1}_{A }   ) d\Theta_{\alpha \beta} \right| + \left| 	\int_{E_R} ( P^n_{\alpha \beta} -P_{\alpha \beta} ) \mathbf{1}_{A_k}  d\Theta_{\alpha \beta} \right| +  \left| \int_{E_R} P_{\alpha \beta}  ( \mathbf{1}_{A_k}  -  \mathbf{1}_{A }   ) d\Theta_{\alpha \beta} \right| \\
	\to & 0 \quad ( k \to \infty , \  n \to \infty ).
\end{align*} 
Since the above holds for all measurable subset $A$, then for almost every $(t,x) \in ( 0,R) \times B_R$ and for all $\varphi \in L^\infty ( E_R)$, 
\begin{align*}
	  P^n_{\alpha \beta} \rightarrow P_{\alpha \beta}, \quad   \tilde{P}^n_{\alpha \beta} \rightarrow \tilde{P}_{\alpha \beta},
\end{align*} 
weakly in $L^1 ( E_R ,d \Theta_{\alpha \beta})$.  Besides, using the facts that $N(f^n) ( t,x)$ and $\left( 1+ \frac{1}{n} \iint_{\R^3 \times \R_+} f^n_\alpha  d \xi dI  \right)^{-1}$ converge a.e. to $N(f)$ and $1$, respectively (see Lemma \ref{theo-6.5}), one easily knows that \eqref{cnv-fn'*} holds.

{\bf Step 4:} {\em The convergence \eqref{HH-limit} holds for $\alpha, \beta \in \{ s_0 + 1, \cdots, s \}$.}

We first define 
\begin{equation*}
j(a,b)=\left\{\begin{array}{rl}
	(a-b) \log \frac{a}{b}, & \quad \text{for } a, b>0 \,, \\
	+\infty, & \quad \text{for } a \text { or } b \leq 0 \,,
\end{array}\right.
\end{equation*}
whose the Hessian matrix 
\begin{equation*}
\begin{aligned}
\tfrac{1}{ab} 
\begin{pmatrix}
	b+\frac{b^2}{a}  &  -(a+b)\\
	-(a+b)   &  a+\frac{a^2}{b} 
\end{pmatrix}, 
\quad a,b>0
\end{aligned}
\end{equation*}
is nonnegative. We then know that $j(a,b)$ is convex. We further consider the integral 
\begin{equation}
J(F,G)=\int_{E_R}j(F,G)d \Theta_{\alpha \beta }, \quad F,G \in L^1(E_R,d \Theta_{\alpha \beta}).
\end{equation}
The convexity of $j (a, b)$ shows that
\begin{equation*}
\begin{aligned}
j(F^n,G^n)\geq j(a,b)+(\log \frac{a}{b}+\frac{a-b}{b})(F^n-a)+(\log \frac{b}{a}+\frac{b-a}{b})(G^n-b) 
\end{aligned}
\end{equation*}
for $F^n,G^n,a,b>0$. We assume that $F^n$ and $G^n$ respectively weakly converge to $F \geq 0$ and $G\geq 0$ in $L^1(E_R,d \Theta_{\alpha \beta })$. For any $\epsilon>0$ and $M > 0$, we take $a=\epsilon +F\wedge M, b=\epsilon +G\wedge M$. It then infers that
\begin{equation*}
\begin{aligned}
\liminf \limits_{n\rightarrow \infty} J(F^n,G^n) 
&\geq \int_{E_R} j(a,b)+(\log \tfrac{a}{b}+\tfrac{a-b}{b})(F-a)+(\log \tfrac{b}{a}+\tfrac{b-a}{b})(G-b) d\Theta_{\alpha \beta}\\
&\geq \int_{E_R} j(a,b)+(\log \tfrac{a}{b}+\tfrac{a-b}{b})(F\wedge M-a)+(\log \tfrac{b}{a}+\tfrac{b-a}{b})(G\wedge M-b) d\Theta_{\alpha \beta}\\
&= \int_{E_R} j(a,b) d \Theta_{\alpha \beta} -\epsilon \int_{E_R} (a-b)(\tfrac{1}{a}-\tfrac{1}{b})d\Theta_{\alpha \beta} \geq \int_{E_R} j(a,b) d\Theta_{\alpha \beta}.
\end{aligned}
\end{equation*}
Hence, for any $\epsilon>0$ and $M > 0$,
\begin{equation*}
\liminf \limits_{n\rightarrow \infty} \int_{E_R}j(F^n,G^n)d\Theta_{\alpha \beta } \geq \int_{E_R}j(\epsilon +F\wedge M ,\epsilon +G\wedge M)d\Theta_{\alpha \beta }.
\end{equation*}
By letting $\epsilon \to 0^+$ and $M \to +\infty$, Fatou's lemma concludes that
\begin{equation}
\liminf \limits_{n\rightarrow \infty} \int_{E_R}j(F^n,G^n)d \Theta_{\alpha \beta } \geq \int_{E_R}j(F ,G) d \Theta_{\alpha \beta }.
\end{equation}
By taking $F^n= N_n (f^n)^{-1} \frac{{f_\alpha^n}' {f_{\beta *}^n}'}{(I')^{\delta(\alpha)/2-1} (I'_*)^{\delta(\beta)/2-1 }} B^R_{2 \alpha \beta, n} $, $G^n= N_n (f^n)^{-1} \frac{f_\alpha^n f_{\beta *}^n}{ I^{\delta(\alpha) /2-1} I_*^{\delta(\beta)/2-1}} B^R_{2 \alpha \beta, n} $, one has 
\begin{equation*}
\liminf \limits_{n\rightarrow \infty} [ N_n (f^n)^{-1} H^{n, R}_{\alpha \beta }(f^n) ] \geq H^R_{\alpha \beta }(f) \quad a.e. \enspace (t, x) \in (0,R) \times B_R.
\end{equation*}
Note that
\begin{equation}
\begin{aligned}
& N_n (f^n)^{-1} H^{n, R}_{\alpha \beta }(f^n) \leq \tilde{H}^n_{\alpha \beta} (f^n) \,, \ H_{\alpha \beta }(f^n)\geq H^R_{\alpha \beta }(f^n) \quad \text{for } R > 0 \,, \\
& H^R_{\alpha \beta }(f) \uparrow H_{\alpha \beta }(f) \quad \text{as } R \uparrow +\infty.
\end{aligned}
\end{equation}
By letting $R \to + \infty$, the convergence \eqref{HH-limit} holds for $\alpha, \beta \in \{ s_0 + 1, \cdots, s \}$.

By the analogous arguments above, one can prove that the convergence \eqref{HH-limit} holds for $\alpha, \beta \in \{1, \cdots, s \}$. Hence the entropy inequality \eqref{entropy-th} holds. The proof of Theorem \ref{MainThm} is therefore finished.

\section{Weak lower semicontinuity of entropy: Proof of Lemma \ref{Lemma 4.6}}\label{subsec-4.6}

In this section, we will prove the weak lower semicontinuity of the entropy $H (f^n)$ defined in \eqref{Hf}, i.e., justifying Lemma \ref{Lemma 4.6}.

\begin{proof}[Proof of Lemma \ref{Lemma 4.6}]
	First, we claim that for any $ r> 1$, 
	\begin{equation}\label{eq4.29}{\small
		\begin{aligned}
  & \sum_{\alpha = 1}^{s_0} \iint_{B_r \times B_r} f_\alpha \log f_\alpha dxd\xi + \sum_{\alpha = s_0 + 1}^{s} \iiint_{B_r \times B_r \times (r^{-1}, r) } f_\alpha \log (I^{1-\delta(\alpha )/2 } f_\alpha) dxd\xi dI \\
\leq & \liminf \limits_{n\rightarrow \infty } \Big\{ \sum_{\alpha = 1}^{s_0} \iint_{B_r \times B_r } f^n_\alpha \log f^n_\alpha dxd\xi + \sum_{\alpha = s_0 + 1}^{s} \iiint_{B_r \times B_r \times (r^{-1},r) } f^n_\alpha \log (I^{1-\delta(\alpha)/2}f^n_\alpha) dxd\xi dI \Big\} .
\end{aligned}}
	\end{equation}
Indeed, denote by $F_m(a,b) = \{ (x,\xi) \in B_r \times B_r : a \leq f_\alpha (x, \xi) <b \}$ for $\alpha \in \{ 1, \cdots, s_0 \}$ and $F_p (a,b) = \{ (x,\xi, I) \in B_r \times B_r\times (r^{-1}, r) : a \leq f_\alpha (x, \xi, I) <b \}$ for $\alpha \in \{ s_0 + 1, \cdots, s \}$. Moreover, for any $\eta > 0$, we divide the interval $[0,b]$ into subintervals with $0 = y_0 < y_1 <\cdots < y_m =b$ such that for $1 \leq i \leq m-1$,
	\begin{equation}\label{eq4.32}
\begin{aligned}
		& \sup \limits_{y\in [y_{i-1} ,y_{i+1}) } |y \log y - y_{i+1} \log y_{i+1} | <\tfrac{\eta}{4 \mathrm{Vol} (B_r \times B_r)} , \\ 
& \sup \limits_{y\in [y_{i-1} ,y_{i+1}) } |y \log ( I^{1 - \delta (\alpha) / 2} y ) - y_{i+1} \log  ( I^{1 - \delta (\alpha) /2} y_{i+1} ) | <\tfrac{\eta}{4 \mathrm{Vol} (B_r \times B_r \times (r^{-1},r ))} \,.
\end{aligned}
	\end{equation}
Note that $f_\alpha \log f_\alpha \in L^1 (\R^3 \times \R^3) $ for $\alpha \in \{ 1, \cdots, s_0 \}$ and $f_\alpha \log ( I^{1 - \delta (\alpha) / 2} f_\alpha ) \in L^1 (\R^3 \times \R^3 \times \R_+) $. Then
	\begin{equation}\label{V1}
		\begin{aligned}
			& |\iint_{F_m(0,b)} f_\alpha \log f_\alpha  dxd\xi  - \sum_{i=0}^{m-1} y_{i+1} \log y_{i+1} \mathrm{Vol} (F(y_i, y_{i+1})) | \\
			\leq & \sum_{i=0}^{m-1} \sup \limits_{(x,\xi) \in F_m (y_i ,y_{i+1})} |f_\alpha \log f_\alpha  - y_{i+1} \log y_{i+1} | \mathrm{Vol} (F_m (y_i, y_{i+1})) < \tfrac{\eta }{4},
		\end{aligned}
	\end{equation}
and
\begin{equation}\label{V2}{\small
		\begin{aligned}
			& |\iint_{F_p(0,b)} f_\alpha \log ( I^{1 - \delta (\alpha) / 2} f_\alpha )  dxd\xi  - \sum_{i=0}^{m-1} y_{i+1} \log ( I^{1 - \delta (\alpha) / 2} y_{i+1} ) \mathrm{Vol} (F_p (y_i, y_{i+1})) | \\
			\leq & \sum_{i=0}^{m-1} \sup \limits_{(x,\xi, I) \in F_p (y_i ,y_{i+1})} |f_\alpha \log ( I^{1 - \delta (\alpha) / 2} f_\alpha )  - y_{i+1} \log ( I^{1 - \delta (\alpha) / 2} y_{i+1} ) | \mathrm{Vol} (F_p (y_i, y_{i+1})) < \tfrac{\eta }{4}.
		\end{aligned}}
	\end{equation}
Furthermore, $f_\alpha^n$ are $L^1$ weakly convergent to $f_\alpha$ for $\alpha \in \{ 1, \cdots, s \}$. It therefore infers that
\begin{equation*}
  \begin{aligned}
    \lim_{n \to \infty} \iint_{F_m (y_j, y_{j+1})} f_\alpha^n dxd\xi = \iint_{F_m (y_j, y_{j+1})} f_\alpha dxd\xi \,, \quad & \alpha \in \{ 1, \cdots, s_0 \} \,, \\
    \lim_{n \to \infty} \iiint_{F_p (y_j, y_{j+1})} f_\alpha^n dxd\xi d I = \iiint_{F_p (y_j, y_{j+1})} f_\alpha dxd\xi d I \,, \quad & \alpha \in \{s_0 + 1, \cdots, s \} \,.
  \end{aligned}
\end{equation*}
Observe that
\begin{equation*}
\begin{aligned}
		y_i \mathrm{Vol} (F_m (y_i ,y_{i+1})) \leq \iint_{F_m (y_j, y_{j+1})} f_\alpha dxd\xi <y_{j+1} \mathrm{Vol} (F_m (y_j, y_{j+1})) , \quad & \alpha \in \{ 1, \cdots, s_0 \} \,, \\
y_i \mathrm{Vol} (F_p (y_i ,y_{i+1})) \leq \iiint_{F_p (y_j, y_{j+1})} f_\alpha dxd\xi d I <y_{j+1} \mathrm{Vol} (F_p (y_j, y_{j+1})), \quad & \alpha \in \{s_0 + 1, \cdots, s \} .
\end{aligned}
\end{equation*}
Consequently, for any $\delta > 0$ there is $n_\delta$ such that for any $n>n_\delta $ and $0\leq i \leq m-1 $, 
	\begin{equation}\label{eq4.34}
\begin{aligned}
		y_i -\delta \leq \tfrac{1}{\mathrm{Vol} (F_m(y_i, y_{i+1}))} \iint_{F_m(y_i, y_{i+1})} f^n_\alpha dxd\xi \leq y_{i+1} + \delta , \\
y_i -\delta \leq \tfrac{1}{\mathrm{Vol} (F_p (y_i, y_{i+1}))} \iiint_{F_p (y_i, y_{i+1})} f^n_\alpha dxd\xi d I \leq y_{i+1} + \delta.
\end{aligned}
	\end{equation}
By employing the Jensen's inequality on $F_m(y_j,y_{j+1})$ for the convex function $\varphi (y) = y \log y ,\ y = f^n_\alpha $, we get that for $n > n_\delta$,
	\begin{equation*}
		\begin{aligned}
			& \iint_{F_m (y_i, y_{i+1})} f^n_\alpha \log f^n_\alpha \tfrac{dxd\xi}{\mathrm{Vol} (F_m(y_i, y_{i+1}))} \\
			\geq & (\iint_{F_m (y_i, y_{i+1})} \tfrac{f^n_\alpha}{\mathrm{Vol} (F_m (y_i, y_{i+1}))} dxd\xi) \log (\iint_{F_m (y_i, y_{i+1})} \tfrac{f^n_\alpha}{\mathrm{Vol} (F_m(y_i, y_{i+1}))} dxd\xi) \geq (y_i - \delta) \log (y_i - \delta) \,.
		\end{aligned}
	\end{equation*}
Taking $0 < \delta < \min \limits_{0 \leq i \leq m-1} |y_{i-1} - y_i | $, one follows from \eqref{eq4.32} that for $n > n_\delta$ and $\alpha \in \{ 1, \cdots, s_0 \}$,
	\begin{equation}\label{V3}
		\begin{aligned}
			\iint_{F_m (y_i, y_{i+1})} f^n_\alpha \log f^n_\alpha dxd\xi 
			&\geq (y_{i+1} \log y_{i+1} - \tfrac{\eta }{4 \mathrm{Vol} (B_r \times B_r)})  \mathrm{Vol} (F_m(y_i, y_{i+1})).
		\end{aligned}
	\end{equation}
Similarly, for $n > n_\delta$ and $\alpha \in \{s_0 + 1, \cdots, s \}$,
\begin{equation}\label{V4}
		\begin{aligned}
			& \iiint_{F_p (y_i, y_{i+1})} f^n_\alpha \log ( I^{1 - \delta (\alpha) /2} f^n_\alpha ) dxd\xi d I \\
			&\geq (y_{i+1} \log ( I^{1 - \delta (\alpha) /2} y_{i+1} ) - \tfrac{\eta }{4 \mathrm{Vol} ( B_r \times B_r \times ( r^{-1}, r ) ) } ) \mathrm{Vol} (F_p(y_i, y_{i+1})).
		\end{aligned}
	\end{equation}
Combining the estimates \eqref{V1}, \eqref{V2}, \eqref{V3} and \eqref{V4}, one obtains that for $n>n_\delta$, $\eta > 0$ and large enough $b > 0$,
\begin{equation*}
\begin{aligned}
	& \sum_{\alpha = 1}^{s_0} \iint_{F_m (0,b)} f^n_\alpha \log f^n_\alpha dxd\xi + \sum_{\alpha = s_0 + 1}^{s} \iiint_{F_p (0, b)} f^n_\alpha \log ( I^{1 - \delta (\alpha) /2} f^n_\alpha ) dxd\xi d I \\
\geq & \sum_{\alpha = 1}^{s_0} \iint_{F_m (0,b)} f_\alpha \log f_\alpha dxd\xi + \sum_{\alpha = s_0 + 1}^{s} \iiint_{F_p (0, b)} f_\alpha \log ( I^{1 - \delta (\alpha) /2} f_\alpha ) dxd\xi d I - \tfrac{s}{2} \eta \,,
\end{aligned}
\end{equation*}
which implies the claim \eqref{eq4.29} by letting $b \to + \infty$, $n \to + \infty$ and $\eta \to 0^+$.

Denote by 
  \begin{align*}
    a (r) = & \sum_{\alpha = 1}^{s_0} \iint_{B_r \times B_r} f_\alpha \log f_\alpha dxd\xi + \sum_{\alpha = s_0 + 1}^{s} \iiint_{B_r \times B_r \times (r^{-1}, r) } f_\alpha \log (I^{1-\delta(\alpha )/2 } f_\alpha) dxd\xi dI \,, \\
    b_n (r) = & \sum_{\alpha = 1}^{s_0} \iint_{B_r \times B_r } f^n_\alpha \log f^n_\alpha dxd\xi + \sum_{\alpha = s_0 + 1}^{s} \iiint_{B_r \times B_r \times (r^{-1},r) } f^n_\alpha \log (I^{1-\delta(\alpha)/2}f^n_\alpha) dxd\xi dI \,.
  \end{align*}
Then the claim \eqref{eq4.29} indicates that $a (r) \leq \liminf_{n \to \infty} b_n (r)$ for any fixed $r > 1$. Taking a subsequence $\{ b_{n_k} (r) \}$ of $\{ b_n (r) \}$ such that $\liminf_{n \to \infty} b_n (r) = \lim_{k \to \infty} b_{n_k} (r)$. Then for any $\epsilon > 0$, there is a $K = K (\epsilon, r) > 0$ such that for all $k > K $,
\begin{equation*}
  \begin{aligned}
    a (r) - \epsilon < b_{n_k} (r) \,.
  \end{aligned}
\end{equation*}
Due to $f^n_\alpha \log f^n_\alpha \in L^1 (\R^3 \times \R^3)$ for $\alpha \in \{ 1, \cdots, s_0 \}$ and $f^n_\alpha \log (I^{1-\delta(\alpha)/2}f^n_\alpha) \in L^1 (\R^3 \times \R^3 \times \R_+)$ for $\alpha \in \{ s_0 + 1, \cdots, s \}$, $\lim_{r \to + \infty} b_{n_k} (r) = b_{n_k} (\infty)$ exists. Consequently, by letting $r \to + \infty$, $k \to \infty$ and $\epsilon \to 0^+$, one has $a (\infty) \leq \lim_{k \to \infty} b_{n_k} (\infty) = \liminf_{n \to \infty} b_n (\infty)$. The proof of Lemma \ref{Lemma 4.6} is finished.
\end{proof}

%%%%%%%%%%%%%%%%%%%%%%%%%%%%%%%%%%%%%%%%%%%%%%%%%%%%%%%%%%%%%%%%%%%%%%%%%%%

\section*{Acknowledgments}
This work was supported by National Key R\&D Program of China under the grant 2023YFA 1010300, the National Natural Science Foundation of China under contract No. 12201220 and the Guang Dong Basic and Applied Basic Research Foundation under contract No. 2024A1515012358.

%%%%%%%%%%%%%%%%%%%%%%%%%%%%%%%%%%%%%%%%%%%%%%%%%%%%%%%%%%%%%%%%%%%%%%%%%%%
\bigskip
% \phantomsection
% \addcontentsline{toc}{section}{\refname}

%\bibliographystyle{unsrtnat}
%\nocite{*}
%\bibliography{reference}

\end{document}